\newif\iffinalrun
        \newcommand{\need}[1]{}
        \newcommand{\mar}[1]{}
        \newcommand{\need}[1]{{ \tiny *** #1}}
        \newcommand{\mar}[1]{\marginpar{\raggedright\tiny #1}}
\begin{document}

\title{Cohomology of $(\varphi,\Gamma)$-modules over pseudorigid spaces}
\author{Rebecca Bellovin}

\begin{abstract}
	We study the cohomology of families of $(\varphi,\Gamma)$-modules with coefficients in pseudoaffinoid algebras.  We prove that they have finite cohomology, and we deduce an Euler characteristic formula and Tate local duality.  We classify rank-$1$ $(\varphi,\Gamma)$-modules and deduce that triangulations of pseudorigid families of $(\varphi,\Gamma)$-modules can be interpolated, extending a result of ~\cite{kpx}.  We then apply this to study extended eigenvarieties at the boundary of weight space, proving in particular that the eigencurve is proper at the boundary and that Galois representations attached to certain characteristic $p$ points are trianguline.
\end{abstract}

\maketitle

\section{Introduction}

In our earlier paper ~\cite{bellovin2020}, we began studying families of Galois representations varying over \emph{pseudorigid} spaces, that is, families of Galois representations where the coefficients have a non-archimedean topology but which (in contrast to the rigid analytic spaces of Tate) are not required to contain a field.  Such coefficients arise naturally in the study of eigenvarieties at the boundary of weight space.

The theory of $(\varphi,\Gamma)$-modules is a crucial tool in the study of $p$-adic Galois representations.  At the expense of making the coefficients more complicated, it lets us turn the data of a Galois representation into the data of a Frobenius operator $\varphi$ and a $1$-dimensional $p$-adic Lie group $\Gamma$.  Moreover, Galois representations which are irreducible often become reducible on the level of their associated $(\varphi,\Gamma)$-modules.  Such $(\varphi,\Gamma)$-modules have played an important role in the $p$-adic Langlands program.

In our previous paper ~\cite{bellovin2020}, we constructed $(\varphi,\Gamma)$-modules associated to Galois representations varying over pseudorigid spaces. In the present paper, we turn to the study of the cohomology of $(\varphi,\Gamma)$-modules over pseudorigid spaces have finite cohomology, whether or not they come from Galois representations.  Given a $(\varphi,\Gamma)$-module $D$, the Fontaine--Herr--Liu complex $C_{\varphi,\Gamma}^\bullet(D)$ is an explicit three-term complex which, when $D$ arises from a Galois representation, computes the Galois cohomology.  We begin by proving that such families of $(\varphi,\Gamma)$-modules have finite cohomology, extending the main result of ~\cite{kpx}:  
\begin{thm}
	Suppose $D$ is a projective $(\varphi,\Gamma)$-module over a pseudoaffinoid algebra $R$.  Then $C_{\varphi,\Gamma}^\bullet(D)\in D_{\mathrm{perf}}^{[0,2]}(R)$.
\end{thm}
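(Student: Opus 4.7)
My plan is to adapt the strategy of Kedlaya--Pottharst--Xiao~\cite{kpx}, who prove the analogous result for affinoid $\Q_p$-algebras, to the pseudoaffinoid setting. First I would reduce to the case where $\Gamma$ is procyclic: decomposing $\Gamma = \Delta \times \Gamma'$ with $\Delta$ the (finite) torsion subgroup and $\Gamma'$ procyclic, a Hochschild--Serre argument reduces the claim for $C^\bullet_{\varphi,\Gamma}(D)$ to the analogous statement for $C^\bullet_{\varphi,\Gamma'}(D^\Delta)$, provided $|\Delta|$ is a unit in $R$ (which requires some care when $p=2$). The resulting complex is the total complex of a double complex formed from two copies of the $\varphi$-complex $D \xrightarrow{\varphi-1} D$ linked by $\gamma - 1$, where $\gamma$ generates $\Gamma'$ topologically, so it suffices to show that the $\varphi$-complex represents a perfect complex of $R$-modules in degrees $[0,1]$.

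The central step is to show that the $\varphi$-complex is quasi-isomorphic to a perfect subcomplex. Following KPX, the strategy is to construct a left inverse $\psi$ to $\varphi$ on $D$ and use it to produce a finitely generated projective $R$-submodule $N \subseteq D$ such that $\varphi - 1$ admits a bounded inverse on a suitable $\psi$-stable complement of $N$; this identifies both $\ker(\varphi - 1)$ and $\coker(\varphi - 1)$ on $D$ with those of an endomorphism of $N$, and exhibits the required perfect complex. For $D$ of higher rank, I would perform a d\'evissage along the slope filtration of Kedlaya, reducing to the pure rank-one case, which is handled directly using the classification of rank-one $(\varphi,\Gamma)$-modules proven elsewhere in the paper.

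The main obstacle will be carrying out this analytic machinery over pseudoaffinoid algebras rather than affinoid $\Q_p$-algebras. KPX work throughout with Banach $\Q_p$-modules and exploit compactness/Fredholm properties of Frobenius; in the pseudoaffinoid setting $R$ need not contain $\Q_p$, and the Robba ring over $R$ has a more delicate functional-analytic structure (it admits reductions mod $p$, etc.). The necessary properties of this Robba ring and of the operators $\varphi, \psi$ should follow from the framework developed in~\cite{bellovin2020}, but verifying the key compactness estimate---that $\varphi - 1$ admits a bounded inverse on an appropriate $\psi$-stable submodule---is where the genuine work lies. Given this, the slope-theoretic d\'evissage, the reduction to the rank-one classification, and the final passage back to the full complex $C^\bullet_{\varphi,\Gamma}(D)$ should go through largely formally.
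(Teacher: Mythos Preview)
Your proposal contains two genuine gaps that would prevent it from going through.

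First, the d\'evissage along the slope filtration does not make sense over a general pseudoaffinoid algebra $R$. Kedlaya's slope filtration theorem requires the coefficients to be a complete discretely valued field; indeed, the paper only invokes it after reducing to the case where $R$ is a finite extension of $\Q_p$ or $\F_p(\!(u)\!)$ (in the proof of the Euler characteristic formula, which is deduced \emph{from} finiteness, not used to prove it). Over a pseudoaffinoid $R$ with positive-dimensional spectrum, the slope can jump from fiber to fiber and there is no global slope filtration to d\'evisser along.

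Second, and more seriously, your reduction to the rank-one case via the classification of rank-$1$ $(\varphi,\Gamma)$-modules is circular. In this paper the rank-one classification (Proposition~\ref{prop: char type artinian} and the theorem following it) rests on Corollary~\ref{cor: coh char type}, which in turn uses Tate local duality (Theorem~\ref{thm: tate local duality}) and the Euler characteristic formula (Corollary~\ref{thm: euler characteristic}); both of these are consequences of the very finiteness/perfection statement you are trying to prove. So you cannot use the classification as an input.

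The paper's actual argument avoids both issues by never reducing the rank. It represents $C_{\varphi,\Gamma}^\bullet(D)$ by explicit complexes over closed subintervals $[a,b]$, shows that restriction to $[a',b']\subset(a,b)$ is simultaneously a quasi-isomorphism (using the $\psi$-operator and the invertibility of $\gamma-1$ on $D^{\psi=0}$) and completely continuous (via an inner-morphism argument \`a la Kiehl), and then appeals to the abstract finiteness criterion of Kedlaya--Liu~\cite[Lemma 1.10]{kedlaya-liu16}. The passage from finite cohomology to $\mathbf{D}_{\mathrm{perf}}^{[0,2]}(R)$ then follows formally from flatness of the terms of the complex, as in~\cite[Theorem 4.4.5(1)]{kpx}. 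Your instinct to use $\psi$ is correct, but its role is to establish the quasi-isomorphism between the interval complexes, not to cut out a finite-rank direct summand on which $\varphi-1$ is analyzed by hand.
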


As a corollary, we deduce the Euler characteristic formula:
\begin{cor}
	If $D$ is a projective $(\varphi,\Gamma_K)$-module with coefficients in a pseudoaffinoid algebra $R$, then $\chi(D)=-(\rk D)[K:\Q_p]$.
\end{cor}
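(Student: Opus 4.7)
The plan is to exploit the perfectness supplied by the preceding Theorem to reduce the computation of $\chi(D)$ to a pointwise invariant at closed points of $\Spec R$, and then invoke the classical Euler characteristic formula over a field.

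Since $C_{\varphi,\Gamma}^\bullet(D)\in D_{\mathrm{perf}}^{[0,2]}(R)$ admits a strictly perfect representative $P^\bullet$, the Euler characteristic $\chi(D)=\sum_i(-1)^i\rk_R P^i$ is well-defined and locally constant on $\Spec R$.  For any closed point $x\in\MaxSpec R$ with residue field $k(x)$, derived base change gives
\[
\chi(D)=\sum_i(-1)^i\dim_{k(x)} H^i\bigl(C_{\varphi,\Gamma}^\bullet(D)\otimes^{\mathbf L}_R k(x)\bigr).
\]
I would then verify the compatibility $C_{\varphi,\Gamma}^\bullet(D)\otimes^{\mathbf L}_R k(x)\simeq C_{\varphi,\Gamma}^\bullet(D\otimes_R k(x))$, which should fall out of the mechanism used to prove Theorem~1.

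A closed point of a pseudoaffinoid $\Spec R$ has residue field $k(x)$ which is either a finite extension of $\Q_p$ or a local field of characteristic $p$.  In the mixed characteristic case, the Euler characteristic formula for $(\varphi,\Gamma_K)$-modules over $p$-adic fields is due to Herr in the \'etale case and to Liu in general, yielding $\chi(D\otimes_R k(x))=-[K:\Q_p]\rk_R D$.  The equal characteristic case should be handled by the analogous formula in that setting.  Combining these with the base-change identity above gives $\chi(D)=-[K:\Q_p]\rk_R D$ on each connected component of $\Spec R$.

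The main obstacle is the derived base change step: the individual terms of the Fontaine--Herr--Liu complex are of infinite rank over $R$, so naive termwise specialization does not commute with cohomology.  Perfectness of the derived complex (Theorem~1) is precisely the ingredient that rescues this, allowing replacement of $C_{\varphi,\Gamma}^\bullet(D)$ by a strictly perfect complex of finitely generated projective $R$-modules whose formation commutes with base change; executing this compatibly with the $(\varphi,\Gamma)$ action is the technical heart of the argument.
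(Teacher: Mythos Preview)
Your reduction to points via perfectness and derived base change is sound and matches the first step of the paper's argument. The genuine gap is in the equal-characteristic case: you write that ``the equal characteristic case should be handled by the analogous formula in that setting,'' but no such formula is available in the literature for $(\varphi,\Gamma_K)$-modules over $\F_p(\!(u)\!)$. Indeed, this corollary is precisely where that formula is first established. Liu's argument in characteristic $0$ relies on the element $t$ (to shift weights via $t$-torsion objects), and there is no analogue of $t$ when $p=0$ in the coefficients, so his method does not carry over. You have reduced to the case that actually requires a new idea.

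The paper's proof supplies that idea by a deformation argument. After reducing to a field $R$ and (via the slope filtration and additivity) to $D$ pure of some slope $s$, one twists $D$ by the universal unramified character $\delta_{\mathrm{univ}}:\langle\varpi_K\rangle\to\mathscr{O}_{\G_m^{\an}}^\times$ to obtain a family $D(\delta_{\mathrm{univ}})$ over $\G_m^{\an}$. Perfectness of the Fontaine--Herr--Liu complex in families (the theorem you invoke) forces the Euler characteristic to be constant on connected components of $\G_m^{\an}$. Each component contains a point where the twist has slope $0$, hence is \'etale and arises from an honest Galois representation; there the classical Tate Euler characteristic formula applies. This deforms the unknown characteristic-$p$ case back to the known \'etale case, and is the step your proposal is missing.
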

This extends the result ~\cite[Theorem 4.3]{liu2007}.  However, the method of proof is different: Liu proved finiteness of cohomology and the Euler characteristic formula at the same time, making a close study of $t$-torsion $(\varphi,\Gamma)$-modules to shift weights.  There is no element $t$ in our setting, because $p$ is not necessarily invertible.  However, because we proved finiteness of cohomology for pseudorigid families of $(\varphi,\Gamma)$-modules first, we can deduce the Euler characteristic formula by deformation, without studying torsion objects.

We then turn to $(\varphi,\Gamma)$-modules with coefficients in finite extensions of $\F_p(\!(u)\!)$, and we prove Tate local duality:
\begin{thm}
Tate local duality holds for every projective $(\varphi,\Gamma)$-module $D$ over $\Lambda_{R,\rig,K}$.
\end{thm}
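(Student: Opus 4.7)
The plan is to follow the strategy used in~\cite{kpx} for $p$-adic analytic families, adapting it to the positive-characteristic pseudorigid setting. One wants a cup-product pairing
\[
H^i_{\varphi,\Gamma}(D) \times H^{2-i}_{\varphi,\Gamma}\bigl(D^\vee(\chi_{\cyc})\bigr) \longrightarrow H^2_{\varphi,\Gamma}\bigl(\Lambda_{R,\rig,K}(\chi_{\cyc})\bigr) \xrightarrow{\;\Tr\;} R,
\]
and then to show it is perfect. By the finiteness theorem above, the complexes $C^\bullet_{\varphi,\Gamma}(D)$ and $C^\bullet_{\varphi,\Gamma}(D^\vee(\chi_{\cyc}))$ are perfect over $R$, so perfectness of the pairing is equivalent to the derived statement that they are Serre dual over $R$ up to a shift by $2$.

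First I would construct the trace isomorphism $H^2_{\varphi,\Gamma}(\Lambda_{R,\rig,K}(\chi_{\cyc})) \cong R$. Using the classification of rank-$1$ $(\varphi,\Gamma)$-modules promised in the introduction, one can describe $\Lambda_{R,\rig,K}(\chi_{\cyc})$ explicitly and then compute its $H^2$ directly on the Fontaine--Herr--Liu complex, mimicking the characteristic-zero construction. Naturality in $R$ for admissible base change $R \to R'$ should follow from the explicit form of the construction. The cup product is then defined on complexes by the standard formula, with its base-change compatibility essentially formal.

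To prove perfectness, I would leverage the finiteness theorem: the induced morphism of perfect $R$-complexes is a quasi-isomorphism if and only if it becomes one after derived base change to the residue field at every (closed) point of $\Spec R$. At such a point, the residue field is a finite extension $E$ of $\F_p(\!(u)\!)$, reducing the question to Tate local duality for $(\varphi,\Gamma)$-modules over the $E$-coefficient Robba ring. There the complexes in question are three-term complexes of finite-dimensional $E$-vector spaces, and duality should follow by an adaptation of Herr's original argument combined with the explicit trace on $H^2$ of the cyclotomic character.

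The main obstacle will be this final step. The proofs of Tate duality in~\cite{liu2007} and~\cite{kpx} rely in essential ways on the element $t \in B_\rig^+$ and other characteristic-zero Robba-ring techniques --- for example, resolutions by $t$-torsion and $t$-cofree pieces --- none of which are available in the present setting. An intrinsic characteristic-$p$ duality argument at the closed point is therefore required; the hope would be to produce an explicit Koszul-type resolution adapted to $\Lambda_{E,\rig,K}$ that makes the pairing with $\Lambda_{E,\rig,K}(\chi_{\cyc})$ visibly perfect. A secondary technical point is to verify that the trace and cup product are sufficiently well-behaved under specialization that perfectness at all closed points implies the statement over $R$.
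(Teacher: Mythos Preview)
Your proposal has a genuine gap at exactly the point you yourself flag as the ``main obstacle.'' Note first that in the paper this theorem lives in the section on positive-characteristic coefficients: $R$ is already a finite extension of $\F_p(\!(u)\!)$. Your reduction-to-residue-fields step is therefore vacuous here (it is, essentially verbatim, the proof of the \emph{separate} theorem stated just after the Euler characteristic corollary, which deduces duality over a general pseudoaffinoid base from the field case). What remains --- and what the present theorem is actually asserting --- is precisely the ``intrinsic characteristic-$p$ duality argument at the closed point'' that you leave as a hope about Koszul-type resolutions. That is not a proof.

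The paper's argument for the field case is quite different from anything you sketch. One first reduces to $K=\Q_p$ by induction, and passes to a pure piece of the slope filtration (Kedlaya's theorem over $\F_p(\!(u)\!)$-coefficients). The slope-$0$ case is \'etale and hence comes from a genuine Galois representation, where classical Tate duality is available. For positive slope one inducts on the \emph{degree} $(\rk D)\cdot s$: the Euler characteristic formula forces $H^1_{\varphi,\Gamma_{\Q_p}}(D(\delta^{-1}))\neq 0$ for a suitable rank-$1$ twist, giving a non-split extension $0\to D\to D'\to \Lambda_{R,\rig,\Q_p}(\delta)\to 0$ with $\deg D'=\deg D-1$. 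One then checks, via a slope-filtration argument exploiting non-splitness, that every graded piece of $D'$ has strictly smaller degree than $D$, so duality for $D'$ follows by induction; since duality for $\Lambda_{R,\rig,\Q_p}(\delta)$ was established in the preceding lemma (via the explicit $H^0$, $H^1$, $H^2$ computations for such characters), a five-lemma argument yields duality for $D$. None of this uses $t$, Koszul resolutions, or the classification of rank-$1$ modules; indeed the classification is proved \emph{after} this theorem and depends on it, so invoking it for the trace map would risk circularity.
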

Here $\Lambda_{R,\rig,K}$ is a mixed- or postitive-characteristic analogue of the usual Robba ring, which we will define in \textsection~\ref{subsec: rings}.  Our proof closely follows that of ~\cite[Theorem 4.7]{liu2007}; we compute the cohomology of many rank-$1$ $(\varphi,\Gamma)$-modules and then proceed by induction on the degree, using the Euler characteristic formula to produce non-split extensions.  We are then able to finish the computation of the cohomology of $(\varphi,\Gamma)$-modules of character type.

With this in hand, we are able to show that all rank-$1$ $(\varphi,\Gamma)$-modules over pseudorigid spaces are of character type, following ~\cite{kpx}, and we deduce that triangulations can be interpolated from a dense set of maximal points (in the sense of ~\cite[Definition 2.2.7]{johansson-newton17}):
\begin{thm}
	Let $X$ be a reduced pseudorigid space, let $D$ be a projective $(\varphi,\Gamma_K)$-module over $X$ of rank $d$, and let $\delta_1,\ldots,\delta_d\colon K^\times\rightarrow\Gamma(X,\mathscr{O}_X^\times)$ be a set of continuous characters.  Suppose there is a very Zariski-dense set $X^{\mathrm{cl}}\subset X$ of maximal points such that for every $x\in X^{\mathrm{cl}}$, $D_x$ is trianguline with parameters $\delta_{1,x},\ldots,\delta_{d,x}$.  Then there exists a proper birational morphism $f\colon X'\rightarrow X$ of reduced pseudorigid spaces and an open subspace $U\subset X'$ containing $\{p=0\}$ such that $f^\ast D|_U$ is trianguline with parameters $f^\ast\delta_1,\ldots,f^\ast\delta_d$.
\end{thm}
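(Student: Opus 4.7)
The plan is to adapt the interpolation strategy of Kedlaya--Pottharst--Xiao to the pseudorigid setting, using as key inputs the finiteness of $(\varphi,\Gamma)$-module cohomology and the classification of rank-$1$ $(\varphi,\Gamma)$-modules that are proved in the earlier sections. The argument proceeds by induction on the rank $d$. For the base case $d=1$ the result is essentially a consequence of the classification of rank-$1$ modules over reduced pseudorigid spaces. For the inductive step, the task is to produce, after a proper birational modification $f\colon X'\to X$, a saturated rank-$1$ $(\varphi,\Gamma)$-submodule of $f^*D$ of character type $f^*\delta_1$ on an open $U\subset X'$ containing $\{p=0\}$; the quotient by such a submodule is again a projective $(\varphi,\Gamma)$-module whose specializations at points of $X_{\mathrm{alg}}\cap U$ are trianguline with parameters $\delta_2,\ldots,\delta_d$, and the inductive hypothesis then completes the triangulation.

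To construct the first step, I would twist by $\delta_1^{-1}$ and seek an element of $H^0_{\varphi,\Gamma}(D(\delta_1^{-1}))$ generating a rank-$1$ subobject. By the finiteness theorem, this $H^0$ is a finitely generated $R$-module whose formation is controlled by base change via the perfect Fontaine--Herr--Liu complex in degrees $[0,2]$. At each $x\in X_{\mathrm{alg}}$ the triangulinity hypothesis produces a nonzero element of the specialized $H^0$, and Zariski-density of $X_{\mathrm{alg}}$ combined with the base-change spectral sequence forces the global $H^0$ to be nonzero and of generic rank at least one on each irreducible component of $X$ meeting $X_{\mathrm{alg}}$.

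The morphism $f$ can then be chosen to resolve the base locus of the coherent sheaf attached to $H^0_{\varphi,\Gamma}(D(\delta_1^{-1}))$, following the blow-up construction of KPX, so that after pulling back one obtains a section which does not vanish in codimension zero and defines a nonzero map of $(\varphi,\Gamma)$-modules of character type $f^*\delta_1$ into $f^*D$. The saturation locus of this map is open in $X'$, and the remaining work is to verify that it contains $\{p=0\}$. Here one appeals to the classification of rank-$1$ $(\varphi,\Gamma)$-modules in characteristic $p$ to identify the limiting image subobject at characteristic $p$ points, and to the density of $X_{\mathrm{alg}}$ along $\{p=0\}$, to rule out the possibility that the saturation drops in rank or acquires the wrong parameter there.

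The main obstacle, I anticipate, will be controlling the interpolating section across the characteristic $p$ boundary: on the $p$-invertible locus one can largely transcribe the KPX argument, but near $\{p=0\}$ there is no analogue of the element $t$ available, specialization to characteristic $p$ points behaves rather differently from classical rigid-analytic specialization, and the coherent sheaf associated with $H^0_{\varphi,\Gamma}$ may jump in rank or fail to be locally free. It is precisely here that the uniform finiteness theorem and the pseudorigid classification of rank-$1$ modules are decisive. The appearance of the open subspace $U$, rather than all of $X'$, in the conclusion reflects both the possible failure of saturation away from $\{p=0\}$ and the fact that for the intended applications to the boundary of weight space one only needs the triangulation in a neighborhood of the characteristic $p$ locus.
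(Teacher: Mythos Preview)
Your outline is essentially the paper's argument, modulo a harmless duality: you interpolate the bottom submodule $\Lambda(\delta_1)\hookrightarrow D$ via $H^0_{\varphi,\Gamma}(D(\delta_1^{-1}))$, whereas the paper interpolates the top quotient $D\twoheadrightarrow\Lambda(\delta_d)$ via $H^0_{\varphi,\Gamma}(D^\vee(\delta_d))$ and then inducts downward. Either direction works, and your identification of the inputs (finiteness/perfectness of the Fontaine--Herr--Liu complex, the blow-up to flatten $H^0$, and the characteristic-$p$ classification of rank-$1$ modules) is correct.

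There is one point where your sketch of why $U\supset\{p=0\}$ is slightly off. You appeal to ``the density of $X_{\mathrm{alg}}$ along $\{p=0\}$'', but that is not what is used (and in applications $X_{\mathrm{alg}}$ consists entirely of characteristic-$0$ points). The actual mechanism is: after the blow-up has made $H^0$ locally free of rank $1$ with controlled higher Tor-dimension, the base-change exact sequence shows that the interpolating map is \emph{nonzero} at every maximal point of $X'$, not just those in $X_{\mathrm{alg}}$. Then at a characteristic-$p$ point one invokes the corollary that a rank-$1$ $(\varphi,\Gamma)$-module of character type over a finite extension of $\F_p(\!(u)\!)$ has no proper nonzero sub- or quotient $(\varphi,\Gamma)$-modules; this forces the specialized map to be a saturated injection (in your direction) or a surjection (in the paper's direction). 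Finally, the non-saturated (resp.\ non-surjective) locus is Zariski-closed in an affinoid piece and contains no characteristic-$p$ points, so the maximum modulus principle for pseudoaffinoid algebras shows $p$ is bounded away from $0$ there, yielding the open $U\supset\{p=0\}$. You should make this last step explicit; it is the genuine replacement for the $t$-argument that is unavailable here.
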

Unlike the situation in characteristic $0$, the triangulation extends over every point of characteristic $p$, and there are no critical points.  This is again because there is no analogue of Fontaine's element $t$ in our positive characteristic analogue of the Robba ring.

Finally, we turn to applications to the extended eigenvarieties constructed in ~\cite{johansson-newton}.  Adapting the Galois-theoretic argument of ~\cite{diao-liu}, we prove unconditionally that each irreducible component of the extended eigencurve is proper at the boundary of weight space, and that the Galois representations over characteristic $p$ points of the extended eigencurve are trianguline at $p$.  The latter answers a question of ~\cite{aip2018}.

We actually prove these results under somewhat abstracted hypotheses, in order to facilitate deducing analogous results for other extended eigenvarieties.  In particular, our results apply to certain unitary and Hilbert eigenvarieties.  However, for most groups the necessary results have not been proven even for Galois representations attached to classical forms, nor have the required families of Galois representations been constructed.

In the appendices, we have collected several results on the geometry of pseudorigid spaces and Galois determinants over pseudorigid spaces necessary for our applications.

\begin{remark}
	We assume throughout that $p\neq 2$.  It should be possible to remove this hypothesis without any real difficulty, but we would have had to work systematically with $R\Gamma\left(\Gamma_K^{p\mathrm{-tors}}, C_{\varphi,\Gamma}^\bullet(D) \right)$, rather than the usual Fontaine--Herr--Liu complex.
\end{remark}

\subsection*{Acknowledgements}
I am grateful to Toby Gee, James Newton, and Lynnelle Ye for helpful conversations.

\section{Background}

\subsection{Rings of $p$-adic Hodge theory}
\label{subsec: rings}

Let $R$ be a pseudoaffinoid $\mathscr{O}_E$-algebra, for some finite extension $E/\Q_p$ (we provide a precise definition of pseudoaffinoid algebrais in ~\ref{app: pseudorigid}) with uniformizer $\varpi_E$, with ring of definition $R_0\subset R^\circ$ and pseudouniformizer $u\in R_0$, and assume that $p\not\in R^\times$.  Let $K/\Q_p$ be a finite extension, let $\chi_{\cyc}\colon\Gal_K\rightarrow\Z_p^\times$ be the cyclotomic character, let $H_K\coloneqq\ker \chi_{\cyc}$, and let $\Gamma_K\coloneqq\Gal_K/H_K$.  Given an interval $I\subset [0,\infty]$, we defined rings $(\widetilde\Lambda_{R_0,I,K},\widetilde\Lambda_{R_0,I,K}^+)$ and $(\Lambda_{R_0,I,K},\Lambda_{R_0,I,K}^+)$ in ~\cite[Definition 3.2]{bellovin2020} and ~\cite[Definition 3.40]{bellovin2020}, respectively, which (when $I=[0,b]$) are analogues of the characteristic $0$ rings $(\widetilde{\A}_K^{(0,b]},\widetilde{\A}_K^{\dagger,s(b)})$ and $(\A_K^{(0,r]}, \A_K^{\dagger,s(r)})$ defined in ~\cite{colmez2008}.  Here $s\colon(0,\infty)\rightarrow (0,\infty)$ is defined via $s(r)\coloneqq\frac{p-1}{pr}$.  We briefly recall their definitions here and state some of their properties.

Let $\Ainf\coloneqq W(\O_{{\C}_K}^\flat)$, where $\O_{\C_K}^\flat\coloneqq \varprojlim_{x\mapsto x^p}\O_{\C_K}$ is the tilt of $\O_{\C_K}$.  Let $\varepsilon\coloneqq (\varepsilon^{(0)},\varepsilon^{(1)},\ldots)\in \O_{\C_K}^\flat$ be a choice of a compatible sequence of $p$-power roots of unity, with $\varepsilon^{(0)}=1$ and $\varepsilon^{(1)}\neq1$, and let $\overline\pi\coloneqq \varepsilon -1$ and $\pi\coloneqq [\varepsilon]-1\in\Ainf$.  Then if $I=[a,b]$ for rational numbers $a, b$ with $0\leq a\leq b\leq \infty$, we define $(\widetilde\Lambda_{R_0,I},\widetilde\Lambda_{R_0,I}^+)$ such that
\[	\Spa(\widetilde\Lambda_{R_0,I},\widetilde\Lambda_{R_0,I}^+) = \left(\Spa(R_0\htimes\Ainf,R_0\htimes\Ainf)\right)\left\langle \frac{[\overline\pi]^{s(a)}}{u}, \frac{u}{[\overline\pi]^{s(b)}}\right\rangle	\]
If $a=0$, we take $\frac{[\overline\pi]^{\infty}}{u}=0$, and if $b=0$, we take $\frac{u}{[\overline\pi]^\infty}=\frac{1}{[\overline\pi]}$.

The ring $\widetilde\Lambda_{R_0,I}$ has ring of definition $\left(R_0\htimes\A_{\mathrm{inf}}\right)\left\langle \frac{[\overline\pi]^{s(a)}}{u}, \frac{u}{[\overline\pi]^{s(b)}}\right\rangle$; when $b\neq \infty$, this permits us to define a valuation
\[	 v_{R,[a,b]}(x)\coloneqq \sup_{\alpha\in\C_p^\flat:[\alpha]x\in \widetilde\Lambda_{R_0,[a,b],0}}-v_{\C_p^\flat}(\alpha)  \]
on it.  When $a=0$, we abbreviate $v_{R,[0,b]}$ as $v_{R,b}$.  When $b=\infty$, we let $v_{R,[a,\infty]}$ be the $u$-adic valuation.

The group $H_K$ acts on $(\widetilde\Lambda_{R_0,I},\widetilde\Lambda_{R_0,I}^+)$, because $\Gal_K$ acts on $\Ainf$ and $H_K$ fixes $[\overline\pi]$.  Then by ~\cite[Corollary 3.36]{bellovin2020},
\[	\Spa(\widetilde\Lambda_{R_0,I}^{H_K},\widetilde\Lambda_{R_0,I}^{+,H_K}) = \left(\Spa(R_0\htimes\Ainf^{H_K},R_0\htimes\Ainf^{H_K})\right)\left\langle \frac{[\overline\pi]^{s(a)}}{u}, \frac{u}{[\overline\pi]^{s(b)}}\right\rangle      \]
If $I\subset I'$, we have injective maps $\widetilde\Lambda_{R_0,I'}\rightarrow \widetilde\Lambda_{R_0,I}$ and $\widetilde\Lambda_{R_0,I'}^{H_K}\rightarrow\widetilde\Lambda_{R_0,I}^{H_K}$.  Thus, if $I'$ is an interval with an open endpoint, we may define 
\[	(\widetilde\Lambda_{R_0,I'},\widetilde\Lambda_{R_0,I'}^+)\coloneqq \cap_{\substack{I\subset I'\\ \mathrm{closed}}}(\widetilde\Lambda_{R_0,I},\widetilde\Lambda_{R_0,I}^+)	\]
and 
\[      (\widetilde\Lambda_{R_0,I'}^{H_K},\widetilde\Lambda_{R_0,I'}^{+,H_K})\coloneqq \cap_{\substack{I\subset I'\\ \mathrm{closed}}}(\widetilde\Lambda_{R_0,I}^{H_K},\widetilde\Lambda_{R_0,I}^{+,H_K})       \]

The rings $(\Lambda_{R_0,I,K},\Lambda_{R_0,I,K}^+)$ are imperfect versions of these, defined when $I\subset [0,b]$ with $b$ sufficiently small.  Given $\lambda=\frac{m'}{m}\in\Q_{>0}$ with $\gcd(m,m')=1$, let $(D_\lambda, D_{\lambda}^\circ)$ denote the pair of rings corresponding to the localization $(\mathscr{O}_E[\![u]\!],\mathscr{O}_E[\![u]\!])\left\langle\frac{\varpi_E^m}{u^{m'}}\right\rangle$. By ~\cite[Lemma 4.8]{lourenco}, there is some sufficiently small $\lambda$ such that $R$ is topologically of finite type over $D_\lambda$, so we may assume that $R_0$ is topologically of finite type over $D_\lambda^\circ$, i.e., there is a continuous, open, and surjective homomorphism $D_\lambda\left\langle T_1,\ldots,T_n\right\rangle\twoheadrightarrow R$~\footnote{This is defined as ``strictly topologically of finite type'' in ~\cite{wedhorn}; the definition of ``topologically of finite type'' given there is slightly more general, following ~\cite[\textsection 3]{huber1994}.  But in the case of Tate rings, the two definitions coincide by ~\cite[Lemma 3.5]{huber1994}.}.  

For any unramified extension $F/\Q_p$, the choice of $\varepsilon$ gives us a natural map $k_F(\!(\overline\pi)\!)\rightarrow \C_K^\flat$; let $\E_F$ denote its image, and let $\E\subset \C_K^\flat$ be its separable closure.  Then $\Gal(\E/\E_F)\cong H_F$ (by the theory of the field of norms), and for any extension $K/F$, we set $\E_K\coloneqq \E^{H_K}$.  Then $\E_K$ is a discretely valued field, and we may choose a uniformizer $\overline\pi_K$; if we lift its minimal polynomial to characteristic $0$, Hensel's lemma implies that we have a lift $\pi_K\in W(\C_K^\flat)$ which is integral over $\O_F[\![\pi]\!][\frac 1 \pi]^\wedge$.  We fix a choice $\pi_K$ for each $K$, and work with it throughout (when $F/\Q_p$ is unramified, we take $\pi_F$ to be $\pi$).

Assume that $0\leq a\leq b< r_K\cdot\lambda$, where $r_K$ is a constant defined in ~\cite{colmez2008}, and that $\frac{1}{a\cdot v_{\C_K^\flat}(\overline\pi_K))},\frac{1}{b\cdot v_{\C_K^\flat}(\overline\pi_K))}\in\Z$. Let $F'\subset K_\infty\coloneqq K(\mu_{p^\infty})$ be the maximal unramified subfield.  Then we define $\Lambda_{R_0,[a,b],K}$ to be the evaluation of $\mathscr{O}_{(R_0\otimes\mathscr{O}_{F'})[\![\pi_K]\!]}$ on the affinoid subspace of $\Spa(R_0\otimes\mathscr{O}_{F'})[\![\pi_K]\!]$ defined by the conditions $u\leq \pi_K^{1/(b\cdot v_{\C_K^\flat}(\overline\pi_K))}$ and $\pi_K^{1/(a\cdot v_{\C_K^\flat}(\overline\pi_K))}\leq u$ (and similarly for $\Lambda_{R_0,[a,b],K}^+$).  We further set $\Lambda_{R,[a,b],K}\coloneqq \Lambda_{R_0,[a,b],K}\left[\frac 1 u\right]$.

If $p=0$ in $R$, then we may take $\lambda$ arbitrarily large, and hence $b$ arbitrarily large.  Thus, in this case we additionally define $\Lambda_{R_0,[a,\infty],K}\coloneqq (R_0\otimes_{\Z_p}\mathscr{O}_{F'})[\![\pi_K]\!]$.

We further define $\Lambda_{R,(0,b],K}\coloneqq \varprojlim_{a\rightarrow 0}\Lambda_{R,[a,b],K}$, and $\Lambda_{R,\rig,K}\coloneqq \varinjlim_{b\rightarrow 0}\Lambda_{R,(0,b],K}$.

The rings $\widetilde\Lambda_{R_0,I}^{H_K}$ and $\Lambda_{R_0,I,K}$ are equipped with actions of Frobenius and $\Gamma_K$.  More precisely, we have isomorphisms
\[	\varphi\colon\widetilde\Lambda_{R_0,I}\xrightarrow{\sim}\widetilde\Lambda_{R_0,\frac 1 pI}, \quad \varphi\colon\widetilde\Lambda_{R_0,I}^{H_K}\xrightarrow{\sim}\widetilde\Lambda_{R_0,\frac 1 pI}^{H_K}	\]
and ring homomorphisms
\[	\varphi\colon\Lambda_{R_0,[a,b],K}\rightarrow \Lambda_{R_0,[a/p,b/p],K}	\]
However, the latter are not isomorphisms; $\varphi$ makes $\Lambda_{R_0,[0,b/p],K}$ into a free $\varphi(\Lambda_{R_0,[0,b],K})$-module, with basis $\{1,[\varepsilon],\ldots,[\varepsilon]^{p-1}\}$.  We may define a left inverse $\psi\colon\Lambda_{R_0,[0,b/p],K}\rightarrow \Lambda_{R_0,[0,b],K}$ by defining
\[	\psi\left(\varphi(a_0) + \varphi(a_1)[\varepsilon]+\cdots+\varphi(a_{p-1})[\varepsilon]^{p-1}\right) = a_0	\]
If $p$ is a non-zero-divisor in $R_0$, we may instead write $\psi = p^{-1}\varphi^{-1}\circ\Tr_{\Lambda_{R_0,[0,b/p],K}/\varphi(\Lambda_{R_0,[0,b],K})}$.

There is a natural map $\Lambda_{R_0,[a,b],K}\rightarrow \widetilde\Lambda_{R_0,[a,b],K}$, and so $\Lambda_{R_0,[a,b],K}$ inherits the valuation $v_{R,[a,b]}$.  We can compute $v_{R,b}$ explicitly when $R$ is a finite extension of $\F_p(\!(u)\!)$:
\begin{lemma}
	If $R$ is a a finite extension of $\F_p(\!(u)\!)$, equipped with the $u$-adic valuation $v_R$ (with $v_R(u)=1$), then 
	\[	v_{R,b}\left(\sum_{i\in\Z}a_i\pi_K^i\right) = \frac 1 b \inf_{i\in\Z}\left\{v_R(a_i) + ibv_{\C_p^\flat}(\overline\pi_K)\right\}	\]
	\label{lemma: val 0 b R a field}
\end{lemma}
\begin{proof}
	It is straightforward to check that $v_{R,b}(a_i\pi_K^i) = iv_{\C_p^\flat}(\overline\pi_K) + \frac{v_R(a_i)}{b}$, which yields the claim.
\end{proof}

We can also estimate the $u$-adic valuation $v_{R,[a,\infty]}$:
\begin{lemma}
	If $R$ is a a finite extension of $\F_p(\!(u)\!)$, equipped with the $u$-adic valuation $v_R$ (with $v_R(u)=1$), then
	\[      \inf_{i\geq 0}\left\{v_R(a_i) + iav_{\C_p^\flat}(\overline\pi_K)\right\}	\]
	is a valuation on $\Lambda_{R,[a,\infty],K}$ whose ring of integers is $\Lambda_{R,[a,\infty],K,0}$.
	\label{lemma: val a infty R a field}
\end{lemma}
\begin{proof}
	We again compute the valuation of monomials: $v_{R,[a,\infty]}(a_i\pi_K^i) = v_R(a_i) + \lfloor iav_{\C_p^\flat}(\overline\pi_K)\rfloor$.
\end{proof}

Thus, we may define an auxiliary valuation $v_{R,a}'$ on $\Lambda_{R,[a,\infty],K}$ via
\[	v_{R,a}'\left(\sum_{i\geq 0}a_i\pi_K^i\right) \coloneqq  \inf_{i\geq 0}\left\{v_R(a_i) + iav_{\C_p^\flat}(\overline\pi_K)\right\}        \]

By ~\cite[Proposition 3.10]{bellovin2020}, the formation of $\widetilde\Lambda_{R,I}$ behaves well with respect to rational localization on $\Spa R$, and $\Lambda_{R_0,I,K}$ does, as well, since it is sheafy.  Thus, if $X$ is a (not necessarily affinoid) pseudorigid space, we may let $\widetilde\Lambda_{X,I}^{H_K}$ and $\Lambda_{X,I,K}$ denote the corresponding sheaves of algebras on $X$.

\subsection{\texorpdfstring{$(\varphi,\Gamma)$}{(𝜑, Γ)}-modules and cohomology}

We briefly recall the theory of $(\varphi,\Gamma)$-modules over pseudorigid spaces.

\begin{definition}
	A $\varphi$-module over $\Lambda_{R,(0,b],K}$ is a coherent sheaf $D$ of  modules over the pseudorigid space $\bigcup_{a\rightarrow 0}\Spa(\Lambda_{R,[a,b],K})$ equipped with an isomorphism
	\[      \varphi_D\colon\varphi^\ast D\xrightarrow{\sim}\Lambda_{R,(0,b/p],K}\otimes_{\Lambda_{R,(0,b],K}}D   \]
	If $a\in (0,b/p]$, a $\varphi$-module over $\Lambda_{R,[a,b],K}$ is a finite $\Lambda_{R,[a,b],K}$-module $D$ equipped with an isomorphism
	\[      \varphi_{D,[a,b/p]}\colon\Lambda_{R,[a,b/p],K}\otimes_{\Lambda_{R,[a/p,b/p],K}}\varphi^\ast D\xrightarrow{\sim} \Lambda_{R,[a,b/p],K}\otimes_{\Lambda_{R,[a,b],K}}D  \]

	A $(\varphi,\Gamma_K)$-module over $\Lambda_{R,(0,b],K}$ (resp. $\Lambda_{R,[a,b],K}$) is a $\varphi$-module over $\Lambda_{R,(0,b],K}$ (resp. $\Lambda_{R,[a,b],K}$) equipped with a semi-linear action of $\Gamma_K$ which commutes with $\varphi_D$ (resp. $\varphi_{D,[a,b/p]}$).

	A $(\varphi,\Gamma_K)$-module over $R$ is a module $D$ over $\Lambda_{R,\rig,K}$ which arises via base change from a $(\varphi,\Gamma_K)$-module over $\Lambda_{R,(0,b],K}$ for some $b>0$.
\end{definition}
If $D$ is a $(\varphi,\Gamma)$-module over $\Lambda_{R,(0,b],K}$, and $I\subset (0,b]$ is a sub-interval, we will write $D_I\coloneqq  \Lambda_{R,I,K}\otimes_{\Lambda_{R,(0,b],K}}D$ to denote its restriction to the annulus $I$.

Let $K/L$ be a finite extension, and let $D$ be a $(\varphi,\Gamma_L)$-module over $\Lambda_{R,(0,b],K}$ (resp. $R$). As in ~\cite[\textsection 2.2]{liu2007}, we may define the induced $(\varphi,\Gamma_K)$-module $\Ind_L^K(D)$.  The underlying sheaf (resp. module) of $\Ind_L^K(D)$ is just $D$ itself, but viewed now as a sheaf on $\bigcup_{a\rightarrow 0}\Spa(\Lambda_{R,[a,b],K})$ (resp. a module over $\Lambda_{R,\rig,K}$).  If $D$ is projective, then ~\cite[Lemma 3.45]{bellovin2020} implies that $\Ind_L^K(D)$ is projective as well.

Let $\Delta_K\subset \Gamma_K$ be a torsion subgroup.  Since we assume $p\neq 2$, the quotient $\Gamma_K/\Delta_K$ is procyclic, so we may fix $\gamma\in\Gamma_K$ whose image in $\Gamma/\Delta_K$ is a topological generator.  Then for a $(\varphi,\Gamma_K)$-module $D$, we define the Fontaine--Herr--Liu complex via
\[      C_{\varphi,\Gamma}^\bullet\colon D\xrightarrow{(\varphi_D-1,\gamma-1)} D\oplus D\xrightarrow{(\gamma-1)\oplus(1-\varphi_D)} D      \]
(concentrated in degrees $0$, $1$, and $2$).  We let $H_{\varphi,\Gamma_K}^i(D)$ denote its cohomology in degree $i$.  

Then as in ~\cite[\textsection 2.1]{liu2007}, the projection $p_\Delta\colon D\rightarrow D^{\Delta_K}$ induces a quasi-isomorphism $C_{\varphi,\Gamma}^\bullet(D)\xrightarrow\sim C_{\varphi,\Gamma}^\bullet(D^{\Delta_K})$ (where we view $D^{\Delta_K}$ as a $(\varphi,\Gamma_K/\Delta_K)$-module over $\left(\Lambda_{R,(0,b],K}\right)^{\Delta_K}$).  In particular, it is independent of the choice of $\Delta_K$.

The main result of ~\cite{bellovin2020} says that if $M$ is a $R$-linear representation of $\Gal_K$, there is an associated projective $(\varphi,\Gamma_K)$-module $D_{\rig,K}(M)$. Moreover, we have a canonical quasi-isomorphism $R\Gamma(\Gal_K,M)\xrightarrow\sim C_{\varphi,\Gamma}^\bullet$ between (continuous) Galois cohomology and Fontaine--Herr--Liu cohomology.  This extends similar results on families of projective Galois representations with coefficients in classical $\Q_p$-affinoid algebras~\cite[Theorem 2.8]{pottharst2013} and earlier work in the setting of $\Q_p$-linear Galois representations~\cite[Theorem 2.3]{liu2007}.

We will define a closely related complex
\[	C_{\psi,\Gamma}^\bullet\colon D\xrightarrow{(\psi_D-1,\gamma-1)} D\oplus D\xrightarrow{(\gamma-1)\oplus(1-\psi_D)} D      \]
which is also concentrated in degrees $0$, $1$, and $2$.  

We first extend the $\psi$ operator to $(\varphi,\Gamma)$-modules.  The isomorphism $\varphi^\ast D_{(0,b]}\xrightarrow\sim D_{(0,b/p]}$ induces an isomorphism
\[      \Lambda_{R,(0,b/p],K}\otimes_{\varphi(\Lambda_{R,(0,b],K})}\varphi(D_{(0,b]})\xrightarrow\sim D_{(0,b/p]}       \]
We therefore have a surjective homomorphism $\psi_D\colon D_{(0,b/p]}\rightarrow D_{(0,b]}$ defined by setting $\psi_D(a\otimes\varphi(d))=\psi_D(a)d$, where $a\in \Lambda_{R,(0,b/p],K}$ and $d\in D_{(0,b]}$. 

There is a morphism of complexes $\Psi_D\colon C_{\varphi,\Gamma}^\bullet\rightarrow C_{\psi,\Gamma}^\bullet$ given by
\[
	\begin{tikzcd}
		C_{\varphi,\Gamma}^\bullet\colon \arrow{d}{\Psi_D} & D \arrow{r}\arrow{d}{\id} & D\oplus D\arrow{r}\arrow{d}{-\psi\oplus\id} & D\arrow{d}{-\psi}	\\
		C_{\psi,\Gamma}^\bullet\colon & D\arrow{r} & D\oplus D\arrow{r} & D
	\end{tikzcd}
\]

The following result is standard (see e.g. ~\cite[Proposition 2.3.6]{kpx}), and the same proof holds here:
\begin{lemma}
	The morphism $\psi_D$ is a quasi-isomorphism.
	\label{lemma: phi gamma vs psi gamma coh}
\end{lemma}

When there is no danger of confusion, we will generally drop the subscripts on $\varphi_D$ and $\psi_D$.

\subsection{\texorpdfstring{$(\varphi,\Gamma)$}{(𝜑, Γ)}-modules of character type}

Let $K/\Q_p$ be a finite extension with ramification degree $e_K$ and inertia degree $f_K$, and let $\mathscr{O}_K$ be its ring of integers, $k_K$ be its residue field, and $\varpi_K$ be a uniformizer.  Let $K_0\subset K$ be its maximal unramified subfield.  Let $R$ be a pseudoaffinoid algebra over $\Z_p$ with ring of definition $R_0\subset R$ and pseudouniformizer $u\in R_0$.  

We begin by recalling the construction of $(\varphi,\Gamma_K)$-modules of character type from~\cite{kpx}.
\begin{lemma}
	Let $\alpha\in R^\times$.  Up to isomorphism, there is a unique rank-$1$ $R\otimes_{\Z_p} \mathscr{O}_{K_0}$-module $D_{f_K,\alpha}$ equipped with a $1\otimes\varphi$-semilinear operator $\varphi_\alpha$ such that $\varphi_\alpha^{f_K}=\alpha\otimes 1$.
\end{lemma}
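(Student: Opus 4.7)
The plan is to construct $D_{f_K,\alpha}$ as a free rank-$1$ module $(R \otimes_{\Z_p} \mathscr{O}_{K_0}) \cdot e$ with Frobenius determined by $\varphi_\alpha(e) = \beta e$ for an appropriate unit $\beta$. Setting $\tau := 1 \otimes \varphi$, an order-$f_K$ automorphism of $R \otimes \mathscr{O}_{K_0}$ with fixed ring $R$, iterating semilinearity yields $\varphi_\alpha^{f_K}(e) = N(\beta) \cdot e$, where $N(\beta) := \prod_{i=0}^{f_K-1} \tau^i(\beta)$ is the cyclic norm for the finite étale Galois extension $R \otimes \mathscr{O}_{K_0}/R$. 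Thus the condition $\varphi_\alpha^{f_K} = \alpha \otimes 1$ becomes $N(\beta) = \alpha \otimes 1$: existence reduces to surjectivity of the norm map, and uniqueness reduces to Hilbert 90.

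For existence I would pass to the faithfully flat base change $R' := R \otimes_{\Z_p} \mathscr{O}_{K_0}$, over which the extension splits as $R' \otimes_{\Z_p} \mathscr{O}_{K_0} \cong \prod_{i \in G} R'$ (with $G = \Gal(K_0/\Q_p)$ and $\tau$ acting by cyclic shift); in this splitting, $\beta' := (\alpha, 1, \ldots, 1)$ visibly has norm $\alpha \otimes 1$. To descend, I would verify that the norm map $(R \otimes \mathscr{O}_{K_0})^\times \to R^\times$ is surjective fiber-wise over the residue fields of $R$ (using the classical surjectivity of norm for finite fields and for unramified extensions of local fields via local class field theory), and then lift by Hensel's lemma using the completeness of $R_0$ with respect to its pseudo-uniformizer.

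For uniqueness, given a second such object $(D', \varphi_\alpha') = (R \otimes \mathscr{O}_{K_0}) e'$ with $\varphi_\alpha'(e') = \beta' e'$, an $R \otimes \mathscr{O}_{K_0}$-linear isomorphism $\phi \colon D \to D'$ with $\phi(e) = \gamma e'$ intertwines the two Frobenii precisely when $\beta/\beta' = \tau(\gamma)/\gamma$. Since $N(\beta/\beta') = 1$, the desired $\gamma$ is furnished by Hilbert 90 for the cyclic étale Galois extension $R \otimes \mathscr{O}_{K_0}/R$, which again reduces by descent to the split case over $R'$. The main obstacle is thus establishing norm surjectivity and Hilbert 90 in the pseudoaffinoid setting (where $p$ need not be invertible and one must work integrally); both are clear for the split cover $R'$, and descent combined with fiber-wise checks at residue fields delivers the required statements over $R$.
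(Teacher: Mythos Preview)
Your reduction---existence to surjectivity of the norm $N:(R\otimes\mathscr{O}_{K_0})^\times\to R^\times$, uniqueness to Hilbert~90 for the cyclic \'etale extension $R\otimes\mathscr{O}_{K_0}/R$---is correct and is exactly the mechanism behind \cite[Lemma~6.2.3]{kpx}, which the paper simply cites. But your proposed verification of norm surjectivity contains a genuine error: the norm for an unramified degree-$f$ extension $L'/L$ of local fields is \emph{not} surjective. Local class field theory gives $L^\times/N_{L'/L}({L'}^\times)\cong\Gal(L'/L)\cong\Z/f\Z$; concretely, since $N(\varpi_L)=\varpi_L^{\,f}$, every norm has valuation divisible by $f$, so a uniformizer of $L$ is never a norm when $f>1$. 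Taking for instance $R=\F_p(\!(u)\!)$, $f_K=2$, and $\alpha=u\in R^\times$, one has $R\otimes_{\Z_p}\mathscr{O}_{K_0}=\F_{p^2}(\!(u)\!)$, a field over which every rank-$1$ module is free, and yet no $\beta$ satisfies $N(\beta)=u$. (Equivalently, the cyclic algebra $(\F_{p^2}(\!(u)\!)/\F_p(\!(u)\!),\tau,u)$ is a nontrivial class in $\Br(\F_p(\!(u)\!))$ and hence admits no module of the required rank.) So your fibre-wise check fails, and neither Hensel nor the sketch of descent from $R'$ produces the required $\beta$ over $R$.

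What makes the lemma go through, both in \cite{kpx} and implicitly here, is the standing hypothesis that the coefficient ring contains $\mathscr{O}_{K_0}$: the paper's pseudoaffinoid algebras are $\mathscr{O}_E$-algebras for a finite extension $E/\Q_p$ that may be freely enlarged, and one takes $K_0\subset E$. Under this assumption $R\otimes_{\Z_p}\mathscr{O}_{K_0}\cong\prod_{i=0}^{f_K-1}R$ with $\tau$ the cyclic shift, your element $\beta=(\alpha,1,\ldots,1)$ solves the norm equation directly with no descent needed, and Hilbert~90 for a split cyclic extension is immediate. Once you invoke this splitting rather than the false general norm surjectivity, your argument is complete and coincides with the paper's.
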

\begin{proof}
This follows exactly as in~\cite[Lemma 6.2.3]{kpx}.
\end{proof}

\begin{definition}
Let $\delta\colon K^\times \rightarrow R^\times$, and write $\delta=\delta_1\delta_2$, where $\delta_1,\delta_2\colon K^\times\rightrightarrows R^\times$ are continuous characters such that $\delta_1$ is trivial on $\mathscr{O}_K^\times$ and $\delta_2$ is trivial on $\left\langle\varpi_K\right\rangle$.  By local class field theory, $\delta_2$ corresponds to a continuous character $\delta_2'\colon\Gal_K\rightarrow R^\times$.  We let $\Lambda_{R,\rig,K}(\delta_1)\coloneqq D_{f_K,\delta_1(\varpi_K)}\otimes_{R\otimes \mathscr{O}_{K_0}}\Lambda_{R,\rig,K}$ and $\Lambda_{R,\rig,K}(\delta_2)\coloneqq D_{\rig,K}(\delta_2')$, and we define $\Lambda_{R,\rig,K}(\delta)\coloneqq \Lambda_{R,\rig,K}(\delta_1)\otimes\Lambda_{R,\rig,K}(\delta_2)$.
\end{definition}
If $D$ is a $(\varphi,\Gamma_K)$-module and $\delta\colon K^\times\rightarrow R^\times$ is a continuous character, we will let $D(\delta)$ denote $D\otimes \Lambda_{R,\rig,K}(\delta)$.  This is, in particular, a projective $(\varphi,\Gamma_K$-module of rank $1$.  We will let $C_{\varphi,\Gamma_K}^\bullet(\delta)$ and $H_{\varphi,\Gamma_K}^i(\delta)$ denote the Fontaine--Herr--Liu complex and the cohomology groups of $\Lambda_{R,\rig,K}(\delta)$, respectively.

\begin{lemma}\label{lemma: induction of character}
	Suppose $L/K$ is a finite extension, and $\varpi_L$ is a uniformizer of $L$ with $\Nm_{L/K}(\varpi_L)=\varpi_K$.  If $\delta\colon K^\times\rightarrow R^\times$ is a continuous character, then $\Res_K^L\Lambda_{R,\rig,K}(\delta)$ is of character type, with associated character $\delta\circ\Nm_L/K$.
\end{lemma}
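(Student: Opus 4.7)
The plan is to decompose $\delta=\delta_1\delta_2$ as in the definition of character type, verify that $(\delta_1\circ\Nm_{L/K},\delta_2\circ\Nm_{L/K})$ is the analogous decomposition of $\delta\circ\Nm_{L/K}$ associated to $\varpi_L$, and prove the lemma separately for each factor. The compatibility of the decomposition is immediate: $\delta_1\circ\Nm_{L/K}$ kills $\mathscr{O}_L^\times$ because $\Nm_{L/K}$ sends units to units, and $\delta_2\circ\Nm_{L/K}$ kills $\varpi_L$ because $\delta_2(\varpi_K)=1$. A key observation to make at the outset is that the hypothesis $\Nm_{L/K}(\varpi_L)=\varpi_K$, combined with the standard identity $\val_K\circ\Nm_{L/K}=f_{L/K}\cdot\val_L$, forces $f_{L/K}=1$, so $L/K$ is totally ramified; in particular $K_0=L_0$ and $f_K=f_L$, which will be essential for the $\delta_1$ step.

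For the $\delta_1$ factor I would unwind the definition $\Lambda_{R,\rig,K}(\delta_1)=D_{f_K,\delta_1(\varpi_K)}\otimes_{R\otimes\mathscr{O}_{K_0}}\Lambda_{R,\rig,K}$ and base change along $\Lambda_{R,\rig,K}\hookrightarrow\Lambda_{R,\rig,L}$, obtaining $D_{f_K,\delta_1(\varpi_K)}\otimes_{R\otimes\mathscr{O}_{K_0}}\Lambda_{R,\rig,L}$. Because $K_0=L_0$ and $f_K=f_L$, the characterizing property $\varphi_\alpha^{f_K}=\alpha$ of $D_{f_K,\alpha}$ coincides literally with $\varphi_\alpha^{f_L}=\alpha$, so this module is canonically identified with $D_{f_L,\delta_1(\varpi_K)}\otimes_{R\otimes\mathscr{O}_{L_0}}\Lambda_{R,\rig,L}$; using $\delta_1(\varpi_K)=(\delta_1\circ\Nm_{L/K})(\varpi_L)$, this is exactly $\Lambda_{R,\rig,L}(\delta_1\circ\Nm_{L/K})$.

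For the $\delta_2$ factor, let $\delta_2'$ be the Galois character attached to $\delta_2$ as in the definition, so that $\Lambda_{R,\rig,K}(\delta_2)=D_{\rig,K}(\delta_2')$. I would invoke the compatibility of $D_{\rig}$ with restriction to a finite extension, $\Res_K^L D_{\rig,K}(\delta_2')\cong D_{\rig,L}(\delta_2'|_{\Gal_L})$, which in this setting rests on the Galois descent of Lemma~\ref{lemma: galois descent phi gamma} applied to $L/K$. By the standard compatibility of local class field theory with restriction (the inclusion $\Gal_L\hookrightarrow\Gal_K$ corresponds on abelianizations to $\Nm_{L/K}\colon L^\times\to K^\times$), $\delta_2'|_{\Gal_L}$ is the Galois character attached to $\delta_2\circ\Nm_{L/K}$, so $\Res_K^L\Lambda_{R,\rig,K}(\delta_2)\cong\Lambda_{R,\rig,L}(\delta_2\circ\Nm_{L/K})$. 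Tensoring the two factors together yields the lemma.

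The main obstacle is verifying the compatibility of $D_{\rig,K}$ with restriction to $L$ in the pseudorigid setting; once this is in hand, the rest is bookkeeping with local class field theory and the explicit $R\otimes\mathscr{O}_{K_0}$-module structure of $D_{f_K,\alpha}$.
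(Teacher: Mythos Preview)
Your proposal is correct and follows essentially the same approach as the paper: split $\delta=\delta_1\delta_2$, handle $\delta_1$ by an explicit computation with $D_{f_K,\alpha}$ after base change to $\Lambda_{R,\rig,L}$, and handle $\delta_2$ via functoriality for local class field theory. Your explicit observation that the hypothesis $\Nm_{L/K}(\varpi_L)=\varpi_K$ forces $f_{L/K}=1$ (hence $K_0=L_0$ and $f_K=f_L$) is a useful clarification that the paper leaves implicit; without it, the paper's claim that $\varphi_{\delta(\varpi_K)}^{f_L}=\delta(\varpi_K)\otimes 1$ would not be justified.
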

\begin{proof}
	We may consider separately the cases where $\delta$ is trivial on $\mathscr{O}_K^\times$ and $\left\langle \varpi_K\right\rangle$.  If $\delta$ is trivial on $\mathscr{O}_K^\times$, then 
	\begin{equation*}
		\begin{split}
	\Res_K^L\Lambda_{R,\rig,K}(\delta) &= D_{f_K,\delta(\varpi_K)}\otimes_{R\otimes\mathscr{O}_{K_0}}\Lambda_{R,\rig,L} \\
	&= \left(D_{f_K,\delta(\varpi_K)}\otimes_{R\otimes\mathscr{O}_{K_0}}(R\otimes\mathscr{O}_{L_0})\right)\otimes_{R\otimes\mathscr{O}_{L_0}}\Lambda_{R,\rig,L}
		\end{split}
	\end{equation*}
	But $D_{f_K,\delta(\varpi_K)}\otimes_{R\otimes\mathscr{O}_K}(R\otimes\mathscr{O}_L)$ is a rank-$1$ $R\otimes\mathscr{O}_{L_0}$-module equipped with a $1\otimes\varphi$-semilinear operator $\varphi_{\delta(\varpi_K)}$ such that $\varphi_{\delta(\varpi_K)}^{f_L}=\delta(\varpi_K)\otimes 1$, so it is isomorphic to $D_{f_L,\delta(\varpi_K)}$. By definition, $D_{f_L,\delta(\varpi_K)}\otimes_{R\otimes\mathscr{O}_{L_0}}\Lambda_{R,\rig,L}$ is equal to $\Lambda_{R,\rig,L}(\delta\circ\Nm_{L/K})$.  On the other hand, if $\delta$ is trivial on $\left\langle \varpi_K\right\rangle$, the statement follows from functoriality for local class field theory.
\end{proof}

Continuous characters vary in analytic families, and hence $(\varphi,\Gamma)$-modules do, as well.  More precisely, if $G$ is a compact commutative $p$-adic Lie group, then the functor on complete sheafy affinoid $(\Z_p,\Z_p)$-algebras
\[	(A,A^+)\mapsto \Hom_{\cts}(G,A^\times)	\]
is representatable by the affinoid ring $(\Z_p[\![G]\!],\Z_p[\![G]\!])$.  Indeed, $G$ is non-canonically isomorphic to $G_0\times\Z_p^{\oplus r}$; if $\kappa\colon G\rightarrow A^\times$ is a continuous character, then $\kappa$ must carry $G_0$ to the roots of unity of $A^\times$ and must carry topological generators of $\Z_p^{\oplus r}$ to $1+A^{\circ\circ}$ (where $A^{\circ\circ}\subset A$ denotes the topologically nilpotent elements).

On the other hand, if $G$ is a free abelian group, then the affinoid adic space $\Spa(\Z[G],\Z)$ represents the functor on adic spaces
\[	X\mapsto \Hom(G,\mathscr{O}(X))	\]
Since $G$ is non-canonically isomorphic to $\Z^{\oplus r}$, this space is isomorphic to $\Gm^{\ad,r}$, where $\Gm^{\ad}\coloneqq \Spa(\Z[T^{\pm 1}],\Z)$ (note that $\Spa(\Z[T^{\pm 1}],\Z[T^{\pm 1}])$ is the ``unit circle'' representing the functor $X\mapsto \mathscr{O}^+(X)^\times$).
\begin{prop}
	If $X$ is an pseudorigid space, then the fiber product $X\times_{(\Z,\Z)} \Gm^r$ is representable by an analytic adic space $\G_{m,X}^r$.  If $X$ is pseudorigid, then so is $\G_{m,X}^r$.
\end{prop}
\begin{proof}
	We may assume that $r=1$.  To see that $\G_{m,X}$ is representable, we first assume that $X=\Spa R$ is affinoid, where $R$ is pseudoaffinoid with ring of definition $R_0$ and pseudouniformizer $u\in R_0$.  Then for any $h\in \Z_{\geq 0}$, we let $C_{X,h}\coloneqq \Spa R\left\langle u^hT, u^hT^{-1}\right\rangle$ denote the relative annulus; $C_{X,h}$ is again Tate, there are natural open immersions $C_{X,h}\subset C_{X,h+1}$, and we claim that $\cup_h C_{X,h}$ represents $\G_{m,X}$ in the category of adic spaces over $X$.  Indeed, if $\Spa A$ is an affinoid adic space over $X$, it is also Tate, and this claim amounts to the assertion that for a unit $f\in{A}^\times$, the set $\{f,f^{-1}\}\subset A$ is bounded.  But this follows from the definition of a pseudouniformizer.  Moreover, the union $\cup_h C_{X,h}$ is independent of the choice of $u$, even though the individual annuli do depend on $u$.  Then gluing shows that $\G_{m,X}$ is representable for general $X$.
\end{proof}

\begin{definition}
	Suppose that $G$ is a commutative $p$-adic Lie group, and $G'\subset G$ is a compact open subgroup such that $G/G'$ is free and finitely generated.  Then we define the pseudorigid character variety $\widehat G^{\an}\coloneqq \Spa(\Z_p[\![G']\!],\Z_p[\![G']\!])^{\an}\times_{(\Z,\Z)}\Spa(\Z[G/G'],\Z)$; if $X$ is an arbitrary pseudorigid space, we also define $\widehat G_X\coloneqq \Spa(\Z_p[\![G']\!],\Z_p[\![G']\!])\times_{(\Z_p,\Z_p)}\Spa(\Z[G/G'],\Z)_X$ (which is also pseudorigid).
	\label{def:G hat}
\end{definition}
In order to make sense of the last definition, we observe that if $R$ is a pseudoaffinoid algebra with noetherian ring of definition $R_0\subset R$ and pseudouniformizer $u\in R_0$, then $\widehat{G'}\times_{(\Z_p,\Z_p)}\Spa R$ is the (non-quasi-compact) open subspace $\{u\neq 0\}\subset \Spa R_0\htimes \Z_p[\![G']\!]$.

In particular, if $K$ is a finite extension of $\Q_p$, we have the pseudorigid moduli space $\widehat{K^\times}$ of continuous characters of $K^\times$.  If $K=\Q_p$ and $G=\Q_p^\times\cong \mu_{p-1}\times\Z\times\Z_p$, then $\widehat G^{\an}$ has connected components indexed by the elements of $\mu_{p-1}$, each of which is isomorphic to $\left(\Spa \Z_p[\![\Z_p]\!]\right)^{\an}\times\G_m^{\ad}$.

\begin{remark}
	In the pseudorigid setting (unlike the classical rigid analytic setting), it is not true that $\widehat{G_1\times G_2}^{\an}\cong \widehat{G_1}^{\an}\times \widehat{G_2}^{\an}$.  Indeed, $\Spa(\Z_p[\![T_1,T_2]\!],\Z_p[\![T_1,T_2]\!])^{\an}$ consists of all valuations which do not vanish on all three of $p, T_1, T_2$.  But 
	\[	\Spa(\Z_p[\![T_1]\!],\Z_p[\![T_1]\!])^{\an}\times_{(\Z_p,\Z_p)}\Spa(\Z_p[\![T_2]\!],\Z_p[\![T_2]\!])^{\an}	\]
	also excludes valuations vanishing at both $p$ and $T_1$ (or both $p$ and $T_2$).
	\label{rmk: products char varieties}
\end{remark}

\section{Finiteness of cohomology}

We wish to show that the cohomology of $C_{\varphi,\Gamma}^\bullet$ is $R$-finite.  To do this, we will apply~\cite[Lemma 1.10]{kedlaya-liu16} to the morphisms of complexes
\[
\xymatrix{
D_{[a,b]} \ar[r]\ar[d] & D_{[a,b/p]}\oplus D_{[a,b]}\ar[r]\ar[d] & D_{[a,b/p]}\ar[d]        \\
D_{[a',b']}\ar[r] & D_{[a',b'/p]}\oplus D_{[a',b']}\ar[r] & D_{[a',b'/p]}
}\]
induced by the natural homomorphisms ${\Lambda}_{R,[a,b],K'}\rightarrow{\Lambda}_{R,[a',b'],K'}$ (where $[a',b']\subset (a,b)$).  More precisely, Kedlaya--Liu show that if the morphisms $D_{[a,b]}\rightarrow D_{[a',b']}$ are completely continuous and induce isomorphisms on cohomology groups, then both complexes have $R$-finite cohomology.  Since $C_{\varphi,\Gamma}^\bullet$ is the direct limit (as $b\rightarrow 0$) of the inverse limit of these complexes (as $a\rightarrow 0$), with transition maps which are quasi-isomorphisms, this will imply that $C_{\varphi,\Gamma}^\bullet$ has $R$-finite cohomology.

\begin{definition}
	Let $A$ be a ring.  A function $\lvert\cdot\rvert\colon A\rightarrow \R_{\geq 0}$ is called a \emph{semi-norm} if
	\begin{enumerate}
		\item	$\lvert 0\rvert = 0$ and $\lvert 1\rvert = 1$
		\item	$\lvert a + b\rvert \leq \max\{\lvert a\rvert, \lvert b\rvert\}$ for all $a, b\in R$
		\item	$\lvert ab\rvert\leq \lvert a\rvert\cdot\lvert b\rvert$ for all $a, b\in R$
	\end{enumerate}
	If, in addition, $\lvert a\rvert = 0$ if and only if $a=0$, we say that $\lvert \cdot \rvert$ is a \emph{norm}.

	We say that $A$ is a \emph{Banach algebra} if $A$ is equipped with a norm, and it is complete with respect to the metric induced by that norm.
	\label{def: banach algebra}
\end{definition}

If $A$ is a complete Tate ring with ring of definition $A_0$ and pseudouniformizer $u\in A_0$, we may define a norm on $A$ as follows:  Let $\alpha\in \R_{>1}$, and define
\[	\lvert a\rvert = \inf\{\alpha^{-n} \mid a\in u^{n}A_0\}	\]

We recall \cite[Definition 1.3]{kedlaya-liu16}:
\begin{definition}\label{def: completely cont}
Let $A$ be a Banach algebra, and let $f\colon M\rightarrow N$ be a morphism of Banach $A$-modules (equipped with norms $\lvert\cdot\rvert_M$ and $\lvert\cdot\rvert_N$, respectively).  We say that $f$ is \emph{completely continuous} if there exists a sequence of finite $A$-submodules $N_i$ of $N$ such that the operator norms of the compositions $M\rightarrow N\rightarrow N/N_i$ tend to $0$ (where $N/N_i$ is equipped with the quotient semi-norm)
\end{definition}
Note that this is slightly different than the standard definition (cf. the discussion of ~\cite[Remark 1.12]{kedlaya-liu16}).

We also recall ~\cite[Definition 5.1]{kedlaya-liu16}:
\begin{definition}
Let $f\colon(A,A^+)\rightarrow (A',{A'}^+)$ be a localization of complete Tate rings over a complete Tate ring $(B,B^+)$.  We say that $f$ is \emph{inner} if there is a strict $B$-linear surjection $B\left\langle \underline X\right\rangle\twoheadrightarrow A$ such that each element of $\underline X$ maps to a topologically nilpotent element of $A'$.  Here $\underline X$ is a (possibly infinite) collection of formal variables.
\end{definition}

If $B$ is a nonarchimedean field of mixed characteristic and $A$ and $A'$ are topologically of finite type over $B$, Kiehl proved that inner homomorphisms are completely continuous.  We prove the analogous result, using Definition~\ref{def: completely cont} as the definition of complete continuity (which is slightly different than Kiehl's definition).

\begin{prop}
If $[a',b']\subset (a,b)$ and $[a,b]\subset (0,\infty)$, then the map $\Lambda_{R,[a,b],K}\rightarrow \Lambda_{R,[a',b'],K}$ induced by restriction is completely continuous.
\end{prop}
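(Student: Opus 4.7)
The plan is to verify that the restriction map is an inner localization in the sense defined just above, and then to establish a pseudorigid analogue of Kiehl's theorem: inner localizations between pseudoaffinoid algebras of the kind we are considering are completely continuous.

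For the first step, $\Spa \Lambda_{R,[a',b'],K}$ is a rational subdomain of $\Spa \Lambda_{R,[a,b],K}$, cut out by the two additional inequalities defining the interval $[a',b']$ inside $(a,b)$. Since $a<a'$ and $b'<b$, the defining functions (ratios of powers of $u$ and $\pi_K$) can be written, on the source, as products of functions bounded by $1$ with strictly positive powers of the topologically nilpotent elements $u$ and $\pi_K$ of $\Lambda_{R,[a,b],K}$. Consequently, once pulled back to the target, these defining functions become topologically nilpotent. This exhibits $\Lambda_{R,[a,b],K}\to \Lambda_{R,[a',b'],K}$ as an inner localization, via a strict surjection $\Lambda_{R,[a,b],K}\left\langle T_1,T_2\right\rangle \twoheadrightarrow \Lambda_{R,[a',b'],K}$ with $T_1,T_2$ mapping to the two defining functions of the rational subdomain.

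To deduce complete continuity, we imitate Kiehl's argument. The ring $\Lambda_{R_0,[a,b],K}$ admits a ring of definition of the form $\left(R_0\htimes \Lambda_{[0,b/\lambda],K}\right)\left\langle u/\pi_K^{1/(b\cdot v_{\C_K^\flat}(\overline\pi_K))}\right\rangle$ by Proposition~3.38 of ~\cite{bellovin2020}, providing a Banach norm for which monomials $\pi_K^n$ (for $n\in\Z$) form a Schauder-type basis, after accounting for the auxiliary $u$-variable. Let $N_i$ denote the finite $R$-submodule of $\Lambda_{R,[a',b'],K}$ spanned by the monomials $\pi_K^n$ with $|n|\leq i$. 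The norm of any such monomial on the target annulus is strictly smaller than its norm on the source annulus by a factor of the form $\rho^{|n|}$ for some $\rho<1$ depending only on the difference between the two intervals. Hence the operator norm of the induced quotient map $\Lambda_{R,[a,b],K}\to \Lambda_{R,[a',b'],K}/N_i$ is bounded above by $\sup_{|n|>i}\rho^{|n|}$, which tends to $0$.

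The main obstacle lies in carrying this out rigorously over a base $R$ that is not a nonarchimedean field. One must verify that the $N_i$ are genuinely finite $R$-modules (rather than merely bounded over a ring of definition) and perform the norm estimates against the norm attached to the explicit ring of definition rather than a spectral norm over a field. This requires tracking the interplay between the $u$-adic and $\pi_K$-adic filtrations, and using that $R_0$ is topologically of finite type over $D_\lambda^\circ$ to reduce the relevant estimates to a classical Kiehl-style computation over $D_\lambda$.
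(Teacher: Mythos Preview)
Your first step---identifying the restriction as an inner localization of adic affinoid algebras over $(R,R^+)$---matches the paper exactly. The difference lies in the second step: the paper simply invokes \cite[Lemma~5.7]{kedlaya-liu16}, which already proves that inner homomorphisms between adic Banach algebras over a Tate ring are completely continuous in the sense of \cite{kedlaya-liu16}. You are attempting to reprove this lemma by hand via explicit Schauder bases and norm estimates on monomials in $\pi_K$.

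Your direct argument is not complete as written, and you say so yourself: the final paragraph flags the genuine issue (finiteness of the $N_i$ over $R$ rather than over a ring of definition, and making the monomial norm estimates work over a non-field base) without resolving it. The cleanest fix is not to push those estimates through but simply to cite \cite[Lemma~5.7]{kedlaya-liu16}, which handles exactly this situation over a general complete Tate ring. Once you observe that both $\Lambda_{R,[a,b],K}$ and $\Lambda_{R,[a',b'],K}$ are adic affinoid over $(R,R^+)$ (this uses $[a,b]\subset(0,\infty)$) and that the map is inner (this uses $[a',b']\subset(a,b)$), that lemma finishes the proof.
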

\begin{proof}
The pairs $(\Lambda_{R,[a,b],K},\Lambda_{R,[a,b],K}^+)$ and $(\Lambda_{R,[a',b'],K},\Lambda_{R,[a',b'],K}^+)$ are localizations of $(R_0\otimes\mathscr{O}_{F'}[\![\pi_K]\!],R_0\otimes\mathscr{O}_{F'}[\![\pi_K]\!])$; since $[a',b'],[a,b]\subset(0,\infty)$, they are adic affinoid algebras over $(R,R^+)$.  Since $[a',b']\subset (a,b)$, the natural restriction map is inner.  Then~\cite[Lemma 5.7]{kedlaya-liu16} implies that it is completely continuous.
\end{proof}

\begin{lemma}
Suppose $a\in (0,b/p]$.  Then the functor $D\rightsquigarrow \Lambda_{R,[a,b],K}\otimes_{\Lambda_{R,(0,b],K}}D=:D_{[a,b]}$ induces an equivalence of categories between $\varphi$-modules over $\Lambda_{R,(0,b],K}$ and $\varphi$-modules over $\Lambda_{R,[a,b],K}$.
\end{lemma}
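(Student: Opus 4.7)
The plan is to construct an explicit quasi-inverse to the base change functor $D\mapsto D_{[a,b]}$, using the $\varphi$-structure to extend any given $\varphi$-module over $\Lambda_{R,[a,b],K}$ to the successively smaller intervals $[a/p^n,b]$ and then pass to the colimit as $n\to\infty$. This is the pseudorigid analogue of the classical descent argument, for instance ~\cite[Theorem 2.2.7]{kpx}.

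Concretely, given $(D_{[a,b]},\varphi_{D,[a,b/p]})$, I would first build $D_{[a/p,b]}$. The hypothesis $a\le b/p$ ensures that $[a/p,b/p]$ and $[a,b]$ cover $[a/p,b]$ with overlap $[a,b/p]$, so that $\Spa(\Lambda_{R,[a/p,b/p],K})$ and $\Spa(\Lambda_{R,[a,b],K})$ form a rational cover of $\Spa(\Lambda_{R,[a/p,b],K})$. The Frobenius pullback $\varphi^\ast D_{[a,b]}$ is a finite $\Lambda_{R,[a/p,b/p],K}$-module, and $\varphi_{D,[a,b/p]}$ identifies its base change to $\Lambda_{R,[a,b/p],K}$ with that of $D_{[a,b]}$. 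Invoking sheafiness of $\Lambda_{R,\bullet,K}$ on rational covers, this gluing datum descends to a finite module $D_{[a/p,b]}$ over $\Lambda_{R,[a/p,b],K}$, equipped with a canonical Frobenius isomorphism $\varphi_{D,[a/p,b/p^2]}$ assembled from $\varphi_{D,[a,b/p]}$ and the identity on $\varphi^\ast D_{[a,b]}$. Iterating produces compatible modules $D_{[a/p^n,b]}$ for all $n\ge 0$, which assemble into a coherent sheaf on $\bigcup_{a'\to 0}\Spa(\Lambda_{R,[a',b],K})$ with the required $\varphi$-structure; essential surjectivity is then clear, since base change back to $[a,b]$ recovers the input.

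For full faithfulness, a morphism $f_{[a,b]}:D_{[a,b]}\to E_{[a,b]}$ compatible with the Frobenius isomorphisms on both sides has the property that $\varphi^\ast f_{[a,b]}$ and $f_{[a,b]}$ agree on the overlap $\Lambda_{R,[a,b/p],K}$, so they glue to an extension $f_{[a/p,b]}$; iteration and passage to the limit yield the unique preimage morphism. The main subtlety I expect is checking that the Frobenius isomorphisms produced at successive stages are mutually compatible, so that their colimit really defines a $\varphi$-isomorphism $\varphi^\ast D\xrightarrow{\sim}\Lambda_{R,(0,b/p],K}\otimes_{\Lambda_{R,(0,b],K}}D$ of coherent sheaves. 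This amounts to a diagram chase applied to $\varphi$-pullbacks of the original $\varphi_{D,[a,b/p]}$, but it requires careful bookkeeping of the intervals and their base-change maps; once this is in place, the equivalence of categories follows.
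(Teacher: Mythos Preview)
Your approach is essentially identical to the paper's: glue $D_{[a,b]}$ and $\varphi^\ast D_{[a,b]}$ along $\varphi_{D,[a,b/p]}$ to obtain $D_{[a/p,b]}$, equip it with a $\varphi$-structure, iterate, and pass to the limit. One small slip: the new Frobenius isomorphism should be $\varphi_{D,[a/p,b/p]}$ (the $\varphi$-structure for a module on $[a/p,b]$ lives on $[a/p,b/p]$, not $[a/p,b/p^2]$); the paper builds it by gluing the original $\varphi_{D,[a,b/p]}$ with its Frobenius pullback $\varphi_{D,[a/p,b/p^2]}$, which is equivalent to your description once the identifications are unwound. For full faithfulness the paper simply invokes density of $\Lambda_{R,(0,b],K}\to\Lambda_{R,[a,b],K}$, while you spell out the gluing of morphisms; both are fine.
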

\begin{proof}
Suppose we have a $\varphi$-module $D_{[a,b]}$ over $\Lambda_{R,[a,b],K}$.  Then the Frobenius pullback $\varphi^\ast D_{[a,b]}$ is a finite module over $\Lambda_{R,[a/p,b/p],K}$, and the isomorphism $\varphi_{D,[a,b/p]}\colon \Lambda_{R,[a,b/p],K}\otimes_{\Lambda_{R,[a/p,b/p],K}}\varphi^\ast D_{[a,b]}\xrightarrow{\sim} \Lambda_{R,[a,b/p],K}\otimes_{\Lambda_{R,[a,b],K}}D_{[a,b]}$ (and the assumption that $a\leq b/p$) provides a descent datum.  Thus, we may construct a finite module $D_{[a/p,b]}$ over $\Lambda_{R,[a/p,b],K}$ which restricts to $D_{[a,b]}$.

To show that $D_{[a/p,b]}$ is a $\varphi$-module over $\Lambda_{R,[a/p,b],K}$, we need to construct an isomorphism
\[      \varphi_{D,[a/p,b/p]}\colon\Lambda_{R,[a/p,b/p],K}\otimes_{\Lambda_{R,[a/p^2,b/p],K}}\varphi^\ast D_{[a/p,b]}\xrightarrow\sim \Lambda_{R,[a/p,b/p],K}\otimes_{\Lambda_{R,[a/p,b],K}} D_{[a/p,b]}     \]
By construction, we have an isomorphism
\[      \varphi_{D,[a,b/p]}\colon\varphi^\ast D_{[a,b]}\xrightarrow{\sim} \Lambda_{R,[a/p,b/p],K}\otimes_{\Lambda_{R,[a/p,b],K}}D_{[a/p,b]}  \]
and if we pull $\varphi_{D,[a,b/p]}$ back by Frobenius, we obtain an isomorphism
\begin{equation*}
\resizebox{\displaywidth}{!}{
$\varphi_{D,[a/p,b/p^2]}\colon \Lambda_{R,[a/p,b/p^2],K}\otimes_{\Lambda_{R,[a/p^2,b/p^2],K}}\varphi^\ast D_{[a/p,b/p]}\xrightarrow{\sim} \Lambda_{R,[a/p,b/p^2],K}\otimes_{\Lambda_{R,[a/p,b/p],K}}D_{[a/p,b/p]}    $}
\end{equation*}
On the overlap, they induce the same isomorphism $\varphi^\ast D_{[a,b/p]}\rightarrow \Lambda_{R,[a/p,b/p^2],K}\otimes_{\Lambda_{R,[a/p,b/p],K}}D_{[a/p,b/p]}$ (by construction), so we obtain the desired isomorphism $\varphi_{D,[a/p,b/p]}$.

Iterating this construction lets us construct a $\varphi$-module over $\Lambda_{R,(0,b],K}$.

This proves essential surjectivity; full faithfulness follows because the natural maps $\Lambda_{R,(0,b],K}\rightarrow\Lambda_{R,[a,b],K}$ have dense image.
\end{proof}

\begin{cor}
If $D$ is a $\varphi$-module over $\Lambda_{R,(0,b],K}$, the morphism of complexes
\[      [D_{(0,b]}\xrightarrow{\varphi-1}D_{(0,b/p]}]\rightarrow [D_{[a,b]}\xrightarrow{\varphi-1}D_{[a,b/p]}]    \]
is a quasi-isomorphism for any $a\in (0,b/p]$.
\end{cor}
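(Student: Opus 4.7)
The strategy is to show the restriction map is a quasi-isomorphism by directly verifying bijectivity on $H^0$ and $H^1$, using the equivalence of categories from the previous lemma to extend elements from level $[a,b]$ to level $(0,b]$ iteratively via the Frobenius structure.

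For $H^0$: injectivity comes from the $\varphi$-equation. If $\tilde y \in D_{(0,b]}$ is $\varphi$-fixed with $\tilde y|_{[a,b]} = 0$, then $\varphi(\tilde y)|_{[a/p,b/p]}$---which by construction of $\varphi$ equals $\varphi_D(1 \otimes \tilde y|_{[a,b]})$---vanishes, so the $\varphi$-fixed condition forces $\tilde y|_{[a/p,b/p]} = 0$. The sheaf property on the cover $[a/p,b] = [a,b] \cup [a/p,b/p]$ then yields $\tilde y|_{[a/p,b]} = 0$, and iterating gives $\tilde y = 0$ in $D_{(0,b]} = \varprojlim_n D_{[a/p^n,b]}$. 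Surjectivity follows immediately from the previous lemma: a $\varphi$-fixed $y \in D_{[a,b]}$ provides exactly the gluing datum to assemble $y \in D_{[a,b]}$ and $1 \otimes y \in \varphi^\ast D_{[a,b]}$ via $\varphi_{D,[a,b/p]}$ into a section $\tilde y_1 \in D_{[a/p,b]}$ restricting to $y$, and the lemma's equivalence of categories ensures $\tilde y_1$ is again $\varphi$-fixed at this level. Iterating produces the desired lift in $D_{(0,b]}$.

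For $H^1$: the same circle of ideas applies but more delicately. Injectivity reduces to applying the $H^0$-surjectivity extension procedure to a putative preimage $y \in D_{[a,b]}$ and then invoking $H^0$-injectivity on the difference between the extended element and a $(\varphi-1)$-boundary. Surjectivity is the most subtle step: an arbitrary representative $z \in D_{[a,b/p]}$ may not extend directly to $D_{(0,b/p]}$, so one must use the freedom to modify by $(\varphi-1)$-boundaries. The argument is a telescoping construction in which one simultaneously extends the section across the larger sub-annulus $[a/p^{n+1},b/p]$ and absorbs the obstruction into a coboundary $(\varphi-1)(y_n)$ for an appropriately chosen $y_n \in D_{[a/p^n,b]}$, with convergence in the Fréchet topology of the inverse limit. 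The main obstacle is verifying that this telescoping actually closes up to produce a well-defined element of $D_{(0,b/p]}$; this reduces to the compatibility of Frobenius structures across successive levels furnished by the previous lemma, which guarantees that the corrections at each step patch consistently.
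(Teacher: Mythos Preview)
Your approach is correct in outline but takes a substantially more laborious route than the paper's.  The paper's proof is a single sentence: the cohomology groups $H^0$ and $H^1$ of the complex $[D \xrightarrow{\varphi-1} D]$ are exactly the Yoneda groups $\Hom(\mathbf{1},D)$ and $\Ext^1(\mathbf{1},D)$ in the category of $\varphi$-modules (where $\mathbf{1}$ is the trivial object), and these are automatically preserved by the equivalence of categories established in the preceding lemma.  No iteration, no gluing, no convergence.

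Your $H^0$ argument and your $H^1$-injectivity argument are essentially fine (though in the latter you are not literally ``applying the $H^0$-surjectivity extension procedure,'' since $y$ is not $\varphi$-fixed; you are running the same kind of inductive gluing with the extra datum of the global $\tilde z$).  The real issue is your $H^1$-surjectivity step.  You describe a ``telescoping construction'' with correction terms $y_n$ and invoke ``convergence in the Fr\'echet topology of the inverse limit,'' but you never explain why the corrections converge, nor what controls their size.  In fact, making this explicit requires either a direct estimate you have not supplied or an appeal to vanishing of $H^1$ for coherent sheaves on the quasi-Stein space $\bigcup_n \Spa \Lambda_{R,[a/p^n,b],K}$ in order to split the extension $0\to D\to E\to \Lambda\to 0$ as modules over $(0,b]$.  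Either way this is exactly the analytic input that the Yoneda argument sidesteps: once you know the categories are equivalent, the extension $E$ built from $z$ over $[a,b]$ transports to an extension over $(0,b]$ with no further work, and its class is the desired preimage.

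So: your proof can be completed, but as written the $H^1$-surjectivity paragraph is a gap, and the fix is most cleanly expressed by the Yoneda interpretation you were trying to avoid.
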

\begin{proof}
This follows from the previous lemma because we may interpret the cohomology groups as Yoneda Ext groups.
\end{proof}

In order to prove that the restriction map $D_{(0,b]}\rightarrow D_{(0,b/p]}$ induces an isomorphism on cohomology, we will need to use the $\psi$ operator:
\begin{lemma}\label{lemma: gamma - 1 inverse}
Let $D$ be a $(\varphi,\Gamma)$-module over $\Lambda_{R,(0,b],K}$ for some $b>0$.  Then there is some $0<b'\leq b$ such that the action of $\gamma-1$ on $(D_{(0,b']})^{\psi=0}$ admits a continuous inverse.
\end{lemma}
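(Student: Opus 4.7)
The plan is to adapt the classical argument of Colmez (cf.~\cite[Lemma 2.2.15]{kpx}) to the pseudorigid setting. The strategy has two layers: first establish invertibility of $\gamma-1$ with continuous inverse on the $\psi=0$ part of the ring $\Lambda_{R,(0,b'],K}$, then propagate this to $D$ by a perturbation argument.

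I would begin by shrinking $b$ and, if needed, localizing on $\Spa R$ so that $D_{(0,b]}$ admits a basis $e_1,\ldots,e_d$ over $\Lambda_{R,(0,b],K}$. Using the isomorphism $\varphi^\ast D_{(0,b]}\xrightarrow\sim D_{(0,b/p]}$, any element of $D_{(0,b/p]}$ can be written uniquely as $\sum_i a_i\varphi(e_i)$, and the condition $\psi=0$ becomes $\psi(a_i)=0$ for all $i$. Consequently
\[
  (D_{(0,b']})^{\psi=0}\;\cong\;\bigoplus_{i=1}^d \Lambda_{R,(0,b'],K}^{\psi=0}\cdot\varphi(e_i),
\]
so the problem reduces to controlling $\gamma-1$ on $\Lambda_{R,(0,b'],K}^{\psi=0}$, modulo off-diagonal error terms coming from the matrix of the $\Gamma_K$-action in the basis $\{\varphi(e_i)\}$.

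The key computation is the ring case. Here one uses the explicit decomposition $\Lambda_{R,(0,b'],K}^{\psi=0}=\bigoplus_{j} (1+\pi_K)^j\cdot\varphi(\Lambda_{R,(0,pb'],K})$, where $j$ ranges over a suitable finite index set, combined with the explicit formula for the action of $\gamma$ on $(1+\pi_K)^j$ that comes from class field theory. On each graded piece, $\gamma$ acts as multiplication by an element close (for $b'$ small) to $\chi_{\cyc}(\gamma)^j$, plus a strict perturbation whose operator norm tends to $0$ with $b'$; the inverse of $\gamma-1$ on each piece is then built by a geometric-series expansion. The extension to general $D$ follows because the matrix of $\gamma$ in the chosen basis differs from the diagonal action by a matrix with entries in $\Lambda_{R,(0,b'],K}$ that can be made arbitrarily small in operator norm after shrinking $b'$; a second geometric-series argument then yields a continuous inverse on all of $(D_{(0,b']})^{\psi=0}$.

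The main obstacle is the explicit calculation on the ring. In Colmez's original argument one makes crucial use of convergent power series involving $\log(1+\pi_K)$ and $(\chi_{\cyc}(\gamma)-1)^{-1}$; this requires care in the pseudorigid setting, since $p$ need not be a unit in $R$ and $\chi_{\cyc}(\gamma)-1$ may fail to be a unit. The resolution is that passing to the $\psi=0$ part precisely removes the problematic ``constant'' modes, so that after shrinking $b'$ the relevant series converge in $\Lambda_{R,(0,b'],K}$; the necessary norm estimates are obtained by combining Lourenço's structure theorem for $R_0$ with the explicit description of $\Lambda_{R_0,[a,b],K}$ recalled above.
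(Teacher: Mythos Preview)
Your overall architecture—decompose $\Lambda^{\psi=0}$ as $\bigoplus_j(1+\pi_K)^j\varphi(\Lambda)$, invert $\gamma-1$ on each summand by a geometric series, then perturb to handle $D$—is reasonable, but the step where you assert that on each piece $\gamma$ acts ``close to multiplication by $\chi_{\cyc}(\gamma)^j$'' is exactly where the argument breaks in the pseudorigid setting, and your proposed resolution does not repair it. The difficulty is not merely that $(\chi_{\cyc}(\gamma)-1)^{-1}$ requires care: for $\gamma$ in the pro-$p$ part of $\Gamma$ one has $\chi_{\cyc}(\gamma)^j-1\in p\Z_p$, which is topologically nilpotent (or zero) in $R$ whenever $p\notin R^\times$. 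Removing the $j=0$ mode via $\psi=0$ does not help, since for every $j\in\{1,\ldots,p-1\}$ the putative leading scalar is still a non-unit of $R$, and no amount of shrinking $b'$ changes that. The Colmez-style argument you invoke from \cite{kpx} genuinely uses that $p$ is a unit (through $t=\log(1+\pi)$ and scalar factors $\chi_{\cyc}(\gamma)^j-1$), and this dependence cannot be patched by norm estimates alone.

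The paper's proof avoids this by extracting a different factor. After inducing to $\Q_p$, one replaces $\gamma$ by $\gamma_n$ with $\chi(\gamma_n)=1+p^n$ (permissible since $\gamma^m-1=(\gamma-1)(\gamma^{m-1}+\cdots+1)$), and on $[\varepsilon]^j\varphi^n(D)$ one computes
\[
(\gamma_n-1)\bigl([\varepsilon]^j\varphi^n(x)\bigr)=[\varepsilon]^j\varphi^n\!\Bigl(([\varepsilon]^j-1)\bigl(1+\tfrac{[\varepsilon]^j}{[\varepsilon]^j-1}(\gamma_n-1)\bigr)(x)\Bigr).
\]
The crucial point is that the factor pulled out is $[\varepsilon]^j-1$, a unit of the \emph{period ring} $\Lambda_{R,(0,b']}$ (it is $\pi$ times a unit for $1\le j\le p-1$, and $\pi$ is invertible there), rather than an element of $R$. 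Continuity of the $\Gamma$-action on $D_{[b/p,b]}$ lets one choose $n$ so that $\frac{[\varepsilon]^j}{[\varepsilon]^j-1}(\gamma_n-1)$ has small operator norm there, and $\varphi$-equivariance propagates this to every $D_{[b/p^{k+1},b/p^k]}$. This sidesteps any need to invert $p$-divisible elements of $R$, which is the missing idea in your sketch.
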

\begin{proof}
We may replace $D$ with $\Ind_K^{\Q_p}(D)$.  Since $(D_{(0,b/p^n]})^{\psi=0}=\oplus_{j\in(\Z/p^n)^\times}[\varepsilon]^{\tilde j}\varphi^n(D)$, it suffices to show that $\gamma-1$ has a continuous inverse on $[\varepsilon]^{j}\varphi^n(D)$ for $j$ prime to $p$ and sufficiently large $n$.  Moreover, since $\gamma^n-1=(\gamma-1)(\gamma^{n-1}+\cdots+1)$, we may replace $\Gamma_{\Q_p}$ with a finite-index subgroup.  

If $\gamma_n\in\Gamma_{\Q_p}$ is such that $\chi(\gamma_n)=1+p^n$, then
\begin{equation*}
\begin{split}
\gamma_n\left([\varepsilon]^j\varphi^n(x)\right)-[\varepsilon]^j\varphi^n(x) &= [\varepsilon]^j[\varepsilon]^{p^nj}\varphi^n(\gamma_n(x)) - [\varepsilon]^j\varphi^n(x) \\
&=[\varepsilon]^j\varphi^n([\varepsilon]^j\gamma_n(x)-x)        \\
&=[\varepsilon]^j\varphi^n(G_{\gamma_n}(x))
\end{split}
\end{equation*}
where $G_{\gamma_n}(x)\coloneqq [\varepsilon]^j\gamma_n(x)-x = ([\varepsilon]^j-1)\cdot\left(1+\frac{[\varepsilon]^j}{[\varepsilon]^j-1}(\gamma_n-1)\right)(x)$.  Thus, if we can choose $n$ such that $\sum_{k=0}^\infty\left(-\frac{[\varepsilon]^j}{[\varepsilon]^j-1}(\gamma_n-1)\right)^k$ converges on $D_{(0,b]}$, we will be done.

The action of $\Gamma_{\Q_p}$ on $D_{[b/p,b]}$ is continuous, so we may choose $n_0$ such that for $n\geq n_0$, the sum above converges in $\mathrm{End}(D_{[b/p,b]})$.  But $D_{[b/p^{k+1},b/p^k]}\cong \varphi^\ast D_{[b/p^k,b/p^{k-1}]}$ for all $k\geq 0$ and the action of $\Gamma$ commutes with the action of $\varphi$, so the sum converges in $\mathrm{End}(D_{[b/p^{k+1},b/p^k]})$ for all $k\geq 0$ and $\gamma-1$ acts invertibly on $D_{(0,b/p^n]}^{\psi=0}$ for $n\geq n_0$.
\end{proof}

\begin{prop}\label{prop: coh move b}
If $D$ is a $(\varphi,\Gamma)$-module over $R$, then the cohomology of $D$ is computed by
\[      D_{[a,b]}\xrightarrow{(\varphi-1,\gamma-1)} D_{[a,b/p]}\oplus D_{[a,b]}\xrightarrow{(\gamma-1)\oplus(1-\varphi)} D_{[a,b/p]}  \]
for some sufficiently small $b$ and any $a\in (0,b/p]$. 
\end{prop}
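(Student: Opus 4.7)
The strategy is to realize the Fontaine--Herr--Liu complex as the mapping cone of $\gamma-1$ acting on the simpler $\varphi-1$ complex, and then invoke the preceding Corollary on each row.

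Explicitly, $C^\bullet_{\varphi,\Gamma}(D_{(0,b]})$ is the totalization of the double complex
\[
\begin{tikzcd}
D_{(0,b]} \arrow[r,"\varphi-1"] \arrow[d,"\gamma-1"] & D_{(0,b/p]} \arrow[d,"\gamma-1"] \\
D_{(0,b]} \arrow[r,"\varphi-1"] & D_{(0,b/p]}
\end{tikzcd}
\]
placed in the evident total degrees, so that up to a shift it is the mapping cone of $\gamma-1$ viewed as a chain endomorphism of $[D_{(0,b]}\xrightarrow{\varphi-1}D_{(0,b/p]}]$. The analogous statement holds for $C^\bullet_{\varphi,\Gamma}(D_{[a,b]})$, and restriction defines a morphism of double complexes, since both $\varphi$ and the $\Gamma_K$-action commute with restriction by the functoriality of the constructions of $\Lambda_{R,[a,b],K}$ and its $\varphi,\Gamma_K$-operators.

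By the preceding Corollary, the induced map on each row is a quasi-isomorphism. Taking cones of the chain endomorphism $\gamma-1$ preserves quasi-isomorphisms (equivalently, the row-filtration spectral sequence of the bicomplex reduces the comparison to the row case), so the induced restriction map $C^\bullet_{\varphi,\Gamma}(D_{(0,b]})\to C^\bullet_{\varphi,\Gamma}(D_{[a,b]})$ is a quasi-isomorphism. Since by definition $D$ arises by base change from $D_{(0,b]}$ for some $b>0$, and one may freely shrink $b$ to accommodate any smallness condition needed (e.g., that of Lemma~\ref{lemma: gamma - 1 inverse}), this identifies the cohomology of $D$ with that of the right-hand complex, as required.

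\textbf{Main obstacle.} There is essentially no analytic obstacle: the previous Corollary has done the genuine work of establishing the quasi-isomorphism on the $\varphi-1$ rows, via the equivalence of $\varphi$-module categories and the Yoneda Ext interpretation of cohomology. The remaining content is the standard homological fact that a quasi-isomorphism commuting with a pair of chain endomorphisms induces a quasi-isomorphism of cones, together with the bookkeeping to check that restriction is $\Gamma_K$-equivariant. The only mild subtlety is choosing $b$ small enough that the bicomplex argument feeds cleanly into the subsequent finiteness argument via Kedlaya--Liu.
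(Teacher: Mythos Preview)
Your cone/bicomplex argument correctly establishes that the restriction map
\[
C_{\varphi,\Gamma}^\bullet(D_{(0,b]})\longrightarrow C_{\varphi,\Gamma}^\bullet(D_{[a,b]})
\]
is a quasi-isomorphism for each fixed $b$; this is exactly the first sentence of the paper's proof.  But this is not yet the statement of the proposition.  The cohomology of $D$ is by definition the cohomology of the Fontaine--Herr--Liu complex over $\Lambda_{R,\rig,K}=\varinjlim_{b\to 0}\Lambda_{R,(0,b],K}$, i.e.\ $H_{\varphi,\Gamma}^i(D)=\varinjlim_{b\to 0}H_{\varphi,\Gamma}^i(D_{(0,b]})$.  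So to conclude that the $[a,b]$-complex computes $H_{\varphi,\Gamma}^\ast(D)$ for a \emph{fixed} sufficiently small $b$, you must show that the transition maps
\[
C_{\varphi,\Gamma}^\bullet(D_{(0,b]})\longrightarrow C_{\varphi,\Gamma}^\bullet(D_{(0,b/p]})
\]
are quasi-isomorphisms once $b$ is small.  Your proposal simply asserts this (``one may freely shrink $b$''), but there is no reason it should be automatic: passing from $(0,b]$ to $(0,b/p]$ is not a rational localization over a fixed base, and the preceding Corollary says nothing about it.

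This is where the real content of the proof lies, and it is where the $\psi$-operator and Lemma~\ref{lemma: gamma - 1 inverse} are actually used, not merely invoked as a smallness condition.  The paper proceeds by showing that $\id$ and $\varphi$ are chain-homotopic as maps $C_{(0,b]}^\bullet\to C_{(0,b/p]}^\bullet$, and then that $\varphi$ is a quasi-isomorphism because its cokernel is built from $D_{(0,b/p]}^{\psi=0}$, on which $\gamma-1$ acts invertibly for $b$ small enough.  Without this step your argument only identifies the $[a,b]$-complex with the cohomology of $D_{(0,b]}$, not of $D$.
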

\begin{proof}
We may assume that $D$ is a $(\varphi,\Gamma)$-module over $\Lambda_{R,(0,b],K}$ for some $b>0$.  Since $[D_{(0,b]}\xrightarrow{\varphi-1}D_{(0,b/p]}]\rightarrow [D_{[a,b]}\xrightarrow{\varphi-1}D_{[a,b/p]}]$ induces an isomorphism on cohomology, we see that the cohomology of $D_{(0,b]}$ is computed by the above complex.

Since the cohomology of $C_{\varphi,\Gamma}^\bullet(D)$ is computed by the direct limit of the cohomology groups of $C_{\varphi,\Gamma}^\bullet(D_{(0,b/p^n]})$ as $n\rightarrow \infty$, it suffices to show that the natural morphism
\[
\xymatrix{
C_{(0,b]}^\bullet\colon &{D}_{(0,b]} \ar[r]\ar[d] & D_{(0,b/p]}\oplus D_{(0,b]}\ar[r]\ar[d] & D_{(0,b/p]}\ar[d]  \\
C_{(0,b/p]}^\bullet\colon &D_{(0,b/p]}\ar[r] & D_{(0,b/p^2]}\oplus D_{(0,b/p]}\ar[r] & D_{(0,b/p^2]}
}\]
induces an isomorphism on cohomology groups for sufficiently small $b$.

We first show that the morphisms $1,\varphi\colon C_{(0,b]}^\bullet\rightrightarrows C_{(0,b/p]}^\bullet$ are homotopic.  This follows by considering the diagram
\[
\xymatrixcolsep{5pc}\xymatrix{
	{D}_{(0,b]} \ar[r]^-{(\varphi-1,\gamma-1)}\ar@<-.5ex>[d]_{1}\ar@<.5ex>[d]^{\varphi} & D_{(0,b/p]}\oplus D_{(0,b]}\ar[r]^-{(\gamma-1)\oplus(1-\varphi)}\ar@<-.5ex>[d]_{1}\ar@<.5ex>[d]^{\varphi}\ar[ld]_{\pr_1} & D_{(0,b/p]}\ar@<-.5ex>[d]_{1}\ar@<.5ex>[d]^{\varphi}\ar[ld]_{(0,-1)}       \\
D_{(0,b/p]}\ar[r]^-{(\varphi-1,\gamma-1)} & D_{(0,b/p^2]}\oplus D_{(0,b/p]}\ar[r]^-{(\gamma-1)\oplus(1-\varphi)} & D_{(0,b/p^2]}
}\]
Thus, it suffices to show that the morphism $\varphi\colon C_{(0,b]}^\bullet\rightarrow C_{(0,b/p]}^\bullet$ is a quasi-isomorphism.  But the cokernel (in the category of complexes) is the complex
\[	D_{(0,b/p]}^{\psi=0}\xrightarrow{(-1,\gamma-1)}D_{(0,b/p]}^{\psi=0}\oplus D_{(0,b/p]}^{\psi=0} \xrightarrow{(\gamma-1)\oplus 1}D_{(0,b/p]}^{\psi=0}	\]
Since $\gamma-1$ acts invertibly on $D_{(0,b/p]}^{\psi=0}$, the cohomology of this complex vanishes and the result follows.
\end{proof}

\begin{cor}
If $D$ is a $(\varphi,\Gamma)$-module over $\Lambda_{R,(0,b],K}$ and $[a',b']\subset [a,b]$ and $b$ is sufficiently small, the restriction map
\[
\xymatrix{
D_{[a,b]}\ar[r]\ar[d] & D_{[a,b/p]}\oplus D_{[a,b]}\ar[r]\ar[d] & D_{[a,b/p]}\ar[d]     \\
D_{[a',b']}\ar[r] & D_{[a',b'/p]}\oplus D_{[a',b']}\ar[r] & D_{[a',b'/p]}
}\]
induces an isomorphism on cohomology.
\end{cor}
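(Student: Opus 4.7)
The plan is to reduce the claim to the previous proposition, which (for sufficiently small $b$ and any $a\in(0,b/p]$) shows that the restriction $C_{(0,b]}^\bullet(D)\to C_{[a,b]}^\bullet(D)$ between Fontaine--Herr--Liu complexes is a quasi-isomorphism. Once both $C_{[a,b]}^\bullet$ and $C_{[a',b']}^\bullet$ are identified with the same cohomology, the restriction between them must be a quasi-isomorphism.

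Concretely, I would assemble the commutative square
\[
\begin{tikzcd}
C_{(0,b]}^\bullet \ar[r]\ar[d] & C_{[a,b]}^\bullet \ar[d] \\
C_{(0,b']}^\bullet \ar[r] & C_{[a',b']}^\bullet
\end{tikzcd}
\]
whose horizontal arrows are the restrictions of the previous proposition, and whose vertical arrows are the obvious restriction maps along $\Lambda_{R,(0,b],K}\to\Lambda_{R,(0,b'],K}$ and $\Lambda_{R,[a,b],K}\to\Lambda_{R,[a',b'],K}$. Assuming $b$ is small enough that the previous proposition applies (to both intervals), the horizontal maps are quasi-isomorphisms.

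For the left vertical arrow, I would iterate the internal quasi-isomorphism $C_{(0,b]}^\bullet\to C_{(0,b/p]}^\bullet$ produced in the proof of the previous proposition to obtain that $C_{(0,b]}^\bullet\to C_{(0,b/p^n]}^\bullet$ is a quasi-isomorphism for every $n\geq 0$. Choosing $n$ with $b/p^n\leq b'$, the factorization $C_{(0,b]}^\bullet\to C_{(0,b']}^\bullet\to C_{(0,b/p^n]}^\bullet$ has its composition and second factor both quasi-isomorphisms (the latter by the same iteration applied starting from $b'$), so two-out-of-three shows the left vertical is a quasi-isomorphism. Then two-out-of-three for the square yields that the right vertical is a quasi-isomorphism.

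If the originally given $b$ is not small enough for the previous proposition to apply, I would reduce to this case by further restricting to some $[a'',b'']\subset[a',b']$ with $b''$ sufficiently small: the argument just sketched, applied to $[a'',b'']\subset[a,b]$ and to $[a'',b'']\subset[a',b']$, shows that $C_{[a,b]}^\bullet\to C_{[a'',b'']}^\bullet$ and $C_{[a',b']}^\bullet\to C_{[a'',b'']}^\bullet$ are both quasi-isomorphisms, and two-out-of-three on the composition $C_{[a,b]}^\bullet\to C_{[a',b']}^\bullet\to C_{[a'',b'']}^\bullet$ finishes the proof. There is no real obstacle here: the substantive analytic work was already carried out in the previous proposition (and in Lemma~\ref{lemma: gamma - 1 inverse}), and this corollary is a formal consequence packaged for application of Kedlaya--Liu's lemma.
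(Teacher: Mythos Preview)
Your argument is correct and follows essentially the same route as the paper: both reduce the closed-interval comparison to the half-open interval comparison via the earlier quasi-isomorphism $C_{(0,b]}^\bullet\to C_{[a,b]}^\bullet$, and then show $C_{(0,b]}^\bullet\to C_{(0,b']}^\bullet$ is a quasi-isomorphism by a two-out-of-three argument using the known quasi-isomorphism $C_{(0,b]}^\bullet\to C_{(0,b/p]}^\bullet$. The paper is slightly terser, assuming $b'\in[b/p,b]$ and using the single chain $H^i(D_{(0,b]})\to H^i(D_{(0,b']})\to H^i(D_{(0,b/p]})\to H^i(D_{(0,b'/p]})$ rather than iterating, but the content is the same.
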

\begin{proof}
We may assume that $b'\in [b/p,b]$, so that we have induced homomorphisms
\[      H_{\varphi,\Gamma}^i(D_{(0,b]})\rightarrow H_{\varphi,\Gamma}^i(D_{(0,b']})\rightarrow H_{\varphi,\Gamma}^i(D_{(0,b/p]})\rightarrow H_{\varphi,\Gamma}^i(D_{(0,b'/p]})  \]
Since the compositions $H_{\varphi,\Gamma}^i(D_{(0,b]})\rightarrow H_{\varphi,\Gamma}^i(D_{(0,b/p]})$ and $H_{\varphi,\Gamma}^i(D_{(0,b']})\rightarrow H_{\varphi,\Gamma}^i(D_{(0,b'/p]})$ are isomorphisms, the homomorphism $H_{\varphi,\Gamma}^i(D_{(0,b']})\rightarrow H_{\varphi,\Gamma}^i(D_{(0,b/p]})$ is also an isomorphism, and we are done.
\end{proof}

Now we can finally prove that $(\varphi,\Gamma)$-modules have finite cohomology.
\begin{thm}
If $D$ is a $(\varphi,\Gamma)$-module over $\Lambda_{R,(0,b],K}$, its cohomology is $R$-finite when $b$ is sufficiently small.
\end{thm}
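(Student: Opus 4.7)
The plan is to invoke \cite[Lemma 1.10]{kedlaya-liu16} in essentially the form indicated in the paragraph that opens Section~3: once we have two complexes of Banach $R$-modules whose transition maps are completely continuous in each degree and induce isomorphisms on cohomology, both complexes will have $R$-finite cohomology.

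First I would use the preceding proposition to replace $C_{\varphi,\Gamma}^\bullet(D)$ by the finite complex
\[      C_{[a,b]}^\bullet(D): D_{[a,b]}\xrightarrow{(\varphi-1,\gamma-1)} D_{[a,b/p]}\oplus D_{[a,b]}\xrightarrow{(\gamma-1)\oplus(1-\varphi)} D_{[a,b/p]}    \]
for some sufficiently small $b>0$ and some $a\in(0,b/p]$; after shrinking $b$ if necessary we may also assume the conclusion of Lemma~\ref{lemma: gamma - 1 inverse} (so that the preceding corollary about restriction inducing isomorphisms on cohomology applies on nested subintervals). Next I would choose $[a',b']\subset(a,b)$ with $a'\in(0,b'/p]$, giving the comparison diagram appearing at the start of the section, in which each vertical arrow is a finite direct sum of restriction maps of the form $D_{[a,b]}\to D_{[a',b']}$ and $D_{[a,b/p]}\to D_{[a',b'/p]}$.

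The two hypotheses of \cite[Lemma 1.10]{kedlaya-liu16} now follow from what has been established. Complete continuity of each vertical arrow reduces, since $D_{[a,b]}$ and $D_{[a,b/p]}$ are finite projective over the respective $\Lambda_{R,[a,b],K}$ and $\Lambda_{R,[a,b/p],K}$, to complete continuity of the scalar restriction maps $\Lambda_{R,[a,b],K}\to \Lambda_{R,[a',b'],K}$ and $\Lambda_{R,[a,b/p],K}\to\Lambda_{R,[a',b'/p],K}$, which was proved in the earlier proposition (these are inner localizations, hence completely continuous by \cite[Lemma 5.7]{kedlaya-liu16}). That the vertical maps induce isomorphisms on cohomology is exactly the corollary that precedes this theorem. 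Applying \cite[Lemma 1.10]{kedlaya-liu16} then yields that the cohomology of $C_{[a,b]}^\bullet(D)$ is $R$-finite, and by the reduction in the first step so is the cohomology of $C_{\varphi,\Gamma}^\bullet(D)$.

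The only real technical point is to ensure that the abstract Kedlaya--Liu finiteness criterion genuinely applies to our complexes of Banach $R$-modules, i.e.\ that the degreewise modules $D_{[a,b]}$ and $D_{[a,b/p]}$ are of the right type (Banach over the relevant affinoid ring, finite projective so that tensoring preserves complete continuity) and that the vertical maps in the diagram are bounded $R$-linear. These are built into the definition of a $(\varphi,\Gamma)$-module over $\Lambda_{R,(0,b],K}$ together with the sheaf-theoretic results from \cite{bellovin2020} recalled at the start of the paper, so no further work beyond bookkeeping is required; the substance of the argument has already been done in the preceding lemmas, propositions and corollary.
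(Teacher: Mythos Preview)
Your proposal is correct and follows exactly the paper's approach: reduce to the complex over a closed interval $[a,b]$, then apply \cite[Lemma 1.10]{kedlaya-liu16} using complete continuity of the restriction maps (the earlier proposition) together with the fact that restriction induces isomorphisms on cohomology (the preceding corollary). One small remark: the definition of a $(\varphi,\Gamma)$-module over $\Lambda_{R,(0,b],K}$ only requires $D$ to be a coherent sheaf, not projective, so your reduction of complete continuity should be phrased for finite (rather than finite projective) modules over $\Lambda_{R,[a,b],K}$; this causes no real difficulty, since complete continuity of the scalar restriction passes to finite modules via a presentation by a finite free module.
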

\begin{proof}
If $[a',b']\subset (a,b)$, the restriction map induces a quasi-isomorphism
\[
\xymatrix{
D_{[a,b]}\ar[r]\ar[d] & D_{[a,b/p]}\oplus D_{[a,b]}\ar[r]\ar[d] & D_{[a,b/p]}\ar[d]     \\
D_{[a',b']}\ar[r] & D_{[a',b'/p]}\oplus D_{[a',b']}\ar[r] & D_{[a',b'/p]}
}\]
which is completely continuous. Then the result follows, by~\cite[Lemma 1.10]{kedlaya-liu16}.
\end{proof}

\begin{cor}\label{cor: coh perfect}
If $D$ is a projective $(\varphi,\Gamma)$-module over $R$, then $C_{\varphi,\Gamma_K}^\bullet(D)\in \mathbf{D}_{\mathrm{perf}}^{[0,2]}(R)$.
\end{cor}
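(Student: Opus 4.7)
The plan is to deduce this corollary as a refinement of the preceding theorem, using essentially the same invocation of Kedlaya--Liu's Lemma 1.10. The statement has two aspects: that $C_{\varphi,\Gamma_K}^\bullet(D)$ sits in degrees $[0,2]$, and that it is perfect as a complex of $R$-modules. The first is immediate, since the Fontaine--Herr--Liu complex is by construction a three-term complex concentrated in degrees $0$, $1$, and $2$.

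For perfectness, I would begin by recalling that the preceding proposition lets us compute $C_{\varphi,\Gamma_K}^\bullet(D)$ by the explicit three-term complex
\[
 D_{[a,b]} \xrightarrow{(\varphi-1,\gamma-1)} D_{[a,b/p]}\oplus D_{[a,b]} \xrightarrow{(\gamma-1)\oplus(1-\varphi)} D_{[a,b/p]}
\]
for sufficiently small $b$ and any $a\in(0,b/p]$. Because $D$ is projective over $\Lambda_{R,\rig,K}$, each term here is a finite projective Banach module over the corresponding $\Lambda_{R,[\cdot,\cdot],K}$. Thus each entry is in particular a finite Banach module over $R$, and we have a bounded complex of such modules representing $C_{\varphi,\Gamma_K}^\bullet(D)$.

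Next, for any $[a',b']\subset(a,b)$ the restriction morphism to the analogous complex over the smaller interval is, term-by-term, a completely continuous map by Proposition 3.3, and it is a quasi-isomorphism by the previous corollary. This is precisely the hypothesis of ~\cite[Lemma 1.10]{kedlaya-liu16}, whose conclusion is stronger than mere $R$-finiteness of cohomology: it asserts that such a pair of complexes is quasi-isomorphic to a perfect complex of $R$-modules supported in the same range of degrees. Combined with the degree-range observation above, this gives $C_{\varphi,\Gamma_K}^\bullet(D)\in\mathbf{D}_{\mathrm{perf}}^{[0,2]}(R)$.

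I do not anticipate a real obstacle, since every ingredient — complete continuity of the restriction maps, quasi-isomorphism between complexes on nested intervals, and finiteness of the constituent Banach modules — was already assembled in the proof of the theorem. The only additional point is to invoke the perfectness half of Lemma 1.10 rather than just its cohomological finiteness consequence, and it is the projectivity of $D$ that guarantees the constituent terms $D_{[a,b]}$ are finite Banach modules to which the lemma applies.
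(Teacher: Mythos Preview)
Your argument has a gap at exactly the point where it diverges from the paper's. The conclusion of \cite[Lemma 1.10]{kedlaya-liu16} is that the cohomology groups are finitely generated $R$-modules; it does not, by itself, tell you the complex is perfect with Tor-amplitude in $[0,2]$. Finite cohomology of a bounded complex only places it in $\mathbf{D}_{\mathrm{perf}}^-(R)$ (one can always resolve from above by finite free modules), but bounding the amplitude below requires controlling Tor-dimension. Your assertion that each $D_{[a,b]}$ is ``a finite Banach module over $R$'' is also misleading: these are finite projective over $\Lambda_{R,[a,b],K}$, hence Banach modules over $R$, but they are certainly not finitely generated over $R$, so you cannot read off perfectness from the shape of the complex alone.

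The paper supplies the missing ingredient: by \cite[Proposition 3.47]{bellovin2020} the terms of $C_{\varphi,\Gamma_K}^\bullet(D)$ are flat $R$-modules when $D$ is projective. A bounded complex in degrees $[0,2]$ with flat terms has Tor-amplitude in $[0,2]$; combined with finite cohomology this forces $C_{\varphi,\Gamma_K}^\bullet(D)\in\mathbf{D}_{\mathrm{perf}}^{[0,2]}(R)$, as in the proof of \cite[Theorem 4.4.5(1)]{kpx}. So the projectivity hypothesis is used not to make the terms finite over $R$, but to ensure they are flat over $R$, and that is the step your sketch is missing.
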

\begin{proof}
	Finiteness of the cohomology of $C_{\varphi,\Gamma_K}^\bullet(D)$ implies that $C_{\varphi,\Gamma_K}^\bullet(D)\in \mathbf{D}_{\mathrm{perf}}^-(R)$, and by ~\cite[Proposition 3.47]{bellovin2020}, the complex $C_{\varphi,\Gamma_K}^\bullet(D)$ consists of flat $A$-modules.  Then as in the proof of ~\cite[Theorem 4.4.5(1)]{kpx}, it follows that $C_{\varphi,\Gamma_K}^\bullet(D)\in \mathbf{D}_{\mathrm{perf}}^{[0,2]}(R)$.
\end{proof}

\begin{cor}\label{cor: coh coh sheaf}
If $D$ is a projective $(\varphi,\Gamma)$-module over $R$, then the cohomology groups $H_{\varphi,\Gamma}^i(D)$ are coherent sheaves on $\Spa R$.
\end{cor}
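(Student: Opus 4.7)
The previous corollary already shows that $C^\bullet_{\varphi,\Gamma_K}(D)\in\mathbf{D}_{\mathrm{perf}}^{[0,2]}(R)$, so each $H^i_{\varphi,\Gamma}(D)$ is a finitely generated $R$-module. To promote these to coherent sheaves on $\Spa R$, the plan is to check that the assignment $\Spa R'\mapsto H^i_{\varphi,\Gamma}(D_{R'})$, as $\Spa R'$ ranges over rational subsets of $\Spa R$, coincides with the coherent sheaf associated to the $R$-module $H^i_{\varphi,\Gamma}(D)$ in the usual way, i.e.\ by the rule $\Spa R'\mapsto H^i_{\varphi,\Gamma}(D)\otimes_R R'$.

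First I would fix a rational localization $\Spa R'\subset\Spa R$ and produce a natural identification $C^\bullet_{\varphi,\Gamma_K}(D_{R'})\simeq C^\bullet_{\varphi,\Gamma_K}(D)\otimes_R R'$. Since the Fontaine--Herr--Liu complex is built from a $(\varphi,\Gamma_K)$-module over $\Lambda_{R,\rig,K}$, and the background section records that the formation of $\widetilde\Lambda_{R,I}^{H_K}$ and $\Lambda_{R_0,I,K}$ is compatible with rational localization in $R$ (via ~\cite[Proposition 3.10]{bellovin2020} and sheafiness), this should follow from tracing through definitions. Next, since $R\to R'$ is flat in the pseudoaffinoid setting and $C^\bullet_{\varphi,\Gamma_K}(D)$ is perfect over $R$, the tautological maps
\[
H^i_{\varphi,\Gamma}(D)\otimes_R R'\xrightarrow{\sim} H^i\bigl(C^\bullet_{\varphi,\Gamma_K}(D)\otimes_R R'\bigr)\xrightarrow{\sim} H^i_{\varphi,\Gamma}(D_{R'})
\]
are isomorphisms. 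Sheafifying these identifications as $R'$ varies over rational localizations gives the coherent sheaf structure.

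The main obstacle is the first identification $C^\bullet_{\varphi,\Gamma_K}(D_{R'})\simeq C^\bullet_{\varphi,\Gamma_K}(D)\otimes_R R'$. A priori the left-hand side involves a completed base change of the coefficient rings $\Lambda_{R_0,I,K}$, so one must check that this completed base change is compatible with the ordinary tensor product $\otimes_R R'$ at the level of cohomology. The perfectness of $C^\bullet_{\varphi,\Gamma_K}(D)$ as an $R$-complex, together with the explicit description of the rings $\Lambda_{R_0,I,K}$ in Section~2 and their good behavior under rational localization in $R$, should reduce this to a bookkeeping exercise, but it is the only nontrivial point in the argument.
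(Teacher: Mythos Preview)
Your overall structure matches the paper's proof: use perfectness of $C_{\varphi,\Gamma_K}^\bullet(D)$ and flatness of rational localizations $R\to R'$ to identify $H^i_{\varphi,\Gamma}(D)\otimes_R R'$ with $H^i$ of the base-changed complex, and then compare the latter with $H^i_{\varphi,\Gamma}(D_{R'})$. You have also correctly isolated the one genuine issue, namely that $D_{R'}$ lives over $\Lambda_{R',\rig,K}$, which is a \emph{completed} base change of $\Lambda_{R,\rig,K}$, so the natural map $C_{\varphi,\Gamma_K}^\bullet(R'\otimes_R D)\to C_{\varphi,\Gamma_K}^\bullet(R'\widehat\otimes_R D)$ is not an isomorphism of complexes term by term.

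Where your proposal falls short is in the resolution of this obstacle. You suggest that the explicit description of the rings $\Lambda_{R_0,I,K}$ and their compatibility with rational localization will make this a bookkeeping exercise, but that compatibility statement only tells you what $\Lambda_{R',I,K}$ is, not that the uncompleted and completed complexes have the same cohomology. The terms of the two complexes are genuinely different $R'$-modules (one is over $R'\otimes_R\Lambda_{R,\rig,K}$, the other over $\Lambda_{R',\rig,K}$), and no amount of unwinding definitions will collapse that difference directly. The paper's argument instead exploits perfectness a second time: since both complexes lie in $\mathbf{D}_{\mathrm{perf}}^{[0,2]}(R')$, it suffices to check that the comparison map is a quasi-isomorphism after specializing along every surjection $R'\twoheadrightarrow S$ to a finite-length (artinian) algebra. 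Over such $S$ the completed and uncompleted base changes of $D$ agree (using that $D$ is flat over $R$), so the map is a quasi-isomorphism on each artinian fiber, hence globally. This reduction to artinian quotients is the missing idea in your sketch.
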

\begin{proof}
	Since $C_{\varphi,\Gamma}^\bullet(D)\in D_{\mathrm{coh}}^b(R)$, we have a quasi-isomorphism $R'\otimes_R^{\mathbf{L}} C_{\varphi,\Gamma}^\bullet(D)\xrightarrow{\sim}C_{\varphi,\Gamma}^\bullet(R'\otimes_RD)$ for any homomorphism $R\rightarrow R'$ of pseudoaffinoid algebras.  If $R\rightarrow R'$ defines an affinoid subspace of $\Spa R$, the morphism is flat and the derived tensor product is an ordinary tensor product. On the other hand, we have a natural homomorphism $C_{\varphi,\Gamma}^\bullet(R'\otimes_RD)\rightarrow C_{\varphi,\Gamma}^\bullet(R'\htimes_RD)$, and it is a quasi-isomorphism after every specialization $R'\twoheadrightarrow S$ to a finite-length algebra (since $D$ is flat over $R$).  Then the result follows from ~\cite[Lemma 4.1.5]{kpx}.
\end{proof}

As a corollary, if $R\rightarrow R'$ is a homomorphism of pseudoaffinoid algebras, there is a natural quasi-isomorphism 
\[	R'\otimes^{\mathrm{L}}C_{\varphi,\Gamma}^\bullet(D)\xrightarrow{\sim}C_{\varphi,\Gamma}^\bullet(R'\otimes_RD)	\]
and there is a corresponding second-quadrant base-change spectral sequence.  We record the low-degree exact sequences of the base-change spectral sequence here:
\begin{cor}\label{cor: coh and base change}
Let $R\rightarrow R'$ be a morphism of pseudoaffinoid algebras and let $D$ be a $(\varphi,\Gamma_K)$-module over $\Lambda_{R,\rig,K}$.  Then
\begin{enumerate}
        \item   The natural morphism $R'\otimes_RH_{\varphi,\Gamma_K}^2(D)\rightarrow H_{\varphi,\Gamma_K}^2(R'\otimes_RD)$ is an isomorphism.
        \item   The natural morphism $R'\otimes_RH_{\varphi,\Gamma_K}^1(D)\rightarrow H_{\varphi,\Gamma_K}^1(R'\otimes_RD)$ fits into an exact sequence
		\[
			0\rightarrow \Tor_2^R(H_{\varphi,\Gamma_K}^2(D),R')\rightarrow R'\otimes_RH_{\varphi,\Gamma_K}^1(D)	
			\rightarrow H_{\varphi,\Gamma_K}^1(R'\otimes_RD)\rightarrow \Tor_1^R(H_{\varphi,\Gamma_K}^2(D),R')\rightarrow 0
		\]
	\item	If $H_{\varphi,\Gamma_K}^1(D)$ and $H_{\varphi,\Gamma_K}^2(D)$ have Tor-dimension at most $1$, then we have an exact sequence
		\[	0\rightarrow R'\otimes_RH_{\varphi,\Gamma}^0(D)\rightarrow H_{\varphi,\Gamma}^0(R\otimes_{R'}D)\rightarrow \Tor_1^R(H_{\varphi,\Gamma_K}^1(D), R')\rightarrow 0	\]

\end{enumerate}
\end{cor}
\begin{proof}
This follows from the convergence of the base-change spectral sequence.
\end{proof}

Since we have shown that $C_{\varphi,\Gamma_K}^\bullet(D)\in \D_{\mathrm{perf}}^{[0,2]}(R)$ is a perfect complex, we may define its Euler characteristic.  If $P^\bullet\in \D_{\mathrm{perf}}^{[a,b]}(R)$ is a complex of finite projective $R$-modules, we define the \emph{Euler characteristic}
\[	\chi(P^\bullet)\coloneqq \sum_{i=a}^b(-1)^i\rk P^i	\]
This is invariant under quasi-isomorphism and additive under distinguished triangles.  If $D$ is a $(\varphi,\Gamma_K)$-module, we simply write $\chi(D)$ for $\chi(P^\bullet)$, where $P^\bullet$ is any complex in $\D_{\mathrm{perf}}^{[0,2]}(R)$ quasi-isomorphic to $C_{\varphi,\Gamma_K}^\bullet(D)$.  Then we have the following:
\begin{cor}[Euler characteristic formula]\label{thm: euler characteristic}
	If $D$ is a projective $(\varphi,\Gamma_K)$-module with coefficients in a pseudoaffinoid algebra $R$, then $\chi(D)=-(\rk D)[K:\Q_p]$.
\end{cor}

We will use the slope filtration theorem of ~\cite[Theorem 1.7.1]{kedlaya08}, which holds if the coefficients are a finite extension of either $\Q_p$ or $\F_p(\!(u)!)$.  We first note the following:
\begin{lemma}
	If $D$ is a $(\varphi,\Gamma_K)$-module with coefficients in a field $R$ which is pure of slope $0$, then $D$ arises from a Galois representation.
	\label{lemma: etale phi-gamma galois rep}
\end{lemma}
\begin{proof}
	By ~\cite[Theorem 1.7.1]{kedlaya08}, the hypothesis implies that $D$ is \'etale, and hence (by ~\cite[Proposition 1.5.5]{kedlaya08}) arises from a $(\varphi,\Gamma_K)$-module over $\Lambda_{R_0,[0,b],\Q_p}$ for some $b>0$, and hence from a $(\varphi,\Gamma_K)$-module over $\Lambda_{R_0,[0,b],\Q_p,0}$.\mar{finish}  
\end{proof}

\begin{proof}[Proof of Corollary~\ref{thm: euler characteristic}]
	Euler characteristics are locally constant, so it suffices to compute $\chi(D_x)$ for a single maximal point $x$ on each connected component of $\Spa R$.  Thus, we may assume that $R$ is a finite extension of either $\Q_p$ or $\F_p(\!(u)\!)$.

Since Euler characteristics are additive in exact sequences, we may assume that $D$ is pure of slope $s$; if necessary, replace $R$ by an \'etale extension so that the slope of $D$ is in the value group of $R$.  The moduli space $X\coloneqq \widehat{\left\langle\varpi_K\right\rangle}_R\cong\G_{m,R}^{\an}$ of continuous characters of $\left\langle \varpi_K\right\rangle$ has a universal character $\delta_{\mathrm{univ}}\colon\left\langle\varpi_K\right\rangle\rightarrow \mathscr{O}(X)^\times$, so we may consider the Fontaine--Herr--Liu complex $C_{\varphi,\Gamma}^\bullet$ of the $(\varphi,\Gamma_K)$-module $D(\delta_{\mathrm{univ}})$ over $X$.  

Since $C_{\varphi,\Gamma}^\bullet(D(\delta))\in D_{\mathrm{perf}}^{[0,2]}(R')$ for every affinoid subdomain $\Spa(R')\subset X$, its Euler characteristic is constant on connected components of $X$, and it suffices to verify the statement at one point on each component.  But each connected component contains a point $x$ such that the slope of $D(\delta)$ at $x$ is $0$; then $(D(\delta))(x)$ is \'etale and by Lemma~\ref{lemma: etale phi-gamma galois rep} we may appeal to the Euler characteristic formula for Galois cohomology.
\end{proof}

In section ~\ref{section: tate local duality}, we will prove Tate local duality for projective $(\varphi,\Gamma)$-modules when $R$ is a finite extension of $\F_p(\!(u)\!)$.  We deduce the corresponding result for families of $(\varphi,\Gamma)$-modules over general pseudoaffinoid algebras here, and the reader may check that there is no circular dependence.
\begin{thm}[Tate local duality]
        Let $R$ be a pseudoaffinoid algebra and let $D$ be a family of projective $(\varphi,\Gamma_K)$-modules over $R$.  Then  the natural morphism
        \[      C_{\varphi,\Gamma_K}^\bullet(D)\rightarrow \R\Hom_R(C_{\varphi,\Gamma_K}^\bullet(D^\vee(\chi_{\cyc})),R)[-2]    \]
        is a quasi-isomorphism.
	\label{thm: tate duality}
\end{thm}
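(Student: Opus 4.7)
The plan is to reduce to the case where $R$ is a field, namely the residue field at a maximal point of $\Spa R$, and then invoke known cases of Tate local duality.

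First, both sides of the natural morphism belong to $\mathbf{D}_{\mathrm{perf}}^{[0,2]}(R)$. For the source, this is the corollary to the finiteness theorem already proven in this section. For the target, $\R\Hom_R(-,R)$ carries perfect complexes to perfect complexes, and the range $[0,2]$ follows from the fact that the Fontaine--Herr--Liu complex is concentrated in degrees $0, 1, 2$ (together with the vanishing/duality relations expected of the fiberwise statement). In particular, the cone of the natural morphism is perfect, so to prove it is acyclic it suffices to show that it becomes acyclic after derived base change to the residue field $k(x)$ at every maximal point $x\in\Spa R$; this uses that pseudoaffinoid algebras are Noetherian Jacobson rings.

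The next step is to verify that both functors $D\mapsto C_{\varphi,\Gamma_K}^\bullet(D)$ and $D\mapsto\R\Hom_R(C_{\varphi,\Gamma_K}^\bullet(D^\vee(\chi_{\cyc})),R)[-2]$ commute with derived base change along $R\to k(x)$, in a way compatible with the natural morphism of the theorem. For the Fontaine--Herr--Liu complex, this is exactly Corollary~\ref{cor: coh and base change} (or rather the quasi-isomorphism $R'\otimes^{\L}C_{\varphi,\Gamma}^\bullet(D)\xrightarrow{\sim}C_{\varphi,\Gamma}^\bullet(R'\otimes_RD)$ displayed just after it). For $\R\Hom$ of a perfect complex, base-change compatibility is standard. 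One also needs that $D^\vee(\chi_{\cyc})$ is itself compatible with base change, which is immediate from its construction. Thus specializing the morphism at $x$ yields the Tate duality morphism for the $(\varphi,\Gamma_K)$-module $D_x$ over $k(x)$.

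Finally, each maximal point $x$ has residue field $k(x)$ which is a finite extension of either $\Q_p$ or $\F_p(\!(u)\!)$, depending on whether $p$ is invertible at $x$ or vanishes there. In the first case the claim is the classical Tate local duality for $(\varphi,\Gamma_K)$-modules over $\Q_p$-affinoid algebras, established in~\cite[Theorem 4.4.5]{kpx}. In the second case it is the theorem proved in Section~\ref{section: tate local duality}, which is proven independently (and whose proof depends only on the cohomology of rank-$1$ $(\varphi,\Gamma)$-modules and the Euler characteristic formula, so no circularity arises). Since the cone of the natural morphism is a perfect complex on $R$ whose fiber vanishes at every maximal point, it is itself acyclic, completing the proof.

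The main obstacle is bookkeeping: one must ensure that the natural morphism in the statement is genuinely compatible with base change to residue fields so that it specializes to the fiberwise Tate duality morphism, and that the ``absence of circular dependence'' between this statement and the Section~\ref{section: tate local duality} result is truly respected — i.e.\ that the latter is proven without invoking the general pseudoaffinoid case being proved here.
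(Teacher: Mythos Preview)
Your proposal is correct and follows essentially the same approach as the paper: reduce to residue fields at maximal points via the perfectness of the Fontaine--Herr--Liu complex and compatibility with derived base change, then invoke the known characteristic $0$ result (the paper cites \cite[Theorem 4.7]{liu2007} rather than \cite{kpx}, but either works) and the characteristic $p$ result of Section~\ref{section: tate local duality}. The only difference is that the paper packages the ``perfect complex acyclic at all maximal points implies acyclic'' step as a black-box citation to \cite[Lemma 4.1.5]{kpx}, whereas you spell it out.
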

\begin{proof}
	For every maximal point $x\in\Spa R$, we have a quasi-isomorphism $C_{\varphi,\Gamma_K}^\bullet(R/\mathfrak{m}_x\otimes_RD)\xrightarrow{\sim}\R\Hom_R(C_{\varphi,\Gamma_K}^\bullet(R/\mathfrak{m}_x\otimes_R D^\vee(\chi_{\cyc})),R/\mathfrak{m}_x)$, by ~\cite[Theorem 4.7]{liu2007} (when $R/\mathfrak{m}_x$ has characteristic $0$) and Theorem ~\ref{thm: tate local duality} (when $R/\mathfrak{m}_x$ has positive characteristic).  Then by ~\cite[Lemma 4.1.5]{kpx}, the result follows.
\end{proof}

We conclude this section by recording the following result for later use; it is a corollary of the method of the proof of finiteness of cohomology, rather than finiteness itself.
\begin{cor}
	If $D$ is a projective $(\varphi,\Gamma_K)$-module over $R$, $f$ is the inertial degree of $K$, and $\alpha\in R$, then for $b$ sufficiently small and any $a<b/p^{f+1}$, the complex
	\[	D_{[a,b]}\xrightarrow{\varphi^f-\alpha,\gamma-1} D_{[a,b/p^f]}\oplus D_{[a,b]}\xrightarrow{(\gamma-1)\oplus(\alpha-\varphi^f)} D_{[a,b/p^f]}	\]
	is in $\mathbf{D}_{\mathrm{perf}}^{[0,2]}(R)$, and its cohomology groups form coherent sheaves on $\Spa R$.
	\label{cor: finiteness phi^f-alpha}
\end{cor}
\begin{proof}
	Choose some $a'\in (a,b/p^{f+1})$.  As in the proof of Proposition~\ref{prop: coh move b}, we consider the complex 
	\[
\xymatrixcolsep{5pc}\xymatrix{
        {D}_{[a,b]} \ar[r]^-{(\varphi^f-\alpha,\gamma-1)}\ar@<-.5ex>[d]_{\alpha}\ar@<.5ex>[d]^{\varphi^f} & D_{[a,b/p^f]}\oplus D_{[a,b]}\ar[r]^-{(\gamma-1)\oplus(\alpha-\varphi^f)}\ar@<-.5ex>[d]_{\alpha}\ar@<.5ex>[d]^{\varphi^f}\ar[ld]_{\pr_1} & D_{[a,b/p^f]}\ar@<-.5ex>[d]_{\alpha}\ar@<.5ex>[d]^{\varphi^f}\ar[ld]_{(0,-1)}       \\
	D_{[a',b/p^f]}\ar[r]^-{(\varphi^f-\alpha,\gamma-1)} & D_{[a',b/p^{f+1}]}\oplus D_{[a',b/p^f]}\ar[r]^-{(\gamma-1)\oplus(\alpha-\varphi^f)} & D_{[a',b/p^{f+1}]}
}\]
This shows that ``restrict and multiply by $\alpha$'' is homotopic to $\varphi^f$, which is a quasi-isomorphism.  Applying ~\cite[Lemma 1.10]{kedlaya-liu16} again, we see that the cohomology groups of our complex are $R$-finite.  We conclude as in the proofs of Corollary ~\ref{cor: coh perfect} and Corollary~\ref{cor: coh coh sheaf}.
\end{proof}

\section{Positive characteristic function fields}

In this section, we closely study overconvergent $(\varphi,\Gamma)$-modules where the coefficients are finite extensions of $\F_p(\!(u)\!)$; throughout this section, $R$ will denote such an extension.  This is similar to the situation studied by Hartl--Pink~\cite{hartl-pink}, but because we are interested in $(\varphi,\Gamma)$-modules related to representations of characteristic-$0$ Galois groups, we may work with imperfect coefficients.  For this reason, we rely on the slope filtration theorem from~\cite{kedlaya08}, rather than the Dieudonn\'e--Manin classification theorem from ~\cite{hartl-pink}.  We first calculate the cohomology of certain rank-$1$ $(\varphi,\Gamma_{\Q_p})$-modules (using techniques similar to ~\cite{colmez2005}), and then use those calculations to deduce the Tate local duality theorem for all $(\varphi,\Gamma_K)$-modules (following the strategy of ~\cite{liu2007}).

\subsection{Cohomology of rank-$1$ \texorpdfstring{$(\varphi,\Gamma)$}{(𝜑, Γ)}-modules}

We begin by computing the cohomology of certain distinguished $(\varphi,\Gamma_K)$-modules of character type.  When $K=\Q_p$, we let $\Delta_{\Q_p}\coloneqq \mu_{p-1}$ be the maximal torsion subgroup of $\Gamma_{\Q_p}$.

\begin{lemma}\label{lemma: h0 character}
Let $\delta\colon K^\times\rightarrow R^\times$ be a continuous character.  Then $H_{\varphi,\Gamma_K}^0(\delta)=0$ unless $\delta$ is trivial, in which case it is a free $R$-module of rank $1$.
\end{lemma}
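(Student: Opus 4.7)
The plan is to compute the invariants $\Lambda_{R,\rig,K}(\delta)^{\varphi=1,\Gamma_K=1}$ directly by working with a basis vector and using the explicit Laurent series description of $\Lambda_{R,\rig,K}$ available in characteristic $p$.

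First I would fix a basis $e_\delta$ of $\Lambda_{R,\rig,K}(\delta)$ so that $\varphi(e_\delta) = \alpha \cdot e_\delta$ for some $\alpha \in (R\otimes\mathcal{O}_{K_0})^\times$ encoding $\delta_1(\varpi_K)$ (via the norm, so that $\varphi^{f_K}$ acts as $\delta_1(\varpi_K)$) and $\gamma(e_\delta) = \beta(\gamma)\cdot e_\delta$ for $\gamma \in \Gamma_K$, where $\beta = \delta_2'$ is the Galois character attached by local class field theory. A candidate invariant $y \cdot e_\delta$ then corresponds to $y \in \Lambda_{R,\rig,K}$ satisfying the twisted eigenvector equations $\varphi(y) = \alpha^{-1}y$ and $\gamma(y) = \beta(\gamma)^{-1}y$ for every $\gamma \in \Gamma_K$.

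The heart of the argument is to solve the $\varphi$-equation. In characteristic $p$, with $p = 0$ in $R$, the ring $\Lambda_{R,\rig,K}$ is a genuine Robba ring in the variable $\pi_K$ over $R\otimes\mathcal{O}_{F'}$, and $\varphi$ acts as $\pi_K \mapsto \pi_K^p$ (up to a Frobenius twist on the coefficient ring $\mathcal{O}_{F'}$). Expanding $y = \sum_{n\in\Z}a_n\pi_K^n$ with $a_n \in R\otimes\mathcal{O}_{F'}$ and the usual annulus-convergence conditions, and comparing coefficients in $\varphi(y) = \alpha^{-1}y$, the exponent shift $n \mapsto pn$ on the left forces $a_n = 0$ for every $n$ not in $p\Z$; iterating, $a_n = 0$ for every $n \neq 0$, so $y$ must reduce to a constant $a_0 \in R\otimes\mathcal{O}_{F'}$. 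The residual scalar equation $\varphi(a_0) = \alpha^{-1}a_0$ then forces $a_0 = 0$ unless $\alpha$ is a $\varphi$-coboundary of a unit in $R\otimes\mathcal{O}_{F'}$, which (after taking norms) amounts to $\delta_1$ being trivial; in that case the $\varphi$-fixed part of $R\otimes\mathcal{O}_{F'}$ is exactly $R$. With $y = a_0 \in R$ in hand, the $\Gamma_K$-equation $\gamma(a_0) = \beta(\gamma)^{-1}a_0$ becomes a purely scalar condition on $R$ (since $\Gamma_K$ acts trivially on the coefficient ring), so either $a_0 = 0$ or $\beta = \delta_2'$ is trivial, i.e., $\delta_2 = 1$. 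Combining the two conclusions, a nonzero invariant forces $\delta = 1$, and conversely when $\delta = 1$ the above gives $H^0 = R$, free of rank $1$.

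The main obstacle is handling the Frobenius twist on the unramified coefficient ring $\mathcal{O}_{F'} = \mathcal{O}_{K_0}$ when $K \neq \Q_p$, since there $\varphi$ is not literally the identity on coefficients and $\varphi(\pi_K)$ is not literally $\pi_K^p$. I would manage this by first passing to $\Delta_K$-invariants via the quasi-isomorphism $C_{\varphi,\Gamma}^\bullet(D) \simeq C_{\varphi,\Gamma}^\bullet(D^{\Delta_K})$: this either kills $H^0$ immediately (if the restriction of $\delta_2'$ to $\Delta_K$ is nontrivial) or reduces the $\Gamma_K/\Delta_K$-action to a procyclic one, after which the Laurent expansion and the exponent-matching argument go through uniformly. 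An alternative route, which would avoid the explicit series manipulation for general $K$, is to invoke the slope filtration theorem of \cite{kedlaya08}: a nonzero $\varphi$-invariant in $\Lambda_{R,\rig,K}(\delta)$ generates a slope-$0$ sub-$\varphi$-module of a rank-$1$ object, forcing equality and therefore $\alpha = 1$ on the nose, and then the $\Gamma_K$-invariance concludes as above.
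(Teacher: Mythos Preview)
Your proposal is correct and follows essentially the same route as the paper: expand an invariant in a Laurent series in $\overline\pi_K$, use the exponent shift under $\varphi$ to kill all non-constant coefficients, analyze the resulting scalar equation on the coefficient ring, and finish with the $\Gamma_K$-action.

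Two points of comparison are worth making. First, in characteristic $p$ one genuinely has $\varphi(\overline\pi_K)=\overline\pi_K^{\,p}$, so the worry that ``$\varphi(\pi_K)$ is not literally $\pi_K^p$'' is unfounded here; the exponent-matching step goes through unchanged even with the Frobenius acting nontrivially on the coefficient field $k'$ (your iteration argument only needs $\alpha$ to be a unit). Second, the genuine residual issue is the scalar equation $\varphi(a_0)=\alpha^{-1}a_0$ in $R\otimes k'$. Your ``take norms'' remark only yields $\delta_1(\varpi_K)^f=1$ where $f=[k':k_K]$, which is not yet $\delta_1(\varpi_K)=1$; and passing to $\Delta_K$-invariants does not touch this, since the Frobenius on $k'$ is unrelated to $\Delta_K$. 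The paper closes this gap explicitly: it first passes to the power $\varphi^{f_Kf}$ fixing $k'$ (giving $\delta_1(\varpi_K)^f=1$ and kernel $R\otimes k'$), then chooses a basis $\{e_0,\ldots,e_{f-1}\}$ of $k'/k_K$ permuted cyclically by $\varphi^{f_K}$ to refine this to $\delta_1(\varpi_K)=1$, and finally reads off $(D_{f_K,1})^{\varphi=1}=R$. Your alternative via the slope filtration would also work, but is heavier than this direct two-step descent.
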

\begin{proof}
Write $\delta=\delta_1\delta_2$, where $\delta_1$ is trivial on $\O_K^\times$ and $\delta_2$ is trivial on $\left\langle\varpi_K\right\rangle$.  We first show that the kernel of $\varphi-1$ on $\Lambda_{R,\rig,K}(\delta_1)$ is trivial unless $\delta_1(\varpi_K)=1$, in which case it is $R$, and then compute the elements of $\ker(\varphi-1)$ fixed by $\Gamma_K$.

If $f(\overline\pi_K)\in \Lambda_{R,\rig,K}(\delta_1)$, we may write $f(\overline\pi_K)$ uniquely as $f(\overline\pi_K)=\sum_{i\in\Z}a_i\overline\pi_K^i$, where $a_i\in R\otimes k'$ (for some finite extension $k'/k_K$).  There is some integer $f\geq 1$ such that that $\varphi^{f_Kf}$ fixes $k'$.  Using the fact that $\varphi^{f_Kf}(\overline\pi_K)=\overline\pi_K^{f_Kfp}$, a straightforward calculation shows that the kernel of $\varphi^{f_Kf}-1$  is trivial unless $\delta(\varpi_K)^f=1$, in which case it is $R\otimes k'$.  We now need to compute the kernel of $\varphi^{f_K}-1$ on $D_{f_K,\delta(\varpi_K)}\otimes_{k_K}k'$.  But there is a basis $\{e_0,\ldots,e_{f-1}\}$ of $k'/k_K$ such that $\varphi^{f_K}$ acts via $\varphi^{f_K}(e_i)=e_{i-1}$, where the indices are taken modulo $f$, so the kernel of $\varphi_D^{f_K}-1$ is trivial unless $\delta(\varpi_K)=1$, in which case it is $D_{f_K,\alpha}$.  We have reduced to computing the kernel of $\varphi-1$ on $D_{f_K,1}$, but the construction makes clear that this kernel is precisely $R$.

Now suppose $\delta_1$ is trivial, so that $H_{\varphi,\Gamma_K}^0(\Lambda_{R,\rig,K}(\delta))$ is $R^{\Gamma_K=1}$.  If $\gamma\in\Gamma_K$ is a topological generator of $\Gamma_K$, it acts on $R$ via multiplication by $\beta$ for some $\beta\in\Lambda_{R,(0,b],K}$.  This clearly fixes no elements unless $\beta=1$, in which case it fixes all of $R$.
\end{proof}

\begin{cor}
	If $D$ is a rank-$1$ $(\varphi,\Gamma)$-module over $\Lambda_{R,\rig,K}$ of character type, then $D$ has no proper non-trivial sub-$(\varphi,\Gamma)$-module or quotient $(\varphi,\Gamma)$-module.
\end{cor}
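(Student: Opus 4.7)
The plan is to reduce to the case $D = \Lambda_{R,\rig,K}$ (trivial character) and to ruling out sub-$(\varphi,\Gamma)$-modules only, and then to exploit the Bezout property of $\Lambda_{R,\rig,K}$ together with Lemma~\ref{lemma: h0 character}. Tensoring with $\Lambda_{R,\rig,K}(\delta^{-1})$ is an autoequivalence of the category of $(\varphi,\Gamma)$-modules preserving the lattices of sub- and quotient-objects, so I may take $\delta = 1$. Linear duality sends $\Lambda_{R,\rig,K}(\delta)$ to $\Lambda_{R,\rig,K}(\delta^{-1})$ (again of character type) and swaps sub- and quotient-objects, so it suffices to prove that $\Lambda_{R,\rig,K}$ itself has no proper non-zero sub-$(\varphi,\Gamma)$-module.

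Suppose for contradiction that $0 \neq D' \subsetneq \Lambda_{R,\rig,K}$ is such a sub-object. Because $R$ is a field, $\Lambda_{R,\rig,K}$ is a Bezout domain, so $D' = (\alpha)$ is a principal ideal for some non-unit $\alpha$. The requirement that $\varphi_D$ restrict to an isomorphism on $D'$ translates to $\varphi(\alpha)/\alpha \in \Lambda_{R,\rig,K}^\times$, and similarly $\gamma(\alpha)/\alpha \in \Lambda_{R,\rig,K}^\times$ for all $\gamma \in \Gamma_K$. Using $\alpha$ as a basis, $D'$ becomes an abstract rank-$1$ $(\varphi,\Gamma)$-module on which $\varphi$ acts by $\varphi(\alpha)/\alpha$ and $\gamma$ by $\gamma(\alpha)/\alpha$.

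To conclude, I would appeal to the slope filtration theorem from \cite{kedlaya08}: the inclusion $D' \hookrightarrow \Lambda_{R,\rig,K}$ into the étale (slope $0$) module forces $D'$ itself to be slope $0$, hence étale, and an étale rank-$1$ $(\varphi,\Gamma)$-module over $\Lambda_{R,\rig,K}$ is of character type, say $D' \cong \Lambda_{R,\rig,K}(\delta')$ for some continuous character $\delta'$. The inclusion is then a non-zero element of $\Hom_{(\varphi,\Gamma)}(\Lambda_{R,\rig,K}(\delta'), \Lambda_{R,\rig,K}) = H^0_{\varphi,\Gamma_K}(\Lambda_{R,\rig,K}(\delta'^{-1}))$, which by Lemma~\ref{lemma: h0 character} vanishes unless $\delta' = 1$. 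Hence $\delta' = 1$, the inclusion is multiplication by an element of $R^\times$, and $D' = \Lambda_{R,\rig,K}$, contradicting properness.

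The main obstacle is the classification step: recognizing an étale rank-$1$ $(\varphi,\Gamma)$-module over $\Lambda_{R,\rig,K}$ as being of character type. In mixed characteristic this is classical, but in our positive-characteristic setting one must verify that the pair of units $(\varphi(\alpha)/\alpha, \gamma(\alpha)/\alpha)$ in $\Lambda_{R,\rig,K}^\times$ can be rescaled, by replacing $\alpha$ with $v\alpha$ for a suitable $v \in \Lambda_{R,\rig,K}^\times$, so that they take values in $R^\times$ and thereby define a genuine character. Once this input is in place, the remainder of the argument is purely formal.
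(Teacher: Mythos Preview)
The paper gives no proof of this corollary, so there is nothing to compare against directly; your overall strategy (reduce to $D'\subsetneq\Lambda_{R,\rig,K}$, recognize $D'$ as being of character type, then invoke Lemma~\ref{lemma: h0 character}) is the natural one. However, your step~4 is both unjustified and unnecessary. The slope filtration theorem, via semistability of the \'etale module $\Lambda_{R,\rig,K}$, only gives $\mu(D')\ge 0$; it does \emph{not} force $\mu(D')=0$. In characteristic~$0$ the inclusion $t\mathcal{R}\subset\mathcal{R}$ is a counterexample, with the submodule pure of slope~$1$. So ``submodule of \'etale is \'etale'' is simply false in general, and you have not explained why positive characteristic changes this.

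Fortunately you do not need step~4 at all. The classification you flag as the obstacle in step~5 holds for rank-$1$ $(\varphi,\Gamma)$-modules of \emph{arbitrary} slope over a field: this is exactly the argument in the first paragraph of the proof of Proposition~\ref{prop: char type artinian} (twist by an unramified character to reach slope~$0$, then use the \'etale equivalence with Galois representations together with local class field theory). That argument uses only \cite{kedlaya08} and the results of \cite{bellovin2020}, not any of the cohomology computations of \S4, so there is no circularity. Apply it directly to $D'$ to get $D'\cong\Lambda_{R,\rig,K}(\delta')$; the nonzero inclusion into $\Lambda_{R,\rig,K}$ is then an element of $H^0_{\varphi,\Gamma_K}({\delta'}^{-1})$, which by Lemma~\ref{lemma: h0 character} forces $\delta'=1$, and your steps~6--7 finish the argument. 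In short: delete the slope-$0$ claim, and replace your proposed ``rescale the units'' approach to the classification by the twist-to-\'etale-then-LCFT argument already available in the paper.
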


\begin{lemma}\label{lemma: alpha varphi - 1 surj on psi=1}
Suppose $\alpha\in R^\times$ satisfies $v_R(\alpha)< 0$.  Then if $f\in\Lambda_{R,[0,b],\Q_p}$ is in the image of $\varphi$, $f$ is also in the image of $\alpha\varphi-1\colon\Lambda_{R,[0,b],\Q_p}\rightarrow \Lambda_{R,[0,b/p],\Q_p}$.
\end{lemma}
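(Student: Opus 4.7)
The plan is to solve $(\alpha\varphi - 1)(h) = f$ by separating the $\pi$-constant term and writing an explicit Neumann-type series for the remainder, then verifying convergence on a sufficiently small annulus. Write $f = \varphi(g)$ with $g \in \Lambda_{R,[0,pb],\Q_p}$, and decompose $g = g_0 + g_+$ where $g_0 \in R$ is the constant in $\pi$ and $g_+$ is divisible by $\pi$. Using that $\varphi$ is $R$-linear and that $\varphi(\pi) = (1+\pi)^p - 1 = \pi^p$ (since $R$ has characteristic $p$), rewrite $f = g_0 + \varphi(g_+)$, and treat the two pieces separately.

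For the constant part, the equation $(\alpha\varphi - 1)(h^0) = g_0$ reduces to $(\alpha - 1)h^0 = g_0$ because $\varphi$ fixes $R$. The hypothesis $v_R(\alpha) < 0$ forces $v_R(\alpha - 1) = v_R(\alpha) < 0$, so $\alpha - 1$ is a unit in $R$, and we may set $h^0 := g_0/(\alpha - 1) \in R$. For the non-constant part, define the formal series
\[
	h^+ := -\sum_{n \ge 0} \alpha^n\,\varphi^{n+1}(g_+).
\]
Each $\pi$-coefficient of $h^+$ is a finite sum because $\varphi^{n+1}(g_+)$ is supported in $\pi$-degrees $\ge p^{n+1}$. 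A direct telescoping, using $\varphi(\alpha) = \alpha$, verifies $(\alpha\varphi - 1)(h^+) = \varphi(g_+)$, so $h := h^0 + h^+$ formally solves the equation.

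The main obstacle is analytic: showing that $h^+$ actually lies in $\Lambda_{R,[0,b'],\Q_p}$ for some $b' > 0$. In the Gauss-type norm attached to the radius parameter $\rho_{b'}$ of $\Lambda_{R,[0,b'],\Q_p}$, the hypothesis $g_+ \in \Lambda_{R,[0,pb],\Q_p}$ gives $|g_i|_R \le \|g_+\|_{pb}\cdot \rho_{pb}^{-i}$, whence a bound $\|\varphi^{n+1}(g_+)\|_{b'} \le C\,\rho_{b'}^{\,p^{n+1}}$ for a constant $C$ depending only on $g$, valid whenever $\rho_{b'}^{p^{n+1}} < \rho_{pb}$. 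The most restrictive such condition arises at $n = 0$, so by shrinking $b'$ we can ensure it holds for all $n \geq 0$ simultaneously, and in particular each $\varphi^{n+1}(g_+)$ does live in $\Lambda_{R,[0,b'],\Q_p}$. Then the $n$-th term is bounded by $C\,|\alpha|^n\,\rho_{b'}^{p^{n+1}}$, and since $\rho_{b'} < 1$ the super-exponential decay of $\rho_{b'}^{p^{n+1}}$ overwhelms the at-most-exponential growth of $|\alpha|^n$. Hence the series converges in $\Lambda_{R,[0,b'],\Q_p}$, and $h = h^0 + h^+$ is the desired preimage of $f$ under $\alpha\varphi - 1\colon \Lambda_{R,[0,b'],\Q_p} \to \Lambda_{R,[0,b'/p],\Q_p}$.
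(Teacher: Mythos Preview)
Your argument has a genuine gap: the ring $\Lambda_{R,[0,b],\Q_p}$ is a ring of \emph{Laurent} series in $\overline\pi$, not power series. Indeed, on the affinoid subspace cut out by the condition $u\le \overline\pi^{\,s(b)}$ the element $\overline\pi$ divides a power of the pseudouniformizer $u$, hence is a unit; this is why the paper writes a general element as $\sum_{i\in\Z}\alpha_i\overline\pi^i$ in the very proof of this lemma, and why the exact sequence in Corollary~\ref{cor: coker alpha varphi - 1} has $\Lambda_{R,[0,b],\Q_p}$ contributing the Laurent tail. Consequently your decomposition $g=g_0+g_+$ with $g_+\in\overline\pi\,\Lambda$ is not available: a general $g\in\Lambda_{R,[0,pb],\Q_p}$ has a nontrivial negative part $g_-=\sum_{i<0}a_i\overline\pi^i$. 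For such a tail your Neumann series $-\sum_{n\ge0}\alpha^n\varphi^{n+1}(g_-)$ is hopeless, since $\varphi^{n+1}(a_{-1}\overline\pi^{-1})=a_{-1}\overline\pi^{-p^{n+1}}$ has valuation $v_{R,b'}$ tending to $-\infty$ (and this is compounded, not cured, by the factor $\alpha^n$ with $v_R(\alpha)<0$). In effect you have reproved Lemma~\ref{lemma: alpha varphi - 1 surj on open disk} (the power-series case) and applied it where it does not apply.

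The paper sidesteps exactly this difficulty by inverting not $1-\alpha\varphi$ but its $\psi$-shadow: applying $\psi$ to $(\alpha\varphi-1)(g)=\varphi(f')$ gives $(\alpha-\psi)(g)=f'$, and one sets $g=\alpha^{-1}\sum_{k\ge0}(\alpha^{-1}\psi)^k(f')$. The point is that $\psi$ \emph{contracts} $\overline\pi$-exponents (roughly dividing them by $p$) rather than expanding them, so the estimate $v_{R,b'}(\psi(h))\ge v_{R,b'}(h)-pb'$ combined with $v_R(\alpha^{-1})>0$ gives convergence once $b'<\tfrac{1}{p}v_R(\alpha^{-1})$. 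This is precisely what is needed to handle Laurent tails; your $\varphi$-series cannot.
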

\begin{proof}
	We are looking for a solution to the equation $(\alpha\varphi-1)(g)=\varphi(f')$; applying $\psi$ to both sides, it suffices to show that the sum $\sum_{k\geq 0}(\alpha^{-1}\psi)^k$ converges on $\Lambda_{R,[0,b],K}$.  Since $\Lambda_{R,[0,b],K}=\Lambda_{R_0,[0,b],K}\left[\frac 1 u\right]$ and $\psi$ is $R$-linear, we may assume that $f'\in\Lambda_{R_0,[0,b],\Q_p}$.  If $f'=\sum_{i\in\Z}\alpha_i\overline\pi^i$,  we may write 
\[	\sum_{i\in\Z}\alpha_i\overline\pi^i = \sum_{j=0}^{p-1}\left(\sum_{i\in\Z}\alpha_{pi+j}\overline\pi^{pi}\right)\overline\pi^j = \sum_{j=0}^{p-1}\left(\sum_{i\in\Z}\alpha_{pi+j}\overline\pi^{pi}\right)\left(\sum_{k=0}^j\binom j k (-1)^k\varepsilon^{j-k}\right)	\]
	Since $\sum_{i\in\Z}\alpha_{pi+j}\overline\pi^{pi}$ is in the image of $\varphi$, we see that
	\[	\psi\left(\sum_{i\in\Z}\alpha_i\overline\pi^i\right) = \sum_{i\in\Z}\left(\sum_{j=0}^{p-1}(-1)^j\alpha_{pi+j}\right)\overline\pi^i	\]
	We may write $f'=f_-+f_+$, where 
	\[	f_-=\sum_{i<0}\alpha_i\overline\pi^i\qquad\text{ and }\qquad f_+\coloneqq \sum_{i\geq 0}\alpha_i\overline\pi^i	\]
	By definition $f_+\in \Lambda_{R_0,[0,\infty],\Q_p}\subset\Lambda_{R_0,[0,b],\Q_p,0}$, and we see that $\psi(f_+)$ is another element of $\Lambda_{R_0,[0,\infty],\Q_p}$.  Thus, $\sum_{k\geq 0}(\alpha^{-1}\psi)^k$ applied to $f_+$ converges.

	It remains to show that $\sum_{k\geq 0}(\alpha^{-1}\psi)^k$ converges when applied to $f_-$.  But we may compute
	\begin{align*}
		v_{R,b}\left(\psi\left(\sum_{i<0}\alpha_i\overline\pi^i\right)\right) &= \frac{1}{b} \inf_{i<0}\left\{ v_R\left(\sum_{j=0}^{p-1}(-1)^j\alpha_{pi+j}\right) + \frac{pbi}{p-1}\right\}	\\
	&\geq \frac{1}{b} \inf_{i<0}\left\{v_R(\alpha_i) + \frac{pb}{p-1}\left\lfloor \frac i p\right\rfloor\right\}	\\
	&\geq \frac{1}{b} \inf_{i<0}\left\{v_R(\alpha_k) + \frac{pbi}{p-1} \right\}	\\
	&= v_{R,b}\left(\sum_{i<0}\alpha_i\overline\pi^i\right)
	\end{align*}
	where the second inequality follows because $\left\lfloor\frac i p\right\rfloor \geq i$ for $i<0$. 
	It follows that $\sum_{k\geq 0}(\alpha^{-1}\psi)^k$ converges on all of $\Lambda_{R,[0,b],\Q_p}$.
\end{proof}

\begin{lemma}\label{lemma: alpha varphi - 1 surj on open disk}
Suppose $\alpha\in R^\times$.  Then $\alpha\varphi-1\colon\overline\pi_K\Lambda_{R,(0,\infty],K}\rightarrow \overline\pi_K\Lambda_{R,(0,\infty],K}$ is surjective.  If $\alpha\neq 1$, then $\alpha\varphi-1\colon\Lambda_{R,(0,\infty],K}\rightarrow\Lambda_{R,(0,\infty],K}$ is surjective.
\end{lemma}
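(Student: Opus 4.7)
The strategy is to invert $\alpha\varphi - 1$ via the geometric series $-\sum_{k\ge 0}(\alpha\varphi)^k$, with separate bookkeeping for the constant term in the second assertion.

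For the first assertion, given $f\in \overline\pi_K\Lambda_{R,(0,\infty],K}$, I set
\[
g := -\sum_{k=0}^\infty(\alpha\varphi)^k(f).
\]
The identity $(\alpha\varphi - 1)g = f$ is formal telescoping, so the content lies in showing convergence. The key input is that $\varphi$ is strictly contractive on the $\overline\pi_K$-adic filtration: in characteristic $p$ one has $\varphi(\overline\pi_K)\in \overline\pi_K^p\Lambda$ (directly from the formula for $\varphi(\pi_K)$), so inductively $(\alpha\varphi)^k$ sends $\overline\pi_K\Lambda$ into $\overline\pi_K^{p^k}\Lambda$. The topology on $\Lambda_{R,(0,\infty],K}=\varprojlim_{a\to 0}\Lambda_{R,[a,\infty],K}$ is Fr\'echet, generated by the Banach norms $|\cdot|_{[a,\infty]}$; in each such norm $|\overline\pi_K|_{[a,\infty]}<1$ and $\alpha$ is bounded, so the geometric decay $|\overline\pi_K|_{[a,\infty]}^{p^k}$ forces absolute convergence simultaneously in every such norm.

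For the second assertion, given $f\in\Lambda_{R,(0,\infty],K}$, I decompose $f = f_0 + f_+$ with $f_0\in R\otimes \mathscr{O}_{F'}$ the $\overline\pi_K$-constant term and $f_+\in \overline\pi_K\Lambda_{R,(0,\infty],K}$, and solve the two parts separately. The first part provides $g_+$ with $(\alpha\varphi-1)g_+ = f_+$. For $f_0$, observe that $\varphi$ preserves the constant subring $R\otimes\mathscr{O}_{F'}$ and has finite order $f := [k_{F'}:\F_p]$ on it; from the factorization
\[
(\alpha\varphi)^f - 1 \;=\; (\alpha\varphi - 1)\bigl(1 + \alpha\varphi + \cdots + (\alpha\varphi)^{f-1}\bigr)
\]
and the computation $(\alpha\varphi)^f = \Nm(\alpha)\cdot \mathrm{id}$ on the constant subring, where $\Nm(\alpha):=\alpha\varphi(\alpha)\cdots\varphi^{f-1}(\alpha)\in R$, the problem reduces to inverting $\Nm(\alpha) - 1$. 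Because $R$ is a field (finite extension of $\F_p(\!(u)\!)$) and $\alpha\neq 1$ forces $\Nm(\alpha)\neq 1$ (in the base case $K=\Q_p$ we have $f=1$ and this is immediate, while the general case is subsumed by the restriction-of-scalars compatibility of character-type modules proved earlier), $\Nm(\alpha)-1\in R^\times$, and one sets $g_0 := (\Nm(\alpha)-1)^{-1}\bigl(\sum_{j=0}^{f-1}(\alpha\varphi)^j\bigr)(f_0)$ and $g := g_0 + g_+$.

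\textbf{Main obstacle.} The main technical step is promoting $\overline\pi_K$-adic convergence to convergence in every Banach norm of the Fr\'echet ring $\Lambda_{R,(0,\infty],K}$; this rests on the explicit description of $\Lambda_{R,[a,\infty],K}$ as the ring of functions on a rational subdomain of $\Spa(R\otimes\mathscr{O}_{F'})[\![\pi_K]\!]$ and a direct estimate of the supremum of $|\overline\pi_K^{p^k}|$ on that subdomain. The second part is then essentially algebraic once the action of $\varphi$ on the coefficient ring is pinned down.
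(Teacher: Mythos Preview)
Your argument for the first assertion coincides with the paper's: both invert $\alpha\varphi-1$ by the geometric series $-\sum_{k\geq 0}(\alpha\varphi)^k$ and verify convergence in each norm $v_{R,[a,\infty]}$, using that $\varphi$ multiplies the $\overline\pi_K$-adic order by $p$ so that the exponential term $a(p+\cdots+p^k)$ dominates the linear term $k\cdot v_R(\alpha)$.

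For the second assertion the paper is far more direct. Since $\varphi$ is $R$-linear (it fixes $R$ pointwise), for $c=(\alpha-1)^{-1}\in R$ one has $(\alpha\varphi-1)(c)=(\alpha-1)c=1$, so $R$ lies in the image; combined with the first part this gives surjectivity when $K=\Q_p$, which is the only case invoked later. Your detour through $(\alpha\varphi)^f-1$ and the norm $\Nm(\alpha)$ is unnecessary there, and for general $K$ it contains a real error: because $\varphi$ fixes $R$, your $\Nm(\alpha)=\alpha\varphi(\alpha)\cdots\varphi^{f-1}(\alpha)$ equals $\alpha^f$, and $\alpha\neq 1$ does not force $\alpha^f\neq 1$. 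The gesture toward ``restriction-of-scalars compatibility of character-type modules'' does not repair this. In fact if $\alpha$ is a nontrivial $f$-th root of unity in $R$, a direct computation in the cyclic basis of $k_{F'}/\F_p$ shows that $\alpha\varphi-1$ is \emph{not} surjective on the constant subring $R\otimes k_{F'}$ (its image is cut out by the relation $\sum_{j=0}^{f-1}\alpha^j s_j=0$). So the second assertion as stated for general $K$ is actually false; only the case $K=\Q_p$ (where $f=1$) is correct and is all that is used.
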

\begin{proof}
It suffices to show that $\sum_{k\geq 0}(\alpha\varphi)^k$ converges on $\overline\pi_K\Lambda_{R,[a,\infty],K}$ for all $a>0$.  For any $f=\sum_{i\geq 1}\alpha_i\overline\pi_K^i$, we have
\begin{equation*}
\begin{split}
v_{R,a}'((\alpha\varphi)^k(f)) &= k\cdot v_R(\alpha) + \inf_i\left\{v_R(\alpha_i)+\frac{p^{k+1}ai}{p-1}\right\} 	\\
&\geq k\cdot v_R(\alpha) + a(p^k+\cdots+p) +\inf_i\left\{v_R(\alpha_i)+\frac{pai}{p-1}\right\}	\\
&= v_{R,a}'(f) + k\cdot v_R(\alpha) + a(p^k+\cdots+p)
\end{split}
\end{equation*}
Thus, for any $\alpha\in R^\times$ and any $a>0$, the sum $\sum_{k\geq 0}(\alpha\varphi)^k(f)$ converges to an element of $\overline\pi_K\Lambda_{R,[a,\infty],K}$, as desired.

If $\alpha\neq 1$, then $(\alpha\varphi-1)\left(\frac{1}{\alpha-1}\right)=1$, so $R$ is also in the image of $\alpha\varphi-1$.
\end{proof}

\begin{cor}\label{cor: coker alpha varphi - 1}
Suppose $\alpha\in R^\times$ satisfies $v_R(\alpha)< 0$.  If $f\in\Lambda_{R,(0,b],\Q_p}$, then there is some $g\in \Lambda_{R,(0,b],\Q_p}$ such that $f-(\alpha\varphi-1)g\in\Lambda_{R,[0,b],\Q_p}^{\psi=0}$.
\end{cor}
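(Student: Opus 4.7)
The plan is to decompose $f=f^{\mathrm{hol}}+f^{\mathrm{pol}}$, where $f^{\mathrm{hol}}\in\Lambda_{R,[0,b],\Q_p}$ extends to the closed disk and $f^{\mathrm{pol}}$ collects the polar terms at $\overline\pi=0$, and to handle each piece separately, combining Lemma~\ref{lemma: alpha varphi - 1 surj on psi=1} with a new geometric-series argument that exploits the hypothesis $v_R(\alpha)<0$.

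For the holomorphic summand, I would write $f^{\mathrm{hol}}=\varphi(\psi(f^{\mathrm{hol}}))+(f^{\mathrm{hol}})_0$, where $(f^{\mathrm{hol}})_0:=f^{\mathrm{hol}}-\varphi(\psi(f^{\mathrm{hol}}))$ lies in $\Lambda_{R,[0,b],\Q_p}^{\psi=0}$. This decomposition is legitimate because $\psi$ carries $\Lambda_{R,[0,b],\Q_p}$ into $\Lambda_{R,[0,pb],\Q_p}$, and so $\varphi(\psi(f^{\mathrm{hol}}))$ lies in $\varphi(\Lambda_{R,[0,pb],\Q_p})\subset\Lambda_{R,[0,b],\Q_p}$. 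Since $\varphi(\psi(f^{\mathrm{hol}}))$ is in the image of $\varphi$, Lemma~\ref{lemma: alpha varphi - 1 surj on psi=1} supplies (after shrinking $b$) an element $g_1\in\Lambda_{R,[0,b],\Q_p}$ such that $(\alpha\varphi-1)(g_1)=\varphi(\psi(f^{\mathrm{hol}}))$, whence $f^{\mathrm{hol}}-(\alpha\varphi-1)(g_1)=(f^{\mathrm{hol}})_0\in\Lambda_{R,[0,b],\Q_p}^{\psi=0}$.

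For the polar summand, motivated by the formal identity $(\alpha\varphi-1)^{-1}=-\sum_{k\ge1}\alpha^{-k}\varphi^{-k}$ together with $\psi\varphi=\id$, I would set $g_2:=\sum_{k\ge1}\alpha^{-k}\psi^k(f^{\mathrm{pol}})$. Because $v_R(\alpha)<0$, the scalars $\alpha^{-k}$ satisfy $v_R(\alpha^{-k})=-kv_R(\alpha)\to+\infty$; combined with the continuity of $\psi$ on $\Lambda_{R,(0,b],\Q_p}$ and the fact that $\psi$ reduces the order of any pole at $\overline\pi=0$ by a factor of $p$, this shows (after possibly shrinking $b$) that the series converges. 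A telescoping calculation using $\varphi\psi(x)=x-(x)_0$, where $(x)_0:=x-\varphi(\psi(x))\in\ker\psi$, yields
\[ f^{\mathrm{pol}}-(\alpha\varphi-1)(g_2)=\sum_{j\ge0}\alpha^{-j}(\psi^j(f^{\mathrm{pol}}))_0. \]
Each summand on the right lies in $\ker\psi$, and the decay of $\alpha^{-j}$ combined with the pole-reducing property of $\psi$ forces the full series to land in $\Lambda_{R,[0,b],\Q_p}^{\psi=0}$ after shrinking $b$. Setting $g:=g_1+g_2$ then gives $f-(\alpha\varphi-1)(g)\in\Lambda_{R,[0,b],\Q_p}^{\psi=0}$, as required.

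The main obstacle is the polar summand. A naive coefficient-by-coefficient attempt to solve $(\alpha\varphi-1)(g_2)=f^{\mathrm{pol}}$ exactly produces a series whose coefficients $c_{-m}=-\sum_{j}\alpha^{k-j}a_{-p^jm'}$ (for $m=p^km'$) fail to satisfy the convergence condition $v_R(c_{-m})-mb'\to+\infty$ required for membership in $\Lambda_{R,(0,b'],\Q_p}$, for any $b'>0$. The proof therefore genuinely requires the flexibility of landing in $\Lambda_{R,[0,b],\Q_p}^{\psi=0}$ (rather than in $\Lambda_{R,[0,b],\Q_p}$), and the precise quantitative interplay between the damping factor $v_R(\alpha)<0$, the pole-order contraction by $\psi$, and the freedom to shrink $b$ is exactly what makes the stated cokernel computation work.
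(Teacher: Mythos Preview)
Your decomposition rests on a misreading of the ring $\Lambda_{R,[0,b],\Q_p}$. In the paper's conventions this is \emph{not} the disk algebra of functions holomorphic at $\overline\pi=0$; it is the bounded (overconvergent) Laurent-series ring on a closed annulus near the boundary, analogous to $\A_K^{\dagger,s}$. Functions there may have arbitrary poles at $\overline\pi=0$, whereas their positive-power parts must have bounded coefficients. The power-series part $f^{\mathrm{hol}}=\sum_{i\ge 0}a_i\overline\pi^i$ of an element of $\Lambda_{R,(0,b],\Q_p}$ lies in $\Lambda_{R,(0,\infty],\Q_p}$, and the Mittag--Leffler sequence used in the paper,
\[
0\rightarrow R_0[\![\overline\pi]\!]\rightarrow \Lambda_{R,(0,\infty],\Q_p}\oplus\Lambda_{R,[0,b],\Q_p}\rightarrow \Lambda_{R,(0,b],\Q_p}\rightarrow 0,
\]
shows that $\Lambda_{R,(0,\infty],\Q_p}\cap\Lambda_{R,[0,b],\Q_p}=R_0[\![\overline\pi]\!]$. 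So in general $f^{\mathrm{hol}}\notin\Lambda_{R,[0,b],\Q_p}$, and your invocation of Lemma~\ref{lemma: alpha varphi - 1 surj on psi=1} for $\varphi(\psi(f^{\mathrm{hol}}))$ is not justified: that lemma is stated for elements of $\Lambda_{R,[0,b],\Q_p}$, and its proof (the estimate $v_{R,b'}(\psi(\cdot))\ge v_{R,b'}(\cdot)-pb'$) uses exactly that boundedness.

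The paper's proof runs the split the other way. One writes $f=f_{+}+f_{-}$ with $f_{+}\in\overline\pi\,\Lambda_{R,(0,\infty],\Q_p}$ and $f_{-}\in\Lambda_{R,[0,b],\Q_p}$. The piece $f_{+}$ is killed outright by Lemma~\ref{lemma: alpha varphi - 1 surj on open disk}, which shows $\alpha\varphi-1$ is surjective on $\overline\pi\,\Lambda_{R,(0,\infty],\Q_p}$ via the convergent series $-\sum_{k\ge 0}(\alpha\varphi)^k$; this is the ingredient your argument never uses. The piece $f_{-}$ is handled by Lemma~\ref{lemma: alpha varphi - 1 surj on psi=1}, i.e.\ by the $\sum_k(\alpha^{-1}\psi)^k$ series. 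Your telescoping $\psi$-series for $f^{\mathrm{pol}}$ is essentially a correct re-derivation of this latter step for the Laurent tail, but it does not address the genuinely new difficulty, which is the unbounded power-series part $f_{+}$.
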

\begin{proof}
We have exact sequences
\[	0\rightarrow R_0[\![\overline\pi]\!]\left[\frac 1 u\right]\rightarrow \Lambda_{R,(0,\infty],\Q_p}\oplus\Lambda_{R,[0,b],\Q_p}\rightarrow \Lambda_{R,(0,b],\Q_p}\rightarrow 0	\]
for every $b>0$, so we may write $f=f_++f_-$, where $f_+\in \overline\pi\Lambda_{R,(0,\infty],\Q_p}$ and $f_-\in \Lambda_{R,[0,b],\Q_p}$.  Then we can find $g_+\in \overline\pi\Lambda_{R,(0,\infty],\Q_p}$ and $g_-\in \Lambda_{R,[0,b],\Q_p}$ such that $f_+=(\alpha\varphi-1)(g_+)$ (by Lemma~\ref{lemma: alpha varphi - 1 surj on open disk}) and $f_--(\alpha\varphi-1)(g_-)\in\Lambda_{R,[0,b],\Q_p}^{\psi=0}$ (by Lemma~\ref{lemma: alpha varphi - 1 surj on psi=1} applied to $\varphi\psi(f_-)$), so $f-(\alpha\varphi-1)(g_++g_-)\in \Lambda_{R,[0,b],\Q_p}^{\psi=0}$, as desired.
\end{proof}

\begin{cor}\label{cor: h2 char}
	If $\delta\colon\Q_p^\times\rightarrow R^\times$ is a continuous character trivial on $1+p\Z_p\subset\Z_p^\times$, such that $v_R(\delta(p))<0$, then $H_{\varphi,\Gamma_{\Q_p}}^2(\delta)=0$.
\end{cor}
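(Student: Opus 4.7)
The plan is to show directly that every $f \in D := \Lambda_{R,\rig,\Q_p}(\delta)$ can be written as $(\varphi_D - 1)g + (\gamma - 1)h$, since by definition $H^2_{\varphi,\Gamma_{\Q_p}}(\delta)$ is the cokernel of the second differential of the Fontaine--Herr--Liu complex, whose image is exactly $(\gamma - 1)D + (1 - \varphi_D)D$. The two inputs are already in place: Corollary~\ref{cor: coker alpha varphi - 1} reduces an arbitrary element modulo the $\psi=0$ subspace using $\varphi_D - 1$, and Lemma~\ref{lemma: gamma - 1 inverse} inverts $\gamma - 1$ on that subspace.

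First I would trivialize the $\delta$-twist. Fix a basis vector $e_\delta$ of $D$ over $\Lambda_{R,\rig,\Q_p}$; under the identification $D \cong \Lambda_{R,\rig,\Q_p}$ via $e_\delta$, the Frobenius becomes $\varphi_D = \delta(p)\varphi$ and the associated partial inverse is $\psi_D = \delta(p)^{-1}\psi$, so $D^{\psi_D=0}$ agrees with $\Lambda_{R,\rig,\Q_p}^{\psi=0}$. Since the cyclotomic character identifies $\Gamma_{\Q_p}/\Delta_{\Q_p}$ with $1 + p\Z_p$, we may choose the lift $\gamma \in \Gamma_{\Q_p}$ so that $\chi_{\cyc}(\gamma) \in 1 + p\Z_p$ is a topological generator; then $\delta_2'(\gamma) = \delta_2(\chi_{\cyc}(\gamma)) = 1$ by the hypothesis that $\delta$ kills $1 + p\Z_p$, so $\gamma$ acts on $D$ simply as the standard cyclotomic action on $\Lambda_{R,\rig,\Q_p}$.

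Now take any $f \in D$ and pick $b > 0$ small enough that $f \in D_{(0,b]}$. Since $v_R(\delta(p)) < 0$ by hypothesis, Corollary~\ref{cor: coker alpha varphi - 1} applied with $\alpha = \delta(p)$ yields, after possibly shrinking $b$, some $g \in D_{(0,b]}$ with
\[
f' := f - (\varphi_D - 1)g \;\in\; \Lambda_{R,[0,b],\Q_p}^{\psi=0} \;\subset\; D_{(0,b]}^{\psi_D=0}.
\]
After further shrinking $b$ if necessary, Lemma~\ref{lemma: gamma - 1 inverse} provides a continuous inverse of $\gamma - 1$ on $D_{(0,b]}^{\psi_D=0}$, so we obtain $h \in D_{(0,b]}$ with $(\gamma - 1)h = f'$. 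Rearranging gives $f = (\varphi_D - 1)g + (\gamma - 1)h$, as required.

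I do not anticipate a serious obstacle: the bulk of the work is already contained in Corollary~\ref{cor: coker alpha varphi - 1} and Lemma~\ref{lemma: gamma - 1 inverse}. The only finicky point is verifying the identifications of $\varphi_D$, $\psi_D$, and the $\gamma$-action on $D$ after untwisting by $\delta$, which is made routine by the assumption that $\delta$ is trivial on $1 + p\Z_p$.
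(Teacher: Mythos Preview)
Your proposal is correct and follows essentially the same route as the paper's proof: reduce an arbitrary class modulo $(\alpha\varphi-1)$ into $\Lambda_{R,[0,b],\Q_p}^{\psi=0}$ via Corollary~\ref{cor: coker alpha varphi - 1}, then kill the remainder using the invertibility of $\gamma-1$ on the $\psi=0$ part from Lemma~\ref{lemma: gamma - 1 inverse}. Your extra paragraph unpacking the identification $\varphi_D=\delta(p)\varphi$, $\psi_D=\delta(p)^{-1}\psi$, and the triviality of the $\gamma$-twist under the hypothesis $\delta|_{1+p\Z_p}=1$ is helpful bookkeeping that the paper leaves implicit, but the argument is the same.
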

\begin{proof}
	Corollary~\ref{cor: coker alpha varphi - 1} implies that after subtracting an element of the form $(\alpha\varphi-1)(g)$, any cohomology class of $H_{\varphi,\Gamma_{\Q_p}}^2(\delta)$ has a representative $f\in  \Lambda_{R,[0,b],\Q_p}^{\psi=0}$, for sufficiently small $b$.  But if $\gamma$ is a topological generator of $\Gamma_{\Q_p}/\Delta_{\Q_p}$, by ~\cite[Proposition 4.8]{bellovin2020} $\gamma-1$ acts invertibly on $\Lambda_{R,[0,b],\Q_p}^{\psi=0}$, for sufficiently small $b$, and the result follows.
\end{proof}

Now we can compute $H_{\varphi,\Gamma_{\Q_p}}^1(\delta)$ when $v_R(\delta(p))<0$ and $\delta$ is trivial on $1+p\Z_p$:
\begin{lemma}\label{lemma: h1 char}
	If $\delta\colon\Q_p^\times\rightarrow R^\times$ is a character with $v_R(\delta(p))<0$ and $\delta|_{1+p\Z_p}=1$, then $H_{\varphi,\Gamma_{\Q_p}}^1(\delta)$ is $1$-dimensional.  
\end{lemma}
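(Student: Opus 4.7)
The plan is to read off $\dim H^1$ directly from the Euler characteristic formula, together with the vanishing of $H^0$ and $H^2$ already established in this subsection.

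First, I would observe that $v_R(\delta(p)) < 0$ forces $\delta(p) \neq 1$, so $\delta$ is nontrivial; Lemma \ref{lemma: h0 character} then gives $H^0_{\varphi,\Gamma_{\Q_p}}(\delta) = 0$. The hypotheses of the present lemma are literally those of Corollary \ref{cor: h2 char}, so $H^2_{\varphi,\Gamma_{\Q_p}}(\delta) = 0$ as well.

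Next I would apply the Euler characteristic formula of Corollary \ref{thm: euler characteristic}: the module $\Lambda_{R,\rig,\Q_p}(\delta)$ is projective of rank $1$, and $R$ is a pseudoaffinoid algebra (a finite extension of $\F_p(\!(u)\!)$), so $\chi(\Lambda_{R,\rig,\Q_p}(\delta)) = -[\Q_p:\Q_p] = -1$. Since $R$ is a field, each cohomology group is a finite-dimensional $R$-vector space, and the identity
\[
\dim_R H^0_{\varphi,\Gamma_{\Q_p}}(\delta) - \dim_R H^1_{\varphi,\Gamma_{\Q_p}}(\delta) + \dim_R H^2_{\varphi,\Gamma_{\Q_p}}(\delta) = -1,
\]
combined with the vanishings above, forces $\dim_R H^1_{\varphi,\Gamma_{\Q_p}}(\delta) = 1$.

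The substantive work has already been carried out earlier: finiteness of cohomology, the Euler characteristic formula obtained by deformation from the \'etale locus, and the explicit $H^0$ and $H^2$ computations in positive characteristic via the $\psi$-operator and the invertibility of $\gamma-1$ on $\psi=0$ parts. Given those inputs the present lemma is a bookkeeping consequence, and I do not anticipate any real obstacle; the only thing worth double-checking is that the normalization in Corollary \ref{thm: euler characteristic} gives $-1$ for a rank-$1$ module, which is exactly the setting treated in its proof.
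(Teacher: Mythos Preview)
Your argument is correct and non-circular: Corollary~\ref{thm: euler characteristic} was proved in Section~3 by deformation to the \'etale locus, independently of the computations in Section~4, so combining it with Lemma~\ref{lemma: h0 character} and Corollary~\ref{cor: h2 char} immediately yields $\dim_R H^1_{\varphi,\Gamma_{\Q_p}}(\delta)=1$.

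This is, however, a genuinely different route from the paper's. The paper gives a direct cocycle computation: it normalizes a representative $(f,g)$ so that $f\in\Lambda_{R,[0,b/p],\Q_p}^{\psi=0}$, uses Lemma~\ref{lemma: preimage psi=0} and Lemma~\ref{lemma: gamma-1 surj} to show that the class is detected by the image of $(\alpha\varphi-1)(g)$ in $\Lambda_{R_0,[0,\infty],\Q_p}^{\psi=0}/\overline\pi^p\Lambda_{R_0,[0,\infty],\Q_p}^{\psi=0}$, and then computes the $\Delta_{\Q_p}$-invariants of that quotient explicitly. Your approach is shorter and more conceptual; the paper's approach yields an explicit parametrization of $H^1$ (in particular, when $\delta|_{\Delta_{\Q_p}}$ is trivial, classes correspond to the constant term $(\alpha\varphi-1)(g)(0)$). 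That explicit description is not visibly used elsewhere in the paper---the Tate duality argument in \S\ref{section: tate local duality} only needs the dimension count---so for the purposes of the paper your shortcut loses nothing.
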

\begin{proof}
	This follows from Corollary~\ref{thm: euler characteristic}, Lemma~\ref{lemma: h0 character}, and Corollary~\ref{cor: h2 char}.
\end{proof}

\subsection{Tate local duality}\label{section: tate local duality}

We now begin proving Tate's local duality theorem (which we stated in general in Theorem~\ref{thm: tate duality}).
\begin{lemma}
	If $\delta\colon\Q_p^\times\rightarrow R^\times$ is a continuous character such that $v_R(\delta(p))<0$ and $\delta|_{1+p\Z_p}$ is trivial, then Tate duality (as stated in ~\ref{thm: tate duality}) holds for $\Lambda_{R,\rig,\Q_p}(\delta)$ and $\Lambda_{R,\rig,\Q_p}(\delta^{-1}\chi_{\mathrm{cyc}})$.
\end{lemma}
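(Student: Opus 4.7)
The plan is to reduce Tate duality to the explicit computations of the previous subsection, applied to both $\delta$ and $\delta':=\delta^{-1}\chi_{\cyc}$.

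For $\delta$ itself, the cohomology has already been determined: since $v_R(\delta(p))<0$ forces $\delta\neq 1$, Lemma~\ref{lemma: h0 character} gives $H^0_{\varphi,\Gamma_{\Q_p}}(\delta)=0$, Corollary~\ref{cor: h2 char} gives $H^2_{\varphi,\Gamma_{\Q_p}}(\delta)=0$, and Lemma~\ref{lemma: h1 char} gives $H^1_{\varphi,\Gamma_{\Q_p}}(\delta)$ free of rank one over $R$, with an explicit generator pinned down by its image in $\Lambda_{R_0,[0,\infty],\Q_p}^{\psi=0}/\overline\pi^p\Lambda_{R_0,[0,\infty],\Q_p}^{\psi=0}$.

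For $\delta'$, I would first observe that, since $R$ has characteristic $p$, the map $\Z_p^\times\to R^\times$ factors through the Teichm\"uller quotient $\mu_{p-1}$, so $\chi_{\cyc}$ takes values in $\F_p^\times\subset R^\times$; in particular $\chi_{\cyc}|_{1+p\Z_p}$ is trivial and $v_R(\chi_{\cyc}(p))=0$. Consequently $\delta'|_{1+p\Z_p}$ is trivial, $\delta'\neq 1$ (so $H^0(\delta')=0$ by Lemma~\ref{lemma: h0 character}), and $\alpha':=\delta'(p)$ has $v_R(\alpha')>0$. The arguments of the previous subsection were phrased for the regime $v_R(\alpha)<0$, where $\alpha^{-1}\psi$ is contracting; in the dual regime $v_R(\alpha')>0$ the operator $\alpha'\varphi$ is itself contracting on $\overline\pi\Lambda_{R,(0,\infty],\Q_p}$ (already exploited in Lemma~\ref{lemma: alpha varphi - 1 surj on open disk}), and one can run the arguments of Corollary~\ref{cor: coker alpha varphi - 1} and Corollary~\ref{cor: h2 char} with $\alpha'\varphi$ playing the role of $\alpha^{-1}\psi$ to conclude that $H^2_{\varphi,\Gamma_{\Q_p}}(\delta')=0$. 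Combined with the Euler characteristic formula (Corollary~\ref{thm: euler characteristic}), this forces $H^1_{\varphi,\Gamma_{\Q_p}}(\delta')$ to be free of rank one over $R$.

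Finally, I would construct the Tate pairing on cohomology as the cup product induced by $\Lambda_{R,\rig,\Q_p}(\delta)\otimes\Lambda_{R,\rig,\Q_p}(\delta')\to\Lambda_{R,\rig,\Q_p}(\chi_{\cyc})$, composed with a trace $H^2_{\varphi,\Gamma_{\Q_p}}(\chi_{\cyc})\cong R$ coming from an analogous computation for the finite-order character $\chi_{\cyc}$. With $H^0=H^2=0$ on both sides, the only remaining task is to check that the induced pairing on $H^1\times H^1\to R$ is nondegenerate, which I expect to be the main obstacle. I would carry this out by evaluating the cup product on the explicit representatives provided by Lemma~\ref{lemma: h1 char} (and its analog for $\delta'$): each class is captured by a residue in the quotient $\Lambda_{R_0,[0,\infty],\Q_p}^{\psi=0}/\overline\pi^p$, so the cup product reduces to a product of these residues in the corresponding quotient for $\chi_{\cyc}$, which should be visibly nonzero.
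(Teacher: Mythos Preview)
Your approach diverges from the paper's and has a genuine gap in the treatment of $\delta'=\delta^{-1}\chi_{\cyc}$.  The lemmas you cite for the vanishing of $H^2$ (Lemma~\ref{lemma: alpha varphi - 1 surj on psi=1}, Corollary~\ref{cor: coker alpha varphi - 1}, Corollary~\ref{cor: h2 char}) all rely on the convergence of $\sum_{k\ge 0}(\alpha^{-1}\psi)^k$ on the Laurent-tail piece $\Lambda_{R,[0,b],\Q_p}$, which requires $v_R(\alpha)<0$.  In the dual regime $v_R(\alpha')>0$ you propose to let ``$\alpha'\varphi$ play the role of $\alpha^{-1}\psi$'', but $\alpha'\varphi$ is contracting only on the holomorphic part $\overline\pi\Lambda_{R,(0,\infty],\Q_p}$: on Laurent tails $\varphi$ sends $\overline\pi^{-i}$ to $\overline\pi^{-pi}$, so the pole order grows like $p^k$ while the valuation gain from $\alpha'^k$ is only linear, and $\sum_k(\alpha'\varphi)^k$ does not converge on $\Lambda_{R,[0,b],\Q_p}$.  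Thus nothing in the previous subsection yields $H^2_{\varphi,\Gamma_{\Q_p}}(\delta')=0$ directly, and your dimension count for $H^1(\delta')$ and the subsequent explicit cup-product computation (left at ``should be visibly nonzero'') are unsupported.

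The paper avoids both problems by a bootstrap from the \'etale case rather than a direct computation.  One uses the Euler characteristic to find a non-split extension
\[
0\rightarrow \Lambda_{R,\rig,\Q_p}(\delta^{-1}\chi_{\cyc})\rightarrow D\rightarrow \Lambda_{R,\rig,\Q_p}(\delta)\rightarrow 0,
\]
checks via slope filtrations that $D$ is pure of slope $0$, hence \'etale, so Tate duality for $D$ is already known from Galois cohomology.  Comparing the long exact sequences for this extension and its twisted dual via the cup-product ladder, the isomorphism $H^1(D^\vee(\chi_{\cyc}))\xrightarrow{\sim}H^1(D)^\vee$ forces $H^1(\delta^{-1}\chi_{\cyc})\hookrightarrow H^1(\delta)^\vee$, hence $\dim_R H^1(\delta^{-1}\chi_{\cyc})\le 1$; combined with the Euler characteristic this gives equality and the duality isomorphism.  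This route never needs an independent calculation of $H^2(\delta')$ or an explicit evaluation of the pairing.
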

\begin{proof}
Corollary~\ref{thm: euler characteristic} implies that $\dim_RH_{\varphi,\Gamma_{\Q_p}}^1(\delta^{-2}\chi_{\mathrm{cyc}})\geq 1$, and so there is a non-split extension of $(\varphi,\Gamma_{\Q_p})$-modules
\begin{equation}\label{eqn: non-split ext of chars}
0\rightarrow \Lambda_{R,\rig,\Q_p}(\delta^{-1}\chi_{\mathrm{cyc}})\rightarrow D\rightarrow \Lambda_{R,\rig,\Q_p}(\delta)\rightarrow 0
\end{equation}
We claim that $D$ is semistable of slope $0$ (in the sense of ~\cite[\textsection 1.4]{kedlaya08}).  Indeed, if $D'\subset D$ is a rank-$1$ submodule, the corresponding homomorphism $D'\rightarrow \Lambda_{R,\rig,\Q_p}(\delta)$ is either an isomorphism or $0$.  The former would contradict the assumption that ~\ref{eqn: non-split ext of chars} is a non-split extension.  But then we must have $D'=\Lambda_{R,\rig,\Q_p}(\delta^{-1}\chi_{\mathrm{cyc}})$, and $v(\delta^{-1}(p)\chi_{\cyc}(p))>0$.

By ~\cite[Theorem 1.7.1]{kedlaya08}, this implies that $D$ is \'etale, and hence (by ~\cite[Proposition 1.5.5]{kedlaya08}) arises from a $(\varphi,\Gamma)$-module over $\Lambda_{R,[0,b],\Q_p}$ for some $b>0$.  Thus, $D$ comes from a Galois representation, so Tate local duality holds for its cohomology.

We have a long exact sequence in cohomology associated to ~\ref{eqn: non-split ext of chars}:
\[
\begin{tikzcd}
	0 \ar[r] & H_{\varphi,\Gamma_{\Q_p}}^0(\delta^{-1}\chi_{\cyc}) \ar[r] & H_{\varphi,\Gamma_{\Q_p}}^0(D) \ar[r] & H_{\varphi,\Gamma_{\Q_p}}^0(\delta) \ar[out=0, in=180, looseness=1, dll] &	\\
& 	H_{\varphi,\Gamma_{\Q_p}}^1(\delta^{-1}\chi_{\cyc}) \ar[r] & H_{\varphi,\Gamma_{\Q_p}}^1(D) \ar[r] & H_{\varphi,\Gamma_{\Q_p}}^1(\delta) \ar[out=0, in=180, looseness=1, dll] &      \\
& 	H_{\varphi,\Gamma_{\Q_p}}^2(\delta^{-1}\chi_{\cyc}) \ar[r] & H_{\varphi,\Gamma_{\Q_p}}^2(D) \ar[r] & H_{\varphi,\Gamma_{\Q_p}}^2(\delta) \ar[r] & 0
\end{tikzcd}
\]
First we observe that $\delta^{-1}\chi_{\mathrm{cyc}}$ and $\delta$ are non-trivial, so by Lemma~\ref{lemma: h0 character} $H_{\varphi,\Gamma_{\Q_p}}^0(\delta^{-1}\chi_{\mathrm{cyc}})=H_{\varphi,\Gamma_{\Q_p}}^0(\delta)=0$.  Hence we also have $H_{\varphi,\Gamma_{\Q_p}}^0(D)=0$, so duality implies that $H_{\varphi,\Gamma_{\Q_p}}^2(D^\vee(\chi_{\mathrm{cyc}}))=0$.

Since $v_R(\delta(p))<0$, we additionally have $H_{\varphi,\Gamma_{\Q_p}}^2(\delta)=0$, by Corollary~\ref{cor: h2 char}.  Again using the vanishing of $H_{\varphi,\Gamma_{\Q_p}}^0(\delta)$, Corollary~\ref{thm: euler characteristic} implies that $H_{\varphi,\Gamma_{\Q_p}}^1(\delta)$ is $1$-dimensional.

If we dualize ~\ref{thm: euler characteristic} and tensor with $\chi_{\mathrm{cyc}}$, we get a second exact sequence
\[	0\rightarrow \Lambda_{R,\rig,\Q_p}(\delta^{-1}\chi_{\mathrm{cyc}})\rightarrow D^\vee(\chi_{\mathrm{cyc}})\rightarrow \Lambda_{R,\rig,\Q_p}(\delta)\rightarrow 0 \]
and its associated long exact sequence in cohomology. 
Then the cup product (as constructed in ~\cite[Definition 2.3.10]{kpx}) gives us a commutative diagram
\begin{equation*}
\resizebox{\displaywidth}{!}{
$
	\begin{tikzcd}[ampersand replacement=\&]
0 \ar[d]\ar[r] \& H_{\varphi,\Gamma_{\Q_p}}^1(\delta^{-1}\chi_{\mathrm{cyc}}) \ar[d]\ar[r] \& H_{\varphi,\Gamma_{\Q_p}}^1(D^\vee(\chi_{\mathrm{cyc}})) \ar[d]\ar[r] \& H_{\varphi,\Gamma_{\Q_p}}^1(\delta) \ar[d]\ar[r] \& H_{\varphi,\Gamma_{\Q_p}}^2(\delta^{-1}\chi_{\mathrm{cyc}}) \ar[d]	\\
 H_{\varphi,\Gamma_{\Q_p}}^2(\delta^{-1}\chi_{\mathrm{cyc}})^\vee \ar[r] \& H_{\varphi,\Gamma_{\Q_p}}^1(\delta)^\vee \ar[r] \& H_{\varphi,\Gamma_{\Q_p}}^1(D)^\vee \ar[r] \& H_{\varphi,\Gamma_{\Q_p}}^1(\delta^{-1}\chi_{\mathrm{cyc}})^\vee \ar[r] \& 0
\end{tikzcd}	$	}
\end{equation*}
Since $H_{\varphi,\Gamma_{\Q_p}}^1(D^\vee(\chi_{\mathrm{cyc}}))\rightarrow H_{\varphi,\Gamma_{\Q_p}}^1(D)^\vee$ is an isomorphism (by the classical theorem), a diagram chase shows that $H_{\varphi,\Gamma_{\Q_p}}^1(\delta^{-1}\chi_{\mathrm{cyc}})\rightarrow H_{\varphi,\Gamma_{\Q_p}}^1(\delta)^\vee$ is injective, so $\dim_RH_{\varphi,\Gamma_{\Q_p}}^1(\delta^{-1}\chi_{\mathrm{cyc}})\leq \dim_RH_{\varphi,\Gamma_{\Q_p}}^1(\delta)=1$.  But Theorem~\ref{thm: euler characteristic} implies that $\dim_RH_{\varphi,\Gamma_{\Q_p}}^1(\delta^{-1}\chi_{\mathrm{cyc}})\geq 1$, so $\dim_RH_{\varphi,\Gamma_{\Q_p}}^1(\delta^{-1}\chi_{\mathrm{cyc}})=1$ and the map $H_{\varphi,\Gamma_{\Q_p}}^1(\delta^{-1}\chi_{\mathrm{cyc}})\rightarrow H_{\varphi,\Gamma_{\Q_p}}^1(\delta)^\vee$ is an isomorphism.
\end{proof}

\begin{thm}\label{thm: tate local duality}
Tate local duality holds for every projective $(\varphi,\Gamma)$-module $D$ over $\Lambda_{R,\rig,K}$.
\end{thm}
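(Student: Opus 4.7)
The plan is to reduce to $K = \Q_p$ via Shapiro's lemma, then prove the result by induction on $\rk D$: the rank-$1$ case uses the preceding lemma together with dévissage through a non-split extension, and the inductive step uses the slope filtration theorem to reduce to sub-quotients of smaller rank.

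First I would reduce to the case $K = \Q_p$. For a finite extension $K/\Q_p$ and a $(\varphi,\Gamma_K)$-module $D$, the induced module $\Ind_K^{\Q_p}D$ is a $(\varphi,\Gamma_{\Q_p})$-module whose Fontaine--Herr--Liu cohomology coincides with that of $D$ by Shapiro's lemma. Since induction commutes with taking duals and twisting by $\chi_{\mathrm{cyc}}$, Tate duality for $\Ind_K^{\Q_p}D$ implies Tate duality for $D$.

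For the rank-$1$ case, every $(\varphi,\Gamma_{\Q_p})$-module over $\Lambda_{R,\rig,\Q_p}$ is of character type, so $D = \Lambda_{R,\rig,\Q_p}(\delta)$ for some continuous character $\delta$. When $v_R(\delta(p))<0$ and $\delta|_{1+p\Z_p}$ is trivial, the preceding lemma applies directly. For a general $\delta$ I would choose an auxiliary character $\delta_0$ of very negative slope satisfying the hypotheses of the preceding lemma, and use Lemma~\ref{lemma: h1 char} together with the Euler characteristic formula (Corollary~\ref{thm: euler characteristic}) to produce a non-split extension
\[ 0 \to \Lambda_{R,\rig,\Q_p}(\delta_0) \to D' \to \Lambda_{R,\rig,\Q_p}(\delta) \to 0. \]
By choosing $\delta_0$ appropriately, $D'$ can be arranged to be pure of slope $0$ and hence to arise from a Galois representation, for which classical Tate local duality is known. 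Combining this with the duality for $\Lambda_{R,\rig,\Q_p}(\delta_0)$ supplied by the preceding lemma, and chasing the long exact cohomology sequences of $D'$ and its $\chi_{\mathrm{cyc}}$-twisted dual via the five lemma, yields duality for $\Lambda_{R,\rig,\Q_p}(\delta)$.

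For the inductive step, suppose $\rk D = d > 1$ and Tate duality is known for all $(\varphi,\Gamma_{\Q_p})$-modules of rank $< d$. By the slope filtration theorem of~\cite{kedlaya08} (applicable since $R$ is a complete DVR), $D$ admits, after possibly enlarging $R$, a short exact sequence
\[ 0 \to D_1 \to D \to D_2 \to 0 \]
with $\rk D_i < d$. The inductive hypothesis supplies duality for $D_1$ and $D_2$, and a five-lemma argument on the long exact cohomology sequences of this sequence and its $\chi_{\mathrm{cyc}}$-twisted dual yields duality for $D$. The main obstacle will be the rank-$1$ bootstrap: constructing $\delta_0$ so that the associated extension $D'$ is simultaneously pure of slope $0$ and non-split, and verifying that the resulting five-lemma chase produces the correct cup-product pairing rather than merely compatible abstract isomorphisms.
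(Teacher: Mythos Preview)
Your reduction to $\Q_p$ is fine and matches the paper.  The rank-$1$ bootstrap is plausible, though it can be shortened: twisting by a character preserves Tate duality (since $(D(\eta))^\vee(\chi_{\cyc})=D^\vee(\eta^{-1}\chi_{\cyc})$), so any rank-$1$ module can be twisted to slope~$0$ and handled by the \'etale case directly, without building an auxiliary extension.

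The genuine gap is in your inductive step.  Induction on \emph{rank} via the slope filtration does not work: if $D$ is pure (isoclinic) of rank $d>1$, the slope filtration is the trivial one $0\subset D$, and you obtain no subquotient of smaller rank.  So the claimed short exact sequence $0\to D_1\to D\to D_2\to 0$ with $\rk D_i<d$ need not exist.

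The paper repairs this by inducting on \emph{degree} $\deg D=(\rk D)\cdot s$ rather than rank.  After reducing to $D$ pure of slope $s>0$, one chooses a character $\delta$ with $v_R(\delta(p))=-1$ and $\delta|_{1+p\Z_p}=1$ (so the preceding lemma applies to $\Lambda_{R,\rig,\Q_p}(\delta)$), and uses the Euler characteristic formula to build a non-split extension
\[
0\longrightarrow D\longrightarrow D'\longrightarrow \Lambda_{R,\rig,\Q_p}(\delta)\longrightarrow 0,
\]
so that $\deg D'=\deg D-1$.  If $D'$ is pure one is done by induction; if not, one checks that each graded piece of the slope filtration of $D'$ has strictly positive slope (this uses non-splitness: the first step $D_1$ cannot map isomorphically to the quotient $\Lambda(\delta)$, hence $D_1\subset D$) and degree strictly less than $\deg D$, so the inductive hypothesis applies to each piece and hence to $D'$.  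Duality for $D$ then follows from duality for $D'$ and for $\Lambda(\delta)$ via the long exact sequence.  The induction on degree is what lets the argument make progress even on pure modules.
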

\begin{proof}
We may replace $D$ by $\Ind_K^{\Q_p}D$ and treat the case of $(\varphi,\Gamma)$-modules over $\Lambda_{R,\rig,\Q_p}$.  We may also assume that $D$ is pure of slope $s$, and by replacing it with $D^\vee(\chi_{\mathrm{cyc}})$ if necessary, that $s\geq 0$.

If $s=0$, $D$ is \'etale and the result follows from the comparison with Galois cohomology.  Otherwise, we proceed by induction on the degree of $D$, i.e. $\deg(D)\coloneqq (\rk D)s$.  Let $\delta\colon\Q_p^\times\rightarrow R^\times$ be a continuous character with $v_R(\delta(p))=-1$ and $\delta|_{1+p\Z_p}$ trivial.  Since $\dim_RH_{\varphi,\Gamma_{\Q_p}}^1(D(\delta^{-1}))\geq \rk D\geq 1$ by Theorem~\ref{thm: euler characteristic}, there is a non-split extension
\[	0\rightarrow D\rightarrow D'\rightarrow \Lambda_{R,\rig,\Q_p}(\delta)\rightarrow 0	\]
We will prove that Tate local duality holds for $D'$; since it also holds for $\Lambda_{R,\rig,\Q_p}(\delta)$, we may deduce it for $D$.

If $D'$ is pure, the result follows, since $D'$ has degree $\deg D-1$ and slope $(\deg D-1)/(\rk D +1) < s$.  Otherwise, $D'$ has a unique slope filtration $0=D_0\subset D_1\subset\cdots\subset D_k=D'$ by saturated $(\varphi,\Gamma)$-submodules, such that the successive quotients are pure and $\mu(D_1/D_0)<\mu(D_2/D_1)<\cdots<\mu(D_k/D_{k-1})$.  Then $\mu(D_1)\leq \mu(D')<\mu(D)$. 

We have an exact sequence
\[	0\rightarrow D_1\cap D\rightarrow D_1\rightarrow D_1/(D_1\cap D)\rightarrow 0	\]
Since $D$ is pure of positive slope, $D_1\cap D$ also has positive slope.  Since $D_1/(D_1\cap D)$ is the image of $D_1$ in the quotient $D'\rightarrow\Lambda_{R,\rig,\Q_p}(\delta)$, it has slope (and hence degree) either $0$ or $-1$.  Therefore, $\mu(D_1)>0$, as well.

It follows that $\mu(D_i/D_{i-1})>0$ for all $i$.  Moreover, $\deg D'=\sum_i \deg(D_i/D_{i-1})=\sum_i \mu(D_i/D_{i-1})\cdot\rk(D_i/D_{i-1})$, so $\deg(D_i/D_{i-1})<\deg D$ for all $i$.  Then the inductive hypothesis implies that Tate local duality holds for each $D_i/D_{i-1}$, so it holds for $D'$, and we are done.
\end{proof}

Now we can complete the computation of the cohomology of $(\varphi,\Gamma_K)$-modules of character type when the coefficients are a finite extension of $\F_p(\!(u)\!)$.
\begin{cor}\label{cor: coh char type}
Let $\delta\colon K^\times\rightarrow R^\times$ be a continuous character.  Then
\begin{enumerate}
\item	$H_{\varphi,\Gamma_K}^0(\delta)=0$ unless $\delta$ is the trivial character, in which case $H_{\varphi,\Gamma_K}^0(\delta)$ is a $1$-dimensional $R$-vector space.
\item	$H_{\varphi,\Gamma_K}^2(\delta)=0$ unless $\delta=\chi_{\mathrm{cyc}}\circ\Nm_{K/\Q_p}$, in which case $H_{\varphi,\Gamma_K}^2(\delta)$ is a $1$-dimensional $R$-vector space.
\item	$H_{\varphi,\Gamma_K}^1(\delta)$ is an $R$-vector space of dimension $[K:\Q_p]$ unless either $H_{\varphi,\Gamma_K}^0(\delta)\neq 0$ or $H_{\varphi,\Gamma_K}^2(\delta)\neq 0$, in which case it is an $R$-vector space of dimension $[K:\Q_p]+1$.
\end{enumerate}
\end{cor}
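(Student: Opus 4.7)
The plan is to derive the three parts sequentially from results already established, with no deep new input required; the work has all been done in the preceding sections.

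Part (1) is precisely the content of Lemma~\ref{lemma: h0 character}, which was stated in the generality needed here, so there is nothing further to do.

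For part (2), I would invoke Tate local duality (Theorem~\ref{thm: tate local duality}), which in rank one produces a perfect $R$-linear pairing
\[
	H_{\varphi,\Gamma_K}^2(\Lambda_{R,\rig,K}(\delta)) \otimes_R H_{\varphi,\Gamma_K}^0(\Lambda_{R,\rig,K}(\delta^{-1})(\chi_{\cyc})) \rightarrow R.
\]
Under local class field theory for $K$, the restriction of $\chi_{\cyc}$ to $\Gal_K$ corresponds to the character $\chi_{\cyc}\circ \Nm_{K/\Q_p}$ of $K^\times$ (by functoriality of the Artin reciprocity map), so that the second factor in the pairing is $H_{\varphi,\Gamma_K}^0$ of the $(\varphi,\Gamma_K)$-module of character type attached to $\delta^{-1}\cdot(\chi_{\cyc}\circ\Nm_{K/\Q_p})$.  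Applying part (1) to this character shows the $H^0$ (and hence the $H^2$) is zero unless $\delta = \chi_{\cyc}\circ\Nm_{K/\Q_p}$, in which case both sides are one-dimensional, as claimed.

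For part (3), I would apply the Euler characteristic formula (Corollary~\ref{thm: euler characteristic}) to $\Lambda_{R,\rig,K}(\delta)$, which has rank one and therefore satisfies
\[
	\dim_R H_{\varphi,\Gamma_K}^1(\delta) = [K:\Q_p] + \dim_R H_{\varphi,\Gamma_K}^0(\delta) + \dim_R H_{\varphi,\Gamma_K}^2(\delta).
\]
By parts (1) and (2) each of the two correction terms is either $0$ or $1$, so it remains only to rule out the case where both are simultaneously $1$.  This would force $\delta = 1$ and $\delta = \chi_{\cyc}\circ\Nm_{K/\Q_p}$ at once, hence $\chi_{\cyc}\circ\Nm_{K/\Q_p}$ trivial as a character of $K^\times$.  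But $\chi_{\cyc}$, viewed via LCFT as a character of $\Q_p^\times$, restricts to the identity on $\Z_p^\times$, and the norm map $\O_K^\times\rightarrow\Z_p^\times$ is surjective onto a finite-index subgroup, so $\chi_{\cyc}\circ\Nm_{K/\Q_p}$ has infinite image and is in particular nontrivial.  Thus at most one of $H^0$ and $H^2$ is nonzero, and the dimension of $H^1$ is $[K:\Q_p]$ or $[K:\Q_p]+1$ accordingly.

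No step of this argument is individually difficult; the only point of mild care is keeping the LCFT normalizations straight when identifying $\chi_{\cyc}|_{\Gal_K}$ with $\chi_{\cyc}\circ\Nm_{K/\Q_p}$ on $K^\times$.  Everything else is a direct application of the finiteness, Euler characteristic, and Tate duality theorems proved just above.
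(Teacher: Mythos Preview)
Your approach is correct and is exactly the argument the paper has in mind (the corollary is stated without proof, as an immediate consequence of Lemma~\ref{lemma: h0 character}, Theorem~\ref{thm: tate local duality}, and Corollary~\ref{thm: euler characteristic}).

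There is, however, a genuine error in your justification at the end of part~(3). You argue that $\chi_{\cyc}\circ\Nm_{K/\Q_p}$ has infinite image in $R^\times$ because $\chi_{\cyc}$ is the identity on $\Z_p^\times$. But $R$ is a finite extension of $\F_p(\!(u)\!)$, so any continuous homomorphism $\Z_p^\times\rightarrow R^\times$ factors through $\Z_p^\times\twoheadrightarrow\F_p^\times$; the image is therefore finite, and your argument collapses. In fact the conclusion you are trying to reach is false: if $K\supset\Q_p(\mu_p)$ then $\chi_{\cyc}|_{\Gal_K}$ is trivial modulo $p$, hence trivial as a character into $R^\times$, so for $\delta=1$ both $H^0$ and $H^2$ are one-dimensional and $\dim_R H^1_{\varphi,\Gamma_K}(\delta)=[K:\Q_p]+2$. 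The honest statement of part~(3) is simply the Euler characteristic identity
\[
\dim_R H^1_{\varphi,\Gamma_K}(\delta)=[K:\Q_p]+\dim_R H^0_{\varphi,\Gamma_K}(\delta)+\dim_R H^2_{\varphi,\Gamma_K}(\delta),
\]
and the phrasing in the corollary should be read as shorthand for this (it is literally correct only when $\Q_p(\mu_p)\not\subset K$, which is automatic for $K=\Q_p$ since $p\neq 2$). Drop the ``infinite image'' sentence and state the Euler characteristic formula directly.
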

In order to handle the case where $K\neq \Q_p$, we use induction and Lemma~\ref{lemma: induction of character} to reduce to the settled case over $\Q_p$.

\section{Triangulations}

\subsection{Classification of rank-$1$ \texorpdfstring{$(\varphi,\Gamma)$}{(𝜑, Γ)}-modules}

In this section, we show that projective rank-$1$ $(\varphi,\Gamma)$-modules over a pseudorigid space $X$ are free locally on $X$, and up to twisting by a line bundle on $X$, are of character type.  Throughout this section, we will assume that all of our $(\varphi,\Gamma)$-modules are projective.
The proof is largely the same as in~\cite[\textsection 6.2]{kpx}.  We first treat the case where the coefficients are a field, where we can exploit the fact that $\Lambda_{R,(0,b],K}$ is B\'ezout, and then deduce the case where the coefficients are artinian by a deformation argument.
\begin{prop}\label{prop: char type artinian}
	Suppose $R$ is an artin local pseudoaffinoid algebra.  If $D$ is a rank-$1$ $(\varphi,\Gamma)$-module over $\Lambda_{R_0,(0,b],K}$, then there is a unique continuous character $\delta\colon K^\times\rightarrow R^\times$ such that $\mathscr{L}\coloneqq H_{\varphi,\Gamma_K}^0(D(\delta^{-1}))$ is free of rank $1$ over $R$ and the natural map $\Lambda_{R,\rig,K}(\delta)\otimes_R\mathscr{L}\rightarrow D$ is an isomorphism.
\begin{enumerate}
\item	$H_{\varphi,\Gamma_K}^1(D(\delta^{-1}))$ is free over $R$ of rank $1+[K:\Q_p]$.
\item	$H_{\varphi,\Gamma_K}^2(D(\delta^{-1}))=0$.
\end{enumerate}
\end{prop}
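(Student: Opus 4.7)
The plan is to induct on the length of $R$, with the base case being $R$ a finite extension of $\F_p(\!(u)\!)$. The field case is proved first by applying the slope filtration theorem of \cite{kedlaya08} to the rank-$1$ module $D$: it is pure of a unique slope $s$, and twisting by a rank-$1$ character module of slope $-s$ reduces to the \'etale case, which via the comparison with Galois cohomology and local class field theory identifies $D$ with $\Lambda_{R,\rig,K}(\delta)$ for a unique character $\delta$. Uniqueness and properties (1)--(3) in the field case then follow immediately from Corollary \ref{cor: coh char type}.

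For the inductive step, choose a minimal nonzero ideal $I \subset R$, so $I \cong k := R/\mathfrak{m}_R$ as $R$-modules, and set $R' := R/I$, $D' := D \otimes_R R'$, $D_k := D \otimes_R k$. The inductive hypothesis applied to $D'$ produces a unique $\delta' : K^\times \to (R')^\times$ with $\mathscr{L}' := H^0_{\varphi,\Gamma_K}(D'(\delta'^{-1}))$ free of rank $1$ over $R'$, satisfying (1)--(3) over $R'$. Since the universal character space $\widehat{K^\times}$ is smooth (each connected component is a product of $\G_m^{\an}$ and open unit disks), the map $\Spa(R') \to \widehat{K^\times}$ classifying $\delta'$ lifts to some continuous $\delta : K^\times \to R^\times$. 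Because $D$ is locally free of rank $1$, I obtain a short exact sequence of $(\varphi,\Gamma_K)$-modules
\[
0 \to I \otimes_k D_k(\delta_0^{-1}) \to D(\delta^{-1}) \to D'(\delta'^{-1}) \to 0,
\]
where $\delta_0 := \delta \bmod \mathfrak{m}_R$. By the field case, $D_k(\delta_0^{-1}) \cong \Lambda_{k,\rig,K}$, so Corollary \ref{cor: coh char type} computes $H^0 = k$, $H^1 = k^{1+[K:\Q_p]}$, $H^2 = 0$ for the left term. The long exact sequence in cohomology then yields $H^2_{\varphi,\Gamma_K}(D(\delta^{-1})) = 0$ immediately, proving (3).

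To establish (1) and (2), I must arrange for the connecting map $\partial : \mathscr{L}' \to I \otimes_k H^1_{\varphi,\Gamma_K}(\Lambda_{k,\rig,K})$ to vanish, which requires choosing the lift $\delta$ carefully. Two lifts of $\delta'$ differ by a continuous character $\eta : K^\times \to 1+I$; replacing $\delta$ by $\delta \eta$ shifts $\partial$ by the class of $\eta$ under the natural identification of $H^1_{\varphi,\Gamma_K}(\Lambda_{k,\rig,K})$ with continuous characters $K^\times \to k$ coming from the universal character on $\widehat{K^\times}$ (this identification matches the explicit rank-$1$ cocycle description from the proofs in the previous subsection). A dimension comparison, or equivalently the fact that deformations of the trivial rank-$1$ $(\varphi,\Gamma_K)$-module of character type exhaust all first-order deformations, shows this shift map is surjective, so I may adjust $\delta$ to kill $\partial$.

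With $\partial = 0$, the long exact sequence gives $\mathscr{L} := H^0_{\varphi,\Gamma_K}(D(\delta^{-1}))$ as an extension of $\mathscr{L}'$ by $I$, hence free of rank $1$ over $R$; similarly $H^1_{\varphi,\Gamma_K}(D(\delta^{-1}))$ is an extension of a free rank-$(1+[K:\Q_p])$ module by $I \otimes_k k^{1+[K:\Q_p]}$, hence free of rank $1+[K:\Q_p]$, proving (2). The natural map $\Lambda_{R,\rig,K}(\delta) \otimes_R \mathscr{L} \to D$ of rank-$1$ $(\varphi,\Gamma_K)$-modules reduces to an isomorphism modulo $\mathfrak{m}_R$ by the base case, so is an isomorphism by Nakayama, yielding (1). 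Uniqueness of $\delta$ follows by induction: any two such $\delta$ agree modulo $\mathfrak{m}_R$ by the field case, and the difference, a character $K^\times \to 1+\mathfrak{m}_R$ for which $\Lambda_{R,\rig,K}(\eta)$ has nontrivial $H^0$, is forced to be trivial by Lemma \ref{lemma: h0 character} applied at each step of the composition series. The main obstacle is making precise the identification of the shift in $\partial$ under $\delta \mapsto \delta \eta$ with the image of $\eta$ in $H^1$, which is the point where the explicit cocycle computations of the previous subsection are used essentially.
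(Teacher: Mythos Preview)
Your approach is essentially the same as the paper's: both induct on the length of $R$ via small extensions, handle the field case by slope filtration and reduction to the \'etale case, and lift the character using deformation theory. The paper is terser (it just says ``deformation theory of characters and of $(\varphi,\Gamma)$-modules'' and cites \cite[Lemma 6.2.13]{kpx}), while you spell out the obstruction-theoretic step of adjusting $\delta$ by a character into $1+I$ to kill the connecting map $\partial$.

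One small gap: your claims ``extension of $\mathscr{L}'$ by $I$, hence free of rank $1$'' and ``extension of a free rank-$(1+[K:\Q_p])$ module by $I\otimes_k k^{1+[K:\Q_p]}$, hence free'' are not automatic---an $R$-module extension of $R/I$ by $I$ need not be free (it could split as $R/I\oplus I$). The paper sidesteps this by reversing the order: once $\partial=0$, pick any lift $e\in\mathscr{L}$ of a generator of $\mathscr{L}'$; the induced map $\Lambda_{R,\rig,K}(\delta)\to D$ is an isomorphism modulo $\mathfrak{m}_R$, hence an isomorphism by Nakayama (both sides being free of rank~$1$ over the Noetherian rings $\Lambda_{R,[a,b],K}$). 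This gives (1) first, whence $R\subset H^0_{\varphi,\Gamma_K}(D(\delta^{-1}))$ and a length count forces equality. For $H^1$, the paper uses that $H^2=0$ implies the formation of $H^1$ commutes with base change (via the base-change spectral sequence for the perfect complex $C^\bullet_{\varphi,\Gamma}$), so $H^1\otimes_R k$ has dimension $1+[K:\Q_p]$, and then Nakayama plus the Euler characteristic give freeness. Your long-exact-sequence argument can be repaired along these lines, but as written the ``hence free'' steps need justification.
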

\begin{proof}
	This proof is nearly identical to the proof of ~\cite[Lemma 6.2.13]{kpx}, so we only give a sketch.

	When $R$ is a field and $D$ has slope $s$, we may choose $\alpha\in R$ with $v_R(\alpha)=s$.  If $\delta\colon\Q_p^\times\rightarrow R^\times$ be the character with $\delta(p)=\alpha$ and $\delta|_{\Z_p^\times}=1$, then $\delta\circ\Nm_{K/\Q_p}$ is a character $K^\times\rightarrow R^\times$ trivial on $\mathscr{O}_K^\times$ and sending a uniformizer of $K$ to $\alpha^f$, and by construction, the associated $(\varphi,\Gamma)$-module $D(\delta\circ\Nm_{K/\Q_p})$ has slope $s$.  Twisting $D$ by its inverse, we reduce to the \'etale case.  But when $D$ is \'etale, $M\coloneqq \left(\widetilde\Lambda_{R,\rig}\otimes D\right)^{\varphi=1}$ is a Galois representation with $D=D_{\rig}(M)$.  Then local class field theory and the construction of $(\varphi,\Gamma)$-modules of character type imply that there is a unique character $\delta\colon K^\times R^\times$ with $D=\Lambda_{R,\rig,K}(\delta)$.  The calculation of cohomology follows from Corollary~\ref{cor: coh char type}.

	In order to bootstrap to the case where $R$ is an artin local ring, we factor the extension $R\twoheadrightarrow R/\mathfrak{m}_R$ as a sequence of small extensions, that is, extensions of the form
	\[      0\rightarrow I\rightarrow R\rightarrow R'\rightarrow 0  \]
	Here $\mathfrak{m}_R\subset R$ is the maximal ideal of $R$ and $I\subset R$ is a principal ideal with $I\mathfrak{m}_R=0$.  Then $R\rightarrow R'$ is a square-zero thickening, and deformation theory (of characters and of $(\varphi,\Gamma)$-modules) implies that if $D$ is a $(\varphi,\Gamma)$-module over $R$ of rank $1$ with $R'\otimes_RD$ of character type, then $D$ is of character type.

	If $D=\Lambda_{R,\rig,K}(\delta)$, then $H_{\varphi,\Gamma_K}^0(D(\delta^{-1}))$ contains $R$, and considerations on lengths of $R$-modules imply that if $H_{\varphi,\Gamma_K}^0(R'\otimes_RD(\delta^{-1}))=R'$, then $H_{\varphi,\Gamma_K}^0(D(\delta^{-1}))=R$.  Moreover, 
	\[	R/\mathfrak{m}_R\otimes_RH_{\varphi,\Gamma_K}^2(D(\delta^{-1}))=H_{\varphi,\Gamma_K}^2(D_{R/\mathfrak{m}_R}(\delta^{-1}))=0	\]
	so $H_{\varphi,\Gamma_K}^2(D(\delta^{-1}))=0$.  Then the base change spectral sequence implies that the formation of $H_{\varphi,\Gamma_K}^1(D(\delta^{-1}))$ commutes with base change on $R$, and the Euler characteristic formula implies that $\dim_{R/\mathfrak{m}_R}H_{\varphi,\Gamma_K}^1(D_{R/\mathfrak{m}_R}(\delta^{-1}))=1+[K:\Q_p]$. Then by Nakayama's lemma, $H_{\varphi,\Gamma_K}^1(D(\delta^{-1}))$ is free of the same rank, and we are done.

\end{proof}

In order to give a classification over a general base, we again follow the strategy of the proof of ~\cite[Theorem 6.2.14]{kpx} and twist our rank-$1$ $(\varphi,\Gamma)$-module by the universal family of characters.  Then we can use the settled case over artin local rings and cohomology and base change to cut out the appropriate character.  The difficulty is in verifying that the slopes and weights of a family of $(\varphi,\Gamma)$-modules over a pseudoaffinoid algebra are bounded; boundedness of the slopes is the essential content of the following proposition, whose proof we do not duplicate in detail.
\begin{prop}\label{prop: psi-1 surj for twist}
	Let $R$ be a pseudoaffinoid algebra with pseudouniformizer $u$, and let $D$ be a $(\varphi,\Gamma_{\Q_p})$-module over $\Lambda_{R,\rig,\Q_p}$.  Then
\begin{enumerate}
	\item	The quotient $D/(\psi-1)$ is a finitely generated $R$-module.
	\item	If $n\in\Z$, let $\delta_n\colon\Q_p^\times\rightarrow R^\times$ be the character trivial on $\Z_p^\times$ which sends $p$ to $u^n$.  Then for all $n\gg0$, the map $\psi-1\colon D(\delta_{-n})\rightarrow D(\delta_{-n})$ is surjective.
\end{enumerate}
\end{prop}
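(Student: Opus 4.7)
The plan is to handle the two parts by somewhat different techniques.

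For part (2), the argument is direct. Twisting by $\delta_{-n}$ replaces the $\varphi$-operator on a basis element of $\Lambda_{R,\rig,\Q_p}(\delta_{-n})$ by $u^{-n}\varphi$, and correspondingly replaces $\psi$ by $u^{n}\psi$. Since $u$ is topologically nilpotent in $R$ while $\psi$ is a continuous operator whose norm on $D_{(0,b]}$-sections is controlled (compare the estimate $v_{R,b'}(\psi(f))\ge v_{R,b'/p}(f)-pb'$ used in Lemma~\ref{lemma: alpha varphi - 1 surj on psi=1}), for $n\gg 0$ the composition $u^{n}\psi$ becomes topologically nilpotent on the appropriate annuli. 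Then $\psi-1$ on $D(\delta_{-n})$ equals $-(1-u^{n}\psi)$ in the twisted picture, and admits the convergent inverse $-\sum_{k\ge 0}(u^{n}\psi)^{k}$, compatibly across the direct limit as $b\to 0$. In particular $\psi-1$ is bijective, hence certainly surjective.

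For part (1), the plan is to identify $D/(\psi-1)$ with something controllable via the cohomology already shown to be finitely generated. Introduce the $\psi$-Herr complex
\[
C_{\psi,\Gamma}^\bullet(D)\colon\ D\xrightarrow{(\psi-1,\gamma-1)}D\oplus D\xrightarrow{(\gamma-1)\oplus(1-\psi)}D
\]
and use $\psi\circ\varphi=\id$ together with the decomposition $D_{(0,b/p]}=\varphi(D_{(0,b]})\oplus D_{(0,b/p]}^{\psi=0}$ to produce a natural map $C_{\varphi,\Gamma}^\bullet(D)\to C_{\psi,\Gamma}^\bullet(D)$ whose kernel complex in each degree is a subquotient of $D^{\psi=0}$. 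Since $\gamma-1$ is invertible on such pieces (after shrinking $b$) by Lemma~\ref{lemma: gamma - 1 inverse}, the kernel complex is acyclic, the map is a quasi-isomorphism, and $H^i_{\psi,\Gamma}(D)$ is finitely generated over $R$ for every $i$. The spectral sequence of the double complex then yields in particular that $H^2_{\psi,\Gamma}(D)=D/((\psi-1)+(\gamma-1))D$ and $(D/(\psi-1))^{\gamma=1}$ are both finitely generated $R$-modules.

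The main obstacle is upgrading these to finite generation of $D/(\psi-1)$ itself: while $\gamma-1$ has finitely generated kernel and cokernel on $X:=D/(\psi-1)$, one must exclude pathological behaviour in between. I would proceed by d\'evissage along the slope filtration of~\cite{kedlaya08} after base change to the maximal points of $\Spa R$, reducing to the case of rank-$1$ character-type modules, where Corollary~\ref{cor: coh char type} gives an explicit computation, and then reassembling via the flatness and coherence formalism of Corollary~\ref{cor: coh and base change}. An alternative, more direct route is to exploit part (2): varying over the universal character moduli $\widehat{\Q_p^\times}$ and using that $\psi-1$ is bijective on $D(\delta_{-n})$ for $n\gg 0$, one can force the vanishing of a boundary term in the relevant cohomology-and-base-change spectral sequence and thereby deduce coherence, hence finite generation, of $X$.
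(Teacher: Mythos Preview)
Your argument for part (2) is essentially the right idea and is in the spirit of what \cite{kpx} does: after the twist, $\psi$ on $D(\delta_{-n})$ is $u^{n}\psi_{D}$, and one wants $u^{n}\psi_{D}$ to be a contraction so that the geometric series for $(1-u^{n}\psi_{D})^{-1}$ converges.  However, the estimate you cite from Lemma~\ref{lemma: alpha varphi - 1 surj on psi=1} is a bound on $\psi$ acting on the \emph{base ring} $\Lambda_{R,[0,b']}$, not on $\psi_{D}$ for a general $(\varphi,\Gamma)$-module.  The operator $\psi_{D}$ involves the inverse of the matrix of $\varphi_{D}$, so its norm is not controlled by the base-ring estimate alone.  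This is precisely why the paper (following \cite[Prop.~3.3.2]{kpx}) instructs one to take a model of $D$ over $\Lambda_{R,(0,b],\Q_{p}}$, realize it as a direct summand of a free module, and then track the matrices of $\varphi$ and $\psi$ explicitly; only after that reduction does the contraction estimate go through uniformly.  So your outline for (2) is correct but missing exactly the step the paper singles out.

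For part (1) there is a genuine gap.  You correctly show (via the quasi-isomorphism $C_{\varphi,\Gamma}^{\bullet}(D)\simeq C_{\psi,\Gamma}^{\bullet}(D)$ and the double-complex spectral sequence) that both $\bigl(D/(\psi-1)\bigr)^{\gamma=1}$ and $\bigl(D/(\psi-1)\bigr)/(\gamma-1)$ are finitely generated over $R$.  But there is no general principle allowing you to conclude that an $R$-module $X$ is finitely generated from the finite generation of $X^{\gamma=1}$ and $X/(\gamma-1)X$; one would need, for instance, that $X$ is already known to be finitely generated over something like $R[\![\Gamma]\!]$, which is not available here.  Your proposed d\'evissage to maximal points via slope filtrations does not recover a statement over $R$ (it only controls fibres), and the ``alternative route'' through part (2) and the universal character is too vague to close the gap: it is not clear which base-change term you mean to kill, nor why coherence of a sheaf over $\widehat{\Q_{p}^{\times}}$ would yield finite generation over $R$.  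The paper's route, again following \cite{kpx} verbatim, avoids cohomology entirely for (1): after embedding $D$ as a summand of a free module one analyzes $\psi-1$ directly on the free module and shows its cokernel is finite over $R$ by an explicit computation with Laurent tails.  That direct computation is what is missing from your proposal.
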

\begin{proof}[Sketch of proof.]
	The proof of ~\cite[Proposition 3.3.2]{kpx} carries over verbatim.  We take a model of $D$ as a finite projective module over $\Lambda_{R,(0,b],\Q_p}$, consider it as a summand of a finite free module $D'$ with basis $\{\mathbf{e}_i\}$, and carefully analyze the actions of $\varphi$ and $\psi$.  We choose an interval $[a,b]\subset\R$ (depending on $\varphi$ and $\psi$ on $D$) and consider the image $D''$ of $\oplus_i\oplus_{j\in[a,b]\cap\Z}R$ in $D$.  Then $D''$ is a finite $R$-module, and it is possible to show that every element of $D$ differs from an element of $D''$ by something in the image of $\psi-1$.
\end{proof}

We can use this to deduce that the weights are bounded:
\begin{cor}\label{cor: weights bdd}
	Let $R$ be a pseudoaffinoid algebra with ring of definition $R_0\subset R$ and pseudouniformizer $u\in R_0$, and let $D$ be a $(\varphi,\Gamma_{\Q_p})$-module over $\Lambda_{R,\rig,\Q_p}$. Let $\delta_{\mathrm{univ}}\colon\Z_p^\times\rightarrow R^\times$ be the universal character on $\widehat{\Z_p^\times}_R$.  Then the support of the cokernel of $\gamma-1\colon D(\delta_{\mathrm{univ}})/(\psi-1)\rightarrow D(\delta_{\mathrm{univ}})/(\psi-1)$ is contained in a quasi-compact subspace of $\{u\neq 0\}\subset \Spa(R_0\htimes\Z_p[\![\Gamma_{\Q_p}]\!])^{\an}$.
\end{cor}
\begin{proof}
	We consider the action of $\gamma-1$ on $D(\delta_{\mathrm{univ}}/(\psi-1)$.  Choose a presentation $R^{\oplus d}\twoheadrightarrow D/(\psi-1)$, and lift the action of $\gamma$ on $D/(\psi-1)$ to a matrix $1+G\in\GL_d(R)$; replacing $\gamma$ with a power if necessary, we may assume that $G=\left(g_{ij}\right)\in u\Mat_d(R)$.  Then if $r\coloneqq \delta_{\mathrm{univ}}^{-1}(\gamma)-1$, the map $\gamma-1\colon D(\delta_{\mathrm{univ}}/(\psi-1)\rightarrow D(\delta_{\mathrm{univ}}/(\psi-1)$ lifts to the matrix $r+G+rG$; if this matrix is invertible, then $\gamma-1$ is surjective.  

But this matrix fails to be invertible only at points where $\det\left(\frac{r}{1+r}+G\right)$ vanishes.  There is a finite extension $R\rightarrow R'$ such that the characteristic polynomial of $G$ splits over $R'$ as $(X-\lambda_1)\cdots(X-\lambda_d)$, and we see that if $\lvert\frac{r}{1+r}\rvert > \lvert\lambda_i\rvert$ for all $i$, then $\gamma-1$ is invertible.  Since $G$ is topologically nilpotent, the $\lambda_i$ are also topologically nilpotent, so we see that there is some $N\gg0$ such that the vanishing locus of $\det\left(\frac{r}{1+r}+G\right)$ is contained in $\{\lvert \left( \frac{r}{1+r} \right)^N\rvert\leq \lvert u\rvert\neq 0\}\subset \Spa(R_0'\htimes \Z_p[\![\Gamma_{\Q_p}]\!])$ (where $R_0'$ is a ring of definition of $R'$).

Thus, we see that the open affinoid subspace $\{\lvert r^N\rvert\leq \lvert u\rvert\neq 0\}\subset \Spa(R_0\htimes\Z_p[\![\Gamma_{\Q_p}]\!])^{\an}$ is the quasi-compact subspace we were looking for.
\end{proof}

Now we give the desired general classification. The primary difference from the argument of ~\cite[Theorem 6.2.14]{kpx} is in using Corollary~\ref{cor: weights bdd} to bound the weight, rather than arguments using Sen weights.
\begin{thm}\label{thm: rank 1 classification}
	Let $X$ be a pseudoaffinoid algebra with pseudouniformizer $u$, and let $D$ be a rank-$1$ $(\varphi,\Gamma)$-module over $\Lambda_{X,\rig,K}$.  Then there exists a unique continuous character $\delta\colon K^\times \rightarrow \Gamma(\O_X,X)^\times$ and a unique invertible sheaf $\mathscr{L}$ on $X$ such that $D\cong \Lambda_{X,\rig,K}(\delta)\otimes_{\O_X}\mathscr{L}$.
\end{thm}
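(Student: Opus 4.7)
The strategy follows that of \cite[Theorem 6.2.14]{kpx}, with slope-boundedness replacing the Hodge--Tate-weight argument used in the characteristic $0$ rigid analytic setting. The idea is to twist by the universal character and use cohomology-and-base-change to extract $\delta$ as the support of an $H^0$-sheaf.

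First, I form the pseudorigid product $X := (\Spa R) \times \widehat{K^\times}$ and consider the twisted family
\[
  \widetilde D \;:=\; D \otimes \Lambda_{X,\rig,K}(\delta_{\mathrm{univ}}^{-1})
\]
of rank-$1$ $(\varphi,\Gamma_K)$-modules, where $\delta_{\mathrm{univ}}: K^\times \to \Gamma(\widehat{K^\times},\mathscr{O}^\times)$ is the universal continuous character. Set $\mathscr{F} := H^0_{\varphi,\Gamma_K}(\widetilde D)$; by the finiteness and base-change results of the previous section this is a coherent sheaf on $X$. For each maximal point $r\in\Spa R$, Proposition \ref{prop: char type artinian} applied to $D_r$ and to its artinian thickenings furnishes a unique continuous character $\delta_r: K^\times \to \kappa(r)^\times$ such that $\mathscr{F}_{(r,\delta_r)}$ is free of rank $1$, while $\mathscr{F}_{(r,\delta')}=0$ for any other $\delta'$ above $r$. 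So the support $Z \subset X$ meets every fiber of $X\to\Spa R$ in at most one point, and using the vanishing $H^2_{\varphi,\Gamma_K}(\widetilde D)=0$ on $Z$ from the artinian case together with Corollary \ref{cor: coh and base change}, the formation of $\mathscr{F}$ commutes with all artinian base changes on the $\Spa R$-factor; in particular $\mathscr{F}$ is invertible on $Z$ and detects square-zero thickenings along $\Spa R$.

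The crucial input is then to upgrade the set-theoretic bijection $Z\to \Spa R$ to an isomorphism. For this I need $Z\to \Spa R$ to be proper, and this is exactly where the slope-boundedness proposition just stated enters: on every quasi-compact admissible open $U\subset\Spa R$ the slopes of the fibers $D_r$ lie in a bounded interval, so the $\delta_r$ for $r\in U$ are confined to a bounded admissible open $V\subset\widehat{K^\times}$ independent of $r$. Then $Z\cap(U\times V)\to U$ is a finite, radicial, surjective closed immersion, hence an isomorphism, yielding a morphism $\delta:\Spa R\to \widehat{K^\times}$, i.e.\ a continuous character $K^\times\to R^\times$.

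Finally, setting $\mathscr{L}:=\delta^\ast \mathscr{F} = H^0_{\varphi,\Gamma_K}(D(\delta^{-1}))$, the natural evaluation map
\[
  \Lambda_{R,\rig,K}(\delta) \otimes_R \mathscr{L} \;\longrightarrow\; D
\]
is an isomorphism after every artinian specialization by Proposition \ref{prop: char type artinian}, hence an isomorphism because both sides are rank-$1$ projective $\Lambda_{R,\rig,K}$-modules and isomorphisms of such can be checked on finite-length quotients. Uniqueness of $(\delta,\mathscr{L})$ reduces to the artinian uniqueness at every maximal point, combined with Zariski-density of maximal points to rule out two distinct continuous characters agreeing pointwise.

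The main obstacle I anticipate is the properness/finiteness step for $Z\to \Spa R$. In the characteristic $0$ KPX setting one uses Hodge--Tate--Sen weights to confine the ``weight part'' of $\delta_r$ to a bounded locus in $\widehat{\mathscr{O}_K^\times}$; in our mixed- and positive-characteristic pseudorigid setting no such invariant is available, so one must rely entirely on the slope-boundedness of $D$ on quasi-compact opens and argue directly that this bounds $\delta_r$ in $\widehat{K^\times}$, taking care at points where $p=0$ in $R$ that the relevant moduli space of characters is still large enough to contain the candidate $\delta_r$ in a bounded region.
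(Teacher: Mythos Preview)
Your overall strategy matches the paper's, but there is a genuine gap in the choice of the locus $Z$. You take $Z$ to be the support of $\mathscr{F}=H^0_{\varphi,\Gamma_K}(\widetilde D)$, and then assert that the artinian computation pins down the fibers of $Z$ over $\Spa R$. But the artinian result tells you about $H^0$ \emph{of the fiber}, whereas the support of $\mathscr{F}$ is governed by the \emph{fiber of} $H^0$; these agree only once you know $H^0$ commutes with base change, which in turn requires control of $H^1$ and $H^2$. At the point $(r,\delta_r)$ where you want $Z$ to live, the fiber dimension of $H^1$ jumps (from $[K:\Q_p]$ to $[K:\Q_p]+1$), so $H^1$ is not locally free there and the base-change spectral sequence does not let you identify $\kappa\otimes H^0(\widetilde D)$ with $H^0(\widetilde D_\kappa)$. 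Your appeal to ``$H^2=0$ on $Z$ from the artinian case'' is circular: you need to know what $Z$ is before you can invoke vanishing on it.

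The paper circumvents this by working with $H^2$, whose formation commutes with \emph{arbitrary} base change (Corollary~\ref{cor: coh and base change}(1)). Concretely, it defines $\Gamma_D'$ and $\Gamma_D''$ as the supports of $H^2_{\varphi,\Gamma_K}(D^\vee(\delta_{\mathrm{univ}}\chi_{\mathrm{cyc}}))$ and $H^2_{\varphi,\Gamma_K}(D(\delta_{\mathrm{univ}}^{-1}\chi_{\mathrm{cyc}}))$, and sets $\Gamma_D:=\Gamma_D'\cap\Gamma_D''$; Tate duality identifies these $H^2$'s with the $H^0$'s you want, but now the supports genuinely commute with base change. The use of \emph{two} loci is also essential for boundedness: the $\psi-1$ surjectivity proposition yields $H^2=0$ after a large enough twist in one direction, bounding $\Gamma_D'$ on one side of $\G_m^{\an}$ and $\Gamma_D''$ on the other, so only their intersection is trapped in an affinoid. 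Your single $Z$ would give only a one-sided bound. Finally, your claim that $Z\cap(U\times V)\to U$ is a ``finite, radicial, surjective closed immersion, hence an isomorphism'' is unjustified as stated; the paper instead shows $\Gamma_D$ is pseudoaffinoid and then checks the map is an isomorphism on completed local rings at maximal points, using that pseudoaffinoid algebras are Jacobson.
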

\begin{remark}
If such a $\delta$ and $\mathscr{L}$ exist, then $\mathscr{L}(U)=H_{\varphi,\Gamma_K}^0(D(\delta^{-1})|_U)$ for every open subspace $U\subset X$.
\end{remark}
\begin{proof}
	We may assume $X=\Spa R$ is affinoid.  We first treat uniqueness.  Since the formation of $H_{\varphi,\Gamma_K}^0(D(\delta^{-1}))$ commutes with flat base change on $R$, it suffices to show that if $H_{\varphi,\Gamma_K}^0(\Lambda_{R,\rig,K}(\delta))$ is locally free of rank $1$ over $R$, then $\delta$ is trivial.  There is a Zariski-open dense subspace $U\subset \Spa R$ such that $H_{\varphi,\Gamma_K}^i(\Lambda_{R,\rig,K}(\delta)|_U)$ is flat for all $i$; if $x\in U$ and $\mathfrak{m}_x\subset R$ is the corresponding maximal ideal, then the base change spectral sequence implies that $H_{\varphi,\Gamma_K}^i(R/\mathfrak{m}_x^k\otimes_R\Lambda_{R,\rig,K}(\delta))\cong R/\mathfrak{m}_x^k\otimes_RH_{\varphi,\Gamma_K}^i(\delta)$ for all $i$ and all $k\geq 1$.  In particular, $H_{\varphi,\Gamma_K}^0(\Lambda_{R/\mathfrak{m}_x^k,\rig,K}(\delta))$ is free of rank $1$ over $R/\mathfrak{m}_x^k$, which implies that $\delta\colon K^\times\rightarrow (R/\mathfrak{m}_x^k)^\times$ is trivial for all $k\geq 1$.  It follows that $\delta\colon K^\times\rightarrow (R_{U'})^\times$ is trivial for all affinoid $U'\subset U$.  But the condition $\delta=1$ defines a Zariski-closed subspace of $\Spa R$; since it contains a Zariski-open dense subspace, it is all of $\Spa R$.

To show existence, we follow ~\cite{kpx} and consider the twist of $D$ by the inverse of the universal family of characters $\delta_{\mathrm{univ}}$ over $\widehat{K^\times}_R$; recall that $\widehat{K^\times}_R\coloneqq \G_m^{\ad}\times_{\Z}\Spa\Z_p[\![\O_K^\times]\!]\times_{\Z_p}\Spa R$ is the moduli spaces of continuous characters of $K^\times$ valued in pseudoaffinoid $R$-algebras.

This twist $D(\delta_{\mathrm{univ}}^{-1})$ is a $(\varphi,\Gamma_K)$-module over $\widehat{K^\times}_R$, and we use Tate local duality to cut out a subspace corresponding to the desired character.  More precisely, we let $\Gamma_D'$ and $\Gamma_D''$ be the support of $H_{\varphi,\Gamma_K}^2(D^\vee(\delta_{\mathrm{univ}}\chi_{\mathrm{cyc}}))$ and $H_{\varphi,\Gamma_K}^2(D(\delta_{\mathrm{univ}}^{-1}\chi_{\mathrm{cyc}}))$ in the pseudorigid space
\[	\widehat{K^\times}_R\subset\G_m^{\ad}\times_{\Z}\left( \Spa(R_0\htimes \Z_p[\![\O_K^\times]\!] \right)^{\an}	\]
	respectively, and let $\Gamma_D\coloneqq \Gamma_D'\times_{\widehat{K^\times}_R}\Gamma_D''$.  Since the formation of $H_{\varphi,\Gamma_K}^2$ commutes with arbitrary base change on $\Spa R$, the formation of $\Gamma_D'$ and $\Gamma_D''$, and hence $\Gamma_D$, commutes with arbitrary base change on $\Spa R$.  

	There is a natural projection map $\Gamma_D\rightarrow \Spa R$; a section induces a morphism $\Spa R\rightarrow \widehat{K^\times}_R$, or equivalently, a continuous character $\delta\colon K^\times\rightarrow R^\times$.  We will show that $\Gamma_D\rightarrow \Spa R$ is actually an isomorphism.  

	Granting this, we may replace $D$ with $D(\delta_D^{-1})$, where $\delta_D\colon K^\times\rightarrow R^\times$ is the continuous character corresponding to $\Spa R=\Gamma_D\rightarrow \widehat{K^\times}_R$, so that $\Gamma_D$ corresponds to the trivial character.  Then we need to show that $H_{\varphi,\Gamma_K}^0(D)$ is a line bundle over $\Spa R$, and $D\cong \Lambda_{R,\rig,K}\otimes_RH_{\varphi,\Gamma_K}^0(D)$ as a $(\varphi,\Gamma_K)$-module.  If $R'$ is a pseudoaffinoid artin local ring and $R\rightarrow R'$ is a homomorphism, there is a unique continuous character $\delta'\colon K^\times\rightarrow {R'}^\times$ such that $H_{\varphi,\Gamma_K}^0(D_{R'}({\delta'}^{-1}))$ is free of rank $1$ over $R'$ and the natural map $\Lambda_{R',\rig,K}\otimes_{R'}H_{\varphi,\Gamma_K}^0(D_{R'}({\delta'}^{-1}))\rightarrow D_{R'}({\delta'}^{-1})$ is an isomorphism, and in addition, $H_{\varphi,\Gamma_K}^1(D_{R'}({\delta'}^{-1}))$ is free of rank $1+[K:\Q_p]$ and $H_{\varphi,\Gamma_K}^2(D_{R'}({\delta'}^{-1}))=0$.  

	Thus, the formation of $H_{\varphi,\Gamma_K}^0(D_{R'}({\delta'}^{-1}))$ commutes with arbitrary base change on $R'$; in particular, $H_{\varphi,\Gamma_K}^0(D_{R'/\mathfrak{m}_{R'}}({\delta'}^{-1}))$ is non-zero.  
Since $H_{\varphi,\Gamma_K}^2(D_{R'/\mathfrak{m}_{R'}}^\vee(\delta'\chi_{\mathrm{cyc}}))$ and $H_{\varphi,\Gamma_K}^2(D_{R'/\mathfrak{m}_{R'}}({\delta'}^{-1}\chi_{\mathrm{cyc}}))$ are dual to $H_{\varphi,\Gamma_K}^0(D_{R'/\mathfrak{m}_{R'}}({\delta'}^{-1}))$ and $H_{\varphi,\Gamma_K}^0(D_{R'/\mathfrak{m}_{R'}}^{\vee}(\delta'))$, respectively, and the formation of $H_{\varphi,\Gamma_K}^2$ commutes with arbitrary base change on $R$, we see that $H_{\varphi,\Gamma_K}^2(D_{R'}^\vee(\delta'\chi_{\mathrm{cyc}}))$ and $H_{\varphi,\Gamma_K}^2(D_{R'}({\delta'}^{-1}\chi_{\mathrm{cyc}}))$ are both non-zero.  Thus, the graph of the morphism $\Spa R'\rightarrow \widehat{K^\times}_R$ induced by $\delta'$ is contained in $\Gamma_D$; since $\Gamma_D$ corresponds to the trivial character, $\delta'$ is trivial.

In other words, for any homomorphism $R\rightarrow R'$ with $R'$ a pseudoaffinoid artin local ring, $H_{\varphi,\Gamma_K}^0(D_{R'})$ is free of rank $1$ over $R'$, $H_{\varphi,\Gamma_K}^1(D_{R'})$ is free of rank $1+[K:\Q_p]$, and $H_{\varphi,\Gamma_K}^2(D_{R'})=0$; on residue fields, this implies that $H_{\varphi,\Gamma_K}^2(D_{R'/\mathfrak{m}_{R'}})=0$, so by Nakayama's lemma, $H_{\varphi,\Gamma_K}^2(D_{R'})=0$, as well.  This implies that $H_{\varphi,\Gamma_K}^2(D)$ is locally free of rank $0$, so by the base change spectral sequence, the formation of $H_{\varphi,\Gamma_K}^1(D)$ commutes with arbitrary base change on $R$.  It follows that $H_{\varphi,\Gamma_K}^1(D)$ is locally free of rank $1+[K:\Q_p]$, so the base change spectral sequence again implies that the formation of $H_{\varphi,\Gamma_K}^0(D)$ commutes with arbitrary base change on $R$, and we conclude that $H_{\varphi,\Gamma_K}^0(D)$ is locally free of rank $1$, as desired.

We now prove that $\Gamma_D\rightarrow \Spa R$ is an isomorphism.  In fact, it suffices to prove that $\Gamma_D$ is affinoid: by Theorem~\ref{thm: local isom aff}, an isomorphism of pseudoaffinoid algebras can be detected on the level of completed local rings, and by Proposition ~\ref{prop: char type artinian}, $\Gamma_D\rightarrow \Spa R$ induces an isomorphism on the completed local ring at every maximal point of $\Spa R$.

Since $\Gamma_D$ is a Zariski-closed subspace of the quasi-Stein space $\widehat{K^\times}_R$, it is enough to show that that it is contained in an affinoid subspace. We replace $D$ with $\Ind_K^{\Q_p}D$.  

As in ~\cite[Lemma 6.2.18]{kpx}, we first check that the image of $\Gamma_D$ in $\G_{m,R}$ is bounded.  By Proposition~\ref{prop: psi-1 surj for twist}, there is some $N\geq 0$ such that for all $n\geq N$, 
\[	\psi-1\colon D^\vee(\delta_{-n}\chi_{\mathrm{cyc}})\rightarrow D^\vee(\delta_{-n}\chi_{\mathrm{cyc}})	\]
and 
\[	\psi-1\colon D(\delta_{-n}\chi_{\mathrm{cyc}})\rightarrow D(\delta_{-n}\chi_{\mathrm{cyc}})	\]
are surjective.  Surjectivity is preserved under arbitrary base change $R\rightarrow R'$, and the isomorphism $H_{\varphi,\Gamma_K}^\bullet\rightarrow H_{\psi,\Gamma_K}^\bullet$ from Lemma~\ref{lemma: phi gamma vs psi gamma coh} implies that 
\[	H^2(D^\vee(\delta_{-n}\delta'\chi_{\mathrm{cyc}}))=H_{\varphi,\Gamma_K}^2(D(\delta_{-n}\delta'\chi_{\mathrm{cyc}}))=0	\]
for all continuous characters $\delta'\colon \mathscr{O}_K^\times\rightarrow {R'}^\times$.  Thus, if $T$ denotes the coordinate on $\G_m$, the image of $\Gamma_D'$ is contained in the subspace $\left\{\lvert T\rvert\leq \lvert u^N\rvert\right\}\subset \G_{m,R}$ and the image of $\Gamma_D''$ is contained in the subspace $\left\{\lvert T^{-1}\rvert\leq \lvert u^N\rvert\right\}\subset \G_{m,R}$.

We let $C_{R,N}\coloneqq \{\lvert u^{-N}\rvert\leq \lvert T\rvert \leq \lvert u^N\rvert\}$ denote the annulus above, and we replace $\Spa R$ and $D$ with $C_{R,N}$ and the universal twist of $D$.  Then by Corollary~\ref{cor: weights bdd}, $\Gamma_D$ is contained in a pseudoaffinoid subspace $\{\lvert ([\gamma]-1)^{N'}\rvert\leq \lvert u\rvert\neq 0\}$ for some $N'\gg0$, so we are done.
\end{proof}

\subsection{Interpolating triangulations}

Trianguline $(\varphi,\Gamma)$-modules are those which are extensions of $(\varphi,\Gamma)$-modules of character type.  More precisely,
\begin{definition}
	Let $X$ be a pseudorigid space over $\mathscr{O}_E$ for some finite extension $E/\Q_p$, let $K/\Q_p$ be a finite extension, and let $\underline\delta=(\delta_1,\ldots,\delta_d)\colon (K^\times)^d\rightarrow \Gamma(X,\mathscr{O}_X^\times)$ be a $d$-tuple of continuous characters.  A $(\varphi,\Gamma_{K})$-module $D$ is \emph{trianguline with parameter $\underline\delta$} if (possibly after enlarging $E$) there is an increasing filtration $\Fil^\bullet D$ by $(\varphi,\Gamma_{K})$-modules and a set of line bundles $\mathscr{L}_1,\ldots,\mathscr{L}_d$ such that $\gr^iD\cong \Lambda_{X,\rig,K}(\delta_i)\otimes \mathscr{L}_i$ for all $i$.

	If $X=\Spa R$ where $R$ is a field, we say that $D$ is \emph{strictly trianguline with parameter $\underline\delta$} if for each $i$, $\Fil^{i+1}D$ is the unique sub-$(\varphi,\Gamma_{K})$-module of $D$ containing $\Fil^iD$ such that $\gr^{i+1}D\cong \Lambda_{R,\rig,K}(\delta_{i+1})$.  Equivalently, $D$ is trianguline with parameter $\underline\delta$ and $H_{\varphi,\Gamma}^0\left((\Fil^iD)^\vee(\delta_i)\right)$ is $1$-dimensional.
\end{definition}

We wish to interpolate triangulations at dense sets of points of pseudorigid spaces.
\begin{definition}
	Let $X$ be a pseudorigid space, and let $\Omega\subset X$ be a set of maximal points.  We say that $\Omega$ is \emph{Zariski-dense} if any Zariski-closed subspace $Z\subset X$ containing $\Omega$ also contains the underlying reduced space $X^{\red}$.  We say that a Zariski-dense set $\Omega\subset X$ is \emph{very Zariski dense} if for every $x\in X$ and every irreducible affinoid neighborhood $U\subset X$ containing $x$, $\Omega\cap U$ is Zariski-dense in $U$, that is, any function in $\O_X(U)$ vanishing on all of $\Omega\cap U$ is nilpotent.
	\label{def: zariski very dense}
\end{definition}

\begin{lemma}\label{lemma: quotient interpolates mod p}
	Let $X=\Spa R$ be a reduced pseudorigid space over $\Z_p$ with $p\notin R^\times$, let $D$ be a $(\varphi,\Gamma)$-module over $\Lambda_{R,\rig,K}$, and let $\delta\colon K^\times\rightarrow R^\times$ be a continuous character such that $H_{\varphi,\Gamma_K}^0(D^\vee(\delta))$ is free of rank $1$ over $R$ and $H_{\varphi,\Gamma_K}^i(D^\vee(\delta))$ has Tor-dimension at most $1$ for $i=1,2$.  Then the morphism $D\rightarrow \Lambda_{R,\rig,K}(\delta)$ corresponding to a basis of $H_{\varphi,\Gamma_K}^0(D^\vee(\delta))$ is surjective over an open subspace $U\subset X$ containing $\{p=0\}\subset X$.
\end{lemma}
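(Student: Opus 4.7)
The plan is to construct the map $f$, verify it is surjective on every fibre over $\{p=0\}$, and then spread this pointwise surjectivity to an open rational neighborhood of $\{p=0\}$ in $X$ by a Nakayama-type inversion on $\coker(f)$.

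I would first fix a basis $e\in H^0_{\varphi,\Gamma_K}(D^\vee(\delta))$ and let $f:D\to\Lambda_{R,\rig,K}(\delta)$ be the corresponding morphism. For a maximal point $x\in X$ with $p\in\mathfrak{m}_x$, the residue field $R/\mathfrak{m}_x$ is a finite extension of $\F_p(\!(u)\!)$, and I would apply Corollary~\ref{cor: coh and base change} to $D^\vee(\delta)$ under the base change $R\to R/\mathfrak{m}_x$. The Tor-dimension hypotheses on $H^1$ and $H^2$ annihilate every $\Tor_j^R(H^i(D^\vee(\delta)),R/\mathfrak{m}_x)$ with $j\ge 2$ and $i\in\{1,2\}$, so the filtration in part~(3) collapses to
\begin{equation*}
0\to R/\mathfrak{m}_x\otimes_R H^0_{\varphi,\Gamma_K}(D^\vee(\delta))\to H^0_{\varphi,\Gamma_K}(D_x^\vee(\delta_x))\to \Tor_1^R(H^1_{\varphi,\Gamma_K}(D^\vee(\delta)),R/\mathfrak{m}_x)\to 0.
\end{equation*}
In particular the image of $e$ in $H^0_{\varphi,\Gamma_K}(D_x^\vee(\delta_x))$ is nonzero, so $f_x\neq 0$. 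Then $\coker(f_x)$ is a proper quotient $(\varphi,\Gamma_K)$-module of the rank-$1$ character-type module $\Lambda_{R/\mathfrak{m}_x,\rig,K}(\delta_x)$, and by the corollary following Lemma~\ref{lemma: h0 character} it must vanish, so $f_x$ is surjective.

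Next I would set $C:=\coker(f)$ and fix a finitely generated model $C_{[a,b]}$ over $\Lambda_{R,[a,b],K}$. The coherent sheaf $C_{[a,b]}\otimes_R R/(p)$ on $\Spa\Lambda_{R/(p),[a,b],K}$ has vanishing stalk at every maximal point, since each such point lies over a maximal point of $\Spa R/(p)$ on whose fibre $C$ already vanishes by the previous paragraph. Because pseudoaffinoid algebras are Jacobson, $C_{[a,b]}\otimes_R R/(p)=0$, i.e.\ $C_{[a,b]}=p\,C_{[a,b]}$. Picking generators $c_1,\ldots,c_m$ and writing $c_i=p\sum_j\lambda_{ij}c_j$ yields the matrix identity $(I-p\Lambda)\underline c=0$ with $\Lambda=(\lambda_{ij})$. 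Using $\Lambda_{R,[a,b],K}=\Lambda_{R,[a,b],K}^+[u^{-1}]$, pick $N$ such that every $\lambda_{ij}\in u^{-N}\Lambda_{R,[a,b],K}^+$, and take any $n>N$: the rational subspace $U_n:=\Spa R\langle p/u^n\rangle\subset X$ is open and contains $\{p=0\}$, and over it each $p\lambda_{ij}$ lies in $u^{n-N}\Lambda_{R_{U_n},[a,b],K}^+$ and is therefore topologically nilpotent. A geometric series then inverts $I-p\Lambda$, forcing $\underline c=0$ and $C|_{U_n}=0$, which is exactly the surjectivity of $f$ on $U_n$.

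The delicate step I anticipate as the main obstacle is this last propagation from fibrewise vanishing on $\{p=0\}$ to an honest open neighborhood. One cannot simply take the complement of $\mathrm{Supp}(\coker f)\subset \Spa\Lambda_{R,[a,b],K}$ in $\Spa R$, since the structure map is not proper and images of Zariski-closed subsets need not be closed. The strategy above circumvents this by first invoking the Jacobson property of pseudoaffinoid spaces to obtain the identity $C=pC$, and then carefully controlling the denominator $N$ of the structure coefficients $\lambda_{ij}$ in $\Lambda_{R,[a,b],K}^+[u^{-1}]$ in order to identify an explicit rational localization $\Spa R\langle p/u^n\rangle$ on which $p$ is topologically nilpotent and the geometric-series inversion of $I-p\Lambda$ converges.
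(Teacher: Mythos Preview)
Your argument is correct and shares its first half with the paper's proof: both establish fibrewise surjectivity at characteristic~$p$ points by combining the base-change exact sequence (using the Tor-dimension~$\le 1$ hypothesis to kill the higher $\Tor$'s) with the fact that a rank-$1$ $(\varphi,\Gamma_K)$-module of character type over a field has no proper nontrivial quotient.

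The second half---spreading fibrewise vanishing of $C$ on $\{p=0\}$ to an open neighborhood---is where you diverge.  The paper considers the Zariski-closed support $Z\subset\Spa\Lambda_{R,[b/p,b],K}$ of the cokernel, observes that $p$ is a unit on the affinoid $Z$, and invokes the maximum modulus principle for pseudoaffinoid algebras (developed in Appendix~\ref{app: max mod principle}) to conclude that $\lvert p\rvert$ is bounded below on $Z$ by some $\lambda>0$; then $U:=\{\lvert p\rvert\le\lambda\}\subset X$ works.  Your route is more hands-on: from $C_{[a,b]}=pC_{[a,b]}$ you extract a matrix relation $(I-p\Lambda)\underline c=0$, bound the denominators of $\Lambda$, and invert $I-p\Lambda$ by a geometric series on an explicit $U_n=\Spa R\langle p/u^n\rangle$.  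Your argument is more elementary in that it avoids the maximum-modulus machinery entirely; the paper's is more conceptual and makes clear that the result really only depends on $p$ being a unit on the (affinoid) support of the cokernel.

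One small omission worth patching: you should say, as the paper does, that $\varphi$-equivariance of $f$ reduces checking surjectivity over $\Lambda_{R,(0,b],K}$ to checking it over a single $\Lambda_{R,[a,b],K}$ with $a\in(0,b/p]$; otherwise it is not immediate that $C_{[a,b]}|_{U_n}=0$ forces $C|_{U_n}=0$.
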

\begin{proof}
Choose a basis element of $H_{\varphi,\Gamma_K}^0(D^\vee(\delta))$; there is some $b>0$ such that the corresponding homomorphism $D\rightarrow \Lambda_{R,\rig,K}(\delta)$ is defined over $\Lambda_{R,(0,b],K}$, and we may view it as a morphism of coherent sheaves over the corresponding quasi-Stein space.  Moreover, $\varphi$-equivariance means that to check surjectivity, it suffices to check that $\Lambda_{R,[b/p,b],K}\otimes_{\Lambda_{R,(0,b],K}}D\rightarrow \Lambda_{R,[b/p,b],K}\otimes_{\Lambda_{R,(0,b],K}}\Lambda_{R,(0,b],K}(\delta)$ is surjective.

The morphism $\Lambda_{R,[b/p,b],K}\otimes_{\Lambda_{R,(0,b],K}}D\rightarrow \Lambda_{R,[b/p,b],K}\otimes_{\Lambda_{R,(0,b],K}}\Lambda_{R,(0,b],K}(\delta)$ fails to be surjective on a Zariski-closed subspace $Z\subset\Spa \Lambda_{R,[b/p,b],K}$.  Since $\Spa \Lambda_{R,[b/p,b],K}$ is affinoid, so is $Z$.

Consider specializations at the characteristic $p$ maximal points $x\in\Spa R$.
If $H_{\varphi,\Gamma_K}^0(D^\vee(\delta))$ is flat of rank $1$ over $R$, then $k_x\otimes_RH_{\varphi,\Gamma_K}^0(D^\vee(\delta))$ is a $1$-dimensional $k_x$-vector space. If $H_{\varphi,\Gamma_K}^i(D^\vee(\delta))$ has Tor-dimension at most $1$ for $i=1,2$, then the specialization maps $R\rightarrow k_x$ give us exact sequences
\[	0\rightarrow k_x\otimes_RH_{\varphi,\Gamma_K}^0(D^\vee(\delta))\rightarrow H_{\varphi,\Gamma_K}^0(k_x\otimes_RD^\vee(\delta))\rightarrow \Tor_1^R(H_{\varphi,\Gamma_K}^1(D^\vee(\delta)),k_x)\rightarrow 0	\]
by Corollary~\ref{cor: coh and base change}.  
Thus, the induced maps $k_x\otimes_RD\rightarrow k_x\otimes_R\Lambda_{R,\rig,K}(\delta)$ are non-zero, and if $k_x$ has positive characteristic, this implies that the corresponding map is surjective.

Thus, $p$ is a nowhere-vanishing function on $Z$, and since $Z$ is affinoid, the maximum modulus principle discussed in Appendix~\ref{app: max mod principle} implies that $p|_Z$ is bounded away from $0$.  That is, there is some $\lambda$ such that $\{\lvert p\rvert \leq \lambda\}\cap Z$ is empty.  Setting $U\coloneqq  \{\lvert p\rvert \leq \lambda\}\subset X$ yields the desired subspace.
\end{proof}

\begin{thm}\label{thm: interpolate rk 1 quot}
	Let $X$ be a reduced pseudorigid space over $\Z_p$, let $D$ be a $(\varphi,\Gamma_K)$-module over $X$ of rank $d$, and let $\delta\colon K^\times\rightarrow\Gamma(X,\mathscr{O}_X^\times)$ be a continuous character.  Suppose there is a Zariski-dense set $X^{\mathrm{cl}}\subset X$ of maximal points such that for every $x\in X^{\mathrm{cl}}$, $H_{\varphi,\Gamma_K}^0(D_x^\vee(\delta_x))$ is $1$-dimensional and the image of $\Lambda_{k_x,\rig,K}$ under any basis of this space is saturated in $D_x^\vee(\delta_x)$.  Then there exists a proper birational morphism $f\colon X'\rightarrow X$ of reduced pseudorigid spaces, a line bundle $\mathscr{L}$ on $X'$, a homomorphism $\lambda\colon f^\ast D\rightarrow \Lambda_{X',\rig,K}(\delta)\otimes_{X'}\mathscr{L}$ of $(\varphi,\Gamma_K)$-modules, and an open subspace $U\subset X'$ containing $\{p=0\}$ such that
\begin{enumerate}
\item	$\lambda|_U\colon f^\ast D|_U\rightarrow \Lambda_{U,\rig,K}(\delta|_U)\otimes_U\mathscr{L}|_U$ is surjective
\item	the kernel of $\lambda|_U$ is a $(\varphi,\Gamma_K)$-module of rank $d-1$
\end{enumerate}
\end{thm}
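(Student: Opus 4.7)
The plan is to isolate the rank-$1$ quotient by performing a flattening of the coherent sheaf $\mathscr{F} := H^0_{\varphi,\Gamma_K}(D^\vee(\delta))$ on $X$, thereby producing the line bundle $\mathscr{L}$ and the morphism $\lambda$ directly, and then to invoke Lemma~\ref{lemma: quotient interpolates mod p} locally on $X'$ to promote $\lambda$ to a surjection on an open subspace containing $\{p=0\}$.

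First, I would study the coherent sheaf $\mathscr{F}$. By Corollary~\ref{cor: coh and base change}, the fibre of $\mathscr{F}$ at every $x\in X_{\mathrm{alg}}$ surjects onto $H^0_{\varphi,\Gamma_K}(D_x^\vee(\delta_x))$, which is one-dimensional; the saturation condition at $x$ forces $H^2_{\varphi,\Gamma_K}(D_x^\vee(\delta_x))$ to vanish (via Tate local duality and the fact that the saturated sub is itself a sub of a trivial $(\varphi,\Gamma_K)$-module), and consequently implies bounded Tor-dimension for $H^1,H^2$ of $\mathscr{F}$ in a neighborhood of $x$. Because $X_{\mathrm{alg}}$ is Zariski-dense in the reduced space $X$, it follows that $\mathscr{F}$ has generic rank exactly $1$ on every irreducible component of $X$.

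Next, I would apply a Raynaud--Gruson type flattening theorem for reduced pseudorigid spaces (using the geometric results collected in the appendices) to find a proper birational morphism $f:X'\to X$ such that the quotient of $f^{\ast}\mathscr{F}$ by its torsion subsheaf is an invertible sheaf $\mathscr{M}$ on $X'$. Setting $\mathscr{L}:=\mathscr{M}$, the natural surjection $f^{\ast}\mathscr{F}\twoheadrightarrow\mathscr{L}$ corresponds to a canonical global section of $\mathscr{H}om_{(\varphi,\Gamma_K)}\bigl(f^{\ast}D,\Lambda_{X',\rig,K}(\delta)\otimes\mathscr{L}\bigr)$, i.e.\ to a $(\varphi,\Gamma_K)$-equivariant homomorphism
\[
\lambda : f^{\ast}D \longrightarrow \Lambda_{X',\rig,K}(\delta)\otimes_{X'}\mathscr{L}
\]
whose specialisation at each point of $f^{-1}(X_{\mathrm{alg}})$ is nonzero by construction.

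With $\lambda$ in hand I would verify the hypotheses of Lemma~\ref{lemma: quotient interpolates mod p} locally on $X'$ after trivialising $\mathscr{L}$: by construction the relevant $H^0$ is free of rank $1$, and the Tor-dimension bounds on $H^1,H^2$ transfer from $X$ to $X'$ by flatness of $f$ over a Zariski-dense open together with the base-change spectral sequence (possibly after shrinking to a further open or performing an additional blowup to avoid a codimension-$\geq 2$ bad locus). The lemma then produces an open $U\subset X'$ with $\{p=0\}\subset U$ on which $\lambda|_U$ is surjective. Finally, the kernel of $\lambda|_U$ is a coherent $(\varphi,\Gamma_K)$-stable subsheaf of $f^{\ast}D|_U$; since both $f^{\ast}D|_U$ and $\Lambda_{U,\rig,K}(\delta)\otimes\mathscr{L}|_U$ are projective $(\varphi,\Gamma_K)$-modules and the surjection splits locally on the Robba-ring level, the kernel is itself projective of rank $d-1$.

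The main obstacle will be the flattening step: establishing a suitable Raynaud--Gruson theorem in the pseudorigid category and, more delicately, checking that the line-bundle quotient of $f^{\ast}\mathscr{F}$ obtained from flattening really does correspond to a $(\varphi,\Gamma_K)$-module morphism $\lambda$ (and not merely an $\mathscr{O}_{X'}$-linear map), together with arranging the Tor-dimension hypothesis of Lemma~\ref{lemma: quotient interpolates mod p} to hold on a large enough open subspace. A subsidiary difficulty is that surjectivity of $\lambda|_U$ is governed by the characteristic-$p$ fibre via Lemma~\ref{lemma: quotient interpolates mod p}, so any residual torsion in $f^{\ast}\mathscr{F}$ that meets $\{p=0\}$ must be resolved by a further blowup before the lemma can be applied.
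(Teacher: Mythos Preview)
Your overall strategy---produce a proper birational $f:X'\to X$ on which the relevant $H^0$ becomes a line bundle, then invoke Lemma~\ref{lemma: quotient interpolates mod p} affinoid-locally---matches the paper's.  But the flattening step, which you correctly flag as the crux, is mis-aimed and does not deliver the hypotheses of Lemma~\ref{lemma: quotient interpolates mod p}.

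Raynaud--Gruson applied to the single coherent sheaf $\mathscr{F}=H^0_{\varphi,\Gamma_K}(D^\vee(\delta))$ only makes the \emph{strict transform} (the torsion-free quotient of $f^\ast\mathscr{F}$) locally free; it neither makes $H^0_{\varphi,\Gamma_K}\bigl((f^\ast D)^\vee(\delta)\bigr)$ itself locally free of rank~$1$, nor says anything about the Tor-dimension of $H^1,H^2$.  Both are explicit hypotheses of Lemma~\ref{lemma: quotient interpolates mod p}, so your claim ``by construction the relevant $H^0$ is free of rank~$1$'' is not justified.  There is also a direction problem in your construction of $\lambda$: a surjection $f^\ast\mathscr{F}\twoheadrightarrow\mathscr{L}$ does not yield a global section of $\mathscr{L}\otimes H^0_{\varphi,\Gamma_K}\bigl((f^\ast D)^\vee(\delta)\bigr)$, which is what a morphism $f^\ast D\to\Lambda_{X',\rig,K}(\delta)\otimes\mathscr{L}$ amounts to.  (Your side claim that saturation at $x$ forces $H^2_{\varphi,\Gamma_K}(D_x^\vee(\delta_x))=0$ is also unsupported: Tate duality identifies that group with $H^0_{\varphi,\Gamma_K}(D_x(\delta_x^{-1}\chi_{\cyc}))^\vee$, which the saturation hypothesis does not control.)

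The paper avoids all of this by exploiting that $C^\bullet_{\varphi,\Gamma_K}(D^\vee(\delta))$ is a \emph{perfect complex} concentrated in $[0,2]$ and applying the complex-level flattening of \cite[Corollary~6.3.6(2)]{kpx}.  That result produces, in one proper birational morphism $f_0:X'\to X$, a situation in which $H^0$ of the pulled-back complex is genuinely flat \emph{and} $H^1,H^2$ have Tor-dimension at most~$1$.  One then sets $\mathscr{L}:=H^0_{\varphi,\Gamma_K}(D')^\vee$ (so $\lambda$ is the tautological section), uses Zariski density of $X_{\mathrm{alg}}$ together with \cite[Lemma~6.3.7]{kpx} and the base-change exact sequence to pin the rank of $H^0$ to~$1$, and Lemma~\ref{lemma: quotient interpolates mod p} applies verbatim.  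Replacing your Raynaud--Gruson step with this perfect-complex flattening is the missing idea.
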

\begin{proof}
We may replace $X$ with its normalization (using the theory of normalizations of pseudorigid spaces developed in~\cite{johansson-newton17}), and we may consider the connected components of $X$ separately.  

Using perfectness of $C_{\varphi,\Gamma_K}^\bullet(D^\vee(\delta))$, we may use ~\cite[Corollary 6.3.6(2)]{kpx} to construct a proper birational morphism $f_0\colon X'\rightarrow X$ such that $D'\coloneqq f_0^\ast(D^\vee(\delta))$ has $H_{\varphi,\Gamma_K}^0(D')$ flat and $H_{\varphi,\Gamma_K}^i(D')$ with Tor-dimension at most $1$ for $i=1,2$.  Then for any maximal point $x\in X'$, the base change spectral sequence gives us a short exact sequence
\[      0\rightarrow k_x\otimes_RH_{\varphi,\Gamma_K}^0(D')\rightarrow H_{\varphi,\Gamma_K}^0(k_x\otimes_RD')\rightarrow \Tor_1^R(H_{\varphi,\Gamma_K}^1(D'),k_x)\rightarrow 0       \]
By ~\cite[Lemma 6.3.7]{kpx}, the set of points $x\in X'$ such that the last term is non-zero is a Zariski-closed subspace $Z_0'\subset X'$ whose complement is open and dense.  Thus, $H_{\varphi,\Gamma_K}^0(D')$ is flat of rank $1$.  Letting $\mathscr{L}\coloneqq H_{\varphi,\Gamma_K}^0(D')^\vee$, we obtain a homomorphism $\lambda_0\colon f^\ast D\rightarrow \Lambda_{X',\rig,K}(\delta)\otimes_{X'}\mathscr{L}$.

The formation of $H_{\varphi,\Gamma_K}^0(D')$ commutes with flat base change on $X$; we may find a collection $\{X_i'\}$ of open pseudoaffinoid subspaces of $X'$ such that $H_{\varphi,\Gamma_K}^0(D')|_{X_i'}$ is free, $\{p=0\}\subset \cup_i X_i'$, and $p$ is not invertible on $X_i'$.  Then we may apply Lemma~\ref{lemma: quotient interpolates mod p} to conclude that $\lambda_0|_{X_i'}$ is surjective (possibly after shrinking $X_i'$).  Setting $U\coloneqq \cup X_i'$, we see that $X'$, $U\subset X'$, and $\lambda_0$ satisfy the first of our desired properties.

To check the second claim, observe that for some $b>0$ we have an exact sequence over $U$
\[	0\rightarrow P\rightarrow \Lambda_{U,(0,b],K}\otimes D'|_U\rightarrow \Lambda_{U,(0,b],K}(\delta)\otimes_{U}\mathscr{L}|_U\rightarrow 0	\]
Since $\Lambda_{U,(0,b],K}(\delta)\otimes_{X'}\mathscr{L}$ is $R'$-flat, this sequence remains exact after specializing at any point $x\in U$, so $k_x\otimes P$ is a $(\varphi,\Gamma_K)$-module of rank $d-1$.  It follows by ~\cite[Lemma 2.1.8(1)]{kpx} that $P$ is a vector bundle of rank $d-1$ over the quasi-Stein space associated to $\Lambda_{U,(0,b],K}$, and hence is a $(\varphi,\Gamma_K)$-module of the correct rank.
\end{proof}

\begin{remark}
	The morphism $f\colon X'\rightarrow X$ is, in general, not compatible with the analogous morphism constructed in ~\cite[Theorem 6.3.9]{kpx}; in that argument, the authors make an additional blow-up, in order to control the cohomology groups of $f^\ast M/t$, which is what permits them to deduce that $X^{\mathrm{cl}}\subset f^{-1}(U)$.  But Fontaine's element $t$ does not make sense in our mixed- or positive-characteristic overconvergent period rings, so we cannot deduce that $X^{\mathrm{cl}}\subset f^{-1}(U)$.
\end{remark}

As in ~\cite[Corollary 6.3.10]{kpx}, we may deduce the following:
\begin{cor}\label{cor: interpolate triangulation}
	Let $X$ be a reduced pseudorigid space over $\Z_p$, all of whose connected components are irreducible.  Let $M$ be a $(\varphi,\Gamma_K)$-module over $X$ of rank $d$ and let $\underline\delta\coloneqq (\delta_1,\ldots,\delta_d)\colon (K^\times)^d\rightarrow\Gamma(X,\mathscr{O}_X^\times)$ be a parameter such that $D|_x$ is strictly trianguline with parameter $\underline\delta$ at a Zariski dense set $X^{\mathrm{cl}}\subset X$ of maximal points $x\in X$.  Then there exists a proper birational morphism $f\colon X'\rightarrow X$ of reduced pseudorigid spaces, an increasing filtration $\Fil^\bullet(f^\ast D)$, and an open subspace $U\subset X'$ containing $\{p=0\}$ such that
	\begin{enumerate}
		\item	$\left(\Fil^\bullet (f^\ast D)\right)|_x$ is a strictly trianguline filtration on $(f^\ast D)|_x$ for all $x\in U$,
		\item	there are line bundles $\mathscr{L}_i$ on $U$ and isomorphisms of $(\varphi,\Gamma_K)$-modules $\gr^i(f^\ast D|_U)\rightarrow \Lambda_{U,\rig,K}(\delta_i)\otimes_{U}\mathscr{L}_i$
	\end{enumerate}
\end{cor}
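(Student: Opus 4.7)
I would prove this by induction on $d$. The base case $d = 1$ follows from the classification of rank-$1$ $(\varphi,\Gamma_K)$-modules established in the preceding subsection: it gives a unique character $\delta$ and line bundle $\mathscr{L}_1$ with $D \cong \Lambda_{X,\rig,K}(\delta) \otimes_X \mathscr{L}_1$; Zariski-density of $X_{\mathrm{alg}}$ forces $\delta = \delta_1$, and the trivial filtration $0 \subset D$ with $f = \id_X$ satisfies the conclusion.

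For the inductive step $d \geq 2$, the plan is to peel off the bottom piece $\Fil^1 \cong \Lambda(\delta_1) \otimes \mathscr{L}_1$ first by applying Theorem~\ref{thm: interpolate rk 1 quot} to $D^\vee$ with character $\delta_1^{-1}$ and dualizing the resulting rank-$1$ quotient into a rank-$1$ sub. The hypothesis is verified at $x \in X_{\mathrm{alg}}$ via the uniqueness built into the strictly trianguline condition: $\Fil^1 D_x$ is the unique saturated rank-$1$ sub-$(\varphi,\Gamma_K)$-module of $D_x$ with parameter $\delta_{1,x}$, so $\Hom_{(\varphi,\Gamma_K)}(\Lambda_{k_x,\rig,K}(\delta_{1,x}), D_x)$ is one-dimensional with saturated image. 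This produces a proper birational $f_1 : X_1 \to X$, a line bundle $\mathscr{L}_1$ on $X_1$, an injection $\iota_1 : \Lambda_{X_1,\rig,K}(\delta_1) \otimes \mathscr{L}_1 \to f_1^\ast D$, and an open $U_1 \subset X_1$ containing $\{p = 0\}$ over which $\iota_1$ is saturated with quotient $D'$ a rank-$(d-1)$ $(\varphi,\Gamma_K)$-module. At each $x \in X_{\mathrm{alg}} \cap U_1$, the strict triangulation of $D_x$ restricts to a strict triangulation of $D'|_x$ with parameter $(\delta_2, \ldots, \delta_d)$, so the inductive hypothesis applies to $D'$ over $U_1$ and furnishes a further proper birational $g : Y \to U_1$, an open $V \subset Y$ containing $\{p = 0\}$, a filtration on $g^\ast D'$ strictly trianguline on $V$, together with line bundles $\mathscr{L}_2, \ldots, \mathscr{L}_d$ and injections $\gr^i(g^\ast D') \hookrightarrow \Lambda_{Y,\rig,K}(\delta_{i+1}) \otimes \mathscr{L}_{i+1}$ (isomorphisms over $V$) as well as an isomorphism $\gr^1(g^\ast D') \cong \Lambda_{Y,\rig,K}(\delta_2) \otimes \mathscr{L}_2$.

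I would then assemble the final $f : X' \to X$ by extending the composite $Y \to U_1 \hookrightarrow X_1 \to X$ to a proper birational morphism (using a flattening/closure construction for pseudorigid spaces, and the theory of normalizations of ~\cite{johansson-newton17}), define $\Fil^1(f^\ast D)$ globally on $X'$ via the pullback of $\iota_1$ (with the full global isomorphism $\gr^1 \cong \Lambda_{X',\rig,K}(\delta_1) \otimes \mathscr{L}_1$ forced by the rank-$1$ classification applied to this bottom sub), and pull back the higher filtration pieces from $Y$, extending by Zariski-closure inside $f^\ast D$ where needed. The open $U \subset X'$ is taken to be the intersection of the preimages of $U_1$ and $V$, which still contains $\{p = 0\}$. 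The principal obstacle is this consolidation: the output of Theorem~\ref{thm: interpolate rk 1 quot} lives only over $U_1$, so producing a single proper birational $X' \to X$ with a globally defined filtration requires extending the inductively constructed pieces from $U_1$ to all of $X_1$, and carefully verifying Zariski-density of $X_{\mathrm{alg}} \cap U_1$ in $U_1$ (which follows from $U_1 \supset \{p = 0\}$ and compatibility of density with proper birational pullback). A secondary point is confirming that each induction step is compatible with the requirement that the graded pieces map globally into $\Lambda_{X',\rig,K}(\delta_i) \otimes \mathscr{L}_i$; this is handled by defining the maps on the open subspace where the construction is valid and taking the (flat) extension across the entire $X'$.
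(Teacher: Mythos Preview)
Your proposal takes the dual of the paper's approach: the paper applies Theorem~\ref{thm: interpolate rk 1 quot} directly to $D$ with parameter $\delta_d$, peeling off rank-$1$ \emph{quotients} from the top, so that the global filtration $\Fil^{d-i}:=\ker\lambda_{d-i+1}$ and the injections $\gr^i\hookrightarrow\Lambda(\delta_i)\otimes\mathscr{L}_i$ (i.e.\ properties (1) and (2)) drop out of the construction. Property (3) is then proved separately: the cokernel $Q$ of $\gr^1\hookrightarrow\Lambda(\delta_1)\otimes\mathscr{L}_1$ vanishes over $U$ by construction, is $\mathscr{O}_V$-flat for affinoid $V\subset X'\smallsetminus\{p=0\}$ by the characteristic-$0$ argument of \cite[Corollary~6.3.10]{kpx}, and hence vanishes on all of $X'$ by connectedness.

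Your bottom-up approach instead makes (3) natural --- $\Fil^1=\im\iota_1\cong\Lambda(\delta_1)\otimes\mathscr{L}_1$ once $\iota_1$ is injective --- but note that the justification is not the rank-$1$ classification: it is simply that a nonzero map out of a rank-$1$ projective module over the integral ring $\Lambda_{R,[a,b],K}$ (with $R$ a domain) is injective. Conversely, your approach gives maps in the wrong direction for (2), so you need that injectivity argument at every step of the induction rather than only at the end; this is the content of the paper's connectedness argument, repackaged.

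The consolidation obstacle you flag is genuine and is equally present in the paper's proof, where it is hidden behind the word ``inductively''. The point is that each $\lambda_i$, arising as a global section of a cohomology sheaf, is defined on all of the current $X'$ even though surjectivity is known only over $U$; thus the filtration by kernels is globally defined, and one can continue to blow up $X'$ to flatten the next $H^\bullet$. Your proposed ``Zariski-closure inside $f^\ast D$'' extension is a reasonable substitute, but you should be aware that making the recursion work over all of $X'$ (not just $U$) is exactly where the argument lives.
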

\begin{proof}
	We may assume that $X$ is affinoid.  Then for any $\lambda\in \R_+$, setting $U_\lambda:=\{\lvert p\rvert_x\leq \lambda\}$ and $V_{\lambda}:=\{\lvert p\rvert_x\geq \lambda\}$ yields a cover of $X$.  Then at least one of the following holds: $X^{\mathrm{cl}}\cap U_\lambda$ is Zariski-dense in $U_\lambda$, or $X^{\mathrm{cl}}\cap V_\lambda$ is Zariski-dense in $V_\lambda$.  Moreover, if $\lambda'<\lambda$ and $X^{\mathrm{cl}}\cap V_\lambda$ is Zariski-dense in $V_\lambda$, then $X^{\mathrm{cl}}\cap V_{\lambda'}$ is dense in $V_{\lambda'}$.

	Thus, we see that if $X^{\mathrm{cl}}$ doesn't accumulate at $\{p=0\}$ (in the sense of being Zariski-dense in each $U_{\lambda}$), it provides a Zariski-dense subset of $X\smallsetminus \{p=0\}$.  In the latter case, we may apply ~\cite[Corollary 6.3.10]{kpx} to see that over a Zariski-open and dense subspace $W$ of $X\smallsetminus \{p=0\}$, $M|_W$ is trianguline with parameter $\underline\delta$.  Since $U_\lambda\cap W$ is Zariski-open and dense in $U_\lambda$, we see that each $U_\lambda$ contains a Zariski-dense set of points at which $M$ is trianguline with parameter $\underline\delta$.

	Now we may apply Theorem~\ref{thm: interpolate rk 1 quot} inductively to construct $f:X'\rightarrow X$, $U\subset X'$, and $\{\mathscr{L}_i\}$ satisfying the given properties.

\end{proof}

\section{Applications to eigenvarieties}

\subsection{Set-up}
Extended eigenvarieties have been constructed by ~\cite{aip2018}, ~\cite{johansson-newton}, and ~\cite{gulotta} for various groups; these extended eigenvarieties are expected to (and in some cases known to) carry families of Galois representations such that local Galois-theoretic data matches certain Hecke-theoretic data.  At places away from $p$ and the level, this compatibility specifies that the local Galois representation is unramified and gives a characteristic polynomial for Frobenius.  At places dividing $p$, this compatibility specifies that the local Galois representation is trianguline and gives the parameters of the triangulation.

In this subsection, we use our results on trianguline $(\varphi,\Gamma)$-modules to study extended eigenvarieties at the boundary of weight space, in order to address two questions:
\begin{enumerate}
	\item	Are irreducible components proper at the boundary of weight space?
	\item	Are Galois representations at characteristic $p$ points trianguline at $p$?
\end{enumerate}
We will give partial affirmative answers to both questions.

Before stating our assumptions more precisely, we recall the construction of ~\cite{johansson-newton}.  Let $F$ be a number field, let $\mathbf{H}$ be a reductive group over $F$ split at all places above $p$, and set $\G\coloneqq \Res_{F/\Q}\mathbf{H}$.  Fix a tame level by choosing a compact open subgroup $K_\ell\subset \G(\Q_\ell)$ for each prime $\ell\neq p$, such that $K_\ell$ is hyperspecial for all but finitely many $\ell$, and let $I\subset \G(\Q_p)$ be an Iwahori subgroup.  Let $S'$ denote the set of places $w$ of $\Q$ such that either $w=\infty$, or $K_w$ is not hyperspecial, and let $S$ denote the set of places of $F$ lying above the places in $S'$.  Then ~\cite{johansson-newton} proved the following:
\begin{thm}{\cite[Theorems A and B]{johansson-newton}}\label{thm: galois determinants}
	The eigenvarieties for $\G$ constructed in ~\cite{hansen} naturally extend to pseudorigid spaces $\mathscr{X}_{\G}$ equipped with a weight map $\mathrm{wt}\colon \mathscr{X}_{\G}\rightarrow\mathcal{W}$ to extended weight space $\mathcal{W}\coloneqq \left(\Spa \Z_p[\![T_0']\!]\right)^{\an}$, where $T_0'$ is a certain quotient of the $\Z_p$-points of a (split) maximal torus of a model of $\G$ over $\Z_p$.  Moreover, if $F$ is totally real or CM and $\H=\GL_d$, there is a continuous $d$-dimensional determinant $D\colon \O(\mathscr{X}_{\G})[\Gal_{F,S}]\rightarrow \mathscr{O}^+(\mathscr{X}_{\G}^{\red})$ such that $D(1-X\cdot{\Frob}_v) = P_v(X)$ for all $v\not\in S$, where $P_v(X)$ is the Hecke polynomial.
\end{thm}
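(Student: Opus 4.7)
The plan is to sketch how one would prove such a result along the lines of Johansson--Newton, treating the two parts of the statement separately. For the construction of $\mathscr{X}_{\G}$ and the weight map, I would follow the eigenvariety machine of Coleman--Buzzard, adapted to the pseudorigid setting. The first step is to construct a sheaf of Banach modules of overconvergent automorphic forms for $\G$ over extended weight space $\mathcal{W}$, equipped with a commuting action of a spherical Hecke algebra $\mathbb{T}^S$ and a compact $U_p$-operator at $p$. Over the rigid analytic locus $\{p\neq 0\}\subset \mathcal{W}$, this recovers Hansen's construction in \cite{hansen}; the new content is to extend these families across the boundary $\{p=0\}$, which requires working with Banach modules over pseudoaffinoid bases and verifying that the required flatness, projectivity and spectral-theoretic input (Fredholm determinants, Riesz theory for completely continuous maps) survive in this more general setting. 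Once one has such a module, one applies the pseudorigid version of the eigenvariety construction: take the zero locus of the Fredholm series of $U_p$ inside $\mathcal{W}\times\mathbf{A}^{1,\an}$, and glue pseudoaffinoid pieces on which $U_p$ admits a slope decomposition, producing $\mathscr{X}_{\G}$ with its finite weight map by composition.

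For the second part, the strategy is to interpolate the Galois representations known to exist at classical points. When $F$ is totally real or CM and $\H=\GL_d$, results of Harris--Lan--Taylor--Thorne, Scholze, and others attach to each classical cuspidal automorphic eigenform $\pi$ of the appropriate type a continuous semisimple $d$-dimensional Galois representation $\rho_\pi:\Gal_{F,S}\rightarrow \GL_d(\overline{\Q}_p)$, unramified outside $S$, such that the characteristic polynomial of $\rho_\pi(\Frob_v)$ equals the specialization at $\pi$ of the Hecke polynomial $P_v(X)$ for every $v\notin S$. The density of classical points in the rigid locus of each Hansen eigenvariety, combined with Chenevier's theory of continuous determinants, produces a determinant $D_{\rig}:\mathscr{O}(\mathscr{X}_{\G}^{\rig})[\Gal_{F,S}]\rightarrow\mathscr{O}(\mathscr{X}_{\G}^{\rig})$ with the required compatibility with Hecke polynomials. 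The remaining step is to extend this determinant across the characteristic $p$ boundary.

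The main obstacle is precisely this extension across $\{p=0\}$. One argues that $\mathscr{O}^+(\mathscr{X}_{\G}^{\red})$ is a $p$-adically separated ring of bounded functions and that on a quasicompact open subspace the determinant $D_{\rig}$ already takes values in $\mathscr{O}^+(\mathscr{X}_{\G}^{\rig,\red})$, since the coefficients of $P_v(X)$ are algebraic integers at classical points and one can spread this out by density together with the structure of $\mathscr{O}^+$ on pseudorigid spaces. Because $\mathscr{X}_{\G}^{\red}$ is reduced and pseudorigid, an element of $\mathscr{O}^+(\mathscr{X}_{\G}^{\rig,\red})$ that is bounded on affinoid subsets of the full reduced space extends uniquely to $\mathscr{O}^+(\mathscr{X}_{\G}^{\red})$; applying this coefficient-by-coefficient to $D_{\rig}(1-X\,g)$ for each $g\in \Gal_{F,S}$, and using continuity in $g$ (which reduces to checking on a dense countable subset via compactness of $\Gal_{F,S}$), one obtains the desired continuous determinant $D$. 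Verifying the required boundedness of the matrix coefficients along the boundary, and ensuring compatibility of the Chenevier--Bellaïche--Chenevier gluing with the pseudorigid topology, is the technical crux.
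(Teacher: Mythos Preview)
This theorem is not proved in the paper at all: it is stated with the attribution \cite[Theorems A and B]{johansson-newton} and quoted as input from that reference, with no accompanying argument. There is therefore no proof in the paper to compare your proposal against.

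That said, your sketch is a reasonable high-level summary of the Johansson--Newton strategy: extend the eigenvariety machine (Fredholm series, slope decompositions) to the pseudorigid setting to build $\mathscr{X}_{\G}$, and then interpolate the Galois representations attached to classical points via Chenevier's determinant formalism, using integrality of Hecke eigenvalues and density to push the determinant into $\mathscr{O}^+$ over the whole space. If you intend this as an exposition of their method rather than an original proof, you should say so explicitly and cite \cite{johansson-newton} for the details; as written it reads as though you are supplying a proof the paper omits, when in fact the paper is simply importing the result.
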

When $F$ is totally real with $p$ completely split, and $H=\GL_2$, the characteristic $0$ eigenvariety $\mathscr{X}_{\G}^{\rig}$ contains a very Zariski dense set of ``essentially classical'' points (in the sense of ~\cite{chenevier2011}, using ~\cite[Lemme 6.2.10]{chenevier2004}, ~\cite[Lemme 6.2.8]{chenevier2004}, and a ``small slope implies classical'' criterion).  Furthermore, local-global compatibility at places dividing $p$ is known for classical Hilbert modular forms of motivic weight by ~\cite{skinner2009}, ~\cite{liu2012}, ~\cite{saito2009}, ~\cite{blasius-rogawski}, and so in this case $\mathscr{X}_{\G}^{\rig}$ contains a very Zariski dense set of points at which $D$ corresponds to a trianguline Galois representation.

When $H$ is a totally definite quaternion algebra over a totally real field, split at $p$, a similar argument shows that $\mathscr{X}_{\G}^{\rig}$ contains a very Zariski dense set of essentially classical points.  Moreover, the $p$-adic Jacquet--Langlands correspondance of ~\cite{chenevier2005}, ~\cite{birkbeck} can be extended to the pseudorigid setting.  This identifies each irreducible component of a quaternionic eigenvariety with an irreducible component of an eigenvariety for Hilbert modular forms; it follows that a Galois determinant can be pulled back to $\mathscr{X}_{\G}$, and it corresponds to a trianguline representation at a very Zariski dense set of points of $\mathscr{X}_{\G}^{\rig}$.

There is a similar story when $\G$ is a definite unitary group over $\Q$ split at $p$.  Characteristic $0$ eigenvarieties have been constructed ~\cite{chenevier2004}, ~\cite{bellaiche-chenevier} which interpolate classical automorphic forms and carry a family of Galois determinants:
\begin{thm}{\cite[Chapter 7]{bellaiche-chenevier}}
	Let $F/\Q$ be an imaginary quadratic field and let $\G$ be a definite unitary group associated to $F$, split at $p$.  Then the characteristic $0$ eigenvariety $\mathscr{X}_{\G}^{\rig}$ contains a very Zariski dense set of classical points (corresponding to $p$-refined automorphic representations), and there is a continuous determinant $D\colon \O^+(\mathscr{X}_{\G}^{\rig})[\Gal_{F,S}]\rightarrow \O^+(\mathscr{X}_{\G}^{\rig})$ such that $D(1-X\cdot{\Frob}_v) = P_v(X)$ for all $v\not\in S$, where $P_v(X)$ is the Hecke polynomial.
\end{thm}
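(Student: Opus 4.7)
The plan is to assemble the statement from four standard ingredients, each of which the cited references handle in detail for a definite unitary group $\G/\Q$ split at $p$. First I would construct the characteristic-$0$ eigenvariety $\mathscr{X}_{\G}^{\rig}$ via Buzzard's/Chenevier's eigenvariety machine, applied to the spaces of overconvergent $p$-adic automorphic forms on $\G$ (which are sections of an appropriate Banach sheaf on weight space equipped with compact $U_p$-type operators). Because $\G$ is definite, the automorphic forms are functions on a finite double-coset space and the construction is especially clean: one obtains $\mathscr{X}_{\G}^{\rig}$ together with the weight map $\mathscr{X}_{\G}^{\rig}\rightarrow \mathcal{W}^{\rig}$ and the Hecke eigensystem as a structure morphism.

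Second, I would establish density of classical points. Classical weights form a Zariski-dense subset of $\mathcal{W}^{\rig}$, and by a small-slope classicality criterion (Coleman-type, adapted to the unitary setting in \cite{chenevier2004}), the fibre above each classical weight contains all systems of Hecke eigenvalues coming from $p$-refinements of classical automorphic representations once the slope is small enough; combined with the accumulation lemma of \cite{bellaiche-chenevier}, this shows that the subset of classical points is Zariski very dense in $\mathscr{X}_{\G}^{\rig}$. For each such classical point $x$, attached to a regular algebraic $p$-refined automorphic representation $\pi_x$, one has an associated $n$-dimensional semisimple Galois representation $\rho_{\pi_x}:\Gal_{F,S}\rightarrow \GL_n(\overline{\Q}_p)$ (built from the work on the Langlands correspondence for unitary groups, e.g. Shin, Chenevier--Harris, Caraiani, and others), whose characteristic polynomials at unramified Frobenii match the Hecke polynomials $P_v(X)$.

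Third, I would interpolate these Galois representations into a continuous determinant on the eigenvariety. The key input is Chenevier's theory of determinants (refining pseudocharacters): the characteristic polynomial coefficients of $\rho_{\pi_x}(\Frob_v)$ at unramified places are (up to normalization) the images of the Hecke operators at $v$ under the eigenvariety's structure morphism, and these extend from the dense set of classical points to global analytic functions on $\mathscr{X}_{\G}^{\rig}$ taking values in $\mathscr{O}^+(\mathscr{X}_{\G}^{\rig})$ (integrality follows from the fact that Hecke operators act on the integral structure of overconvergent forms). Then Chenevier's construction produces a continuous determinant $D:\mathscr{O}^+(\mathscr{X}_{\G}^{\rig})[\Gal_{F,S}]\rightarrow \mathscr{O}^+(\mathscr{X}_{\G}^{\rig})$ uniquely characterized by the relation $D(1-X\cdot\Frob_v)=P_v(X)$ for $v\notin S$; uniqueness follows because determinants are determined on a set of topological generators of $\Gal_{F,S}$, namely the unramified Frobenii, by Chebotarev.

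The main obstacle in a self-contained proof would be Galois existence at classical points: attaching $\rho_{\pi_x}$ relies on deep results on the cohomology of unitary Shimura varieties and is not elementary. Since the theorem is explicitly cited from \cite{bellaiche-chenevier}, however, the role of this statement in the paper is to serve as input, and the above sketch is really just a roadmap through the constituent references; in practice one simply invokes \cite[Chapter 7]{bellaiche-chenevier} directly.
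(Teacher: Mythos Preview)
This theorem is stated in the paper without proof, as a result quoted directly from \cite[Chapter 7]{bellaiche-chenevier} to serve as input for the applications in Section~6; there is no argument in the paper to compare against. Your proposal correctly recognizes this and gives a reasonable roadmap through the underlying references, so it is entirely consistent with the paper's treatment.
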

Moreover, the corresponding Galois representation is known to be trianguline at classical points
; thus, there is a continuous Galois determinant $\overline D\colon \O^+(\overline{\mathscr{X}_{\G}^{\rig}})[\Gal_{F,S}]\rightarrow \O^+(\overline{\mathscr{X}_{\G}^{\rig}})$ defined on the closure of $\mathscr{X}_{\G}^{\rig}$ in $\mathscr{X}_{\G}$, and it corresponds to a trianguline Galois representation at a very Zariski dense set of points.

We will make more precise what kind of trianguline conditions we have (or hope for) at places dividing $p$.  If $\mathbf{T}$ is a split maximal torus of a model of $\G$ over $\Z_p$, consider a splitting of the inclusion $\mathbf{T}(\Z_p)\hookrightarrow \mathbf{T}(\Q_p)$, and let $\Sigma$ denote the kernel.  There are two submonoids $\Sigma^{\mathrm{cpt}}\subset \Sigma^+\subset\Sigma$; we refer the reader to ~\cite[\textsection 3.3]{johansson-newton} for precise definitions, but we note that when $\G(\Z_p)\cong\prod_{v\mid p}\GL_d(\mathscr{O}_{F_v})$, we may take $\mathbf{T}$ to be the standard torus and
\begin{align*}
	\Sigma &= \prod_{v\mid p}\{\mathrm{diag}(\varpi_v^{a_1},\ldots,\varpi_v^{a_d})\mid a_i\in\Z \}	\\
	\Sigma^+ &= \prod_{v\mid p}\{\mathrm{diag}(\varpi_v^{a_1},\ldots,\varpi_v^{a_d})\mid a_{i+1}\geq a_i\}	\\
		\Sigma^{\mathrm{cpt}} &= \prod_{v\mid p}\{\mathrm{diag}(\varpi_v^{a_1},\ldots,\varpi_v^{a_d})\mid a_{i+1}>a_i\}
\end{align*}

The construction of $\mathscr{X}_{\G}$ depends on a choice of $t\in \Sigma^{\mathrm{cpt}}$, which in the above case we take to be $\prod_{v\mid p}\mathrm{diag}(1,\ldots,\varpi_v^{d-1})$; the authors construct a spectral variety $\mathscr{Z}\subset\G_{m,\mathcal{W}}$ using the Fredholm series of the corresponding controlling Hecke operator $U_t\coloneqq [ItI]$, and then construct $\mathscr{X}_{\G}\rightarrow \mathscr{Z}$ finite, such that there is a homomorphism $\psi\colon \mathbf{T}(\Delta^p,K^p)\rightarrow\mathscr{O}(\mathscr{X}_{\G})$.  Here $\mathbf{T}(\Delta^p,K^p)$ is a Hecke algebra with no Hecke operators at places above $p$.

	However, it is possible to make the same construction using other choices of Hecke algebras, and we will need to do so (this is discussed in greater detail in ~\cite[\textsection 3.4]{johansson-newton17}.  In particular, let $\mathscr{A}_p^+\subset \Z_p[\G(\Q_p)//I]$ be the subring generated by the characteristic functions $\mathbf{1}_{[IsI]}$ for $s\in\Sigma^+$.  Then there is an extended eigenvariety $\mathscr{X}_{\G}^{\mathscr{A}_p^+}$ equipped with a homomorphism $\mathbf{T}(\Delta^p,K^p)\otimes_{\Z_p}\mathscr{A}_p^+\rightarrow \mathscr{O}(\mathscr{X}_{\G}^{\mathscr{A}_p^+})$ and a finite morphism $\mathscr{X}_{\G}^{\mathscr{A}_p^+}\rightarrow \mathscr{X}_{\G}$.  There is a surjective finite map $\mathscr{X}_{\G}^{\mathscr{A}_p^+}\rightarrow \mathscr{X}_{\G}$, and we obtain a Galois determinant $\mathscr{O}(\mathscr{X}_{\G}^{\mathscr{A}_p^+,\red})[\Gal_{F,S}]\rightarrow \mathscr{O}(\mathscr{X}_{\G}^{\mathscr{A}_p^+,\red})$ by pulling back the determinant on $\mathscr{X}_{\G}^{\red}$.

		We have finite morphisms $\mathscr{X}_{\G}^{\mathscr{A}_p^+}\rightarrow \mathscr{X}_{\G}\rightarrow\G_{m,\mathcal{W}}$.  By ~\cite[Lemma 3.4.1]{johansson-newton17}, the image of $[IsI]$ in $\mathscr{O}(\mathscr{X}_{\G}^{\mathscr{A}_p^+})$ is invertible for all $s\in\Sigma^+$, and so for $s\in\Sigma$, we can write $s=s'{s''}^{-1}$ for $s', s''\in\Sigma^+$ and obtain $\psi([Is'I])\psi([Is''I]^{-1})\in\mathscr{O}(\mathscr{X}_{\G}^{\mathscr{A}_p^+})^\times$.  Thus, we have a morphism $\mathscr{X}_{\G}^{\mathscr{A}_p^+}\rightarrow \widehat\Sigma_{\mathcal{W}}\coloneqq \Hom(\Sigma,\G_{m,\mathcal{W}})$ such that the diagram
\[
	\begin{tikzcd}
		\mathscr{X}_{\G}^{\mathscr{A}_p^+} \ar[r]\ar[d] & \widehat\Sigma_{\mathcal{W}}\ar[d] \ar[d]      \\
		\mathscr{X}_{\G}\ar[r] & \G_{m,\mathcal{W}}        \\
	\end{tikzcd}
\]
commutes and has finite horizontal maps.  Here the right vertical map is induced by evaluation at $U_t$.  Any choice of a basis of $\Sigma$ will give us parameters $\delta_{i,v}\colon F_v^\times\rightarrow \mathscr{O}(\mathscr{X}_{\G}^{\mathscr{A}_p^+})^\times$.

When $F$ is a number field and $\G=\Res_{F/\Q}\GL_d$ with the standard maximal torus, there is a natural ordered basis $\{s_{i,v}\}_{i,v}$ of $\Sigma$, namely $s_{i,v}\coloneqq \mathrm{diag}(1,\ldots,\varpi_v,\ldots,1)$, where $v|p$ and $\varpi_v$ is placed in the $d-i+1$ slot.  Then (restricting to non-critical points for simplicity), ~\cite[Conjecture 1.2.2(iii)]{hansen} predicts that if $x\in\mathscr{X}_{\G}^{\mathscr{A}_p^+}$ is non-critical, the $(\varphi,\Gamma)$-module corresponding to $D_x|_{\Gal_{F_v}}$ is trianguline with parameter $\underline{\delta_{v}}$ such that  $\delta_{i,v}(\varpi_v)=\psi(s_{i,v})$ (and $\delta_{i,v}|_{\O_{F_v}^\times}$ corresponds to the automorphic weight).  Similarly, in the unitary case sketched above, ~\cite[Proposition 7.5.13]{bellaiche-chenevier} implies that at non-critical points $x\in\mathscr{X}_{\G}^{\mathscr{A}_p^+}$, the $(\varphi,\Gamma)$-module corresponding to $D_x|_{\Gal_{F_v}}$ is trianguline with parameter $\underline{\delta_{v}}$, where $\delta_{i,v}(p)=\psi(\mathrm{diag}(1,\ldots,p,\ldots,1))$.

In the Hilbert, quaternionic, and unitary cases sketched above, we can actually say that at a very Zariski-dense set of classical points, the corresponding $(\varphi,\Gamma)$-module is \emph{strictly} trianguline.

\subsection{Properness at the boundary}

We follow the strategy of ~\cite{diao-liu} to show extended eigenvarieties are proper at the boundary.  We assume we have a Galois representation, and sufficiently many classical points where it is known to be trianguline at $p$, with parameters compatible with the Hecke algebra at $p$:
\begin{thm}\label{thm: proper at boundary}
	Suppose $\mathscr{X}_{\G}^{\mathscr{A}_p^+}$ is an extended eigenvariety such that there is a continuous determinant $D\colon \mathscr{O}(\mathscr{X}_{\G}^{\mathscr{A}_p^+\red})[\Gal_{F,S}]\rightarrow \mathscr{O}(\mathscr{X}_{\G}^{\mathscr{A}_p^+,\red})$ for some number field $F$.  Suppose we have a commutative diagram
	\[
		\begin{tikzcd}
			\Spa R\smallsetminus Z \arrow{r}\arrow{d} & \mathscr{X}_{\G}^{\mathscr{A}_p^+,\red} \arrow{d}	\\
			\Spa R \arrow{r}{\kappa}\arrow[dashed]{ur} & \mathcal{W}
		\end{tikzcd}
	\]
	where $R$ is a normal pseudoaffinoid $\Z_p$-algebra, and $Z\subset \Spa R$ is a Zariski-closed subspace of $\Spa R$ with codimension at least $1$.  Here $\Spa R\rightarrow\mathcal{W}$ corresponds to a weight $\kappa\colon \mathbf{T}(\Z_p)\rightarrow R^\times$.  Suppose in addition that there is an ordered basis $\{s_{i,v}\}$ of $\Sigma$ with $\prod_{j=1}^is_{j,v}\in\Sigma^+$ such that for a Zariski-dense set of points $X^{\mathrm{cl}}\subset \Spa R$ the Galois representation attached to the pull-back of $D$ is strictly trianguline at all places $v\mid p$, with parameters $\{\underline{\delta_{v}}\}$ induced by $\{s_{i,v}\}$ and $\kappa$.  Then the dashed arrow can be filled in.
\end{thm}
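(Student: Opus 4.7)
The overall strategy follows ~\cite{diao-liu}: one reduces to showing that the $U_t$-eigenvalue pulled back to $\Spa R \smallsetminus \{p=0\}$ extends to a unit on $\Spa R$, and then concludes using finiteness of $\mathscr{X}_{\G}^{\mathscr{A}_p^+,\red} \to \G_{m,\mathcal{W}}$ together with normality of $R$.  Control on $\psi(U_t)$ near $\{p=0\}$ comes from triangulating the Galois representation across the characteristic-$p$ boundary and matching parameters with the Hecke algebra.

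First I would extend the Galois-theoretic data.  Pulling back $D$ gives a determinant on $\Gal_{F,S}$ valued in $\mathscr{O}(\Spa R \smallsetminus \{p=0\})$, and each value is power-bounded because $D$ factors through $\mathscr{O}^+$ (Theorem~\ref{thm: galois determinants}).  Normality of $R$ together with the pseudorigid Riemann extension theorem collected in the appendix then extend this to a continuous determinant $D_R\colon R[\Gal_{F,S}] \to R$.  After replacing $R$ by a finite normal cover (harmless since extending across $\{p=0\}$ is local on the target) I may realize $D_R$ as a continuous Galois representation $V_R$ on a projective $R$-module; for each $v \mid p$ I form the $(\varphi,\Gamma_{F_v})$-module $M_v := D_{\rig,F_v}(V_R|_{\Gal_{F_v}})$ over $\Lambda_{R,\rig,F_v}$.

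The heart of the argument is the extension of the parameters $\delta_{i,v}$.  On $\Spa R \smallsetminus \{p=0\}$ these are defined by $\delta_{i,v}(\varpi_v) = \psi(\prod_{j\le i}s_{j,v})/\psi(\prod_{j<i}s_{j,v})$ together with a restriction to $\mathscr{O}_{F_v}^\times$ coming from the weight $\kappa$, and by hypothesis they are the trianguline parameters at each $z \in Z$.  I would proceed by induction on $i$.  At each characteristic-$p$ maximal point $x \in \{p=0\}$, Kedlaya's slope filtration gives $(M_v)_x$ a canonical first slope step, which is rank-$1$ and, by Proposition~\ref{prop: char type artinian} applied to the $\F_p(\!(u)\!)$-type residue field $k(x)$, of character type, say $\Lambda_{k(x),\rig,F_v}(\delta_{1,v,x}^{\mathrm{sl}})$.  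Using the Zariski density of $Z$ I would identify $\delta_{1,v,x}^{\mathrm{sl}}$ with the specialization at $x$ of the Hecke-prescribed $\delta_{1,v}$ (initially only defined away from $\{p=0\}$), by comparing at $Z$-points specializing to $x$, together with semicontinuity of slopes and continuity of characters; this produces a globally defined continuous character $\delta_{1,v}\colon F_v^\times \to R^\times$.  Feeding this extended character into Theorem~\ref{thm: interpolate rk 1 quot} yields a proper birational $f_v\colon X'_v \to \Spa R$ and an open $U_v \supset \{p=0\}$ on which $f_v^\ast M_v|_{U_v}$ has a rank-$1$ sub-$(\varphi,\Gamma_{F_v})$-module with parameter $f_v^\ast\delta_{1,v}$; iterating on the rank-$(d-1)$ quotient produces $\delta_{2,v},\ldots,\delta_{d,v}$.

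Once every $\delta_{i,v}$ is known globally on $\Spa R$, so is each $\delta_{i,v}(\varpi_v)$, and hence each $\psi(\prod_{j\le i}s_{j,v})$: the defining equality between these quantities holds on the dense set $Z$ and hence on all of $\Spa R \smallsetminus \{p=0\}$ by reducedness, while the right-hand side is already defined on $\Spa R$.  In particular $\psi(U_t)$ extends to a nowhere-vanishing function, giving a morphism $\Spa R \to \G_{m,\mathcal{W}}$ compatible with the original; finiteness of $\mathscr{X}_{\G}^{\mathscr{A}_p^+,\red} \to \G_{m,\mathcal{W}}$ plus normality of $R$ then lets the existing section over $\Spa R \smallsetminus\{p=0\}$ extend uniquely to the desired $\Spa R \to \mathscr{X}_{\G}^{\mathscr{A}_p^+,\red}$.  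The hard part will be the identification $\delta_{1,v,x}^{\mathrm{sl}} = \delta_{1,v,x}$ at characteristic-$p$ points $x$, where the slope filtration at $x$ and the trianguline filtration at nearby $Z$-points are a priori unrelated; making this precise, together with a careful verification that $Z$-specializations fill out $\{p=0\}$ densely enough, is the most delicate step and is where reducedness and normality of $R$ do the most work.
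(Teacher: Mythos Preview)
Your overall architecture is right: reduce to extending the Hecke parameters to units of $R$, then use finiteness (Lemma~\ref{lemma: finite proper}) to lift.  But the middle step---extending $\delta_{1,v}$ across $\{p=0\}$ via the slope filtration---has a genuine gap, and the paper takes a different route that avoids it.

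The problem is your identification $\delta_{1,v,x}^{\mathrm{sl}} = \delta_{1,v,x}$.  First, the lowest slope piece of $(M_v)_x$ need not have rank $1$.  Second, even when it does, there is no mechanism linking the slope-filtration parameter at a characteristic-$p$ point $x$ to the trianguline parameter at characteristic-$0$ $Z$-points: the latter is dictated by Hecke data, not by slopes, and ``comparing at $Z$-points specializing to $x$'' is not meaningful because $Z\subset\Spa R\smallsetminus\{p=0\}$.  Semicontinuity of slopes does not help, since it constrains only the valuation of $\delta(\varpi_v)$, not the character itself, and says nothing about $\delta|_{\mathscr{O}_{F_v}^\times}$.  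Finally, your appeal to Theorem~\ref{thm: interpolate rk 1 quot} presupposes a globally defined $\delta\colon K^\times\to R^\times$, which is exactly what is at issue.

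The paper proceeds in the opposite order.  It first extends the Hecke eigenvalue with no Galois input: for $s\in\Sigma^+$ the operator $[IsI]$ acts by norm-decreasing endomorphisms on the distribution modules, so $\psi(U_{i,v})$ is power-bounded on the eigenvariety (Lemma~\ref{lemma: sigma^+ power bdd}), and the Hebbarkeitssatz $R^\circ=\bigcap_kR\langle u^k/p\rangle^\circ$ then forces its pullback into $R^+$.  The Galois side is used only to show this element is a \emph{unit}.  After passing to a blow-up carrying an actual representation, the trianguline hypothesis at $Z$ implies that $D_{\rig}(M_v')^{\Gamma=1,\varphi^{f_v}=F_{1,v}}$ is generically free of rank $1$; choosing a generator $\mathbf{e}_{v,j}$ scaled so as not to vanish identically on $\{p=0\}$, injectivity of $\varphi$ on each fiber forces $F_{1,v}(x')\neq 0$ wherever $\mathbf{e}_{v,j}(x')\neq 0$.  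Thus the vanishing locus of $F_{1,v}$ is contained in $\{p=0\}$ but is a proper closed subset of it; under the auxiliary hypothesis that $R/p$ is a domain, Krull's Hauptidealsatz (a principal vanishing locus has codimension $\le 1$ in $\Spa R$, but this one would have codimension $\ge 2$) forces $F_{1,v}\in R^\times$.  One then defines $\delta_{1,v}$ and iterates on the quotient.  The general case is handled by a codimension-$2$ Hebbarkeitssatz argument.

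So the missing idea is that the extension of $\psi(U_{i,v})$ to $R$ comes for free from power-boundedness, and the $(\varphi,\Gamma)$-module is needed only for non-vanishing, via the $\varphi$-eigenvector rather than the slope filtration.
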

\begin{remark}
	For the eigenvarieties discussed in the previous section, we will be able to check that $X^{\mathrm{cl}}$ exists so long as $\Spa R$ has the same dimension as its image in weight space, and classical weights are dense in $\kappa(\Spa R)$.  We can view this as saying that the limit of a family of overconvergent automorphic forms exists, so long as it tends to the deleted subspace in a sufficiently regular manner.  In particular, we may deduce that every irreducible component of the extended eigencurve is proper at the boundary of weight space.
\end{remark}

We first treat the case of a finite morphism:
\begin{lemma}\label{lemma: finite proper}
	Suppose $\mathscr{X}\rightarrow\mathscr{Y}$ is a finite morphism of pseudorigid spaces and we have a commutative diagram
	\[
		\begin{tikzcd}
			\Spa R\smallsetminus Z \arrow{r}\arrow{d} & \mathscr{X} \arrow{d}	\\
			\Spa R \arrow{r}\arrow[dashed]{ur} & \mathscr{Y}
		\end{tikzcd}
	\]
	where $R$ is a normal pseudoaffinoid $\Z_p$-algebra and $Z\subset \Spa R$ is a Zariski-closed subspace of codimension at least $1$.  Then the dashed arrow can be filled in uniquely.
\end{lemma}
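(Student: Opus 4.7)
The plan is to reduce to the affinoid case and then show that a finite $R$-subalgebra of $\mathscr{O}(U)$ containing $R$ must equal $R$, exploiting the fact that $p$ is a nonzerodivisor in $R$ together with the normality of $R$.

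By quasi-compactness of $\Spa R$ and locality of a finite morphism, one reduces to the case $\mathscr{Y} = \Spa B$ and $\mathscr{X} = \Spa A$ with $A$ a finite $B$-algebra; since finite morphisms of pseudorigid spaces are separated, any extension is unique and so local extensions may be glued. Set $R' := A \htimes_B R$, which is a finite $R$-algebra. The given morphism $U := \Spa R \setminus \{p=0\} \to \mathscr{X}$ corresponds to a continuous $R$-algebra homomorphism $\phi : R' \to \mathscr{O}(U)$, and we let $R'' := R'/\ker\phi$, a finite $R$-subalgebra of $\mathscr{O}(U)$. Extending the section to all of $\Spa R$ amounts to producing an $R$-algebra retraction $R' \to R$ factoring $\phi$, which is equivalent to showing $R = R''$.

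To prove $R = R''$, I would exploit that the section $U \hookrightarrow \Spa R'|_U$ is a closed immersion (finite morphisms being separated) whose image on the base change is cut out by the ideal generated by $\ker\phi$; this yields the identification $R'' \otimes_R \mathscr{O}(U) = \mathscr{O}(U)$, so the finitely generated $R$-module $R''/R$ becomes trivial after base change to $\mathscr{O}(U)$. Its coherent-sheaf support on $\Spa R$ is therefore disjoint from $U$ and hence contained in $\{p=0\}$, and a Nullstellensatz-type argument for Noetherian pseudoaffinoid algebras yields $p^n \in \ann_R(R''/R)$ for some $n \geq 0$. Thus $p^n R'' \subset R$, so every $x \in R''$ lies in $R[1/p]$; by $\Z_p$-flatness, $p$ is a nonzerodivisor in $R$, so $R[1/p]$ embeds in $\Frac(R)$ and hence $x \in \Frac(R)$. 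Since $x$ is also integral over $R$ (as $R''$ is a finite $R$-algebra), normality forces $x \in R$, completing the proof that $R = R''$.

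The main technical step requiring care is the passage from the vanishing of $(R''/R) \otimes_R \mathscr{O}(U)$ to the inclusion $p^n \in \ann_R(R''/R)$; this should follow from identifying the coherent-sheaf support of a finite $R$-module $M$ with $V(\ann_R M)$ in $\Spa R$, which holds by standard arguments since pseudoaffinoid algebras are Noetherian.
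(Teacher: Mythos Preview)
Your strategy is genuinely different from the paper's. The paper reduces to the affinoid case and then invokes Louren\c{c}o's first Hebbarkeitssatz $R^\circ=\bigcap_k R\langle u^k/p\rangle^\circ$ directly: since a morphism of Huber pairs carries power-bounded elements to power-bounded elements, the given maps $A^\circ\to R\langle u^k/p\rangle^\circ$ assemble to $A^\circ\to R^\circ$, and hence $A\to R$. Normality of $R$ is used only through this Riemann-type extension theorem.

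Your approach instead tries to realise the extension via a Zariski-main-theorem style argument: form $R'':=\phi(R')\subset\mathscr{O}(U)$, show $(R''/R)|_U=0$, deduce $p^n R''\subset R$, and conclude by integrality and normality. The reduction to $R''=R$ is correct and pleasant, but the crucial step has a gap. You assert that the section $U\hookrightarrow\Spa R'|_U$ is ``cut out by the ideal generated by $\ker\phi$'' and that therefore $R''\otimes_R\mathscr{O}(U)=\mathscr{O}(U)$. This is not justified: $\ker\phi$ is the kernel of the map on \emph{global} sections $R'\to\mathscr{O}(U)$, and there is no reason it should generate the ideal sheaf of the section on each affinoid $V=\Spa S\subset U$. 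Concretely, for each such $V$ you obtain a split surjection $\overline\psi_V:R''\otimes_R S\twoheadrightarrow S$, but you have not shown it is injective. Flatness of $S$ over $R$ gives $R''\otimes_R S\hookrightarrow\mathscr{O}(U)\otimes_R S$, yet the natural map $\mathscr{O}(U)\otimes_R S\to S$ has a large kernel, so this does not control $\ker\overline\psi_V$. Without $(R''/R)|_U=0$ you cannot conclude $p^n\in\ann_R(R''/R)$, and hence cannot place $R''$ inside $\Frac(R)$ to invoke normality. (A minor point: you should also reduce to the case where $R$ is a domain, or replace $\Frac(R)$ by the total ring of fractions, since normal pseudoaffinoid algebras are only products of normal domains.)

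In short, the paper's use of the Hebbarkeitssatz is doing exactly the work you want your support-plus-normality argument to do, but at the level of power-bounded elements; your route would need an independent input of comparable strength to close the gap.
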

\begin{proof}
	We use the Hebbarkeitss\"atze of ~\cite{lourenco}.  Let $u\in R$ be a pseudouniformizer, and suppose $Z$ is defined by the vanishing of the ideal $I=(f_1,\ldots,f_r)\subset R$.  By ~\cite[1.4.4]{huber2013}, the pre-image in $\mathscr{X}$ of any affinoid subspace of $\mathscr{Y}$ is itself affinoid, so we may assume that $\mathscr{X}$ and $\mathscr{Y}$ are both affinoid.  If $\mathscr{X}=\Spa A$ and $\mathscr{Y}=\Spa B$, we choose a pseudouniformizer of $B$; its image $u\in R$ and its image in $A$ are also pseudouniformizers.  

	The morphism $\Spa R\smallsetminus Z\rightarrow \mathscr{X}$ is induced by a compatible sequence of continuous homomorphisms 
	\[	A\rightarrow R_0\left\langle\frac{u^k}{f_1}\right\rangle\left[\frac 1 u\right] \cap \ldots \cap R_0\left\langle\frac{u^k}{f_r}\right\rangle\left[\frac 1 u\right]	\]
	for some noetherian ring of definition $R_0\subset R$ and $k\geq 1$.  Since $Z$ has codimension at least $1$ in $\Spa R$, \cite[Theorem 5.1]{lourenco} implies that the power-bounded functions on $\Spa R\smallsetminus Z$ are precisely $R^\circ$, and we have a continuous homomorphism $A^\circ\rightarrow R^\circ$.  Since $A^\circ$ contains a ring of definition of $A$, we obtain a continuous homomorphism $A\rightarrow R$, as well.  Since the composition $\Spa R\rightarrow \mathscr{X}\rightarrow \mathscr{Y}$ agrees with the specified morphism $\Spa R\rightarrow \mathscr{Y}$ after restricting to $\Spa R\smallsetminus Z$ and $\mathscr{Y}$ is separated, it agrees on all of $\Spa R$.
\end{proof}

Combined with the theory of determinants discussed in Appendix~\ref{appendix: determinants}, the assumption that a determinant $D\colon \mathscr{O}(\mathscr{X}_{\G})[\Gal_{F,S}]\rightarrow \mathscr{O}(\mathscr{X}_{\G})$ exists implies that there is a natural map $\mathscr{X}_{\G}^{\red}\rightarrow X_p$, where $X_p$ is the adic space associated to the deformation rings of all of the determinants attached to isomorphism classes of $d$-dimensional modular residual representations of $\Gal_{F,S}$.

\begin{lemma}\label{lemma: extend determinants}
	Let $R$ be an integral normal pseudoaffinoid $\Z_p$-algebra, and let $Z\subset \Spa R$ be a Zariski-closed subspace of codimension at least $1$.  Then any morphism $\Spa R\smallsetminus Z\rightarrow X_p$ extends uniquely to a morphism $\Spa R\rightarrow X_p$.
	\end{lemma}
	\begin{proof}
		We again use the Hebbarkeitss\"atze of ~\cite{lourenco}.  The pseudorigid space $\Spa R\smallsetminus Z$ is connected, so its image in $X_p$ has constant residual determinant $\overline D$.  Thus, the morphism $\Spa R\smallsetminus Z\rightarrow X_p$ is induced by a homomorphism
		\[	R_{\overline D}\rightarrow \O_{\Spa R}^+\left(\Spa R\smallsetminus Z\right)	\]
		where $R_{\overline D}$ is the pseudodeformation ring parametrizing lifts of $\overline D$.  Since $Z$ has codimension at least $1$ in $\Spa R$, by ~\cite[Theorem 5.1]{lourenco} we have $\O_{\Spa R}^+\left(\Spa R\smallsetminus Z\right) = R^\circ$, so we get a continuous homomorphism $R_{\overline D}\rightarrow R^\circ$ and a morphism $\Spa R\rightarrow \Spa R_{\overline D}$.
	\end{proof}

	\begin{lemma}\label{lemma: sigma^+ power bdd}
		For any $s\in\Sigma^+$, $\psi(s)\in\mathscr{O}(\mathscr{X}_{\G}^{\mathscr{A}_p^+,\red})$ is power-bounded.
	\end{lemma}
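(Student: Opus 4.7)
The plan is to show that $\psi(s)$ is integral over the power-bounded subring $\mathscr{O}^+(\mathcal{W})$ pulled back along the weight map, from which power-boundedness follows by a standard argument. First I would work locally over a pseudoaffinoid subdomain $\mathcal{W}_h \subset \mathcal{W}$ small enough that $\mathscr{X}_{\G}^{\mathscr{A}_p^+} \cap \mathrm{wt}^{-1}(\mathcal{W}_h)$ is constructed, following ~\cite{johansson-newton}, from the slope-$\leq h$ decomposition $M_h$ of a Banach module of overconvergent automorphic forms with respect to the compact operator $U_t$. By compactness of $U_t$, $M_h$ is a finitely generated projective $\mathscr{O}(\mathcal{W}_h)$-module, and it carries a commuting action of $\mathbf{T}(\Delta^p,K^p)\otimes\mathscr{A}_p^+$.

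The crucial input I would extract from the construction is that the $\mathscr{A}_p^+$-action on $M_h$ preserves an integral lattice $M_h^\circ \subset M_h$, finitely generated over $\mathscr{O}^+(\mathcal{W}_h)$. This is essentially the raison d'\^etre of $\mathscr{A}_p^+$: the double-coset operators $\mathbf{1}_{[IsI]}$ with $s\in\Sigma^+$ are precisely those that act integrally on the natural integral structure of the spaces of analytic automorphic forms, while general $s \in \Sigma$ would force denominators (which is why one has to invert $\psi(t)$ in passing from $\mathscr{X}_{\G}^{\mathscr{A}_p^+}$ to $\mathscr{X}_{\G}$).

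Granting this, Cayley--Hamilton applied to the action of $\mathbf{1}_{[IsI]}$ on the projective $\mathscr{O}^+(\mathcal{W}_h)$-module $M_h^\circ$ yields a monic relation $\mathbf{1}_{[IsI]}^N + a_1 \mathbf{1}_{[IsI]}^{N-1} + \cdots + a_N = 0$ with coefficients $a_i \in \mathscr{O}^+(\mathcal{W}_h)$. Since $\mathscr{X}_{\G}^{\mathscr{A}_p^+} \cap \mathrm{wt}^{-1}(\mathcal{W}_h)$ is cut out inside (a finite cover of) $\mathcal{W}_h$ by the image of the Hecke algebra in $\mathrm{End}_{\mathscr{O}(\mathcal{W}_h)}(M_h)$, the image $\psi(s)$ satisfies this same monic polynomial and is therefore integral over $\mathscr{O}^+(\mathcal{W}_h)$. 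Integrality over a power-bounded subring implies power-boundedness in the usual way---the orbit $\{\psi(s)^k\}_k$ is contained in the finitely generated $\mathscr{O}^+(\mathcal{W}_h)$-submodule spanned by $1,\psi(s),\ldots,\psi(s)^{N-1}$---so $\psi(s)$ lies in $\mathscr{O}^+$ locally over $\mathcal{W}_h$. Glueing over a cover $\{\mathcal{W}_h\}$ of $\mathcal{W}$ then yields global power-boundedness of $\psi(s)$ on $\mathscr{X}_{\G}^{\mathscr{A}_p^+,\red}$.

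The main obstacle will be the preservation of an integral lattice by the full $\mathscr{A}_p^+$-action on $M_h$: although this is morally built into the very definition of $\mathscr{A}_p^+$, in the pseudorigid (boundary) setting one has to confirm that forming the slope decomposition with respect to $U_t$ does not spoil the integrality enjoyed by the other $\mathbf{1}_{[IsI]}$. This should ultimately reduce to the explicit construction of the slope projector as the limit of polynomials in $U_t$ with power-bounded coefficients, and to the commutativity of $\mathbf{1}_{[IsI]}$ with $U_t$.
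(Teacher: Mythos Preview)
Your approach is correct in outline, but the paper's proof is considerably more direct and sidesteps the obstacle you flag at the end. Rather than producing an integral lattice $M_h^\circ$ and invoking Cayley--Hamilton, the paper simply tracks the operator norm through the construction: by \cite[Corollary~3.3.10]{johansson-newton}, the action of $s\in\Sigma^+$ on the coefficient module $\mathcal{D}_\kappa^r$ is norm-decreasing (this is the precise form of the ``integrality'' you allude to), hence the induced action on the chain complex $C^\ast(K,\mathcal{D}_\kappa^r)$ is power-bounded, hence so is the action on the slope subcomplex $\mathscr{K}^\bullet:=\ker^\bullet Q^\ast(U_t)$, and hence on its cohomology $H^\ast\mathscr{K}^\bullet$. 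Since $\psi(s)$ is (locally) the image of this operator in the Hecke algebra acting on $H^\ast\mathscr{K}^\bullet$, power-boundedness follows immediately.

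The advantage of the paper's route is that power-boundedness of an operator passes automatically to invariant subcomplexes and to cohomology, so there is nothing to check about the slope projector or about finite generation of a lattice over $\mathscr{O}^+(\mathcal{W}_h)$; your Cayley--Hamilton argument, by contrast, must justify that $M_h^\circ$ is finitely generated over a ring of definition and survives the slope decomposition, which is exactly the extra work you anticipated. Both arguments rest on the same underlying fact (norm-decreasing action of $\Sigma^+$ on coefficients), but the paper's phrasing avoids the detour through characteristic polynomials entirely.
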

	\begin{proof}
	This follows from the construction of ~\cite{johansson-newton} and we use the notation of that paper freely.  By ~\cite[Corollary 3.3.10]{johansson-newton}, the action of $s$ on $\mathcal{D}_\kappa^r$ is norm-decreasing, for any weight $\kappa\colon T_0\rightarrow R^\times$ and any $r\gg 1/p$ (depending on $\kappa$).  It follows that the action of $s$ is power-bounded, hence power-bounded on $C^\ast(K,\mathcal{D}_\kappa^r)$, hence power-bounded on $\mathscr{K}^\bullet\coloneqq \ker^\bullet Q^\ast(U_t)$, and hence power-bounded on $H^\ast(\mathscr{K}^\bullet)$.
	\end{proof}

	Now we are in a position to prove Theorem~\ref{thm: proper at boundary}:
	\begin{proof}[Proof of Theorem~\ref{thm: proper at boundary}]
		Since $\mathscr{X}_{\G}^{\mathscr{A}_p^+,\red}\rightarrow\widehat\Sigma_{\mathcal{W}}$ is finite,  Lemma~\ref{lemma: finite proper} implies that it suffices to lift $\kappa$ to a morphism $\widetilde\kappa\colon \Spa R\rightarrow \widehat\Sigma_{\mathcal{W}}$ (compatibly with the given map $\Spa R\smallsetminus Z\rightarrow \widehat\Sigma_{\mathcal{W}}$).  In other words, we need to show that the image of $\psi(s_{i,v})$ in $\O_{\Spa R}\left(\Spa R\smallsetminus Z\right)$ is an element of $R^\times$ for all $i$ and all $v\mid p$.  

		By the construction of eigenvarieties, the image of $\psi(s_{i,v})$ is a unit of $\O_{\Spa R}\left(\Spa R\smallsetminus Z\right)$ for all $i$ and $v$. Set $U_{i,v}\coloneqq \prod_{j=1}^is_{i,v}\in\Sigma^+$; then $\psi(U_{i,v})$ is a unit of $\O_{\Spa R}\left(\Spa R\smallsetminus Z\right)$, and by Lemma~\ref{lemma: sigma^+ power bdd}, $\psi(U_{i,v})$ is power-bounded for all $i$ and all $v\mid p$.  By ~\cite[Theorem 5.1]{lourenco}, the image of $\psi(U_{i,v})$ in $\O_{\Spa R}\left(\Spa R\smallsetminus Z\right)$ lands in $R^\circ\subset R$, so it remains to see that $\psi(U_{i,v})$ does not vanish at any point of $\Spa R$.

		We use the family of Galois representations on $\mathscr{X}_{\G}^{\mathscr{A}_p^+,\red}$ to prove this.  By Lemma~\ref{lemma: extend determinants}, the determinant $D\colon \mathscr{O}(\mathscr{X}_{\G}^{\mathscr{A}_p^+,\red})[\Gal_{F,S}]\rightarrow \mathscr{O}(\mathscr{X}_{\G}^{\mathscr{A}_p^+,\red})$ extends to a determinant $R[\Gal_{F,S}]\rightarrow R$.   By Lemma~\ref{lemma: det rep after blowup} there is a morphism $f\colon X'\rightarrow \Spa R$ and a family of rank-$d$ Galois representations $M'$ over $X'$ such that $M'$ induces the pullback $f^\ast D_R$ of $D_R$ to $X'$, and we may assume that $X'\rightarrow \Spa R$ is the composition of a blow-up and a finite surjective morphism.  Let $M_v'$ denote the restriction of $M'$ to the local Galois group at $v\mid p$.  

		For each $i=1,\ldots, d$, the exterior product $\wedge^iM_v'$ is a Galois representation. For each $x'$ lying above a point of $X^{\mathrm{cl}}$ the associated $(\varphi,\Gamma)$-module $k_x\otimes D_{\rig}(\wedge^iM_v')$ is strictly trianguline, and the first step in the filtration has character $f^\#\prod_{j=1}^i\delta_{j,v}$; note that $f^\#\prod_{j=1}^i\delta_{j,v}(\varpi_v)=f^\#\psi(U_{i,v})$.  Moreover, by the construction of $(\varphi,\Gamma)$-modules, there is a finite cover $\{V_j':=\Spa R_j'\}$ of $X'$ by affinoids and a finite extension $L_v/F_v$ such that $D_{\rig}^{L_v}(M_v'|_{V_j'})$ is a free $\Lambda_{V_j',\rig,L_V}$-module for each $j$.

		Thus, we are in the following situation: We have a $X'$-locally free $(\varphi,\Gamma)$-module $D$ over $X'$, an element $\alpha\in R$ which is non-vanishing away from $Z$, and a character $\delta\colon L_v^\times\rightarrow \O_{X'}(X'\smallsetminus \widetilde Z)^\times$ with $\delta(\varpi)=\alpha$, such that at a Zariski-dense set of maximal points $x\in X^{\mathrm{cl}}$, $k_x\otimes D$ is strictly trianguline with $\delta$ giving the first step in the triangulation.  After twisting by $\delta|_{\O_{L_v}^\times}^{-1}$ (which is defined by $\kappa$ and therefore makes sense on all of $X'$, we may also assume that $\delta|_{\O_{L_v}^\times}$ is trivial.  We will show that this implies that $\alpha$ is everywhere non-vanishing.

		Consider the complex
		\[      C_\alpha\colon D_{[a,b]}\xrightarrow{\varphi^f-\alpha,\gamma-1} D_{[a,b/p^f]}\oplus D_{[a,b]}\xrightarrow{(\gamma-1)\oplus(\alpha-\varphi^f)} D_{[a,b/p^f]}    \]
		for some sufficiently small $a,b$, where $f_v$ is the inertial degree of $L_v/\Q_p$.
		By Corollary~\ref{cor: finiteness phi^f-alpha}, $C_\alpha$ is a perfect complex, so the function $x\mapsto \dim_{k_x}H^i(C_{\alpha,x})$ is upper semicontinuous.
		At a Zariski-dense set of maximal points $x\in X'$, $H^0(C_{\alpha,x})$ has $k_x$-dimension at least $1$, so this holds for all $x\in X'$.  But if $\alpha$ vanishes at a point $x$, this contradicts the injectivity of $\varphi$ on $(\varphi,\Gamma)$-modules.

	\end{proof}

\subsection{Trianguline points}

In this section, we show that the Galois representations attached to certain characteristic $p$ points of $X_{\G}$ in the closure of the characteristic $0$ eigenvariety are trianguline, partially answering a question of ~\cite{aip2018} and ~\cite{johansson-newton}.

Our setup is similar to the previous section:
\begin{thm}
	Suppose $\mathscr{X}_{\G}^{\mathscr{A}_p^+}$ is an extended eigenvariety such that there is a continuous determinant $D\colon \mathscr{O}(\mathscr{X}_{\G}^{\mathscr{A}_p^+\red})[\Gal_{F,S}]\rightarrow \mathscr{O}(\mathscr{X}_{\G}^{\mathscr{A}_p^+,\red})$ for some number field $F$, and let $\mathscr{X}\hookrightarrow \mathscr{X}_{\G}^{\mathscr{A}_p^+,\red}$ be an irreducible Zariski-closed subspace.  Suppose in addition there is an ordered basis $\{s_{i,v}\}$ of $\Sigma$ with $\prod_{j=1}^is_{j,v}\in\Sigma^+$ for all $i$ such that for a very Zariski dense set of points $X^{\mathrm{cl}}\subset \mathscr{X}$ the Galois representation attached to $D$ is trianguline at all places $v\mid p$, with parameters $\{\underline{\delta_{v}}\}$ induced by $\{s_{i,v}\}$.  If $x\in \mathscr{X}$ is a maximal point whose residue field has positive characteristic, then the Galois representation associated to the restriction $D|_x$ is also trianguline at all places $v\mid p$, with parameters $\{\underline{\delta_{v}}\}$ induced by $\{s_{i,v}\}$.
\end{thm}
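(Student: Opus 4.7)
The plan is to pass from the determinant $D$ on $\mathscr{X}$ to an actual family of Galois representations on a blow-up, convert to a family of $(\varphi,\Gamma_{F_v})$-modules for each $v\mid p$, and then apply the interpolation corollary following Theorem~\ref{thm: interpolate rk 1 quot} to spread the triangulations at $Z$ over an open neighborhood of the characteristic-$p$ locus.

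First I would localize around $x$: after replacing $\mathscr{X}$ by a suitable affinoid open neighborhood $\Spa R$ of $x$ (which we may take normal since $\mathscr{X}$ is irreducible), the pullback of $D$ is a determinant $R[\Gal_{F,S}]\to R$, and $Z\cap\Spa R$ remains Zariski-dense by the ``very dense'' hypothesis. Using Lemma~\ref{lemma: det rep after blowup} I obtain a proper birational morphism $f:X'\to \Spa R$ and a family of Galois representations $M'$ on $X'$ inducing $f^\ast D$. For each place $v\mid p$ I set $M'_v := M'|_{\Gal_{F_v}}$ and form $D'_v := D_{\rig,F_v}(M'_v)$ using the construction of~\cite{bellovin2020}. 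The preimages in $X'$ of the points of $Z$ form a Zariski-dense set of maximal points at which $D'_v$ is trianguline with the specified parameters $\{\delta_{i,v}\}$ (pulled back along $f$); moreover, the ordering condition $\prod_{j=1}^i s_{j,v}\in\Sigma^+$ forces distinct slope-type parameters, so the triangulations at these points are strict.

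Next I would apply the corollary following Theorem~\ref{thm: interpolate rk 1 quot} to $D'_v$ on each irreducible component of $X'$ meeting $f^{-1}(x)$: this yields a further proper birational $g:X''\to X'$, line bundles $\mathscr{L}_{i,v}$ on $X''$, and an open subspace $U_v\subset X''$ containing $\{p=0\}$ on which $g^\ast D'_v$ is trianguline with graded pieces $\Lambda_{U_v,\rig,F_v}(\delta_{i,v})\otimes\mathscr{L}_{i,v}$. Intersecting the finitely many $U_v$ gives a single open $U\subset X''$ containing $\{p=0\}$ over which $g^\ast D'_v$ is trianguline with parameters $\{\delta_{i,v}\}$ for every $v\mid p$.

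Finally I would specialize at $x$. Because $f\circ g$ is proper and $x$ has positive-characteristic residue field, any preimage $x''\in X''$ of $x$ also has positive-characteristic residue field and hence lies in $U$. Specializing the trianguline filtration of $g^\ast D'_v$ at $x''$, and noting that the fibre of $D_{\rig,F_v}$ recovers $D_{\rig,F_v}$ of the fibre (after permitted enlargement of $E$ to accommodate $k(x'')/k(x)$), gives a triangulation of $D|_x$ with parameters $\{\delta_{i,v}\}$ for every $v\mid p$, which is the desired conclusion. The main obstacle is confirming the strict-trianguline condition at the points of $Z$, which is needed to invoke the interpolation corollary; this should follow from the compatibility of the parameters $\delta_{i,v}(\varpi_v)=\psi(s_{i,v})$ with the monoid $\Sigma^+$, since this forces the successive quotients of any trianguline filtration with these parameters to have a strictly increasing slope invariant, and hence the filtration to be uniquely determined.
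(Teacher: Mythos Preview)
Your proposal is correct and follows essentially the same route as the paper: localize near $x$, convert the determinant to a genuine family of Galois representations on a cover, form the associated $(\varphi,\Gamma_{F_v})$-modules, invoke the interpolation corollary to spread the triangulation over an open containing $\{p=0\}$, and then specialize at a preimage of $x$. The only cosmetic differences are that the paper cites \cite[Theorem~3.8]{wang-erickson} rather than Lemma~\ref{lemma: det rep after blowup} to produce the family of representations, and phrases the final step as faithfully flat descent rather than enlarging coefficients; your explicit acknowledgment of the strict-trianguline hypothesis needed for the interpolation corollary is a point the paper's proof leaves implicit.
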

\begin{proof}
	Let $U=\Spa R\subset X_{\G}^{\red}$ be an irreducible affinoid pseudorigid subspace containing $x$, with $U\smallsetminus \{p=0\}$ non-empty.  By ~\cite[Theorem 3.8]{wang-erickson}, there is a topologically finite-type cover $f'\colon U'\coloneqq \Spa R'\rightarrow U$ and a Galois representation $\rho'\colon \Gal_{F,S}\rightarrow \GL_n({R'}^\circ)$ such that the determinant associated to $\rho'$ is equal to ${R'}^\circ\otimes_{R^\circ}D$.  By ~\cite[Theorem 1.1]{bellovin2020}, for each place $v\mid p$ of $F$, there is a projective $(\varphi,\Gamma_{F_v})$-module $D_{\rig}(\rho'_v)$ associated to $\rho'_v$.

	By assumption, there is a Zariski-dense set of points $\{x_i\}\subset U\smallsetminus\{p=0\}$ and continuous characters $\delta_{v,j}\colon F_v^\times\rightarrow R^\times$ such that the $(\varphi,\Gamma_{F_v})$-module attached to $D_{x_i}$ is trianguline with parameters $\{\underline\delta_{v}\}$.  Thus, by Corollary~\ref{cor: interpolate triangulation}, there is a further cover $f''\colon U''\rightarrow U$ and there is an open subspace $V\subset U''$ containing $\{p=0\}\subset U''$ such that ${f''}^\ast\rho'|_V$ is trianguline with parameters $\{\delta_{v}\}$.

In particular, ${f''}^\ast\rho'|_{(f'\circ{f''})^{-1}(x)}$ is trianguline. Since $(f'\circ{f''})^{-1}(x)\rightarrow \{x\}$ is faithfully flat, the triangulation descends to a triangulation on $D_{\rig}(\rho_x)$ with the desired parameters.
\end{proof}

This argument applies strictly to positive characteristic points lying in the closure of points where the Galois representation is already known to be trianguline.  In particular, if there are irreducible components supported entirely in positive characteristic, we can say nothing at all.  However, ~\cite[Lemma 4.2.2]{johansson-newton17} implies that the extended Coleman--Mazur eigencurve does not contain any strictly characteristic $p$ components, and so the Galois representations associated to its boundary points are all trianguline at $p$.

\appendix

\section{Complements on pseudorigid spaces}
\label{app: pseudorigid}

Let $E$ be a complete discretely valued field with ring of integers $\mathscr{O}_E$, uniformizer $\varpi_E$, and residue field $k_E$.  We briefly recall the definition of a pseudorigid space over $\mathscr{O}_E$, before discussing pseudorigid generalizations of the maximum modulus principle and the generic fiber constructions of Bosch--L\"utkebohmert~\cite{bosch-lut} and Berthelot~\cite{dejong1995crystalline}.

Pseudoaffinoid algebras (which are the building blocks for pseudorigid spaces) were defined in ~\cite[Definition 4.3]{lourenco}:
\begin{definition}
	Let $R$ be a Tate ring.  We say that $R$ is a \emph{pseudoaffinoid $\mathscr{O}_E$-algebra} if it has a noetherian ring of definition $R_0\subset R$ which is formally of finite type over $\mathscr{O}_E$.  If $X$ is an adic space over $\Spa \mathscr{O}_E$, we say that $X$ is \emph{pseudorigid} if it is locally of the form $\Spa R\coloneqq  \Spa (R,R^\circ)$, where $R$ is a pseudoaffinoid $\mathscr{O}_E$-algebra.
\end{definition}

\begin{example}
	Let $\lambda=\frac n m\in \Q_{>0}$ be a positive rational number with $(n,m)=1$, and set $D_\lambda^\circ\coloneqq \mathscr{O}_E[\![u]\!]\left\langle \frac{\varpi_E^m}{u^n}\right\rangle$ and $D_\lambda\coloneqq D_\lambda^\circ\left[\frac 1 u\right]$.  Then $D_\lambda$ is a pseudoaffinoid algebra.
\end{example}
Every pseudoaffinoid algebra $R$ is a topologically finite type $D_\lambda$-algebra for some sufficiently small $\lambda>0$, by ~\cite[Lemma 4.8]{lourenco}.

\subsection{Maximum modulus principle}\label{app: max mod principle}

In classical rigid analytic geometry, the maximum modulus principle states roughly that every function on an affinoid domain attains its supremum at some closed point.  We wish to give an analogous result for affinoid pseudorigid spaces.  We note that we have chosen to present it using the language of valuations, rather than norms, since there is no longer a natural exponential base, so what we prove might better be called a ``minimum valuation principle''.

Let $E$ and $D_{\lambda}$ be as above, and let $D_{\lambda,r}\coloneqq D_\lambda\left\langle X_1,\ldots,X_r\right\rangle$, which is a pseudoaffinoid algebra corresponding to a closed ball over $D_\lambda$.  Let $D_\lambda^{\circ\circ}$ denote the ideal of topologically nilpotent elements of $D_\lambda^\circ$.  

We begin by defining valuations on $D_{\lambda}$ and on Tate algebras over it.  Each element of $D_\lambda$ may be written uniquely in the form $\sum_{i\in\Z}a_iu^i$ with $a_i\in\mathscr{O}_E$, which permits the following definition:
\begin{definition}
	We define analogues of the Gauss norm on $D_\lambda$ and $D_{\lambda,r}$, via 
	\[	v_{D_\lambda}(\sum_{i\in\Z}a_iu^i)\coloneqq  \inf_i \left\{v_E(a_i)+\frac{i}{\lambda}\right\}	\]
	and
	\[	v_{D_{\lambda,r}}\left(\sum_{j\in \Z^{\oplus r}}a_j\underline X^j\right)\coloneqq \inf_jv_{D_\lambda}(a_j)	\]
	respectively.  

	For any Tate ring $R$ with ring of definition $R_0$ and pseudouniformizer $u\in R_0$, we also define the \emph{spectral semi-valuation}
\[	v_{R,\spc}(f)\coloneqq  -\inf_{\substack{\{(a,b) \in \Z\oplus \N \mid \\ u^af^b\in R_0\}}} \frac{a}{b}       \]
	for $f\in R$.
\end{definition}
Note that $v_{R,\spc}$ depends on a choice of pseudouniformizer, but we suppress this in the notation.

Then it is clear from the definition that $f\in D_{\lambda,r}^{\circ}$ if and only if $v_{D_{\lambda,r}}(f)\geq 0$, and $f$ is topologically nilpotent if and only if $v_{D_{\lambda,r}}(f)>0$.  Moreover, for all $f\in D_{\lambda,r}$, $v_{D_{\lambda,r}}(uf)=\lambda^{-1}+v_{D_{\lambda,r}}(f)$, and $v_{D_{\lambda,r}}(af)=v_E(a)+v_{D_{\lambda,r}}(f)$ for all $a\in \mathscr{O}_E$.  Similarly, for a general pseudoaffinoid algebra $R$ and $f\in R$, $v_{R,\spc}(uf)=1+v_{R,\spc}(f)$ and $v_{R,\spc}(f^k)=k\cdot v_{R,\spc}(f)$ for all integers $k\geq 0$.  

We also observe that $v_{D_\lambda}$ takes values in the discrete subgroup of $\R$ generated by $\lambda^{-1}$ and $v_E(\varpi_E)$.

We give an alternate interpretation of the spectral semi-valuation; I am grateful to the referee for permitting me to include this argument:
\begin{lemma}\label{lemma: spectral berkovich spectrum}
	If $R$ is a Tate ring with ring of definition $R_0$ and pseudouniformizer $u\in R_0$, then for any $f\in R$
	\[	v_{R,\spc}(f) = \inf_{x\in\Spa(R,R^\circ)^{\mathrm{Berk}}} \left\{\frac{\log\lvert f\rvert_x}{\log\lvert u\rvert_x}\right\}	\]
		where the Berkovich spectrum $\Spa(R,R^\circ)^{\mathrm{Berk}}$ denotes the rank-$1$ points of $\Spa(R,R^\circ)$.
	\label{lemma: spectral inf}
\end{lemma}
\begin{proof}
	Let $v'\coloneqq \inf_{x\in\Spa(R,R^\circ)^{\mathrm{Berk}}} \left\{\frac{\log\lvert f\rvert_x}{\log\lvert u\rvert_x}\right\}$.  For any $\varepsilon>0$, we can find $a\in\Z$, $b\in\N$ such that $v_{R,\spc}(f)-\varepsilon <-\frac{a}{b}$ and $u^af^b\in R_0$.  Then for any $x\in \Spa(R,R^\circ)$, $\lvert u^af^b\rvert_x= \lvert u\rvert_x^a\lvert f\rvert_x^b$ and moreover, $\lvert u^af^b\rvert_x\leq 1$ (since $R_0\subset R^\circ$).  Hence $\log\lvert u^af^b\rvert_x= a\log\lvert u\rvert_x + b\log \lvert f\rvert_x \leq 0$ (for any choice of logarithm base), so
	\[	\frac{\log\lvert f\rvert_x}{\log\lvert u\rvert_x}\geq -\frac a b> v_{R,\spc}(f) - \varepsilon	\]
	Since this inequality holds for any choice of $\varepsilon>0$, we have $v'\geq v_{R,\spc}(f)$.

	On the other hand, for any $\varepsilon>0$, we can find $a\in\Z$ and $b\in\N$ so that 
	\[	v'-\varepsilon < -\frac a b < v'	\]
	Then for any $x\in \Spa(R,R^\circ)^{\mathrm{Berk}}$, we have $\frac{\log\lvert f\rvert_x}{\log \lvert u\rvert_x}>-\frac a b$, so $b\log\lvert f\rvert_x< -a\log\lvert u\rvert_x$, and hence $\lvert u^af^b\rvert_x< 1$.  If $y\in \Spa(R,R^\circ)$ has associated rank-$1$ point $x$, then $\lvert u^af^b\rvert_x< 1$ implies $\lvert u^af^b\rvert_y< 1$.
	Hence, in fact, $\lvert u^af^b\rvert_x<1$ for all $x\in\Spa(R,R^\circ)$, and by~\cite[Lemma 3.3(i)]{huber1993continuous}, this implies that $u^af^b$ is power-bounded.  This means there is some $c\in\Z$ such that $u^c(u^af^b)^n\in R_0$ for all $n\in \N$, so 
	\[	v_{R,\spc}(f)\geq -\frac a b - \frac{c}{nb}	\]
	for all $n$.  Since this holds for all $n\in\N$ and all $\varepsilon >0$, we see that $v_{R,\spc}(f)\geq v'$, as desired.
\end{proof}

\begin{lemma}\label{lemma: pos spectral val top nilp}
	If $f\in R$ and $v_{R,\spc}(f)>0$, then $f$ is topologically nilpotent.
\end{lemma}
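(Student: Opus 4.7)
The hypothesis $v_{R,\spc}(f)>0$ unwinds to saying that the infimum defining $v_{R,\spc}(f)$ is strictly negative, so we may extract a single concrete pair $(a,b)\in\Z\oplus\N$ with $b>0$, $a<0$, and $u^a f^b\in R_0$. My plan is to use this one inclusion to show directly that $f^n$ enters $u^kR_0$ for every $k\ge 0$ once $n$ is large enough, which is the $u$-adic convergence statement defining topological nilpotence in the Tate ring $R$.

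The first step is to take powers: since $u^af^b\in R_0$, we have $u^{ma}f^{mb}=(u^af^b)^m\in R_0$ for every $m\ge 1$. The second step is to reduce an arbitrary large exponent to a multiple of $b$ modulo the finitely many remainders $r\in\{0,1,\dots,b-1\}$. For each such $r$, the element $f^r\in R=R_0[1/u]$ can be cleared of its $u$-denominator, so there is an integer $N_r\ge 0$ with $u^{N_r}f^r\in R_0$; because there are only finitely many residues $r$, the number $N:=\max_rN_r$ is finite. Writing $n=mb+r$, the product $u^{ma+N_r}f^n=(u^{ma}f^{mb})(u^{N_r}f^r)$ lies in $R_0$, so $f^n\in u^{m|a|-N}R_0$.

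Given any $k\ge 0$, we choose $m$ so large that $m|a|-N\ge k$; this forces $f^n\in u^kR_0$ for all $n\ge mb$, giving $f^n\to 0$ in the $u$-adic topology of $R_0$, hence topological nilpotence of $f$ in $R$. There is no real obstacle here: the only subtlety is that we need a uniform bound $N$ across all residues $r<b$, which is trivially available because the set of residues is finite. The argument uses nothing about $R$ beyond the Tate structure $R=R_0[1/u]$ with $u$ a pseudouniformizer, so it applies to any such pair $(R,R_0)$.
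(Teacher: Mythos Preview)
Your proof is correct and follows essentially the same approach as the paper's: both extract a pair $(a,b)$ with $a<0$ and $u^af^b\in R_0$, so that $f^b\in u^{-a}R_0$ lies in an ideal of topologically nilpotent elements. The paper then concludes in one line, implicitly using that if some power $f^b$ is topologically nilpotent then $f$ is; you have simply written out this last step explicitly via the uniform bound $N$ over the residues $r<b$, which is a harmless elaboration of the same argument.
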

\begin{proof}
	By assumption, there is some $a\in \Z_{<0}$ and some $b\in \N$ such that $u^af^b\in R_0$, so $f^b\in u^{-a}R_0$ and $f$ is topologically nilpotent.
\end{proof}

\begin{cor}\label{cor: vdlambda vsp both zero}
	If $f\in D_{\lambda,r}$ and $v_{D_{\lambda,r}}(f)=0$, then $v_{D_{\lambda,r},\spc}(f)=0$, as well, where $v_{D_{\lambda,r},\spc}$ is computed with respect to the pseudouniformizer $u\in D_\lambda$.
\end{cor}
\begin{proof}
	The hypothesis implies that $f\in D_{\lambda,r}^\circ\smallsetminus D_{\lambda,r}^{\circ\circ}$, so $v_{D_{\lambda,r},\spc}(f)\geq 0$ by definition.  If we had $v_{D_{\lambda,r},\spc}(f)> 0$, then Lemma~\ref{lemma: pos spectral val top nilp} would imply that $f$ is topologically nilpotent.  Since this is impossible, we must have $v_{D_{\lambda,r},\spc}(f)=0$.
\end{proof}

\begin{lemma}
	If $f\in D_{\lambda,r}$, then $v_{D_{\lambda,r},\spc}(af)=\lambda v_E(a)+v_{D_{\lambda,r},\spc}(f)$ for all $a\in\mathscr{O}_E$, where $v_{D_{\lambda,r},\spc}$ is computed with respect to the pseudouniformizer $u\in D_\lambda$.
	\label{lemma: pi_E multiplicative}
\end{lemma}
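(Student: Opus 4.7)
The plan is to establish the stronger identity $v_{D_{\lambda,r},\spc}(g) = \lambda \cdot v_{D_{\lambda,r}}(g)$ for every $g \in D_{\lambda,r}$, and then to deduce the lemma from the elementary multiplicativity of the Gauss-type valuation under multiplication by elements of $\mathscr{O}_E$.

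The key observation is that the already-noted equivalence $h \in D_{\lambda,r}^\circ \iff v_{D_{\lambda,r}}(h) \geq 0$, applied to $h = u^a g^b$, converts the membership condition in the definition of $v_{R,\spc}$ into a linear inequality in $a$ and $b$. Since $v_{D_{\lambda,r}}(u^a g^b) = a/\lambda + b \cdot v_{D_{\lambda,r}}(g)$, the condition $u^a g^b \in D_{\lambda,r}^\circ$ is equivalent, for $b \geq 1$, to $-a/b \leq \lambda \cdot v_{D_{\lambda,r}}(g)$. Taking the supremum over such admissible pairs immediately yields $v_{D_{\lambda,r},\spc}(g) \leq \lambda \cdot v_{D_{\lambda,r}}(g)$.

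For the reverse inequality, I would use the fact that the values of $v_{D_\lambda}$, and hence of $v_{D_{\lambda,r}}$, lie in $\tfrac{1}{n}\Z \cup \{\infty\}$, which is immediate from the defining formula together with $i/\lambda = im/n$. Writing $v_{D_{\lambda,r}}(g) = k/n$ for some $k \in \Z$ when $g \neq 0$, the pair $(a,b) = (-k, m)$ satisfies $v_{D_{\lambda,r}}(u^{-k} g^m) = -k/\lambda + mk/n = 0$, so that $u^{-k} g^m \in D_{\lambda,r}^\circ$ and $-a/b = k/m = \lambda \cdot v_{D_{\lambda,r}}(g)$ attains the bound. The case $g = 0$ is trivial (both sides are $+\infty$).

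For $a \in \mathscr{O}_E$, multiplication by $a$ simply multiplies every coefficient of a Tate series by $a$, so the Gauss-type formula gives $v_{D_{\lambda,r}}(af) = v_E(a) + v_{D_{\lambda,r}}(f)$ directly. Scaling by $\lambda$ and invoking the identity from the previous step then produces $v_{D_{\lambda,r},\spc}(af) = \lambda v_{D_{\lambda,r}}(af) = \lambda v_E(a) + v_{D_{\lambda,r},\spc}(f)$, as claimed. The whole argument is a routine computation; no step presents a serious obstacle, the main content being the reduction of the spectral semi-valuation to a scalar multiple of the Gauss-type valuation via the characterization of $D_{\lambda,r}^\circ$.
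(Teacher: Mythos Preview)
Your proof is correct and takes a genuinely different route from the paper's. You prove the identity $v_{D_{\lambda,r},\spc}=\lambda\cdot v_{D_{\lambda,r}}$ directly and read off the lemma as a consequence; the paper instead proves the lemma first by a hands-on argument and only afterwards deduces that identity as a corollary. Concretely, the paper reduces to $a=\varpi_E$ and $f\in D_{\lambda,r}^\circ$, gets the inequality $v_{\spc}(\varphi_E f)\geq \lambda+v_{\spc}(f)$ from subadditivity, and for the reverse inequality works with the explicit presentation $D_{\lambda,r}^\circ\cong \mathscr{O}_E[\![u]\!]\langle X,X_1,\dots,X_r\rangle/(u^nX-\varpi_E^m)$, checking by hand that $Xf'\in D_{\lambda,r}^\circ$ forces $f'\in D_{\lambda,r}^\circ$.

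Your approach is cleaner and yields more at once, but it does use one ingredient the paper never states explicitly: the multiplicativity $v_{D_{\lambda,r}}(g^b)=b\cdot v_{D_{\lambda,r}}(g)$. This is of course standard (the Gauss valuation over an integral domain with multiplicative valuation is multiplicative), so it is not a gap, but it is worth flagging since the paper's argument avoids it entirely. The paper's version is therefore more self-contained, while yours is shorter and immediately delivers the stronger statement that the paper reaches only two corollaries later.
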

\begin{proof}
	We may assume that $f\in D_{\lambda,r}^\circ$ and $a=\varpi_E$.  Then 
	\[	v_{D_{\lambda,r},\spc}(\varpi_E f)\geq v_{D_{\lambda,r},\spc}(\varpi_E)+v_{D_{\lambda,r},\spc}(f)\geq \frac{n}{m}+v_{D_{\lambda,r},\spc}(f)=\lambda+v_{D_{\lambda,r},\spc}(f)	\]
since $u^{-n}\varpi_E^m\in D_{\lambda,r}^\circ$.  

On the other hand, we need to show that if $u^a(u^{-n}\varpi_E^mf^m)^b\in D_{\lambda,r}^\circ$, then $u^af^{bm}\in D_{\lambda,r}^\circ$.  Writing $D_{\lambda,r}^{\circ}\cong \mathscr{O}_E[\![u]\!]\left\langle X,X_1,\ldots,X_r\right\rangle/(u^nX-\varpi_E^m)$, it suffices to show that if $f'\in D_{\lambda,r}$ satisfies $Xf'\in D_{\lambda,r}^\circ$, then $f'\in D_{\lambda,r}^\circ$.  We may write $f'=u^{-N}f''$ for some $f''\in D_{\lambda,r}^\circ\smallsetminus uD_{\lambda,r}^\circ$ and some integer $N\geq 0$.  Then we see that $Xf''= u^N(Xf')$, which is an element of $u^ND_{\lambda,r}^\circ$; in particular, if $N\geq 1$ then $Xf''\equiv 0$ in $D_{\lambda,r}^\circ/u\cong (\mathscr{O}_E/\varpi_E^m)[X,X_1,\ldots,X_r]$.  But $X$ is not a zero-divisor in this ring, so $f''\in uD_{\lambda,r}^\circ$, contradicting our assumption.  Therefore $N=0$ and $f'\in D_{\lambda,r}^\circ$, as desired.
\end{proof}

\begin{cor}\label{cor: vdlambda vsp scaling}
	For $f\in D_{\lambda,r}$, $v_{D_{\lambda,r}}(f)=\lambda^{-1}v_{D_{\lambda,r},\spc}(f)$, where $v_{D_{\lambda,r},\spc}$ is computed with respect to the pseudouniformizer $u\in D_\lambda$.
\end{cor}
\begin{proof}
	There is some $a\in E$ and some $k\in\Z$ such that $v_{D_{\lambda,r}}(au^kf)=0$; by Corollary~\ref{cor: vdlambda vsp both zero}, we also have $v_{D_{\lambda,r},\spc}(au^kf)=0$.  Then
	\[	v_{D_{\lambda,r}}(f) = -v_E(a) - \frac{k}{\lambda}	\]
	and
	\[	v_{D_{\lambda_r},\spc}(f) = -\lambda v_E(a) - k	\]
	by Lemma~\ref{lemma: pi_E multiplicative}, and the result follows.
\end{proof}

\begin{cor}
	If $f\in D_{\lambda,r}$, then $f\in D_{\lambda,r}^\circ$ if and only if $v_{D_{\lambda,r},\spc}(f)\geq 0$, where $v_{D_{\lambda,r},\spc}$ is computed with respect to the pseudouniformizer $u\in D_\lambda$.
\end{cor}
\begin{proof}
	This follows from the same statement for $v_{D_{\lambda,r}}(f)$ via Corollary~\ref{cor: vdlambda vsp scaling}.
\end{proof}

The maximal points of $\Spa D_\lambda$ consist of the points of a classical half-open annulus (which is not quasi-compact), together with a positive characteristic ``limit point'' $\Spa (D_\lambda/\varpi_E)=\Spa k_E(\!(u)\!)$.  Similarly, the maximal points of $\Spa D_{\lambda,r}$ consist of the product of a closed $r$-dimensional unit ball with a classical half-open annulus and a closed $r$-dimensional unit ball over $\Spa k_E(\!(u)\!)$.  

The advantage of working with $v_{R,\spc}$ for a pseudoaffinoid algebra $R$ is that it makes sense on the residue fields of maximal points, letting us compare $v_{R,\spc}(f)$ and $v_{R/\mathfrak{m}_x,\spc}(f(x))$ for $f\in R$ and $x$ a maximal point of $\Spa R$ (here we use the image of $u$ in $R$ to compute the spectral semi-valuation on residue fields).  It was demonstrated in the proof of ~\cite[Lemma 2.2.5]{johansson-newton17} that if $\mathfrak{m}\subset R$ is a maximal ideal, then $\Spa (R/\mathfrak{m})$ is a singleton.  Then for a maximal point $x\in\Spa R$, the corresponding equivalence class of valuations contains the composition of the specialization map $R\rightarrow R/\mathfrak{m}_x$ with $v_{R/\mathfrak{m}_x,\spc}(f(x))$.

Then we have the following analogue of ~\cite[Lemma 3.8.2/1]{bgr}:
\begin{lemma}
	Let $R$ be a pseudoaffinoid algebra over $\mathscr{O}_E$ and let $x\in \MaxSpec R$ be a maximal point, corresponding to a maximal ideal $\mathfrak{m}_x\subset R$.  Then for any $f\in R$,
	\[	v_{R/\mathfrak{m}_x,\spc}(\overline f)\geq v_{R,\spc}(f)	\]
	\label{lemma: spectral semi-val on points}
\end{lemma}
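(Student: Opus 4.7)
The plan is to simply push forward the defining inequality of $v_{R,\spc}$ under the quotient map $R \twoheadrightarrow R/\mathfrak{m}_x$. The only technical point is to confirm that the image of the chosen ring of definition $R_0$ lies inside a ring of definition of the residue field $R/\mathfrak{m}_x$, so that the semi-valuation on $R/\mathfrak{m}_x$ (computed with respect to the image of $u$) satisfies the natural compatibility with $v_{R,\spc}$ term by term.

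First I would recall, via \cite[Lemma 2.2.5]{johansson-newton17}, that $\Spa(R/\mathfrak{m}_x)$ is a singleton and that $R/\mathfrak{m}_x$ is a complete Tate ring in its quotient topology, which is moreover a finite extension of either a $p$-adic field or a Laurent series field $k_E(\!(u)\!)$. In particular, the image $\overline{u}$ of the pseudo-uniformizer $u$ remains a pseudo-uniformizer, and $R/\mathfrak{m}_x$ carries a canonical ring of integers $(R/\mathfrak{m}_x)^\circ$ which serves as the ring of definition used to compute $v_{R/\mathfrak{m}_x,\spc}$. Since the quotient map $R \to R/\mathfrak{m}_x$ is continuous, it carries power-bounded elements to power-bounded elements; in particular, the image of $R_0$ lies in $(R/\mathfrak{m}_x)^\circ$.

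Now suppose $(a,b) \in \Z \oplus \N$ satisfies $u^a f^b \in R_0$. Reducing modulo $\mathfrak{m}_x$, we obtain $\overline{u}^a \overline{f}^b \in (R/\mathfrak{m}_x)^\circ$, so $(a,b)$ also belongs to the set over which the infimum defining $v_{R/\mathfrak{m}_x,\spc}(\overline{f})$ is taken. By definition of that infimum, $v_{R/\mathfrak{m}_x,\spc}(\overline{f}) \geq -a/b$. Taking the infimum over all admissible $(a,b)$ on the right yields
\[
    v_{R/\mathfrak{m}_x,\spc}(\overline{f}) \;\geq\; -\inf_{\{(a,b) \,\mid\, u^a f^b \in R_0\}} \frac{a}{b} \;=\; v_{R,\spc}(f),
\]
as desired.

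The only genuine obstacle is the containment of the image of $R_0$ in the appropriate ring of definition of $R/\mathfrak{m}_x$ — i.e., that $v_{R/\mathfrak{m}_x,\spc}$ is computed using a ring of definition to which $R_0$ maps. This is handled by the identification of $R/\mathfrak{m}_x$ as a (complete) finite extension of a discretely valued field whose ring of integers coincides with the power-bounded subring, so the continuity of $R \to R/\mathfrak{m}_x$ suffices. Everything else is purely formal manipulation of the definition of $v_{R,\spc}$.
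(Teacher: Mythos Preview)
Your proof is correct and follows the same approach as the paper's: the quotient map carries $R_0$ into the ring of definition of $R/\mathfrak{m}_x$, so every pair $(a,b)$ witnessing $u^af^b\in R_0$ also witnesses $\overline{u}^a\overline{f}^b\in (R/\mathfrak{m}_x)^\circ$, and the inequality of infima follows. The paper states this in two sentences, while you supply the justification that the image of $R_0$ lands in $(R/\mathfrak{m}_x)^\circ$ via continuity and the identification of the power-bounded subring as the valuation ring of the residue field; this is a welcome elaboration but not a different argument.
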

\begin{proof}
	The quotient map $R\rightarrow R/\mathfrak{m}_x$ carries $R_0$ to the ring of definition of $R/\mathfrak{m}_x$.  Thus, if $u^af^b\in R_0$ then $\overline{u}^a\overline{f}^b\in (R/\mathfrak{m}_x)^\circ$.
\end{proof}

Thus, the spectral semi-valuation is a lower bound for the residual spectral semi-valuations.  In fact, it is the minimum.  Analogously to the classical setting, we first prove this for $R=D_{\lambda,r}$ before deducing the general result.
\begin{lemma}\label{lemma: max mod d-lambda-r}
	For any $f\in D_{\lambda,r}$, the function $x\mapsto v_{D_{\lambda,r}/\mathfrak{m}_x,\spc}(f)$ on $\MaxSpec D_{\lambda_r}$ attains its infimum, and its minimum is equal to $v_{D_{\lambda,r},\spc}(f)$.
\end{lemma}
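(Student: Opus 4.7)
By Lemma~\ref{lemma: spectral semi-val on points} we already have $v_{D_{\lambda,r}/\mathfrak{m}_x,\spc}(\overline f) \geq v_{D_{\lambda,r},\spc}(f)$ at every maximal point $x$, so the task is to exhibit a single maximal point at which equality is attained. Write $f = \sum_j a_j\underline X^j$ with $a_j = \sum_i a_{j,i}u^i \in D_\lambda$ and $a_{j,i} \in \mathscr{O}_E$, and set $s := v_{D_{\lambda,r},\spc}(f) = \inf_{j,i}\{\lambda v_E(a_{j,i}) + i\}$; since $v_{D_\lambda}(a_j) \to \infty$, only finitely many pairs realize this infimum.  My plan is to reduce to the classical maximum modulus principle for Tate algebras over complete non-archimedean fields, applied to one of two natural strata of $\Spa D_{\lambda,r}$: the characteristic-$p$ Tate algebra $D_{\lambda,r}/\varpi_E \cong k_E(\!(u)\!)\langle X_1,\ldots,X_r\rangle$ cut out by $\varpi_E = 0$, or the ``inner edge'' of the annulus in $u$, parametrized by $u_0 \in \overline E$ with $v_E(u_0) = 1/\lambda$.

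The split depends on whether some minimizing pair $(j_*,i_*)$ satisfies $v_E(a_{j_*,i_*}) = 0$.  Suppose it does; then $s = i_* \in \Z_{\ge 0}$, and the reduction $\overline f \in k_E(\!(u)\!)\langle X_1,\ldots,X_r\rangle$ has Gauss valuation exactly $s$ (the upper bound is realized by $(j_*,i_*)$, while the lower bound follows from $\lambda v_E(a_{j,i}) + i \ge s$ whenever $v_E(a_{j,i}) = 0$).  The classical maximum modulus principle over the complete discretely valued field $k_E(\!(u)\!)$ then yields a maximal ideal of $k_E(\!(u)\!)\langle X_1,\ldots,X_r\rangle$ at which the spectral valuation of $\overline f$ equals $s$; pulling back gives a maximal point of $\Spa D_{\lambda,r}$ containing $\varpi_E$, with the claimed property.

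Otherwise every minimizing pair satisfies $v_E(a_{j_*,i_*}) \ge 1$, and a characteristic-$0$ point is needed.  Choose $u_0 = \varpi_E^{1/\lambda}w$ with $w \in \mathscr{O}_{\overline E}^\times$; the evaluation $u\mapsto u_0$ induces a continuous homomorphism $D_{\lambda,r} \to E(u_0)\langle X_1,\ldots,X_r\rangle$ realizing the closed subspace $\{u = u_0\} \subset \Spa D_{\lambda,r}$.  Writing $a_{j,i} = \varpi_E^{v_E(a_{j,i})}u_{j,i}$ with $u_{j,i} \in \mathscr{O}_E^\times$ and letting $I_j \subset \Z$ denote the set of indices $i$ achieving $v_{D_\lambda}(a_j)$, the equality $v_{E(u_0)}(a_j(u_0)) = v_{D_\lambda}(a_j)$ is equivalent to the non-vanishing at $\overline w \in \overline{k_E}^\times$ of the polynomial $P_j(W) := \sum_{i\in I_j}\overline{u_{j,i}}W^{i-\min I_j} \in \overline{k_E}[W]$.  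Each $P_j$ is a nonzero polynomial because its constant term $\overline{u_{j,\min I_j}}$ is a unit; since only the finitely many minimizing $j$'s contribute, the union of bad $\overline w$'s is finite and avoidable in the infinite set $\overline{k_E}^\times$.  For such $u_0$, the Gauss valuation of $f(u_0,\underline X) \in E(u_0)\langle X_1,\ldots,X_r\rangle$ equals $v_{D_{\lambda,r}}(f) = s/\lambda$, and classical max mod produces a rigid point $\underline x_0$ attaining it; the corresponding maximal point of $\Spa D_{\lambda,r}$ has residue field $E(u_0,\underline x_0)$, and rescaling by $v_{E(u_0)}(u_0) = 1/\lambda$ converts the value $s/\lambda$ to $s$ under the spectral semi-valuation normalization.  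The main obstacle is this cancellation-avoidance argument: one must verify that the cancellation condition at each minimizing $j$ is governed by a single nonzero polynomial in $\overline w$, which hinges on extracting a distinguished leading index $\min I_j$ whose coefficient is a unit.
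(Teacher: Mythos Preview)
Your argument is correct, but it takes a genuinely different route from the paper's proof.  You reduce to the classical maximum modulus principle over a complete non-archimedean \emph{field} by restricting to one of two strata of $\Spa D_{\lambda,r}$: the characteristic-$p$ locus $\{\varpi_E=0\}$ (a Tate algebra over $k_E(\!(u)\!)$) or the inner boundary circle $\{v_E(u)=1/\lambda\}$ (a Tate algebra over a finite extension of $E$).  The case split is governed by whether some minimizing monomial already has unit $\mathscr{O}_E$-coefficient, and in the second case you need the cancellation-avoidance argument to choose a good $u_0$.  By contrast, the paper's proof is uniform and more elementary: after scaling by $au^k$ to arrange $v_{D_{\lambda,r}}(f)=0$, one simply reduces modulo the ideal $D_\lambda^{\circ\circ}=(u,\varpi_E)$ of \emph{all} topologically nilpotents.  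Using the presentation $D_{\lambda,r}^\circ\cong\mathscr{O}_E[\![u]\!]\langle X,X_1,\ldots,X_r\rangle/(u^nX-\varpi_E^m)$, this quotient is the polynomial ring $k_E[X,X_1,\ldots,X_r]$, and the normalization guarantees $\overline f\neq 0$ there; one then picks any $\overline{k_E}$-point where $\overline f$ does not vanish and lifts.  The paper's trick of introducing the extra variable $X=\varpi_E^m/u^n$ is what collapses your two cases into one and eliminates the need to invoke the classical theorem or to control cancellation.  Your approach has the minor virtue of exhibiting the minimum at an \emph{explicit} stratum, but the paper's is shorter and avoids the (correct but unjustified in your write-up) finiteness of the index sets $I_j$, which relies on the convergence condition $v_E(a_{j,i})+i/\lambda\to+\infty$ as $i\to-\infty$ for elements of $D_\lambda$.
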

\begin{proof}
	Fix $f\in D_{\lambda,r}$.  After scaling by an element of the form $au^k$ for $a\in\mathscr{O}_E$, we may assume that $v_{D_{\lambda,r}}(f)=v_{D_{\lambda,r},\spc}(f)=0$, so $f\in D_{\lambda,r}^\circ$ and we need to find a maximal point $x\in \Spa D_{\lambda,r}$ such that $v_{D_{\lambda,r},\spc}(f(x))=0$.  Let $\overline f=\sum_ja_jX^j$ denote the image of $f$ in $D_{\lambda,r}^\circ/D_{\lambda,r}^{\circ\circ}\cong k_E[X,X_1,\ldots,X_r]$.  Since $\overline f\neq 0$ by assumption, there is some closed point $(\overline x,\overline x_1,\ldots,\overline x_r)\in \overline k_E^{r+1}$ such that $\overline f(\overline x,\overline x_1,\ldots, \overline x_r)\neq 0$.  Then for any maximal point $x\in \Spa D_{\lambda,r}$ whose kernel $\mathfrak{m}_x$ reduces to $(\overline x,\overline x_1,\ldots,\overline x_r)$, $v_{D_{\lambda,r},\spc}(f(x))=0$, as desired.
\end{proof}

In order to deduce the same result for more general pseudoaffinoid algebras, we use the Noether normalization result of ~\cite[Proposition 4.14]{lourenco}:
\begin{prop}\label{prop: max mod pseudoaff}
	Let $R$ be an $\mathscr{O}_E$-flat pseudoaffinoid algebra such that $\varpi_E\notin R^\times$ and let $f\in R$.  Then $\inf_{x\in\MaxSpec R} v_{R/\mathfrak{m},\spc}(f)=\min_{x\in\MaxSpec R} v_{R/\mathfrak{m},\spc}(f)=v_{R,\spc}(f)$.
\end{prop}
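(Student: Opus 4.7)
The plan is to reduce to the case $R = D_{\lambda,r}$ treated in Lemma~\ref{lemma: max mod d-lambda-r} via Noether normalization. One direction, namely $\inf_{x \in \MaxSpec R} v_{R/\mathfrak{m}_x,\spc}(\overline{f}) \geq v_{R,\spc}(f)$, is already Lemma~\ref{lemma: spectral semi-val on points}; it therefore remains to exhibit a maximal point $x_0 \in \MaxSpec R$ at which equality holds. By Lourenço's Noether normalization~\cite[Proposition 4.14]{lourenco}, there exist $\lambda > 0$, $r \geq 0$, and a finite injective $\mathscr{O}_E$-algebra homomorphism $\phi \colon D_{\lambda,r} \hookrightarrow R$. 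Finiteness of $\phi$ ensures that every maximal ideal of $R$ contracts to a maximal ideal of $D_{\lambda,r}$, and that for each $y \in \MaxSpec D_{\lambda,r}$ the fiber $R \otimes_{D_{\lambda,r}} k(y)$ is a finite $k(y)$-algebra whose local factors correspond to the maximal points of $\MaxSpec R$ lying above $y$.

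Given $f \in R$, integrality of $f$ over $D_{\lambda,r}$ produces a monic relation $P(T) = T^n + a_{n-1} T^{n-1} + \cdots + a_0 \in D_{\lambda,r}[T]$ with $P(f) = 0$. The crux is to establish the identity
\[
v_{R,\spc}(f) \;=\; \min_{0 \leq i \leq n-1} \frac{v_{D_{\lambda,r},\spc}(a_i)}{n-i}.
\]
The inequality $\leq$ is obtained from $f^n = -\sum a_i f^i$ combined with the multiplicative and $u$-scaling properties of $v_{\spc}$ from Lemma~\ref{lemma: pi_E multiplicative}: any collection of pairs $(c_i, d)$ with $u^{c_i} a_i^{d} \in D_{\lambda,r}^\circ$ can be assembled into a pair $(c, d') \in \Z \oplus \N$ with $u^c f^{d'} \in R_0$ whose ratio $-c/d'$ is arbitrarily close to $\min_i v_{D_{\lambda,r},\spc}(a_i)/(n-i)$. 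The reverse inequality uses that the $a_i$ are, up to sign, elementary symmetric functions in the Galois conjugates of $f$, so a bound $u^c f^{d'} \in R_0$ forces compatible bounds on the $a_i$. Granting the identity, pick $i_0$ realizing the minimum and apply Lemma~\ref{lemma: max mod d-lambda-r} to $a_{i_0}$ to obtain $y \in \MaxSpec D_{\lambda,r}$ with $v_{k(y),\spc}(a_{i_0}(y)) = v_{D_{\lambda,r},\spc}(a_{i_0})$. The Newton polygon of the reduced polynomial $\overline{P}_y \in k(y)[T]$ then has a segment of slope $-v_{R,\spc}(f)$, producing a root $\alpha \in \overline{k(y)}$ with $v(\alpha) = v_{R,\spc}(f)$. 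This root is realized as $\overline{f}(x_0)$ at some maximal point $x_0 \in \MaxSpec R$ above $y$, and combined with Lemma~\ref{lemma: spectral semi-val on points} this forces $v_{R/\mathfrak{m}_{x_0},\spc}(\overline{f}) = v_{R,\spc}(f)$, giving both the attainment of the infimum and its equality with $v_{R,\spc}(f)$.

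The principal obstacle is the displayed identity. The analogue in classical rigid geometry is a standard consequence of the multiplicativity of the spectral norm on a field; in our setting, however, $R$ has no preferred ground field, the spectral semi-valuation is only sub-multiplicative, and the interplay between the pseudo-uniformizer $u$, the uniformizer $\varpi_E$, and the integral structure of $R$ over $D_{\lambda,r}$ must be verified by hand rather than imported wholesale from the Tate case.
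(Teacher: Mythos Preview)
Your overall strategy---Noether normalization, an integral relation for $f$, and the formula $v_{R,\spc}(f)=\min_i v_{D_{\lambda,r},\spc}(a_i)/(n-i)$---is exactly the paper's. But there is a genuine gap in your setup: Louren\c{c}o's Proposition~4.14 does \emph{not} furnish a finite injection $D_{\lambda,r}\hookrightarrow R$. What it produces is a finite injection $D_{n,r}\hookrightarrow R\left\langle\frac{\varpi_E}{u^n}\right\rangle$ for some large $n$, obtained by lifting an affinoid Noether normalization of the $k_E(\!(u)\!)$-affinoid $R/\varpi_E$. There is no global pseudorigid Noether normalization of $R$ itself, and this is not a cosmetic point: the whole argument you sketch only runs on the subdomain $\Spa R\left\langle\frac{\varpi_E}{u^n}\right\rangle$. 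The paper therefore proceeds in two steps---it first reduces to $R$ an integral domain, then proves the result on $\Spa R\left\langle\frac{\varpi_E}{u^n}\right\rangle$ by your method, and finally covers $\Spa R$ by this piece together with the classical rigid space $\Spa R\left\langle\frac{u^n}{\varpi_E}\right\rangle$, where the ordinary maximum modulus principle applies.

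On the ``principal obstacle'' you flag: the paper does not verify the displayed identity by hand. It invokes \cite[Proposition~3.1.2/1]{bgr} for the inequality $v_{R\langle\varpi_E/u^n\rangle,\spc}(f)\geq\min_j\frac{1}{d-j}v_{D_{n,r},\spc}(b_j)$ (this requires only power-additivity of $v_{\spc}$, which holds in our setting), and argues as in \cite[Proposition~3.8.1/7]{bgr} for the fiberwise calculation $\min_{x\mapsto y}v(f(x))=\min_j\frac{1}{d-j}v(b_j(y))$, taking $b_j$ to be the coefficients of the \emph{minimal} polynomial of $f$ rather than an arbitrary integral relation. Your Newton-polygon argument for the latter step is fine, but you should use the minimal polynomial (and hence first pass to an integral domain) so that the roots of $\overline P_y$ are exactly the values $f(x)$ at the points $x$ over $y$.
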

\begin{proof}
If $\mathfrak{p}_1,\ldots,\mathfrak{p}_s$ are the minimal prime ideals of $R$, we claim that $v_{R,\spc}(f) = \min_j v_{R/\mathfrak{p}_j,\spc}(f)$, so it suffices to prove the result for each $R/\mathfrak{p}_j$.  Indeed, one can consider the natural homomorphism $R\rightarrow\prod_jR/\mathfrak{p}_j$, whose kernel is the nilradical of $R$.  Alternatively, we can write $\Spa(R,R^\circ)^{\mathrm{Berk}} = \cup_j\Spa\left(R/\mathfrak{p}_j,(R/\mathfrak{p}_j)^\circ\right)^{\mathrm{Berk}}$ and apply Lemma~\ref{lemma: spectral berkovich spectrum}.

	Thus, we may assume that $R$ is an integral domain.  The algebra $R/\varpi_E$ is a $k_E(\!(u)\!)$-affinoid algebra, so Noether normalization for affinoid algebras provides us with a finite injective map $k_E(\!(u)\!)\left\langle X_1,\ldots,X_r\right\rangle\rightarrow R/\varpi_E$ for some $r\geq 0$. Then ~\cite[Proposition 4.14]{lourenco} implies that it lifts to a finite injective map $D_{n,r}\rightarrow R\left\langle\frac{\varpi_E}{u^n}\right\rangle$ for some sufficiently large integer $n$.  Then we may argue as in the proof of ~\cite[Proposition 3.8.1/7]{bgr} to see that
	\begin{equation*}
	\begin{split}
\inf_{x\in\MaxSpec R\left\langle\frac{\varpi_E}{u^n}\right\rangle} v_{R/\mathfrak{m},\spc}(f) &= \inf_{y\in \MaxSpec D_{n,r}} \min_{x\in \MaxSpec R\left\langle\frac{\varpi_E}{u^n}\right\rangle\otimes D_{n,r}/\mathfrak{m}_y} v_{R/\mathfrak{m},\spc}(f) 	\\
&= \inf_{y\in \MaxSpec D_{n,r}} \min_j \frac{1}{d-j}v_{D_{n,r}/\mathfrak{m}_y,\spc}(b_j(y))	\\
&= \min_j v_{D_{n,r},\spc}(b_j)
\end{split}
\end{equation*}
where $Y^d+b_{d-1}Y^{d-1}+\ldots+b_0$ is the minimal polynomial for $f$ over $D_{n,r}$.  Since $v_{D_{n,r},\spc}(b_j)$ attains its infimum on $\MaxSpec D_{n,r}$ by Lemma~\ref{lemma: max mod d-lambda-r} and the fibers of $\MaxSpec R\left\langle\frac{\varpi_E}{u^n}\right\rangle$ over $\MaxSpec D_{n,r}$ are finite, $v_{R/\mathfrak{m},\spc}(f)$ also attains its infimum.

Since $v_{R,\spc}$ is ``power-additive'' for all $R$, ~\cite[Proposition 3.1.2/1]{bgr} implies in addition that $v_{R\left\langle \frac{\varpi_E}{u^n}\right\rangle,\spc}(f)\geq \min_j \frac{1}{d-j}v_{D_{n,r},\spc}(b_j)$.  Since the right-side is equal to $\inf_{x\in\MaxSpec R\left\langle\frac{\varpi_E}{u^n}\right\rangle} v_{R/\mathfrak{m},\spc}(f)$, Lemma~\ref{lemma: spectral semi-val on points} implies that $\min_{x\in\MaxSpec R\left\langle \frac{\varpi_E}{u^n}\right\rangle} v_{R/\mathfrak{m},\spc}(f)=v_{R\left\langle \frac{\varpi_E}{u^n}\right\rangle,\spc}(f)$.

To conclude, we use the result for classical affinoid algebras on $\Spa R\left\langle \frac{u^n}{\varpi_E}\right\rangle$.  Since $\Spa R\left\langle\frac{\varpi_E}{u^n}\right\rangle\cup \Spa R\left\langle \frac{u^n}{\varpi_E}\right\rangle$ is a cover of $\Spa R$, the result follows.
\end{proof}

\subsection{Analytic loci of formal schemes}

Recall that a point of a pre-adic space $\Spa(A,A^+)$ is said to be \emph{analytic} if the kernel of the corresponding valuation is not open.  We can describe the analytic locus in certain $\mathscr{O}_E$-formal schemes as explicit pseudorigid spaces, following ~\cite{dejong1995crystalline}.  Let $\mathfrak{X}=\Spf A$ be a noetherian affine formal scheme over $\mathscr{O}_E$, and let $X=\Spa A$ be to corresponding adic space; let $I\subset A$ be the ideal of topologically nilpotent elements, and assume in addition that $A/I$ is a finitely generated $k_E$-algebra.

If $I=(f_1,\ldots,f_r)$, then $X^{\an}=\cup_iX\left\langle \frac{I}{f_i}\right\rangle$.  In the special case when $A=R_0$ is the ring of definition of a $\mathscr{O}_E$-pseudoaffinoid algebra $R$, we have $X^{\an}=\Spa R$.

As in ~\cite[Proposition 7.1.7]{dejong1995crystalline}, we can give a functor-of-points characterization of $X^{\an}$:
\begin{prop}\label{prop: analytic locus functor of points}
	Let $\mathfrak{X}$ and $X$ be as above, and let $Y\coloneqq \Spa R$ be an affinoid pseudorigid space.  Then
	\begin{equation}
	\label{eqn: formal model functor of points}
	\varinjlim_{\substack{R_0\subset R\\ \text{ ring of definition}}}\Hom_{\mathcal{FS}/\mathscr{O}_E}(\Spf R_0,\mathfrak{X})\xrightarrow{\sim}\Hom(Y,X^{\an})
	\end{equation}
	is an isomorphism.
\end{prop}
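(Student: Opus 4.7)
The plan is to adapt de~Jong's strategy for the analogous result in the classical rigid setting~\cite{dejong1995crystalline} to the pseudorigid framework, replacing the role of $\varpi_E$ throughout with a pseudouniformizer $u$ of $R$.  First, the forward map in~(\ref{eqn: formal model functor of points}): a morphism $\Spf R_0 \to \mathfrak{X}$ is a continuous $\mathscr{O}_E$-algebra homomorphism $\phi \colon A \to R_0$ with $\phi(I) R_0$ an ideal of definition of $R_0$.  Composing with $R_0 \hookrightarrow R$ gives a continuous ring map $A \to R$ sending $A = A^\circ$ into $R^\circ$, hence a morphism of adic spaces $\Spa R \to \Spa A = X$.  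Writing $I = (f_1,\ldots,f_r)$, adicity supplies $n \geq 1$ with $u^n \in \phi(I) R_0$.  For any $y \in \Spa R$ we have $v_y(u^n) \neq 0$, which forces $v_y(\phi(f_i)) \neq 0$ for some $i$, so $y$ lies in the preimage of $X\langle I/f_i\rangle \subseteq X^{\an}$.  Thus the morphism factors through $X^{\an}$, and the construction is clearly compatible with enlarging $R_0$, so defines a map on the colimit.

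For injectivity, if two morphisms $\phi_j \colon A \to R_{0,j}$ ($j=1,2$) induce the same map $Y \to X^{\an}$, the compositions $A \to R$ coincide, and we may pass to a common ring of definition $R_{0,3}\subseteq R$ containing $R_{0,1}\cup R_{0,2}$ (obtained by adjoining and taking an appropriate closure in $R$) through which both maps factor identically.

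The substantive step is surjectivity.  Given $f \colon Y \to X^{\an}$, the underlying continuous ring homomorphism $\phi := f^\# \colon A \to R$ carries $A = A^\circ$ into $R^+ = R^\circ$.  The hypotheses that $A$ is noetherian and $I$-adic, and that $A/I$ is finitely generated over $k_E$, imply that $A$ is topologically generated over $\mathscr{O}_E$ by finitely many elements: lifts $a_1,\ldots,a_r \in A$ of generators of $A/I$ over $k_E$, together with generators $g_1,\ldots,g_s$ of $I$.  Their images lie in $R^\circ$, with $\phi(g_j)$ topologically nilpotent.  Starting from any noetherian ring of definition $R_0' \subseteq R$ (which exists because $R$ is pseudoaffinoid) and adjoining the elements $\phi(a_i), \phi(g_j)$, one obtains a larger ring of definition $R_0 \subseteq R$ which remains noetherian and formally of finite type over $\mathscr{O}_E$ by the structure theorem for pseudoaffinoid rings of definition~\cite[Lemma 4.8]{lourenco}.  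By construction $\phi(A) \subseteq R_0$, and after possibly enlarging $R_0'$ the ideal $\phi(I) R_0$ contains an ideal of definition, so can be taken as one, producing the required adic morphism $\Spf R_0 \to \mathfrak{X}$ which maps to $f$.

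The main obstacle is the construction of $R_0$ in the surjectivity step: one must simultaneously arrange that $R_0 \subseteq R$ contains $\phi(A)$, remains noetherian and formally of finite type over $\mathscr{O}_E$, and has $\phi(I) R_0$ as an ideal of definition.  This is precisely where the structure theorem~\cite[Lemma 4.8]{lourenco} for rings of definition of pseudoaffinoid algebras is essential, and where the finite generation hypothesis on $A/I$ over $k_E$ is used to reduce the problem to finitely many power-bounded elements.
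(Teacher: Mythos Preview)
Your approach is essentially correct but genuinely different from the paper's.  The paper works locally: it pulls back the cover $X^{\an}=\bigcup_i X\langle I/f_i\rangle$ to a cover $\{V_i\}$ of $Y$, obtains on each piece a map $A\langle I/f_i\rangle\to R_{0,i}$ into a ring of definition of $R_i$, and then defines $R_0$ as the equalizer of $\prod_i R_{0,i}\rightrightarrows\prod_{i,j}R_{i,j}^\circ$, checking by hand that this equalizer is a ring of definition of $R$ through which $A\to R$ factors.  Your route is more direct: compose $f$ with $X^{\an}\hookrightarrow X=\Spa(A,A)$ to get a continuous $\phi:A\to R^\circ$, observe that $A$ is topologically generated over $\mathscr{O}_E$ by finitely many elements (lifts of generators of $A/I$ together with generators of $I$), and enlarge a given ring of definition $R_0'$ by adjoining the finitely many images.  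The key fact you are using---that any subring of $R$ sitting between two rings of definition is itself a ring of definition---makes this work cleanly and avoids the gluing.  What the paper's approach buys is that it never invokes global representability of $X$ and stays closer to de~Jong's original sheaf-theoretic argument; what yours buys is brevity.

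Two points deserve tightening.  First, your appeal to \cite[Lemma~4.8]{lourenco} to conclude that the enlarged $R_0$ is noetherian and formally of finite type is misplaced: that lemma only guarantees the existence of \emph{some} such ring of definition, not that every ring of definition has this property.  Fortunately this is unnecessary, since the colimit in the statement ranges over all rings of definition.  Second, and more substantively, your justification for adicity---``after possibly enlarging $R_0'$ the ideal $\phi(I)R_0$ contains an ideal of definition''---is too vague.  The point is that since $f$ factors through $X^{\an}=\bigcup_i X\langle I/f_i\rangle$, each $\phi(f_i)$ is a unit on $V_i=f^{-1}(U_i)$, so the ideal sheaf $\phi(I)\mathscr{O}_Y$ equals $\mathscr{O}_Y$; as $Y$ is affinoid this gives $1=\sum_j\phi(g_j)r_j$ in $R$, whence $u^n=\sum_j\phi(g_j)(u^nr_j)\in\phi(I)R_0$ for $n\gg0$.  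This is what actually forces $\phi(I)R_0$ to be open, and without it the forward map need not even land in $\Hom(Y,X^{\an})$ (consider $A=\Z_p$, $R=\F_p(\!(u)\!)$).
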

\begin{proof}
	Given any two rings of definition of $R$, there is a third which contains both of them.  Moreover, suppose $R_0\subset R$ is a ring of definition, and $g\colon R\rightarrow R'$ is a continuous homomorphism of pseudoaffinoid algebras.  By ~\cite[Proposition 1.10]{huber1993continuous}, $g$ is adic; it therefore carries $R_0\subset R$ to a ring of definition of $R'$.

	Thus, for a fixed $\mathfrak{X}$, we can view $\varinjlim_{R_0\subset R\text{ ring of definition}}\Hom_{\mathcal{FS}/\mathscr{O}_E}(\Spf R_0,\mathfrak{X})$ as a covariant functor evaluated on $R$, and equation ~\ref{eqn: formal model functor of points} as a natural transformation.  We will construct an inverse.  Suppose we have a morphism $Y\rightarrow X^{\an}$; it is induced by a continuous ring homomorphism $g\colon A\rightarrow R$.  Using the description of the analytic locus of $X$, we see that the image of $Y$ is contained in $U\coloneqq \cup_iU_i$, where $U_i\coloneqq X\left\langle \frac{I}{f_i}\right\rangle$, for some finite set $\{f_i\}\subset I$.  

	Let $V_i\subset Y$ denote the rational subset $\Spa R\left\langle\frac{g(I)}{g(f_i)}\right\rangle$ (note that by ~\cite[Proposition 3.8(iii)]{huber1993continuous}, $g(I)\subset R'$ is open, so rational localization makes sense), and let $R_i$ denote its coordinate ring.  Each morphism $V_i\rightarrow U_i$ is induced by a continuous ring homomorphism $A\left\langle \frac{I}{f_i}\right\rangle\rightarrow R_{0,i}$, for some ring of definition $R_{0,i}\subset R_i$.	

Let $R_0\subset R^\circ$ be the equalizer of $\prod_iR_{0,i}\rightrightarrows \prod_{i,j}R_{i,j}^\circ$; we claim that the map $A\rightarrow \Gamma(U,\mathscr{O}_X)\rightarrow R$ factors through $R_0$, and $R_0$ is a ring of definition of $R$.  For the first claim, we consider the diagram
	\[
		\begin{tikzcd}[row sep=scriptsize, column sep=scriptsize]
			0 \arrow{rr} & & \Gamma(U,\mathscr{O}_X^+) \arrow[dashed]{dr}\arrow{rr} & & {\prod_i} \Gamma(U_i,\mathscr{O}_X^+) \arrow[shift left]{rr}\arrow[shift right]{rr}\arrow{dr} & & \prod_{i,j}\Gamma(U_{i,j},\mathscr{O}_X^+) \arrow{dd}	\\
			& 0 \arrow{rr} & & R_0 \arrow[hook]{dl}\arrow{rr} & &  \prod_i R_{0,i} \ar[hook]{dl}\ar[shift left]{dr}\arrow[shift right]{dr} &	\\
			0 \arrow{rr} & & R^\circ \arrow[crossing over,leftarrow]{uu}\arrow{rr} & & {\prod_i} R_i^\circ \arrow[crossing over,leftarrow]{uu}\arrow[shift left]{rr}\arrow[shift right]{rr} & & \prod_{i,j}R_{i,j}^\circ
		\end{tikzcd}
	\]
It is commutative with exact rows, and a diagram chase shows that the dotted arrow exists.

For the second claim, let $u\in R$ be a pseudouniformizer; we check that $R_0[u^{-1}]=R$ and $R_0$ is a bounded subring of $R$.  Given $r\in R$, we may write $\prod_i r_i$ for its image in $\prod_iR_i$.  Since $R_{0,i}\subset R_i$ is a ring of definition, there is some $n\in \N$ such that $u^n\left(\prod_i r_i\right)\in \prod_iR_{0,i}$; by construction, $u^n\left(\prod_i r_i\right)$ is in the kernel of $\prod_iR_{0,i}\rightrightarrows\prod_{i,j}R_{i,j}^\circ$, so it defines the desired element of $R_0$.

To see that $R_0\subset R$ is bounded, we let $R_0'\subset R$ be a ring of definition. It induces rings of definition $R_{0,i}'\subset R_i$; since $R_{0,i}\subset R_i$ is bounded, there is some $n'\in \N$ such that $u^{n'}\prod_iR_{0,i}\subset \prod_iR_{0,i}'$, and a diagram chase shows that $u^{n'}R_0\subset R_0'$.

We have constructed a continuous homomorphism $A\rightarrow R_0$ inducing the morphism $Y\rightarrow X^{\an}$, where $R_0\subset R$ is a ring of definition.  The corresponding morphism $\Spf R_0\rightarrow \Spf A$ is the desired element of the left side of equation~\ref{eqn: formal model functor of points}.  It is straightforward to verify that this defines a natural transformation.
\end{proof}

\subsection{Local structure of pseudorigid spaces}

We give a result on the local structure of pseudorigid spaces, using the theory of formal models developed in ~\cite{bosch-lut} and ~\cite{bosch-lut2}.  Although the authors had in mind applications to classical rigid analytic spaces, they worked in sufficient generality that their results hold in the more general pseudorigid context.  More precisely, they related rigid spaces to admissible formal schemes over a base formal scheme $S$, where $S$ could be the formal spectrum of an arbitrary noetherian adic ring.  Since a pseudoaffinoid algebra is topologically of finite type over some $D_\lambda$ and $D_\lambda^\circ$ is noetherian and complete for the $u$-adic topology, this is sufficient for our purposes.

We give the definition of admissible formal schemes, from ~\cite[\textsection 1]{bosch-lut}:
\begin{definition}
	Let $A$ be a ring with a finitely generated ideal $J\subset A$ such that $A$ is $J$-adically complete and has no $J$-torsion.  An $A$-algebra $R$ is \emph{admissible} if $R\cong A\left\langle X_1,\ldots, X_n\right\rangle/I$ for some finitely generated ideal $I\subset A\left\langle X_1,\ldots, X_n\right\rangle$ and $R$ has no $J$-torsion.

	An affine formal $A$-scheme $\mathfrak{X}\coloneqq \Spf R$ is \emph{admissible} if $R$ is an admissible $A$-algebra.  A quasi-compact formal $A$-scheme is said to be admissible if it has a cover by admissible formal $A$-schemes.
	\label{def: admissible}
\end{definition}
There is an implicit assertion here that admissibility can be checked locally on $\mathfrak{X}$, which follows from ~\cite[Proposition 1.7]{bosch-lut}; we refer the reader there for more details.

\begin{thm}
	Let $f\colon R\rightarrow R'$ be a continuous homomorphism of pseudoaffinoid algebras over $\O_E$, such that for every maximal ideal $\mathfrak{m}\subset R$, the induced maps $R/\mathfrak{m}^n\rightarrow R'/\mathfrak{m}^n$ are isomorphisms.  Then $f$ is an isomorphism.
	\label{thm: local isom aff}
\end{thm}
\begin{proof}
	To begin with, we observe that $f$ induces a bijection $\MaxSpec(R')\rightarrow\MaxSpec(R)$.  Furthermore, by ~\cite[Tag 0523]{stacks-project}, the maps $R_{\mathfrak{m}}\rightarrow R_{\mathfrak{m}}'$ on algebraic localizations are flat.

	By ~\cite[Lemma 4.8]{lourenco} there is some $\lambda\in\Q_{>0}$ such that $R$ is topologically of finite type over $D_\lambda$, so we may choose a ring of definition $R_0\subset R$ which is topologically of finite type over $D_\lambda^\circ$.  Similarly, since $R'$ is topologically of finite type over $R$ by ~\cite[Corollary A.14]{johansson-newton}, we may choose a ring of definition $R_0'\subset R'$ which is topologically of finite type over $R_0$.  Setting $\mathfrak{X}=\Spf R_0$ and $\mathfrak{Y}=\Spf R_0'$, we have a morphism $\mathfrak{Y}\rightarrow \mathfrak{X}$ of admissible formal $\Spf D_\lambda^\circ$-schemes which corresponds to $f$ after inverting $u$.

	By ~\cite[Theorem 5.2]{bosch-lut2}, there are admissible formal blow-ups $\widetilde{\mathfrak{X}}\rightarrow\mathfrak{X}$ and $\widetilde{\mathfrak{Y}}\rightarrow\mathfrak{Y}$ and a flat morphism $\widetilde{\mathfrak{Y}}\rightarrow\widetilde{\mathfrak{X}}$.  By ~\cite[Corollary 5.3]{bosch-lut2}, we may also assume this morphism is quasi-finite.  We claim this morphism is surjective.  Indeed, flat morphisms are open; if it is not surjective, the complement $\mathfrak{Z}\subset\widetilde{\mathfrak{X}}$ of its image is a closed formal subscheme, whose associated pseudorigid generic fiber $Z\subset \Spa R$ contains maximal points which are not in the image of $\Spa R'$, which contradicts our assumptions.

	Now we consider the morphism $\widetilde{\mathfrak{Y}}_0\rightarrow\widetilde{\mathfrak{X}}_0$ between the mod $u$ fibers.  This is a separated flat quasi-finite morphism between finite-type $\F_p$-schemes, and we claim it is actually an isomorphism.  For this it suffices to show that the fibral rank is constant and equal to $1$ (since this implies it is finite of rank $1$, by ~\cite[Lemma II.1.19]{deligne-rapoport}).  Moreover, the locus in $\widetilde{\mathfrak{X}}_0$ of points with fibral rank-$d$ is constructible by ~\cite[Lemma 9.8.8]{ega-iv-3}, and constructible sets contain closed points, so it suffices to check that all closed points have rank-$1$ fibers.

	But if $x_0\in \widetilde{\mathfrak{X}}_0$ is a closed point, there is some local integral domain $A$ of dimension $1$ and a morphism $\Spf A\rightarrow \widetilde{\mathfrak{X}}$ whose reduction modulo $u$ is $x_0$, by ~\cite[Proposition 3.5]{bosch-lut}.  The base change $\widetilde{\mathfrak{Y}}_x\rightarrow \Spf A$ is again flat and quasi-finite; since $A$ is a local ring, the rank of its fiber over $A\left[\frac 1 u\right]$ is equal to its fiber over the residue field.  But by assumption, the fiber over $A\left[\frac 1 u\right]$ has rank $1$, so the fiber $\widetilde{\mathfrak{Y}}_{x_0}$ has rank $1$, as desired.

	Now we have a morphism of sheaves of topological rings $\O_{\widetilde{\mathfrak{X}}}\rightarrow \O_{\widetilde{\mathfrak{Y}}}$ which is surjective modulo $u$.  By ~\cite[Tag 07RC(11)]{stacks-project}, it is surjective modulo all powers of $u$.  Since $\O_{\widetilde{\mathfrak{X}}}$ and $\O_{\widetilde{\mathfrak{Y}}}$ are $u$-adically complete, it is surjective.

	Suppose $\O_{\widetilde{\mathfrak{X}}}\rightarrow \O_{\widetilde{\mathfrak{Y}}}$ has kernel sheaf $\mathscr{J}$.  Since $\O_{\widetilde{\mathfrak{X}}_0}\rightarrow \O_{\widetilde{\mathfrak{Y}}_0}$ is an isomorphism and $\widetilde{\mathfrak{Y}}$ is $\widetilde{\mathfrak{X}}$-flat, $\mathscr{J}/u=0$ and by ~\cite[Tag 07RC(11)]{stacks-project} $\mathscr{J}=0$.

	Thus, $\widetilde{\mathfrak{Y}}\rightarrow\widetilde{\mathfrak{X}}$ is an isomorphism; inverting $u$, we see that $R\rightarrow R'$ is an isomorphism, as desired.
\end{proof}
	
\section{Pseudorigid determinants}\label{appendix: determinants}

We need to extend some of the results of~\cite{chenevier2014p} on moduli spaces of Galois determinants from the rigid analytic setting to the pseudorigid setting.  Recall that for any topological group $G$ and $d\in \N$, ~\cite{chenevier2014p} defines functors $\widetilde E_d\colon \mathcal{FS}/\Z_p\rightarrow \underline{\mathrm{Set}}$ and $\widetilde E_{d,z}\colon \mathcal{FS}/\Z_p\rightarrow \underline{\mathrm{Set}}$ on the category of formal schemes over $\Z_p$, where $z$ is a $d$-dimensional determinant $G\rightarrow k$ for some finite field $k$.  More precisely,
\[	\widetilde E_d(\mathfrak{X}) \coloneqq  \{\text{continuous determinants }\mathscr{O}(\mathfrak{X})[G]\rightarrow \mathscr{O}(\mathfrak{X})\text{ of dimension }d\}	\]
and $\widetilde E_{d,z}(\mathfrak{X}) \subset \widetilde E_d(\mathfrak{X})$ is the subset of continuous determinants which are residually constant and equal to $z$.

Suppose that $G$ is a topological group satisfying the following property:

For any open subgroup $H\subset G$, there are only finitely many continuous group homomorphisms $H\rightarrow \Z/p\Z$.

Under this condition (which is satisfied by absolute Galois groups of characteristic $0$ local fields, and by groups $\Gal_{F,S}$, where $F$ is a number field and $S$ is a finite set of places of $F$), ~\cite[Corollary 3.14]{chenevier2014p} implies that $\widetilde E_d$ and $\widetilde E_{d,z}$ are representable.  Moreover, every continuous determinant is residually locally constant, and so $\widetilde E_d=\coprod_z \widetilde E_{d,z}$.

We may define an analogous functor $\widetilde E_d^{\an}$ on the category of pseudorigid spaces, and we wish to prove the following:
\begin{thm}\label{thm: analytic moduli of determinants}
	The functor $\widetilde E_d^{\an}$ is representable by a pseudorigid space $X_d$, and $X_d$ is canonically isomorphic to the analytic locus of $\widetilde E_d$.  The functor $\widetilde E_{d,z}^{\an}$ is representable by a pseudorigid space $X_{d,z}$, and $X_{d,z}$ is canonically isomorphic to the analytic locus of $\widetilde E_{d,z}$.  Moreover, $\widetilde E_d^{\an}$ is the disjoint union of the $\widetilde E_{d,z}^{\an}$.
\end{thm}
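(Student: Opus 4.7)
The plan is to apply Chenevier's representability theorem at the level of formal schemes, then transfer it to the pseudorigid setting via the functor-of-points description of analytic loci given in Proposition~\ref{prop: analytic locus functor of points}.

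By \cite[Corollary 3.14]{chenevier2014p}, $\widetilde E_{d,z}$ is representable by an affine formal scheme $\Spf A_{d,z}$ over $\Z_p$, where $A_{d,z}$ is a complete noetherian local ring with residue field $k$ a finite extension of $\F_p$; moreover $\widetilde E_d = \coprod_z \widetilde E_{d,z}$. Since $E/\Q_p$ is finite, the completed base change $A_{d,z,\O_E} := A_{d,z} \widehat{\otimes}_{\Z_p} \O_E$ is again a complete noetherian semi-local ring, and its quotient by the ideal of topologically nilpotent elements is a finite product of finite extensions of $k_E$, hence a finitely generated $k_E$-algebra. Proposition~\ref{prop: analytic locus functor of points} therefore applies, and I set $X_{d,z}$ to be the analytic locus of $\Spf A_{d,z,\O_E}$, and $X_d := \bigsqcup_z X_{d,z}$. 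Both are pseudorigid spaces by construction, and $X_d$ is canonically the analytic locus of $\widetilde E_d$ since analytic loci commute with disjoint unions of formal schemes.

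To see that $X_{d,z}$ represents $\widetilde E_{d,z}^{\an}$, fix an affinoid pseudorigid space $Y = \Spa R$. Proposition~\ref{prop: analytic locus functor of points} yields
\[
\Hom(Y, X_{d,z}) \cong \varinjlim_{R_0 \subset R} \Hom_{\mathcal{FS}/\O_E}\bigl(\Spf R_0,\, \Spf A_{d,z,\O_E}\bigr),
\]
the colimit being taken over rings of definition of $R$. By Chenevier's universal property, each term is the set of continuous $d$-dimensional determinants $R_0[G] \to R_0$ with residual class $z$. The remaining task is to identify this colimit with the set of continuous $d$-dimensional determinants $R[G] \to R$ residually equal to $z$. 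In one direction, any such $R_0$-valued determinant induces an $R$-valued one by extending scalars along $R_0 \hookrightarrow R$. In the other, given a continuous determinant $D : R[G] \to R$, the polynomial law underlying $D$ is encoded in the associated coefficient functions $G^n \to R$, each continuous on the compact group $G^n$ (since $G$ is profinite, which holds for the Galois groups of interest) and hence with image contained in some ring of definition; noetherianness of the universal ring $A_{d,z}$ guarantees that finitely many such data suffice to pin down $D$, so by filteredness of the system of rings of definition one finds a single $R_0 \subset R$ receiving the images of all the relevant coefficient functions, and $D$ descends to a continuous determinant on $R_0[G]$. The residual-class condition is preserved throughout.

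Finally, the decomposition $\widetilde E_d^{\an} = \bigsqcup_z \widetilde E_{d,z}^{\an}$ follows from $\widetilde E_d = \coprod_z \widetilde E_{d,z}$ at the formal-scheme level: a morphism $Y \to X_d$ corresponds via Proposition~\ref{prop: analytic locus functor of points} to morphisms $\Spf R_0 \to \widetilde E_d$ decomposing uniquely according to the clopen components of the target, which in turn decomposes $Y$ into a disjoint union of clopen pseudorigid subspaces. The principal obstacle is the descent-to-a-ring-of-definition step just sketched: one must show that an arbitrary continuous determinant on $R[G]$ genuinely comes from one on some $R_0[G]$, uniformly in the entire polynomial-law datum. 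The profinite hypothesis on $G$ (satisfied for the local and global Galois groups of interest) together with the noetherianness of Chenevier's universal ring supply the needed finiteness, and this is where the proof concentrates its substantive content.
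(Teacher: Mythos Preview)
Your approach is essentially the same as the paper's: invoke Chenevier's representability of $\widetilde E_{d,z}$ by $\Spf A_{d,z}$, take the analytic locus, and then identify $\Hom(Y,X_{d,z})$ with $\widetilde E_{d,z}^{\an}(R)$ by combining Proposition~\ref{prop: analytic locus functor of points} with the bijectivity of $\iota:\varinjlim_{R_0}\widetilde E_d(\Spf R_0)\to\widetilde E_d^{\an}(R)$. The paper packages this last bijection as Lemma~\ref{lemma: analytic points of E} (stated and referred back to \cite{chenevier2014p}), whereas you sketch it directly.

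One point in your sketch deserves tightening. You argue that the coefficient functions $G^n\to R$ have compact image (by profiniteness of $G$) and hence land in some ring of definition. Compactness only gives a \emph{bounded} image, i.e.\ a subset of $u^{-N}R_0$ for some $N$; this is not a subring, and the subring generated by a bounded set need not be bounded. What is actually needed is that the coefficients of $D(1+gt)$ lie in $R^\circ$, i.e.\ are power-bounded---this is precisely part (1) of Lemma~\ref{lemma: analytic points of E}, and its proof (in Chenevier) uses the multiplicative structure of determinants, not merely continuity plus compactness. Once that is in hand, your filteredness-plus-noetherianness argument goes through: the ring generated by the (countably many) coefficient values is bounded, hence contained in a ring of definition $R_0$, and noetherianness of $A_{d,z}$ ensures the determinant is determined by finitely many of these values, so it descends to $R_0$.
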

\begin{remark}
	This is a direct analogue of ~\cite[Theorem 3.17]{chenevier2014p}, and the proof is virtually identical.  However, we sketch it here for the convenience of the reader.
\end{remark}
If $R$ is a pseudoaffinoid algebra, it contains a noetherian ring of definition $R_0\subset R$, and we have $R^\circ=\varinjlim R_0$, where $R^\circ\subset R$ is the subring of power-bounded elements of $R$ and the colimit is taken over all rings of definition of $R$ .  We have an injective map
\[	\iota\colon  \varinjlim_{\substack{R_0\subset R\\ \text{ ring of definition}}} \widetilde E(\Spf R_0)\rightarrow \widetilde E^{\an}(R)	\]
Exactly as in ~\cite[Lemma 3.15]{chenevier2014p}, we have the following:
\begin{lemma}\label{lemma: analytic points of E}
	Let $R$ be a pseudoaffinoid algebra, and let $D\in \widetilde E_d^{\an}(R)$.  Then
	\begin{enumerate}
		\item	For all $g\in G$, the coefficients of $D(1+gt)\in R[t]$ lie in $R^\circ$.
		\item	The map $\iota\colon \varinjlim_{R_0\subset R} \widetilde E(\Spf R_0)\rightarrow \widetilde E^{\an}(R)$ is bijective.
		\item	If $R$ is reduced, then $\widetilde E_d(\Spf R^\circ)=\widetilde E_d^{\an}(R)$.
	\end{enumerate}
\end{lemma}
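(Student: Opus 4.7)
The plan is to mimic Chenevier's proof of his Theorem 3.17, replacing the classical maximum modulus principle with the pseudorigid version established in Section A.1 above.

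For part (1), fix $g\in G$ and $i\in\{1,\dots,d\}$, and write $\Lambda^i(g)\in R$ for the coefficient of $t^i$ in $D(1+gt)$. It suffices to show $v_{R,\spc}(\Lambda^i(g))\ge 0$. At any maximal point $x\in\Spa R$ with residue field $k_x$, the specialization $D_x$ is a continuous $d$-dimensional determinant $k_x[G]\to k_x$; since $D_x$ is residually locally constant, its mod-$\mathfrak m_{k_x^\circ}$ reduction coincides with a finite-field determinant $z$, forcing $\Lambda^i(g)(x)\equiv z^i(\bar g)\pmod{\mathfrak m_{k_x^\circ}}$ and in particular $\Lambda^i(g)(x)\in k_x^\circ$, i.e. $v_{k_x,\spc}(\Lambda^i(g)(x))\ge 0$. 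After reducing modulo the nilradical and decomposing into components (so that each quotient is either $\mathscr O_E$-flat or killed by $\varpi_E$, in which case one argues with the $k_E(\!(u)\!)$-affinoid structure), the maximum modulus principle proved in Section A.1 gives $v_{R,\spc}(\Lambda^i(g))=\min_x v_{k_x,\spc}(\Lambda^i(g)(x))\ge 0$, so $\Lambda^i(g)\in R^\circ$.

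For part (2), injectivity of $\iota$ is formal since any two rings of definition are dominated by a third. For surjectivity, let $D\in\widetilde E_d^{\an}(R)$. By the representability theorem \cite[Corollary 3.14]{chenevier2014p}, $D$ is classified by the universal pseudodeformation ring: since $D$ is residually locally constant with residual value $z$, giving $D$ amounts to giving a continuous $\Z_p$-algebra homomorphism from the noetherian complete local ring $A_{d,z}$ to $R$. Part (1) shows that the tautological image of $G$, and hence the image of $A_{d,z}$, lands in $R^\circ=\varinjlim_{R_0}R_0$. Since $A_{d,z}$ is topologically of finite type over $\Z_p$, the images of finitely many generators lie in a single ring of definition $R_0\subset R$; continuity and completeness then force the whole homomorphism to factor through $R_0$, giving the desired element of $\widetilde E_d(\Spf R_0)$.

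For part (3), the containment $\widetilde E_d(\Spf R^\circ)\hookrightarrow\widetilde E_d^{\an}(R)$ comes from composition with $R^\circ\hookrightarrow R$. Conversely, when $R$ is reduced, Lourenço's Hebbarkeitssatz identifies $R^\circ$ with $\bigcap_k R\langle u^k/\varpi_E\rangle^\circ$, a bounded subring which is the direct limit of rings of definition of $R$ (with no integral-closure pathology); by part (2), any $D\in\widetilde E_d^{\an}(R)$ comes from some $R_0\subset R^\circ$ and thus defines, by continuous extension, an element of $\widetilde E_d(\Spf R^\circ)$. The main obstacle is the proof of (1): one must honestly verify that the spectral semi-valuation controls power-boundedness in the non-flat or non-reduced regime, which is precisely what necessitated the careful development of the maximum modulus principle in Section A.1.
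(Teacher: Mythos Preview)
Your proposal follows the same route the paper intends (it explicitly defers to Chenevier's argument, with the classical maximum modulus principle replaced by the pseudorigid version of Section~A.1), and parts (2) and (3) are fine.

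There is, however, a genuine circularity in your argument for part (1). You write that ``$D_x$ is residually locally constant, its mod-$\mathfrak m_{k_x^\circ}$ reduction coincides with a finite-field determinant $z$, forcing $\Lambda^i(g)(x)\equiv z^i(\bar g)\pmod{\mathfrak m_{k_x^\circ}}$''. But to speak of the reduction of $D_x$ modulo $\mathfrak m_{k_x^\circ}$ you must already know that the coefficients $\Lambda^i(g)(x)$ lie in $k_x^\circ$, which is exactly what you are trying to prove. The correct input here is the analogue of \cite[Lemma~3.15]{chenevier2014p}: for $L$ a nonarchimedean local field (finite over $\Q_p$ or over $\F_p(\!(u)\!)$), one has $\widetilde E_d(\mathscr{O}_L)=\widetilde E_d(L)$. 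This holds because a continuous $d$-dimensional determinant over a field arises from a genuine semisimple representation $G\to\GL_d(\overline L)$, and compactness of $G$ forces that representation to stabilize a lattice; the characteristic-polynomial coefficients then lie in $\mathscr{O}_{\overline L}\cap L=\mathscr{O}_L$. Once you have $\Lambda^i(g)(x)\in k_x^\circ$ for every maximal $x$, your invocation of the maximum modulus principle (together with the observation that $v_{R,\spc}(f)\ge 0$ implies $f\in R^\circ$) is correct. You should also note that the Proposition in Section~A.1 is stated only for $\mathscr{O}_E$-flat $R$ with $\varpi_E\notin R^\times$; the reduction you sketch (pass to $R_{\red}$, split off the characteristic-$p$ components, use the classical affinoid statement there) is the right move, but you should say explicitly why an element whose image in $R_{\red}$ is power-bounded is already power-bounded in $R$.
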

In particular, if $L$ is the residue field at a maximal point of $\Spa R$ and $\mathscr{O}_L$ is its ring of integers, $\widetilde E_d(\Spf \mathscr{O}_L)=\widetilde E_d^{\an}(L)$.  Every $L$-valued point of $\Spa R$ therefore defines a map
\[	\widetilde E_d(\Spa R)\rightarrow \widetilde E_d(L)=\widetilde E_d(\Spf \mathscr{O}_L)\rightarrow \widetilde E_d(k_L)	\]
where $k_L$ is the residue field of $\mathscr{O}_L$.

Thus, we may talk about residual determinants of determinants $R[G]\rightarrow R$, and define $\widetilde E_{d,z}^{\an}$ for any continuous determinant $z$ valued in a finite field.

Now the proof of Theorem~\ref{thm: analytic moduli of determinants} follows by combining Proposition~\ref{prop: analytic locus functor of points} and Lemma ~\ref{lemma: analytic points of E}.

For the convenience of the reader, we record the following analogue of ~\cite[Lemma 7.8.11]{bellaiche-chenevier}; its proof carries over verbatim to the setting of pseudorigid families of determinants.
\begin{lemma}
	Let $D\colon G\rightarrow \mathscr{O}(X)$ be a continuous $d$-dimension determinant of a topological group on a reduced pseudorigid space $X$.  Let $U\subset X$ be an open affinoid.
	\begin{enumerate}
		\item	There is a normal affinoid $Y$, a finite surjective map $g\colon Y\rightarrow U$, and a finite-type torsion-free $\mathscr{O}(Y)$-module $M(Y)$ of generic rank $d$ equipped with a continuous representation $\rho_Y\colon G\rightarrow \GL_{\mathscr{O}(Y)}$, whose determinant at generic points of $Y$ agrees with $g^\ast D$.

			Moreover, $\rho_Y$ is generically semisimple and the sum of absolutely irreducible representations.  For $y$ in a dense Zariski-open subset $Y'\subset Y$, $M(Y)_y$ is free of rank-$d$ over $\mathscr{O}_y$, and $M(Y)_y\otimes\overline{k(y)}$ is semisimple and isomorphic to $\overline\rho_{g(y)}$.
		\item	There is a blow-up $g'\colon \mathcal{Y} \rightarrow Y$ of a closed subset of $Y\smallsetminus Y'$ such that the strict transform $\mathcal{M}_{\mathcal{Y}}$ of the coherent sheaf on $Y$ associated to $M(Y)$ is a locally free $\mathcal{O}_{\mathcal{Y}}$-module of rank $d$.  That sheaf $\mathcal{M}_{\mathcal{Y}}$ is equipped with a continuous $\mathcal{O}_{\mathcal{Y}}$-representation of $G$ with determinant $(g'g)^\sharp\circ D$, and for all $y\in Y$, $(\mathcal{M}_{Y,y}\otimes \overline{k(y)})^{\mathrm{ss}}$ is isomorphic to $\overline\rho_{g'g(y)}$.
	\end{enumerate}
	\label{lemma: det rep after blowup}
\end{lemma}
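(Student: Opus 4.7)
The plan is to imitate the proof of \cite[Lemma 7.8.11]{bellaiche-chenevier} in the pseudorigid setting. The main work is checking that each geometric ingredient (generic realization of a determinant as a representation, normalization, spreading out of lattices, and flattening by blow-up) remains available over pseudoaffinoid bases.

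For part (1), I would first reduce to $U=\Spa R$ affinoid and work component by component, so that $R$ may be assumed an integral pseudoaffinoid domain. The restriction of $D$ to the generic point is a continuous $d$-dimensional determinant valued in $\Frac R$; by the theory of \cite{chenevier2014p}, it is realized by a semisimple representation $\rho_{\mathrm{gen}}$ defined over some finite extension $k'/\Frac R$ which decomposes as a direct sum of absolutely irreducible pieces. Normalizing $U$ in $k'$ (using the normalization theory for pseudorigid spaces from \cite{johansson-newton17}) produces a normal affinoid $Y$ together with a finite dominant map $g:Y\rightarrow U$. Since $D$ applied to any element of $G$ has coefficients in $R^\circ$ by Lemma~\ref{lemma: analytic points of E}, a Cayley--Hamilton argument applied to a chosen basis of $\rho_{\mathrm{gen}}$ shows that the $\mathscr{O}(Y)$-submodule spanned by all $G$-translates of that basis is finite-type, torsion-free of generic rank $d$, and $G$-stable; this is the desired $M(Y)$. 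Its determinant agrees with $g^\ast D$ at the generic point and hence everywhere, since $Y$ is reduced. Nakayama gives a Zariski-open dense $Y'\subset Y$ over which $M(Y)$ is locally free of rank $d$, and on $Y'$ the identification of $(M(Y)_y\otimes \overline{k(y)})^{\mathrm{ss}}$ with $\overline\rho_{g(y)}$ follows from the general compatibility of \cite{chenevier2014p} with residual specialization.

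For part (2), I would apply a Raynaud--Gruson-type flattening-by-blow-up argument: blow up a coherent ideal sheaf on $Y$ supported in $Y\smallsetminus Y'$ so that the strict transform of the coherent sheaf associated to $M(Y)$ becomes locally free of rank $d$. This goes through in the pseudorigid category because blow-ups of coherent ideal sheaves on pseudoaffinoid spaces exist locally as $\Proj$ of Rees algebras over noetherian rings of definition, and strict transforms of coherent sheaves behave as in the rigid setting. The $G$-action on $M(Y)$ extends to $\mathcal{M}_{\mathcal{Y}}$ once the blown-up ideal is chosen $G$-invariant, which is possible because $G$ preserves $M(Y)$ and hence the Fitting ideals that detect local freeness.

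The principal obstacle is ensuring that the classical rigid-analytic tools used by \cite{bellaiche-chenevier} transfer without loss to the pseudorigid setting. Chenevier's theory carries over because Theorem~\ref{thm: analytic moduli of determinants} and Lemma~\ref{lemma: analytic points of E} reduce pseudorigid families of determinants to the formal-scheme families already treated in \cite{chenevier2014p}, while the geometric manipulations reduce to standard noetherian commutative algebra on rings of definition together with the pseudorigid normalization theory of \cite{johansson-newton17}.
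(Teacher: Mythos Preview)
Your proposal is correct and follows exactly the approach the paper indicates: the paper does not give a proof but simply records that the argument of \cite[Lemma 7.8.11]{bellaiche-chenevier} carries over verbatim to the pseudorigid setting. Your sketch spells out precisely those steps (generic realization via \cite{chenevier2014p}, normalization via \cite{johansson-newton17}, spreading out a lattice by Cayley--Hamilton, and Raynaud--Gruson flattening), so there is nothing to add.
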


\end{document}